\numberwithin{equation}{section}
\let\al=\alpha
\let\g=\gamma
\let\la=\lambda
\let\G= \Gamma
\let\wt=\widetilde
\let\wh=\widehat
\let\pa=\partial
\def\C{\mathbb C}
\def\R{\mathbb R}
\def\Z{\mathbb Z}
\def\dv{\mathrm{d}v}
\def\dZ{\mathrm{d}Z}
\def\ds{\mathrm{d}s}
\def\du{\mathrm{d}u}
\def\dz{\mathrm{d}z}
\newcommand{\beq}{\begin{equation}}
	\newcommand{\eeq}{\end{equation}}
\newcommand{\ben}{\begin{eqnarray}}
	\newcommand{\een}{\end{eqnarray}}
\newcommand{\beno}{\begin{eqnarray*}}
	\newcommand{\eeno}{\end{eqnarray*}}
\newtheorem{theorem}{Theorem}[section]
\newtheorem{lemma}[theorem]{Lemma}
\newtheorem{proposition}[theorem]{Proposition}
\theoremstyle{remark}
\newtheorem{remark}[theorem]{Remark}
\begin{document}

\title[Self-similar solutions of relativistic Euler equations]
{Self-similar imploding solutions of the relativistic Euler equations}

\author[F. Shao]{Feng Shao}
\address{School of Mathematical Sciences, Peking University, Beijing 100871,  China}
\email{2101110016@stu.pku.edu.cn}

\author[D. Wei]{Dongyi Wei}
\address{School of Mathematical Sciences, Peking University, Beijing 100871,  China}
\email{jnwdyi@pku.edu.cn}

\author[Z. Zhang]{Zhifei Zhang}
\address{School of Mathematical Sciences, Peking University, Beijing 100871, China}
\email{zfzhang@math.pku.edu.cn}

\date{\today}

\begin{abstract}
Motivated by recent breakthrough on smooth imploding solutions of compressible Euler,  we construct self-similar smooth imploding solutions of isentropic relativistic Euler equations with isothermal equation of state
$p=\frac1\ell\varrho$ for {\it all} $\ell>1$ in physical space dimension $d=2,3$ and for $\ell>1$ close to 1 in higher dimensions.
This work is a crucial step toward solving the long-standing problem: finite time blow-up of the supercritical defocusing nonlinear wave equation.
\end{abstract}

\maketitle

\tableofcontents

\section{Introduction}

In this paper, we consider the relativistic Euler equations, which describe the motion of a relativistic fluid in the flat Minkowski space-time $\mathbb{M}^{1+d}$, $d\geq 1$, with the flat Lorentzian metric $g=(g_{\mu\nu})$ given by \footnote{Throughout this paper, we adopt the standard rectangular coordinates in Minkowski space-time, denoted by $(x^0=t, x^1, \cdots, x^d)$. Greek indices run from $0$ to $d$ and Latin indices from $1$ to $d$, and repeated indices appearing once upstairs and once downstairs are summed over their range.}
\[g_{00}=-1, \ \ g_{ii}=1 \ \ \text{for $1\leq i\leq d$},\ \ g_{\mu\nu}=0\ \ \text{for $\mu\neq\nu$}.\]
The fluid state is represented by the \emph{energy density} $\varrho\geq0$, and the \emph{relativistic velocity} $\vec{u}=(u^0, u^1, \cdots, u^d)$ normalized by
\begin{equation}\label{Eq.normal_vecu}
	|\vec{u}|_g^2=g_{\mu\nu}u^\mu u^\nu=u_\mu u^\mu=-1.
\end{equation}
The motion of the fluid is dominated by the isentropic relativistic Euler equations
\begin{equation}\label{Eq.relativistic_Euler}
	\begin{cases}
		u^\mu\pa_\mu\varrho+(p+\varrho)\pa_\mu u^\mu=0,\\
		\pa_\mu T^{\mu\nu}=0,
	\end{cases}
\end{equation}
where $p$ is the \emph{pressure} subject to the equation of state $p=p(\varrho)$, and $T^{\mu\nu}$ is the \emph{energy-momentum tensor} defined by
\begin{equation}\label{Eq.tensor}
	T^{\mu\nu}=(p+\varrho)u^\mu u^\nu+pg^{\mu\nu}.
\end{equation}

The local-in-time existence of smooth solution for relativistic Euler equations was proved by Makino and Ukai \cite{MU}. The first global existence result for the relativistic Euler equations with $d=1$ was obtained by Smoller and Temple \cite{ST} under the assumption of bounded total variation of the initial data. The condition regarding bounded total variation is weaken significantly in some recent works \cite{Ruan_Zhu, Chen_Schrecker}. Some results on the singularity formation were obtained by Guo and Tahvildar-Zadeh \cite{GTZ}, Pan and Smoller \cite{PS} and so on.  Christodoulou's landmark monograph \cite{Chris1} proved the shock formation for open sets of irrotational and isentropic solutions in the 3-D setting, see also \cite{Chris2}. The shock formation for the relativistic Euler equations with non-trivial vorticity and non-constant entropy remains open up to date. A crucial step  is made by Disconzi and Speck \cite{Disconzi-Speck} recently, who rewrite the relativistic Euler equations as a system possessing  null structures. For more developments on the mathematical aspects of relativistic fluids, we refer to recent review articles by Abbrescia and Speck \cite{AS} and Disconzi \cite{Disconzi}. Let's  mention some important works \cite{Sd, CM, Yin, LS1, LS2} and references therein on the shock formation of compressible Euler equations.

A new blow-up mechanism, known as the \textit{implosion}, has been constructed in the breakthrough work \cite{MRRJ2} by Merle, Raph\"el, Rodnianski and Szeftel for the  isentropic  compressible Euler equations with $d\geq 2$, where both the density and velocity themselves blow up in finite time. More importantly, in the companion papers \cite{MRRJ1,MRRJ3}, they construct radial and asymptotically self-similar blow-up solutions based on smooth self-similar solutions in \cite{MRRJ2} for both the compressible Navier-Stokes equations and the energy supercritical defocusing nonlinear Schr\"odinger equations, respectively. 
In a recent work by Buckmaster, Cao-Labora and Gómez-Serrano \cite{BCLGS}, the self-similar smooth imploding solutions have been extended to cover a wider range of adiabatic exponents in the 3-D setting. Moreover, non-radial imploding solutions are constructed in \cite{CLGSSS}. \smallskip

Motivated by breakthrough works \cite{MRRJ1, MRRJ2, MRRJ3}, we are going to construct self-similar smooth imploding solutions for the isentropic relativistic Euler equations with isothermal equation of state
\begin{equation}\label{Eq.state}
	p=\frac1\ell\varrho,
\end{equation}
where $\ell>1$ corresponds to reciprocal of the square of the sound speed. This equation of state is valid in the early stages of stellar formation, especially $\ell=3$, see Smoller and Temple \cite{ST} for more discussions.\smallskip

Our main result is stated as follows.

\begin{theorem}\label{Thm.mainthm}
	For $k=d-1\geq 1$, there exists a critical $\ell_*(k)\in(1, +\infty]$ (see \eqref{Eq.ell_*}) such that for all $1<\ell<\ell_*(k)$, we can find a $\beta\in(0, k)$ such that the relativistic Euler equations \eqref{Eq.relativistic_Euler} with \eqref{Eq.state} admit a spherically symmetric self-similar solution defined on $[0, T_*]\times \R^d$, smooth away from the concentrated point $(T_*, 0)$ (where $T_*>0$):
	\begin{equation}\label{rho5}
		\begin{cases}
			\varrho(t, x)=\frac1{(T_*-t)^\beta}\hat\varrho(Z),\quad u^0(t,x)=\wh{u^0}(Z),\quad u^i(t,x)=\wh u(Z)\frac{x^i}r\quad (1\leq i\le d),\\
			r=|x|=\big((x^1)^2+\cdots+(x^d)^2\big)^{1/2},\quad Z=r/(T_*-t),
		\end{cases}
	\end{equation}
	with the asymptotics
	\begin{equation*}
		\varrho(t,x)=\frac{\varrho_*(1+o_{Z\to+\infty}(1))}{|x|^\beta},\quad \lim_{Z\to+\infty}\wh{u^0}(Z)=u^0_*, \quad \lim_{Z\to+\infty}\wh{u}(Z)=u_*,
	\end{equation*}
	for some real constants $\varrho_*>0, u^0_*\in\R\setminus\{0\}, u_*\in\R$. 
	\end{theorem}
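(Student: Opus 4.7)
The plan is to adapt the sonic-point shooting strategy of Merle--Rapha\"el--Rodnianski--Szeftel \cite{MRRJ2} to the relativistic setting. First, I would substitute the self-similar ansatz \eqref{rho5} together with the normalization \eqref{Eq.normal_vecu} into the relativistic Euler equations \eqref{Eq.relativistic_Euler}. The constraint $(\wh{u^0})^2 - \wh u^2 = 1$ reduces the problem to two effective unknowns; introducing the relativistic three-velocity $w(Z) = \wh u / \wh{u^0}$ together with a rescaled logarithmic density $\sigma(Z) = \ell^{-1/2}\log\hat\varrho$, the system collapses, after a standard change of variable to $\tau=\log Z$, to a pair of first-order ODEs whose right-hand sides are rational functions of $(w,\sigma,Z)$. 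Their common denominator $D(w;Z)$ vanishes precisely at sonic points, where the self-similar surface becomes tangent to the acoustical cone. I would identify the three singular points: the origin $Z=0$, infinity $Z=\infty$, and a candidate interior sonic point $Z=Z_s$.

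Second, I would carry out the local analysis at $Z_s$. Demanding simultaneous vanishing of $D$ and of the corresponding numerator yields an algebraic relation that determines $Z_s$ and $w_s=w(Z_s)$ in terms of $\beta$ and $\ell$. Smoothness of the profile forces the Taylor coefficients at $Z_s$ to satisfy a triangular recursion: the leading slope $w'(Z_s)$ solves a quadratic, and one selects the root corresponding to the subsonic--supersonic crossing that is compatible with imploding asymptotics. Higher coefficients are then determined uniquely \emph{provided} a discrete family of resonance conditions does not hold. The threshold $\ell_*(k)$ in \eqref{Eq.ell_*} would be defined precisely so that the quadratic discriminant stays positive and no resonance occurs throughout $\ell\in(1,\ell_*(k))$.

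Third, I would run a shooting argument in the free parameter $\beta\in(0,k)$. Using the Taylor data at $Z_s$ as Cauchy initial conditions, I would integrate the reduced system inward toward $Z=0$ and outward toward $Z=\infty$. The parameter $\beta$ must be tuned so that the inward branch satisfies the regularity condition $\wh u(0)=0$ forced by spherical symmetry and extends smoothly through the axis, while the outward branch admits finite limits $\wh{u^0}(Z)\to u^0_*$ and $\wh u(Z)\to u_*$ with $\hat\varrho(Z)\sim \varrho_* Z^{-\beta}$. An intermediate-value or degree-theoretic argument on the one-parameter family $\beta\mapsto (\text{behavior at }Z=0)$, analogous to those in \cite{MRRJ2,BCLGS}, delivers the required $\beta$; the asymptotics at infinity follow from autonomous linearization near $Z=\infty$.

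The main obstacle will be the sonic-point analysis. Because of the Lorentz constraint, the denominator and numerators are rational in $w$ rather than polynomial, and the isothermal sound speed $\ell^{-1/2}$ couples nonlinearly to the factor $1-w^2$ coming from relativistic corrections; this alters both the location of the sonic point and the spectrum of the recursion controlling Taylor coefficients. Establishing that the smooth Taylor branch persists for \emph{all} $\ell\in(1,\ell_*(k))$ and that the global shooting succeeds for $d=2,3$, and for $\ell-1$ small in $d\ge 4$, requires delicate monotonicity and continuity estimates, together with careful sign control on the coefficients governing resonances. I expect these considerations to pin down the precise form of $\ell_*(k)$ in \eqref{Eq.ell_*} and to explain the dimensional restriction that forces $\ell$ near $1$ when $k\ge 3$.
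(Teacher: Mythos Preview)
Your broad strategy---sonic-point Taylor expansion plus a shooting argument in $\beta$, in the spirit of \cite{MRRJ2}---is aligned with the paper, but several of the concrete mechanisms you describe are off, and one of them constitutes a real gap.

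First, the paper does not work with a pair of ODEs in $(w,\sigma)$. After imposing irrotationality it eliminates $\hat\varrho$ entirely and reduces to the \emph{scalar} ODE \eqref{Eq.main_ODE} for $v=-\wh u/\wh{u^0}$ alone; the density is recovered a posteriori by quadrature. More importantly, the paper then applies a nonlinear change of variables $(Z,v)\mapsto(z,u)$ (Lemma~\ref{Lem.renormalization}) that maps the sonic point to $z=0$ and reduces all nonlinearities to quadratic. This renormalization is what makes the Taylor recursion \eqref{Eq.a_n_recurrence} tractable; without it you would be manipulating quartic expressions centered at a nonzero point, and the sign analysis of the coefficients becomes essentially unmanageable.

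Second, and this is the genuine gap: your characterization of $\ell_*(k)$ is incorrect. The discriminant of the slope quadratic \eqref{Eq.quad_u_1} is \emph{always} positive (the paper shows $p_0(2(1+\ell\gamma))<0$), so it plays no selecting role. Resonances are avoided not by restricting $\ell$ but by restricting the eigenvalue ratio $R=\lambda_+/\lambda_-$ to the open interval $(3,4)$, which is the actual shooting parameter. The threshold $\ell_*(k)$ in \eqref{Eq.ell_*} instead comes from the requirement $B_2>0$ (Lemma~\ref{Lem.a_4<0}), which forces the fourth Taylor coefficient $a_4<0$. The shooting mechanism is then: $a_3\sim (R-3)^{-1}>0$ pushes $u_L$ above $u_F$ as $R\downarrow 3$, while $a_4\sim -(4-R)^{-1}<0$ pushes $u_L$ below $u_F$ as $R\uparrow 4$, and the intermediate value theorem yields the matching $R_0$. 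For $k=2$ and large $\ell$ one loses $a_4<0$ and must replace the quartic barrier by a more elaborate one (Section~\ref{Sec.Proof_k=2_l_large}); this is why $\ell_*(1)=\ell_*(2)=+\infty$ requires separate work. Your proposal does not identify this sign structure, and a generic ``degree-theoretic argument on $\beta$'' will not produce it.
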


Several remarks are in order.

\begin{enumerate}
	\item[1.] We have $\ell_*(1)=\ell_*(2)=+\infty$ and for $3\leq k\leq 6$, the value of $\ell_*(k)$ ($=\ell_1(k)$, see \eqref{Eq.ell_*}) can be found in Table \ref{Tab.1}.
For $k\geq3$, the upper bound $\ell_*(k)$ is not optimal and can be improved using the method in Section \ref{Sec.Proof_k=2_l_large}. Here we only consider the case $R\in(3,4)$ defined by \eqref{def:R}, and the upper bound $\ell_*(k)$ can be improved by considering larger $R$.
	
	\item[2.] Our result is not perturbative, in view of Table \ref{Tab.1}. Moreover, our proof does not rely on any numerical computations, except for Table \ref{Tab.1}.
	
	\if0The data in Table \ref{Tab.1} are obtained by numerically solving a system consisting of two algebraic equations.
	
	\item[2.] For $k=2$, we can also get the same conclusion for $\ell\geq\ell_*(2)$, with technical adaptations on the proof (see Appendix \ref{Appen.k=2}), hence we can cover all $\ell>1$ for $k=2$.\fi
	
	\item[3.] Our solution constructed in this paper can be used to construct a finite time blow-up for the supercritical defocusing nonlinear wave equation. Indeed, we consider nonlinear wave equation of the form $\Box u=|u|^{p-1}u$ for the unknown field $u:\R^{1+d}\to\C$, where $\Box=\partial^{\mu}\partial_{\mu}=-\partial_t^2+\sum_{i=1}^d\partial_i^2$ is the d'Alembertian operator on $\R^{1+d}.$ We write the modular-phase decomposition $u=\rho \mathrm{e}^{\mathrm{i}\phi}$, with $\rho,\phi:\R^{1+d}\to\R$ and $\rho>0$, then
	\[\Box u=(\Box\rho+2\mathrm{i}\partial^{\mu}\rho\partial_{\mu}\phi+\mathrm{i}\rho\Box\phi-\rho\partial^{\mu}\phi\partial_{\mu}\phi)\mathrm{e}^{\mathrm{i}\phi}.\]
	Now we introduce the front renormalization $\rho\mapsto b^{-\frac{1}{p-1}}\rho$, $ \phi\mapsto b^{-\frac{1}{2}}\phi$, where $b>0$ is a small number, then the above system becomes
	\begin{align*}
		b\Box\rho=\rho^{p}+\rho\partial^{\mu}\phi\partial_{\mu}\phi,\quad 2\partial^{\mu}\rho\partial_{\mu}\phi+\rho\Box\phi=0.
	\end{align*}
	Letting $b\downarrow0$ gives 
	\begin{align*}
		\rho^{p}+\rho\partial^{\mu}\phi\partial_{\mu}\phi=0,\quad 2\partial^{\mu}\rho\partial_{\mu}\phi+\rho\Box\phi=0,
	\end{align*}
	or equivalently,
	\begin{equation}\label{Eq.wave_analogous}
		\rho^{p-1}+\pa^\mu\phi\pa_\mu\phi=0,\quad \pa_\mu(\rho^2\pa^\mu\phi)=0.
	\end{equation}
Let $ \ell=1+\frac{4}{p-1}$, $ \varrho=\rho^{p+1}$, then \eqref{Eq.wave_analogous} becomes 
\eqref{Eq.normal_varrho_phi} and \eqref{Eq.continuity_varrho_phi}, which are deduced from \eqref{Eq.relativistic_Euler} in the irrotational regime. After introducing self-similarity and spherical symmetry into \eqref{Eq.wave_analogous}, we can convert \eqref{Eq.wave_analogous} into an ODE which is exactly the same as \eqref{Eq.main_ODE}. Therefore, this paper gives a crucial  step toward the construction of blow-up solution to the supercritical defocusing nonlinear wave equation, which is the main content of our forthcoming paper \cite{Shao-Wei-Zhang}. In view of Lemma \ref{Lem.beta_neq_l+1}, it seems that we can prove the blow-up for the nonlinear wave equation for $d=4$ and all $p\geq 29$ and $d=5$ for all $p\geq 17$.
\end{enumerate}

Let's conclude this introduction by reviewing some recent progress on self-similar type singularities for the fluid PDEs.\smallskip

Firstly, we refer to \cite{Chen-Hou1, Elgindi, EGM} and \cite{Chen-Hou2, Chen-Hou3, Luo-Hou} and \cite{Chen,Chen-Hou-Huang,Huang-Qin-Wang-Wei1, Huang-Qin-Wang-Wei2, Huang-Tong-Wei}  for the finite time singularity formation of self-similar type on the 3-D incompressible Euler equations and related lower dimensional models. Secondly, in a series of works \cite{Buck-Shko-Vicol1, Buck-D-Shko-Vicol, Buck-Iyer, Buck-Shko-Vicol2,Buck-Shko-Vicol3}, Buckmaster, Shkoller, Vicol et al developed a systematic approach via constructing asymptotically self-similar solutions to study the formation and development of shocks for the compressible Euler equations even in the presence of vorticity and entropy.  Thirdly, there are many recent progress on the gravitational collapse in the field of astrophysics, referring to a process as star implosion, see \cite{Guo-H-Jang2, Guo-H-Jang-Schrecker, Guo-H-Jang1, AHS, Guo-H-Jang3} and references therein. These singularity results are all of self-similar type in the class of radially symmetric solutions, and the key ingredient is to solve some non-autonomous ODEs having sonic points.

\section{The self-similar equation and phase portrait}

Plugging the equation of state \eqref{Eq.state} into the first equation of \eqref{Eq.relativistic_Euler}, we obtain
\begin{equation}\label{Eq.continuity_eq}
	\pa_\mu\left(\varrho^{\frac\ell{\ell+1}}u^\mu\right)=0.
\end{equation}
We find  the solution of  spherical symmetry, which takes 
\begin{equation*}
	\varrho=\varrho(t, r), \quad u^0=u^0(t, r), \quad u^i=u_r(t, r)\frac{x^i}{r}, \quad 1\leq i\leq d,
\end{equation*}
where
 $$t=x^0 \quad \text{and}\quad r=\big((x^1)^2+\cdots+(x^d)^2\big)^{1/2}.$$ 
Under the spherical symmetry, \eqref{Eq.continuity_eq} is equivalent to
\begin{equation}\label{Eq.mainPDE1}
	\pa_t\left(\varrho^{\frac\ell{\ell+1}}u^0\right)+\pa_r\left(\varrho^{\frac\ell{\ell+1}}u_r\right)+\frac kr\varrho^{\frac\ell{\ell+1}}u_r=0,
\end{equation}
where $k=d-1$.\index{$k=d-1$}

Furthermore, we assume that the fluid is irrotational, i.e.,
\begin{equation}\label{Eq.vecu_profile}
	u^\mu=\varrho^{-\frac1{\ell+1}}\pa^\mu\phi
\end{equation}
for some potential function $\phi$. For spherically symmetric $\phi=\phi(t, r)$, we have $u^0=-\varrho^{-\frac1{\ell+1}}\pa_t\phi$ and $u_r=\varrho^{-\frac1{\ell+1}}\pa_r\phi$, or equivalently,
\begin{equation}\label{Eq.mainPDE2}
	\pa_r\left(\varrho^{\frac1{\ell+1}}u^0\right)+\pa_t\left(\varrho^{\frac1{\ell+1}}u_r\right)=0.
\end{equation}

It follows from \eqref{Eq.normal_vecu} and \eqref{Eq.vecu_profile} that
\begin{equation}\label{Eq.normal_varrho_phi}
	0=\varrho^{\frac2{1+\ell}}+\pa^\mu\phi\pa_\mu\phi=\varrho^{\frac2{1+\ell}}-|\pa_t\phi|^2+\sum_{i=1}^d|\pa_i\phi|^2.
\end{equation}
Plugging \eqref{Eq.vecu_profile} into \eqref{Eq.continuity_eq}, we get
\begin{equation}\label{Eq.continuity_varrho_phi}
	\pa_\mu\left(\varrho^{\frac{\ell-1}{\ell+1}}\pa^\mu\phi\right)=0.
\end{equation}
Now  the second equation in \eqref{Eq.relativistic_Euler} is automatically satisfied. Indeed, inserting \eqref{Eq.vecu_profile} and \eqref{Eq.state} into \eqref{Eq.tensor} gives 
\[T^{\mu\nu}=\frac{\ell+1}{\ell}\varrho^{\frac{\ell-1}{\ell+1}}\pa^\mu\phi\pa^\nu\phi+\frac1\ell\varrho g^{\mu\nu},\]
and then by \eqref{Eq.normal_varrho_phi} and \eqref{Eq.continuity_varrho_phi}, we get
\begin{align*}
	\pa_\mu T^{\mu\nu}&=\frac{\ell+1}{\ell}\pa_\mu\left(\varrho^{\frac{\ell-1}{\ell+1}}\pa^\mu\phi\right)\pa^\nu\phi+
\frac{\ell+1}{\ell}\varrho^{\frac{\ell-1}{\ell+1}}\pa^\mu\phi\pa_\mu\pa^\nu\phi+\frac1\ell\pa_\mu\varrho g^{\mu\nu}\\
	&=\frac{\ell+1}{2\ell}\varrho^{\frac{\ell-1}{\ell+1}}\pa^\nu\left(\pa_\mu\phi\pa^\mu\phi\right)+\frac1\ell\pa^\nu\varrho=
-\frac{\ell+1}{2\ell}\varrho^{\frac{\ell-1}{\ell+1}}\pa^\nu\left(\varrho^{\frac2{\ell+1}}\right)+\frac1\ell\pa^\nu\varrho=0.
\end{align*}

As a result, it suffices to solve the system consisting of \eqref{Eq.mainPDE1}, \eqref{Eq.mainPDE2} and
\begin{equation}\label{Eq.mainPDE3}
	u^0(t,r)^2-u_r(t, r)^2=1
\end{equation}
for the unknowns $(\varrho, u^0, u_r)$.

For each $\beta>0$, the system consisting of \eqref{Eq.mainPDE1}, \eqref{Eq.mainPDE2} and \eqref{Eq.mainPDE3} is invariant under the scaling
\[\varrho_\lambda(t,r)=\lambda^\beta\varrho(\la t, \la r),\quad u^0_\la(t, r)=u^0(\la t, \la r),\quad u_{r,\la}(t,r)=u_r(\la t, \la r),\qquad \la>0.\]
We look for solutions $(\varrho(t, r), u^0(t, r), u_r(t,r))$ that are \emph{self-similar}\footnote{In \cite{Lai}, Lai considered the case where $\beta=0$, in which Lai constructed continuous self-similar radially symmetric solutions to the relativistic Euler equations with or without shock. We emphasize that in \cite{Lai}, even if the shock is absent, the global self-similar profile belongs to $C^0\setminus C^1$.}:
\begin{equation}
	\varrho(t,r)=(T_*-t)^{-\beta}\wh\varrho(Z), \quad u^0(t, r)=\wh{u^0}(Z),\quad u_r(t, r)=\wh u(Z), \qquad t\in[0, T_*),
\end{equation}
where $T_*>0$ and $Z=r/(T_*-t)$. Then we compute (here the prime $'$ represents derivatives with respect to $Z$)
\begin{align*}
	&\pa_t\left(\varrho^{\frac{\ell}{\ell+1}}u^0\right)=(T_*-t)^{-\frac{\beta\ell}{\ell+1}-1}
	\left(\frac{\beta\ell}{\ell+1}\widehat{\varrho}^{\frac{\ell}{\ell+1}}\widehat{u^0}(Z)+Z\left(\widehat{\varrho}^{\frac{\ell}{\ell+1}}\widehat{u^0}\right)'(Z)
	\right),\\
	&\pa_t\left(\varrho^{\frac{1}{\ell+1}}u_r\right)=(T_*-t)^{-\frac{\beta}{\ell+1}-1}
	\left(\frac{\beta}{\ell+1}\widehat{\varrho}^{\frac{1}{\ell+1}}\widehat{u}(Z)+Z\left(\widehat{\varrho}^{\frac{1}{\ell+1}}\widehat{u}\right)'(Z)
	\right),\\
	&\pa_r\left(\varrho^{\frac{\ell}{\ell+1}}u_r\right)=(T_*-t)^{-\frac{\beta\ell}{\ell+1}-1}
	\left(\widehat{\varrho}^{\frac{\ell}{\ell+1}}\widehat{u}\right)'(Z),\\
	&\pa_r\left(\varrho^{\frac{1}{\ell+1}}u^0\right)=(T_*-t)^{-\frac{\beta}{\ell+1}-1}
	\left(\widehat{\varrho}^{\frac{1}{\ell+1}}\widehat{u^0}\right)'(Z),\\
	&\frac{k}{r}\varrho^{\frac{\ell}{\ell+1}}u_r=(T_*-t)^{-\frac{\beta\ell}{\ell+1}-1}\frac{k}{Z}\widehat{\varrho}^{\frac{\ell}{\ell+1}}\widehat{u}(Z).
\end{align*}
Hence, \eqref{Eq.mainPDE1} and \eqref{Eq.mainPDE2} are equivalent to
\begin{align*}
	&\frac{\beta\ell}{\ell+1}\widehat{\varrho}^{\frac{\ell}{\ell+1}}\widehat{u^0}(Z)+
	Z\left(\widehat{\varrho}^{\frac{\ell}{\ell+1}}\widehat{u^0}\right)'(Z)+\left(\widehat{\varrho}^{\frac{\ell}{\ell+1}}\widehat{u}\right)'(Z)+
	\frac{k}{Z}\widehat{\varrho}^{\frac{\ell}{\ell+1}}\widehat{u}(Z)=0,\\
	&\frac{\beta}{\ell+1}\widehat{\varrho}^{\frac{1}{\ell+1}}\widehat{u}(Z)+Z\left(\widehat{\varrho}^{\frac{1}{\ell+1}}\widehat{u}\right)'(Z)
	+\left(\widehat{\varrho}^{\frac{1}{\ell+1}}\widehat{u^0}\right)'(Z)=0,
\end{align*}
which are further reduced  to
\begin{align}\label{rho3}
	&\frac{\beta\ell}{\ell+1}\widehat{u^0}(Z)+
	Z\widehat{u^0}'(Z)+\widehat{u}'(Z)+\frac{k}{Z}\widehat{u}(Z)+
	\frac{\ell}{\ell+1}\left(Z\widehat{u^0}(Z)+\widehat{u}(Z)\right)\frac{\widehat{\varrho}'(Z)}{\widehat{\varrho}(Z)}=0,\\
	&\label{rho4}\frac{\beta}{\ell+1}\widehat{u}(Z)+
	Z\widehat{u}'(Z)+\widehat{u^0}'(Z)
	+\frac{1}{\ell+1}\left(Z\widehat{u}(Z)+\widehat{u^0}(Z)\right)\frac{\widehat{\varrho}'(Z)}{\widehat{\varrho}(Z)}=0.
\end{align}
By \eqref{Eq.mainPDE3} we have $\widehat{u^0}(Z)^2-\widehat{u}(Z)^2=1 $. Let $v(Z)=-\widehat{u}(Z)/\widehat{u^0}(Z)$. Then we obtain
\begin{align}\label{u1}
	&\widehat{u^0}(Z)=-\frac{1}{\sqrt{1-v(Z)^2}},\quad \widehat{u}(Z)=\frac{v(Z)}{\sqrt{1-v(Z)^2}},\\& \widehat{u^0}'(Z)=-\frac{v(Z)v'(Z)}{(1-v(Z)^2)^{3/2}},\quad \widehat{u}'(Z)=\frac{v'(Z)}{(1-v(Z)^2)^{3/2}}.
\end{align}
Here we assume $\widehat{u^0}(Z)<0 $. Let $m=\frac{\beta\ell}{\ell+1}>0$.\index{$m=\frac{\beta\ell}{\ell+1}$} Then \eqref{rho3} and \eqref{rho4} become
\begin{align}\label{rho1}
	&\frac{-m+{kv(Z)}/{Z}}{\sqrt{1-v(Z)^2}}+\frac{v'(Z)(1-Zv(Z))}{(1-v(Z)^2)^{3/2}}
	+\frac{\ell}{\ell+1}\frac{v(Z)-Z}{\sqrt{1-v(Z)^2}}\frac{\widehat{\varrho}'(Z)}{\widehat{\varrho}(Z)}=0,\\
	&\label{rho2}\frac{mv(Z)}{\ell\sqrt{1-v(Z)^2}}+\frac{v'(Z)(Z-v(Z))}{(1-v(Z)^2)^{3/2}}
	+\frac{1}{\ell+1}\frac{Zv(Z)-1}{\sqrt{1-v(Z)^2}}\frac{\widehat{\varrho}'(Z)}{\widehat{\varrho}(Z)}=0.
\end{align}
Eliminating  $\hat \varrho$ from \eqref{rho1} and \eqref{rho2} gives 
\begin{equation}\label{Eq.2.15}
	\begin{aligned}
		&\left[(-m+{kv(Z)}/{Z})\left(1-v(Z)^2\right)+{v'(Z)(1-Zv(Z))}\right](Zv(Z)-1)\\
		=&\left[mv(Z)(1-v(Z)^2)+\ell v'(Z)(Z-v(Z))\right](v(Z)-Z),
	\end{aligned}
\end{equation}
which can be rearranged as
\begin{equation}\label{Eq.main_ODE0}
	\left[(1-Zv)^2-\ell(v-Z)^2\right]v'(Z)=(1-v^2)\left[m(1-v^2)-kv\left(\frac1Z-v\right)\right],
\end{equation}
or
\begin{equation}\label{Eq.main_ODE}
	\begin{aligned}
		\frac{\dv}{\dZ}&=\frac{(1-v^2)\left[m(1-v^2)-kv\left(\frac1Z-v\right)\right]}{(1-Zv)^2-\ell(v-Z)^2}\\
		&=\frac{(1-v^2)\left[m(1-v^2)Z-kv\left(1-vZ\right)\right]}{Z\left[(1-Zv)^2-\ell(v-Z)^2\right]}=\frac{\Delta_v(Z, v)}{\Delta_Z(Z, v)},
	\end{aligned}
\end{equation}
where\index{$\Delta_v, \Delta_{Z}$}
\begin{align*}
&\Delta_v(Z, v)=(1-v^2)\left[m(1-v^2)Z-kv\left(1-vZ\right)\right],\\
&\Delta_Z(Z, v)=Z\left[(1-Zv)^2-\ell(v-Z)^2\right].
\end{align*}

\begin{lemma}\label{Lem.parameter_range}
	If $v(Z):[0,1]\to(-1,1)$ is a $C^1$ solution to \eqref{Eq.main_ODE0} with $v(0)=0$ and $\ell>1$, $m>0$, $k>0$,  then $k>m\left(1+\frac1{\sqrt\ell}\right)$.
\end{lemma}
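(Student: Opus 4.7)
The plan is to locate a sonic crossing of the solution $v$ with one of the two zero curves of $\Delta_Z$, and then to read off the required inequality from the algebraic constraint that $C^1$-regularity imposes there. Since $\Delta_Z(Z,v)/Z=[(1-Zv)-\sqrt{\ell}(v-Z)][(1-Zv)+\sqrt{\ell}(v-Z)]$, the zero set of $\Delta_Z$ on $Z\in(0,1]$ consists of the two smooth branches
\[
v_+(Z)=\frac{1+\sqrt{\ell}\,Z}{\sqrt{\ell}+Z},\qquad v_-(Z)=\frac{\sqrt{\ell}\,Z-1}{\sqrt{\ell}-Z},
\]
both well-defined on $[0,1]$ since $\sqrt{\ell}>1$. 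I will work with the lower branch $v_-$.

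First I will use the intermediate value theorem to produce a crossing of $v$ with $v_-$ in the open interval. At $Z=0$ we have $v(0)-v_-(0)=0-(-1/\sqrt{\ell})>0$, while at $Z=1$ the hypothesis $v(1)\in(-1,1)$ gives $v(1)-v_-(1)=v(1)-1<0$. By continuity of $v-v_-$ on $[0,1]$, there exists $Z_*\in(0,1)$ with $v(Z_*)=v_-(Z_*)$. At this point $\Delta_Z(Z_*,v(Z_*))=0$, and evaluating \eqref{Eq.main_ODE0} at $Z_*$ forces
\[
0=\Delta_Z(Z_*,v(Z_*))\,v'(Z_*)=\Delta_v(Z_*,v(Z_*)),
\]
since $v'(Z_*)$ is finite by the $C^1$ hypothesis. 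Because $v(Z_*)\in(-1,1)$, the factor $(1-v^2)$ in $\Delta_v$ is strictly positive, so it may be cancelled to yield the scalar identity
\[
m\bigl(1-v(Z_*)^2\bigr)Z_*=k\,v(Z_*)\bigl(1-v(Z_*)Z_*\bigr).
\]

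Second, I will substitute $v(Z_*)=v_-(Z_*)$ into this identity using the elementary simplifications
\[
1-v_-(Z)^2=\frac{(\ell-1)(1-Z^2)}{(\sqrt{\ell}-Z)^2},\qquad 1-v_-(Z)\,Z=\frac{\sqrt{\ell}\,(1-Z^2)}{\sqrt{\ell}-Z},
\]
and cancel the nonzero common factor $(1-Z_*^2)/(\sqrt{\ell}-Z_*)^2$ to reduce to the linear equation $m(\ell-1)Z_*=k\sqrt{\ell}(\sqrt{\ell}\,Z_*-1)$, whose unique solution is
\[
Z_*=\frac{k\sqrt{\ell}}{k\ell-m(\ell-1)}.
\]
The requirement $Z_*\in(0,1)$ with $k\sqrt{\ell}>0$ forces the denominator to be positive and strictly larger than the numerator, i.e.\ $k\ell-m(\ell-1)>k\sqrt{\ell}$; rearranging as $k\sqrt{\ell}(\sqrt{\ell}-1)>m(\sqrt{\ell}-1)(\sqrt{\ell}+1)$ and dividing by $\sqrt{\ell}-1>0$ gives $k\sqrt{\ell}>m(\sqrt{\ell}+1)$, equivalently $k>m(1+1/\sqrt{\ell})$, as claimed.

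I do not foresee a serious obstacle: the argument reduces to the intermediate value theorem plus one algebraic simplification. The only conceptual choice is to work with the lower branch $v_-$ rather than the upper branch $v_+$. The latter would not furnish a guaranteed crossing from sign change, because $v_+(0)=1/\sqrt{\ell}>v(0)=0$ and $v_+(1)=1>v(1)$ make $v-v_+$ negative at both endpoints; the lower branch is singled out precisely because $v_-(0)<v(0)$ while $v_-(1)=1>v(1)$, making the IVT applicable.
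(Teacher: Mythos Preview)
Your proof is correct and follows essentially the same approach as the paper: both locate a crossing with the lower sonic branch $v_-$ via the intermediate value theorem, use the $C^1$ hypothesis to force $\Delta_v=0$ there, and then extract the inequality algebraically. The only cosmetic difference is that the paper parametrizes the crossing by $v_0=v(Z_0)$ and reads off $k=\dfrac{m(1+\sqrt{\ell}\,v_0)}{\sqrt{\ell}\,v_0}>m(1+1/\sqrt{\ell})$ from $v_0<1$, whereas you solve explicitly for $Z_*$ and use $Z_*<1$; these are equivalent endings to the same argument.
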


\begin{proof}
	Let $F(Z)=(1-Zv)+\sqrt{\ell}(v-Z)$. Then $F\in C([0,1]) $ is real valued, $F(0)=1>0,$ $F(1)=(\sqrt{\ell}-1)(v(1)-1)<0$. Thus, there exists $Z_0\in (0,1)$ such that $F(Z_0)=0$.
	Then 
	\begin{align*} &[(1-Zv)^2-\ell(v-Z)^2]v'|_{Z=Z_0}\\
		=&[(1-Zv)+\sqrt{\ell}(v-Z)][(1-Zv)-\sqrt{\ell}(v-Z)]v'|_{Z=Z_0}\\=&F(Z_0)[(1-Zv)-\sqrt{\ell}(v-Z)]v'|_{Z=Z_0}=0, 
		\end{align*}
	which gives $(1-v^2)[m(1-v^2)-kv(1/Z-v)]|_{Z=Z_0}=0 $. Let $v_0=v(Z_0)$. Then $v_0\in (-1,1)$, and $(1-v_0^2)[m(1-v_0^2)-kv_0(1/Z_0-v_0)]=0, $ thus $m(1-v_0^2)-kv_0(1/Z_0-v_0)=0 $. This implies that $v_0\in(0,1)$, since otherwise $v_0\in(-1,0]$, hence $m(1-v_0^2)>0$ but $kv_0(1/Z_0-v_0)\leq 0$,  a contradiction. We also have $0=F(Z_0)=(1-Z_0v_0)+\sqrt{\ell}(v_0-Z_0)$, so $ Z_0=\frac{1+\sqrt{\ell}v_0}{\sqrt{\ell}+v_0}$. Hence,
	\begin{align*}
		k&=\frac{m(1-v_0^2)}{v_0\left(\frac1{Z_0}-v_0\right)}=\frac{m(1-v_0^2)}{v_0\left(\frac{\sqrt{\ell}+v_0}{1+\sqrt{\ell}v_0}-v_0\right)}=
\frac{m(1-v_0^2)}{v_0\frac{\sqrt{\ell}(1-v_0^2)}{1+\sqrt{\ell}v_0}}\\
		&=\frac{m(1+\sqrt\ell v_0)}{\sqrt\ell v_0}=\frac m{\sqrt{\ell}}\left(\frac1{v_0}+\sqrt{\ell}\right)>\frac m{\sqrt\ell}(1+\sqrt\ell)=m\left(1+\frac1{\sqrt\ell}\right).
	\end{align*}
	This completes the proof.
	\end{proof}

In view of the above lemma, it is natural to restrict the parameters $(m, \ell, k)$ in the following regime 
\begin{equation}\label{Eq.parameter_range}
	m>0, \qquad \ell>1, \qquad k\in\Z_{+}, \qquad k>m\left(1+\frac1{\sqrt\ell}\right).
\end{equation}

The solutions to $\Delta_v(Z, v)=0$ are $v=\pm1$ and
\begin{equation}\label{Eq.v_12}
	v_1(Z)=\frac{k-\sqrt{k^2-4(k-m)mZ^2}}{2(k-m)Z},\qquad v_2(Z)=\frac{k+\sqrt{k^2-4(k-m)mZ^2}}{2(k-m)Z}
\end{equation}
for $|Z|\leq Z_e=\frac k{2\sqrt{(k-m)m}}$, which is equivalent to $Z=Z_\text b(v)=\frac{kv}{m+(k-m)v^2}$,\index{$Z_\text b(v)$} see the black curve passing $P_1$ in Figure \ref{Fig.Phase_Portrait_vZ}.
The solutions to $\Delta_Z(Z, v)=0$ are $Z=0$ and
\begin{equation}\label{Eq.v_pm}
	v_+(Z)=\frac{\sqrt\ell Z+1}{Z+\sqrt\ell},\qquad v_-(Z)=\frac{-\sqrt\ell Z+1}{Z-\sqrt\ell}.
\end{equation}
As a result, it is easy to find that the solutions to $\Delta_v=\Delta_Z=0$ are
\begin{align*}
	P_0&=(Z=0, v=0),\qquad P_1=\left(Z_1=\frac{k\sqrt\ell}{\ell(k-m)+m}, v_1=\frac{m}{(k-m)\sqrt\ell}\right)\index{$P_1(Z_1,v_1)$},\\
	P_2&=(Z=1, v=1), \qquad P_3=(Z=0, v=1),
\end{align*}
and $P_{-j}=-P_j$ for $j=1, 2, 3$.

\begin{figure}
	\centering
	\includegraphics[width=1\textwidth]{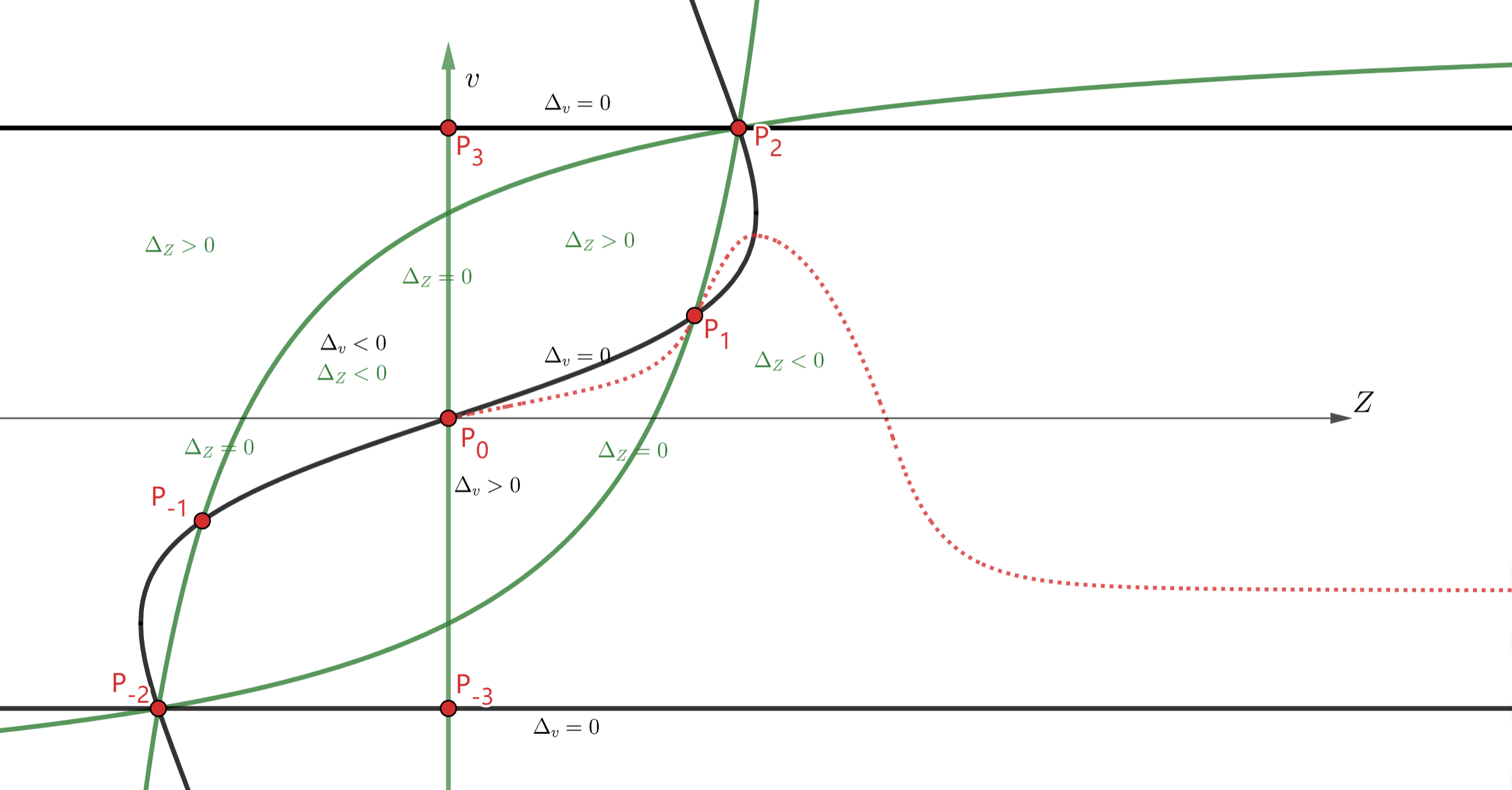}
	\caption{Phase portrait for the $Z-v$ system, with $m=1, \ell=2$ and $k=3$.}
	\label{Fig.Phase_Portrait_vZ}
\end{figure}

Note that the condition \eqref{Eq.parameter_range} implies that $0<Z_1<1$ and $0<v_1<1$. For $Z\in(0, Z_e)$, we have $0<v_1(Z)<v_2(Z)$; moreover, it follows from $m(1-v_1^2)Z-kv_1(1-v_1Z)=(k-m)Zv_1^2-kv_1+mZ=0$ that
\begin{align*}
	v_1'(Z)=-\frac{m+(k-m)v_1^2(Z)}{2(k-m)Zv_1(Z)-k}=\frac{\frac kZv_1(Z)}{\sqrt{k^2-4(k-m)mZ^2}}>0
\end{align*}
and similarly $v_2'(Z)=-\frac{\frac kZv_2(Z)}{\sqrt{k^2-4(k-m)mZ^2}}<0$ for $Z\in(0, Z_e)$. On the other hand, we have
\begin{align*}
	\sqrt{k^2-4(k-m)mZ_1^2}&=\sqrt{\frac{k^2(\ell(k-m)+m)^2-4(k-m)mk^2\ell}{(\ell(k-m)+m)^2}}\\
	&=\sqrt{\frac{k^2(\ell(k-m)-m)^2}{(\ell(k-m)+m)^2}}=\frac{k(\ell(k-m)-m)}{\ell(k-m)+m},
\end{align*}
where we have used \eqref{Eq.parameter_range} to get $\frac km>1+\frac1{\sqrt\ell}>1+\frac1\ell$ and hence $\ell(k-m)-m>0$, thus
\[v_1(Z_1)=\frac{k-\frac{k(\ell(k-m)-m)}{\ell(k-m)+m}}{2(k-m)\frac{k\sqrt\ell}{\ell(k-m)+m}}=\frac{2km}{2(k-m)k\sqrt\ell}=v_1.\]
Therefore, the point $P_1$ lies on the graph of $v_1$, and we have
\begin{equation}
	v_1'(Z)>0,\qquad \text{for}\qquad Z\in(0, Z_1)\subset (0, Z_e).
\end{equation}

Finally, it is easy to check that the point $P_1$ lies on the graph of $v_-$ and
\begin{equation}
	v_+'(Z)=\frac{\ell-1}{(Z+\sqrt\ell)^2}>0,\qquad v_-'(Z)=\frac{\ell-1}{(Z-\sqrt\ell)^2}>0
\end{equation}
in view of \eqref{Eq.parameter_range}. For further usage, we write $v_-(Z)$ as $
Z_g(v)=\frac{\sqrt\ell v+1}{v+\sqrt\ell}$\index{$Z_g(v)$}, see the green curve passing $P_1$ in Figure \ref{Fig.Phase_Portrait_vZ}.\smallskip

Our aim in this paper is to show the existence of a $C^\infty$ solution to the $Z-v$ ODE \eqref{Eq.main_ODE}.

\begin{theorem}\label{Thm.mainthm_ODE}
	Let $k=d-1\geq 1$. Then there exists $\ell_*(k)\in(1, +\infty]$ such that for any $1<\ell<\ell_*(k)$, there exists $\beta\in(0, k)$ such that \eqref{Eq.main_ODE} with this $\beta$ admits a global $C^\infty([0, +\infty))$ solution $v=v(Z)$ defined on $Z\in[0,+\infty)$ satisfying $v(0)=0$ and $|v(Z)|<1$ for all $Z\in[0,+\infty)$; moreover, we have $ \Delta_Z(Z,v(Z))>0$ for $Z\in (0,Z_1)$, $ \Delta_Z(Z,v(Z))<0$ for $Z\in (Z_1,+\infty)$, $v(Z_1)=Z_1$ and $v(Z)<Z$ for all $Z\in(0, +\infty)$. See the dashed red curve in Figure \ref{Fig.Phase_Portrait_vZ}.
\end{theorem}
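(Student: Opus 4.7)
The construction proceeds in four stages that follow the solution through the regions $\{0<Z<Z_1\}$, the sonic point $P_1$, and $\{Z>Z_1\}$.

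\emph{Left of the sonic point.} Both $\Delta_v$ and $\Delta_Z$ vanish at $(0,0)$, but at leading order \eqref{Eq.main_ODE0} reduces to the Fuchsian equation $Z v'(Z)+k v(Z)=mZ$, whose unique regular solution at $Z=0$ is $v_{\mathrm{lin}}(Z)=\tfrac{m}{k+1}Z$. By a standard Briot--Bouquet / indicial-root expansion I would produce, for every admissible $m$, a unique analytic germ $v(Z)=\tfrac{m}{k+1}Z+O(Z^3)$. As long as $\Delta_Z(Z,v(Z))>0$ the right-hand side of \eqref{Eq.main_ODE} is Lipschitz and this germ can be continued; using the explicit curves $v_{1,2}$ and $v_\pm$ from Section 2, I would identify a trapping region (bounded below by $v=0$ and above by $v_-$) on which the vector field points inward, so that the solution extends all the way up to $Z\uparrow Z_1$ with its limit lying on the sonic curve $\{\Delta_Z=0\}$.

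\emph{Passage through $P_1$.} This is the main obstacle. Setting $\xi=Z-Z_1$, $\eta=v-v_1$ and reparametrising by $\tfrac{\dZ}{\ds}=\Delta_Z$, $\tfrac{\dv}{\ds}=\Delta_v$, I obtain a planar autonomous system whose linearisation at $P_1$ is a $2\times 2$ matrix $\mathcal M=\mathcal M(\beta,\ell,k)$ whose entries are explicit rational functions of $(\beta,\ell,k)$ (computable because $P_1$ is the simultaneous intersection of $\{\Delta_v=0\}$ and $\{\Delta_Z=0\}$). A $C^\infty$ integral curve through $P_1$ is then selected as the invariant manifold tangent to a prescribed eigenvector, and its existence and uniqueness hinge on (i)~the two eigenvalues $\lambda_\pm$ of $\mathcal M$ having opposite signs, so that the desired branch is a hyperbolic separatrix, and (ii)~the ratio $\lambda_+/\lambda_-$ avoiding the positive integers, so that the formal Taylor series at $P_1$ can be solved recursively without small-divisor obstructions. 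The admissible range $\ell<\ell_\ast(k)$ defined by \eqref{Eq.ell_*} is precisely the regime in which (i)--(ii) are met for some $\beta\in(0,k)$. To glue the left-side trajectory to this local branch I would run a shooting argument in $\beta$: the left-side solution depends continuously on $\beta$ and meets $v_-$ at some point $(Z(\beta),v(Z(\beta)))$; an intermediate-value argument then produces $\beta_\ast\in(0,k)$ with $Z(\beta_\ast)=Z_1$, at which $\beta_\ast$ the left piece matches the Stage~3 local branch and glues to a $C^\infty$ curve through $P_1$.

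\emph{Right of the sonic point.} On the selected branch $\Delta_Z<0$ past $P_1$, so the ODE is again Lipschitz. An invariant-region argument, checking the signs of $\Delta_v$ and $\Delta_Z$ on the lines $v=\pm1$ and $v=Z$, keeps $|v|<1$ and $v<Z$ for every $Z>Z_1$; since $v$ is bounded and eventually monotone it tends to a finite limit $v_\infty\in(-1,1)$ as $Z\to+\infty$, and balancing the leading terms of \eqref{Eq.main_ODE} identifies $v_\infty$ as a root of an explicit algebraic equation. The constants $\varrho_\ast,u^0_\ast,u_\ast$ of Theorem \ref{Thm.mainthm} are then read off from \eqref{rho1}--\eqref{rho2}, completing the asymptotics.

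The principal difficulty is the passage through $P_1$: one simultaneously has to choose $\beta\in(0,k)$ so that the left-side trajectory arrives at $P_1$ and to secure smoothness of the continuation, which is an algebraic non-resonance condition on the eigenvalues of the linearised system. Failure of this condition when $\ell$ grows is exactly what produces the threshold $\ell_\ast(k)$, and this is also where the numerical data of Table \ref{Tab.1} are invoked when $k\geq 3$.
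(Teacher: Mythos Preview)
Your proposal contains two structural misconceptions that would prevent it from going through as written.

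First, the linearisation at $P_1$ is not a saddle: both eigenvalues $\lambda_\pm$ of the matrix \eqref{Eq.linear_P_1} are \emph{positive} (see the computation following \eqref{Eq.linear_P_1}), so $P_1$ is an unstable node. In a node there is no ``hyperbolic separatrix''; rather, infinitely many integral curves pass through $P_1$, generically with only $C^{\lfloor R\rfloor}$ regularity where $R=\lambda_+/\lambda_-$, and exactly two of them (one per eigendirection) are $C^\infty$. Second, the left-side solution $v_F$ \emph{always} limits to $P_1$ as $Z\uparrow Z_1$, for every admissible $\beta$ (Proposition~\ref{Prop.P_0-P_1} and \eqref{v01}); there is nothing to shoot for in order to ``arrive'' at $P_1$. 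The genuine difficulty is that for generic $\beta$ the curve $v_F$ is merely one of the non-smooth branches at $P_1$, and one must tune $\beta$ so that $v_F$ coincides with the unique $C^\infty$ local branch $v_L$ having the correct slope $a_1$. The paper does this by passing to the renormalised $(z,u)$ coordinates, computing the Taylor coefficients $a_n$ of $u_L$ explicitly, and proving sign conditions $a_3>0$, $a_4<0$ that force $u_L-u_F$ to change sign as $R=\lambda_+/\lambda_-$ ranges over $(3,4)$; the intermediate value theorem then yields the matching $\beta$. The threshold $\ell_*(k)$ is not a resonance condition on the eigenvalue ratio but the range in which the inequality $B_2>0$ (hence $a_4<0$) can be secured for some $R\in(3,4)$; this is what the analysis of Appendix~\ref{Appen.B_2>0} and Table~\ref{Tab.1} supplies.

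On the right of $P_1$, a bare invariant-region argument on the lines $v=\pm1$ and $v=Z$ is insufficient: the real danger is that the trajectory hits the sonic curve $Z=Z_g(v)$ again (or reaches $P_2=(1,1)$), where $\Delta_Z$ vanishes and the ODE becomes singular. The paper rules this out with carefully designed polynomial barrier functions $u_{(3)}$, $u_g$ in the $(z,u)$ variables (Lemmas~\ref{Lem.u_3_compare}--\ref{Lem.relative_positions} and Lemma~\ref{Lem.Z_v_global}); your sketch does not indicate how you would prevent this second sonic crossing.
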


Theorem \ref{Thm.mainthm} is a corollary of Theorem \ref{Thm.mainthm_ODE}, see Section \ref{Sec.proof}. Now let's give a sketch of  the proof of Theorem \ref{Thm.mainthm_ODE}.\smallskip

The first step is to construct a smooth local solution to \eqref{Eq.main_ODE} starting at $(0,0)$ and  show that this solution can be extended until reaching $P_1$, and we denote it by $v_F$. However, we don't know anything about the smoothness at $P_1$ of $v_F$. Our strategy, alike \cite{MRRJ2}, is to construct a smooth local solution $v_L$ near $P_1$, and to show $v_L=v_F$, hence $v_F$ is smooth at $P_1$. Due to high nonlinearity in the $Z-v$ ODE, we will introduce a renormalization $(Z, v)\mapsto(z, u)$, which converts the $Z-v$ ODE to a $z-u$ ODE, see \eqref{Eq.ODE_P_1}, and we will find the corresponding curve of $v_F$ in the $z-u$ plane, denoted by $u_F$, which is a smooth curve connecting $Q_0$ to $Q_1$, see Figure \ref{Fig.Phase_Portrait_zu}. See Section \ref{Sec.P_0-P_1} for the details.\smallskip

The second step is to construct the local smooth solution $u_L$ to the $z-u$ ODE near $Q_1$, using the power series method. Then through the inverse mapping of the renormalization, we get a local solution $v_L$ to the $Z-v$ ODE near $P_1$. This is our main goal in Section \ref{Sec.local_P_1}.\smallskip

The third step is to show that $u_L=u_F$, hence $v_L=v_F$. However, this is not true for some parameters. By analyzing the coefficients of the power series defining $u_L$ and using the barrier function method, we can show that for fixed $k\geq 1$ and  $\ell\in(1, \ell_*(k))$ (not for all $\ell>1$), there exists $\zeta_0$ such that for some $\beta_1$ we have $u_L(\zeta_0)>u_F(\zeta_0)$, and for some $\beta_2$ we have $u_L(\zeta_0)<u_F(\zeta_0)$, hence by the continuous dependence on parameters, the intermediate value theorem gives a $\beta_0$ such that for this $\beta_0$ we have $u_L(\zeta_0)=u_F(\zeta_0)$. Then by the uniqueness of solutions to ODEs at regular points, we have $u_L\equiv u_F$. This task will be done in Sections \ref{Sec.qualitative} and \ref{Sec.left_P_1}. We will enclose the treatment of $\ell_*(k)$ in appendix \ref{Appen.B_2>0}. We emphasize that for $k=2$ and $\ell$ large, we need to modify the barrier functions used in Sections \ref{Sec.qualitative} and \ref{Sec.left_P_1}, see Section \ref{Sec.Proof_k=2_l_large} for the details.\smallskip

Finally, we show that the $P_0-P_1$ curve $v_F$, after passing $P_1$ smoothly, will then pass through the black curve $Z=Z_\text b(v)$ and then be defined on the whole half-line $Z\in[0,+\infty)$. Here we use the barrier function method to exclude the possibility of touching the green curve $Z_g(v)$ or $P_2$. See Section \ref{Sec.right_P_1} for the details.\smallskip

In Section \ref{Sec.proof}, we will combine all things to give a proof of Theorem \ref{Thm.mainthm_ODE}.

\if0 Recalling that $\Delta_Z(0, 0)=\Delta_v(0,0)=0$, hence we cannot use the the classical theorem on the local well-posedness of ODEs stated in standard textbooks. Instead, we can use the power series method to construct this local solution, see \fi
\if0
\begin{proof}[Proof of Theorem \ref{Thm.mainthm}]
	For the solution $v$ obtained by Theorem \ref{Thm.mainthm_ODE}, we denote
	\[v_\infty:=\lim_{Z\to +\infty}v(Z)\in[-1, 0].\]
	By \eqref{Eq.main_ODE} we have
	\begin{equation}\label{Eq.Z^2dv/dZ}
		Z^2\frac{\mathrm{d}v}{\dZ}=\frac{(1-v^2)\left[\big(m+(k-m)v^2\big)Z^2-kvZ\right]}{(v^2-\ell)Z^2+2(\ell-1)vZ+1-\ell v^2}.
	\end{equation}
	Letting $Z\to+\infty$, and using $m>0, k>0, \ell>1, v_\infty\in[-1,0]$, we have
	\[\lim_{Z\to +\infty}Z^2\frac{\mathrm{d}v}{\dZ}=-v_{\infty, 1},\qquad\text{with }v_{\infty, 1}:=(1-v_\infty^2)\frac{m+(k-m)v_\infty^2}{v_\infty^2-\ell}.\]
	By L'H\^opital's rule, we compute
	\[\lim_{Z\to +\infty}Z\big(v_\infty-v(Z)\big)=\lim_{Z\to +\infty}\frac{\int_Z^\infty v'(s)\,\mathrm{d}s}{1/Z}=\lim_{Z\to +\infty}Z^2v'(Z)=-v_{\infty, 1},\]
	hence
	\[v(Z)=v_\infty+\frac{v_{\infty, 1}}{Z}+o\left(\frac1Z\right),\quad \text{as }Z\to+\infty.\]
	
	Let $\theta=\frac\beta{\ell+1}-1\in\R$, then $\beta=(\ell+1)(\theta+1)$ and by Lemma \ref{Lem.beta_neq_l+1}, we have $\theta\neq0$. We take $\hat\phi(0)=1$, then by \eqref{Eq.hatphi_hatphi'} we get
	\begin{equation}\label{Eq.hatphi}
		\hat\phi(Z)=\exp\left(\theta\int_0^Z\frac{v(s)}{1-sv(s)}\,\mathrm{d}s\right),\qquad\forall\ Z\in[0, +\infty).
	\end{equation}
	It follows from \eqref{Eq.varrho} that
	\begin{equation}
		\varrho(t, x)=(T_*-t)^{-\beta}|\theta|^{\ell+1}\frac{\hat\phi(Z)^{\ell+1}\big(1-v(Z)^2\big)^{(\ell+1)/2}}{\big(1-Zv(Z)\big)^{\ell+1}},
	\end{equation}
	then by \eqref{Eq.phi_derivative1} and \eqref{Eq.vecu_profile} we have
	\begin{align*}
		&u^0(t, x)=-\varrho^{-1/(\ell+1)}\pa_t\phi=-\frac\theta{|\theta|}\frac{1}{\big(1-v(Z)^2\big)^{1/2}}=:\wh{u^0}(Z),\\
		u^i(t, x)=&\varrho^{-1/(\ell+1)}\pa_r\phi\frac{x^i}r=\hat u(Z)\frac{x^i}r (1\leq i\leq d),\quad \text{where } \hat u(Z):=\frac\theta{|\theta|}\frac{v(Z)}{\big(1-v(Z)^2\big)^{1/2}}.
	\end{align*}
	
	\underline{Case 1: $v_\infty\in(-1, 0)$.} In this case, we have
	\[\lim_{Z\to +\infty}\frac{\int_0^Z\frac{v(s)}{1-sv(s)}\,\mathrm{d}s}{\ln Z}=\lim_{Z\to +\infty}\frac{Zv(Z)}{1-Zv(Z)}=-1,\]
	hence by \eqref{Eq.hatphi}, $\hat \phi(Z)=Z^{-\theta}(1+o(1))$ as $Z\to+\infty$. Therefore, as $Z\to+\infty$, we have
	\begin{align*}
		\varrho(t,x)&=(T_*-t)^{-\beta}|\theta|^{\ell+1}\big(1-v_\infty^2\big)^{(\ell+1)/2}\frac{Z^{-(\ell+1)\theta}}{|v_\infty|^{\ell+1}Z^{\ell+1}}(1+o(1))\\
		&=(T_*-t)^{-\beta}|\theta|^{\ell+1}\left(\frac{1-v_\infty^2}{v_\infty^2}\right)^{\frac{\ell+1}2}Z^{-\beta}(1+o(1))=\frac{\varrho_*(1+o(1))}{|x|^\beta},
	\end{align*}
	where $\varrho_*=|\theta|^{\ell+1}\left(\frac{1-v_\infty^2}{v_\infty^2}\right)^{\frac{\ell+1}2}>0$, and
	\[\lim_{Z\to +\infty}\wh{u^0}(Z)=-\frac\theta{|\theta|}\frac1{(1-v_\infty^2)^{1/2}}\in\R\setminus\{0\},\quad \lim_{Z\to +\infty}\wh{u}(Z)=-\frac\theta{|\theta|}\frac{v_\infty}{(1-v_\infty^2)^{1/2}}\in\R\setminus\{0\}.\]
	
	\underline{Case 2: $v_\infty=0$.} In this case we have $v_{\infty, 1}=m/\ell=\beta/(\ell+1)$ and
	\[v(Z)=\frac\beta{(\ell+1)Z}+o\left(\frac1Z\right),\qquad\text{as }Z\to+\infty.\]
	Since $v(Z)<\frac1Z$ for all $Z>0$ and $\theta=\frac\beta{\ell+1}-1\neq0$, we have $\beta<\ell+1$, hence by Lemma \ref{Lem.beta_neq_l+1} we know that this case happens only possibly for $k=d-1\in\{1,2\}$ and $\theta<0$. Then,
	\[\lim_{Z\to +\infty}\frac{\int_0^Z\frac{v(s)}{1-sv(s)}\,\mathrm{d}s}{\ln Z}=\lim_{Z\to +\infty}\frac{Zv(Z)}{1-Zv(Z)}=\frac{\theta+1}{-\theta},\]
	hence \eqref{Eq.hatphi}, $\hat \phi(Z)=Z^{-(\theta+1)}(1+o(1))$ as $Z\to+\infty$. Therefore, as $Z\to+\infty$, we have
	\begin{align*}
		\varrho(t,x)&=(T_*-t)^{-\beta}|\theta|^{\ell+1}\frac{Z^{-(\ell+1)(\theta+1)}}{(-\theta)^{\ell+1}}(1+o(1))\\
		&=(T_*-t)^{-\beta}Z^{-\beta}(1+o(1))=\frac{\varrho_*(1+o(1))}{|x|^\beta},
	\end{align*}
	where $\varrho_*=1>0$, and
	\[\lim_{Z\to +\infty}\wh{u^0}(Z)=-\frac\theta{|\theta|}\in\R\setminus\{0\},\quad \lim_{Z\to +\infty}\wh{u}(Z)=0.\]
	
	\underline{Case 3: $v_\infty=-1$.} We prove that this case cannot happen. Let
	\[\tilde h(Z, v)=\frac{(1-v)\left[\big(m+(k-m)v^2\big)Z^2-kvZ\right]}{(v^2-\ell)Z^2+2(\ell-1)vZ+1-\ell v^2},\]
	and $h(Z)=\tilde h(Z, v(Z))$ for $Z>0$. Then by \eqref{Eq.Z^2dv/dZ}  we have
	\begin{equation}\label{Eq.Z^2dv/dZ2}
		Z^2\frac{\mathrm{d}v}{\dZ}=(1+v(Z))h(Z),\qquad\forall\ Z>0.
	\end{equation}
	Since $\lim_{Z\to +\infty}h(Z)=k/(1-\ell)<0$, there exists $Z_0>0$ such that $|h(Z)|<2k/(\ell-1)$ for all $Z\in[Z_0, +\infty)$. Now by \eqref{Eq.Z^2dv/dZ2} and $|v(Z)|<1$ we obtain
	\[\frac{\mathrm d}{\dZ}\ln \big(1+v(Z)\big)=\frac{h(Z)}{Z^2},\qquad\forall\ Z>0,\]
	thus
	\[\ln\big(1+v(Z)\big)-\ln\big(1+v(Z_0)\big)=\int_{Z_0}^Z\frac{h(s)}{s^2}\,\ds,\qquad\forall\ Z>Z_0.\]
	Letting $Z\to+\infty$ in the above identity, by $v_\infty=-1$ we have $\lim_{Z\to +\infty}\ln\big(1+v(Z)\big)=-\infty$; however, by the boundedness of $h$ we have
	\[\left|\int_{Z_0}^Z\frac{h(s)}{s^2}\,\ds\right|\leq \frac{2k}{\ell-1}\int_{Z_0}^Z\frac\ds{s^2}\leq \frac{2k}{(\ell-1)Z_0},\qquad\forall\ Z>Z_0,\]
	which gives a contradiction. This completes the proof of Theorem \ref{Thm.mainthm}.
\end{proof}
\fi

\section{Construction of  $P_0-P_1$ curve and renormalization}\label{Sec.P_0-P_1}

In this section, we construct the unique solution $v=v_F(Z)$ to \eqref{Eq.main_ODE} connecting $P_0$ and $P_1$. After a renormalization introduced in Subsection \ref{Subsec.re-normalization}, the $Z-v$ ODE \eqref{Eq.main_ODE} is transformed into a $z-u$ ODE, see \eqref{Eq.ODE_P_1}, which is easier to analyze when we consider the analytic solutions crossing the sonic point. Before we analyze the analytic solutions of the $z-u$ ODE crossing the sonic point $Q_1$, we will show in Subsection \ref{Subsec.Q_0-Q_1_curve} that the $P_0-P_1$ solution to the $Z-v$ ODE \eqref{Eq.main_ODE} is transformed by the renormalization mapping \eqref{Eq.change_of_variables} into a $Q_0-Q_1$ solution to the $z-u$ ODE.

\subsection{The $P_0-P_1$ curve}
Here we construct the unique solution $v=v_F(Z)$ to \eqref{Eq.main_ODE} connecting $P_0$ and $P_1$. The main idea is to first construct a local solution near the origin $P_0$ using the power series method, then prove that this local solution can be continued up to  $P_1$. We recall the classical theory on the extension of the solutions to ODEs.

\begin{proposition}\label{Prop.IVP}
	Consider the initial value problem
	\begin{equation}\label{Eq.IVP}
		\frac{\mathrm d\bm{x}}{\mathrm dt}=\bm f(t, \bm x),\qquad \bm x(t_0)=\bm{x_0},
	\end{equation}
	where $\bm f\in {C^1}(U; \R^n)$, $U\subset \R^{1+n}$ is an open set and $(t_0, \bm{x_0})\in U$. Let $I=(T_-, T_+)$ be the maximal interval of existence of a solution to \eqref{Eq.IVP}. If $T_+<+\infty$, then the solution must eventually leave every compact set $C$ with $[t_0, T_+]\times C\subset U$ as $t$ approaches $T_+$.
\end{proposition}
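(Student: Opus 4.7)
The plan is a standard continuation argument by contradiction. Suppose $T_+<+\infty$ and, contrary to the claim, that there is a compact $C$ with $[t_0,T_+]\times C\subset U$ for which the solution fails to eventually leave $C$, i.e.\ there is a sequence $t_n\uparrow T_+$ with $\bm{x}(t_n)\in C$ for all $n$. I aim to derive a contradiction by constructing an extension of $\bm{x}$ to an interval strictly larger than $I=(T_-,T_+)$, violating the definition of $T_+$ as the right endpoint of the maximal interval of existence.

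First I would carve out a \emph{buffer}: since $[t_0,T_+]\times C$ is a compact subset of the open set $U$, there exists $\epsilon>0$ such that the closed $\epsilon$-neighborhood $K\subset\R^{1+n}$ of $[t_0,T_+]\times C$ is still contained in $U$ and is itself compact, so $\bm{f}|_K$ is bounded by some $M$ and Lipschitz in the spatial variable with some constant $L$. Next I would invoke \emph{uniform local existence}: the Picard--Lindel\"of construction on $K$ produces $\delta>0$ depending only on $\epsilon,M,L$ (one may take $\delta=\min(\epsilon/(2M),1/(2L))$) such that for every base point $(s,\bm{y})\in[t_0,T_+]\times C$ the IVP $\bm{x}'=\bm{f}(t,\bm{x})$, $\bm{x}(s)=\bm{y}$ admits a unique $C^1$ solution on $[s,s+\delta]$; the choice of $\delta$ keeps the Picard iterates inside $K$ and makes the associated integral operator a strict contraction, independently of the starting point.

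To conclude, pick $n$ with $T_+-t_n<\delta$ and apply the previous step at $(t_n,\bm{x}(t_n))\in[t_0,T_+]\times C$ to produce a solution $\tilde{\bm{x}}$ on $[t_n,t_n+\delta]$, which by uniqueness on the overlap coincides with $\bm{x}$ on $[t_n,T_+)$. Gluing $\bm{x}|_{(T_-,t_n]}$ with $\tilde{\bm{x}}|_{[t_n,t_n+\delta]}$ yields a $C^1$ solution on $(T_-,t_n+\delta)\supsetneq(T_-,T_+)$, contradicting the maximality of $I$. The main obstacle is the uniform local existence step: one must secure an existence time $\delta$ that does not degenerate as the base point ranges over $[t_0,T_+]\times C$, which is precisely why the buffer $K$ has to be set up first so that $\bm{f}$ has uniform bounds and Lipschitz constant along every Picard iteration; the final concatenation and the initial compactness argument are then routine.
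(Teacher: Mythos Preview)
Your argument is correct and is precisely the standard continuation proof. The paper does not give its own proof of this proposition; it simply states that the proof ``can be found in section 2.6 of the classical textbook \cite{T}'' (Teschl's \emph{Ordinary differential equations and dynamical systems}), and the argument there is exactly the one you outline. One cosmetic point: your suggested $\delta=\min(\epsilon/(2M),1/(2L))$ does not automatically satisfy $\delta\le\epsilon$ (needed so that the time coordinate of the iterates also stays in the buffer $K$), so in practice one takes $\delta=\min(\epsilon,\epsilon/(2M),1/(2L))$; this does not affect the substance of your proof.
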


The proof of Proposition \ref{Prop.IVP} can be found in section 2.6 of the classical textbook \cite{T}.

\begin{remark}\label{rem1}
	If $T_+<+\infty$ and the limit $ \bm x_*:=\lim_{T\uparrow T_+}\bm{x}(t)$ exists, then $(T_+, \bm x_*)\not\in U$.
\end{remark}

\begin{proposition}\label{Prop.P_0-P_1}
	The ODE
	\begin{equation}\label{Eq.mainODE}
		\frac{\dv}{\dZ}=\frac{(1-v^2)\left[m(1-v^2)Z-kv\left(1-vZ\right)\right]}{Z\left[(1-Zv)^2-\ell(v-Z)^2\right]}=\frac{\Delta_v(Z, v)}{\Delta_Z(Z, v)}
	\end{equation}
	has a unique solution $v\in C^\infty([0, Z_1))$ with $v(0)=0$. 
\end{proposition}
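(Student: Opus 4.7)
The plan is to construct a unique local smooth solution $v_F$ near $Z = 0$ via the unstable-manifold theorem applied to an associated planar system, and then extend it to $[0, Z_1)$ using Proposition \ref{Prop.IVP} combined with trapping barriers drawn from the phase portrait of Section 2.

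For the local construction, I would view \eqref{Eq.mainODE} as the $Z$-parametrization of orbits of the planar autonomous system $\dot Z = \Delta_Z(Z, v)$, $\dot v = \Delta_v(Z, v)$. The Jacobian at $P_0 = (0, 0)$ is lower-triangular with diagonal entries $1, -k$ and sub-diagonal entry $m$, so its eigenvalues are $\lambda_u = 1$ and $\lambda_s = -k$; under \eqref{Eq.parameter_range} this is a hyperbolic saddle, with unstable eigenvector $(1, m/(k+1))$ and stable eigenvector $(0, 1)$. The real-analytic unstable-manifold theorem then yields a unique analytic one-dimensional invariant curve $W^u(P_0)$ tangent to the unstable direction; since its tangent has nonzero $Z$-component, the branch in $\{Z > 0\}$ is the graph of an analytic function $v_F$ on some $[0, \delta)$ with $v_F(0) = 0$ and $v_F'(0) = m/(k+1)$. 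Uniqueness of $v_F$ among $C^\infty$ solutions with $v(0) = 0$ follows by reparametrizing any candidate as a planar orbit using $d\tau/dZ = 1/\Delta_Z$ (so $\tau \to -\infty$ as $Z \to 0^+$): such an orbit has $\alpha$-limit $P_0$ with exit tangent necessarily in the unstable eigenspace, and thus lies on $W^u(P_0)$.

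For the extension, let $[0, T_+) \subseteq [0, Z_1)$ be the maximal interval on which $v_F$ extends as a $C^\infty$ solution with $\Delta_Z(Z, v_F(Z)) > 0$, and suppose for contradiction that $T_+ < Z_1$. By Proposition \ref{Prop.IVP} and Remark \ref{rem1}, applied on the open set $U := \{0 < Z < Z_1,\ \Delta_Z(Z, v) > 0,\ |v| < 1\}$, the graph $(Z, v_F(Z))$ must exit every compact subset of $U$ as $Z \uparrow T_+$. I would rule this out via three barriers. The upper barrier $v_F(Z) < v_1(Z)$ on $(0, T_+)$ is a standard first-crossing argument: at a minimal touching $Z_0$, $\Delta_v(Z_0, v_1(Z_0)) = 0$ gives $v_F'(Z_0) = 0$, contradicting $v_1'(Z_0) > 0$ (see the computation following \eqref{Eq.v_12}) paired with $(v_F - v_1)'(Z_0) \geq 0$ forced by minimality. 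The bound $v_F > 0$ follows since $\Delta_v(Z, 0) = mZ > 0$ for $Z > 0$. For the safety bound $v_F > v_-$ (needed to maintain $\Delta_Z > 0$), use monotonicity: $v_F$ is strictly increasing on $[0, T_+)$ and bounded above by $v_1(T_+) < 1$, so $v^* := \lim_{Z \uparrow T_+} v_F(Z)$ exists in $(0, v_1(T_+)]$; if $\Delta_Z(T_+, v^*) > 0$ the solution extends past $T_+$, contradicting maximality, so $v^* \in \{v_-(T_+), v_+(T_+)\}$. The case $v^* = v_+(T_+)$ is ruled out by the algebraic inequality $v_+(Z) > v_1(Z)$ on $[0, Z_1]$ (any crossing of $v_+$ and $v_1$ would force a common zero of $\Delta_Z$ and $\Delta_v$ inside $(0, Z_1)$, but $P_1$ is the only such zero and occurs at $Z_1$). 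The case $v^* = v_-(T_+)$ is excluded because $v_-(T_+) < v_1(T_+)$ forces $\Delta_v(T_+, v_-(T_+)) > 0$, hence $v_F'(Z) \to +\infty$ as $Z \to T_+^-$, so $(v_F - v_-)'(Z) \to +\infty$; but a strictly positive, strictly increasing function on $[T_+ - \varepsilon, T_+)$ cannot tend to $0$ at $T_+^-$, contradicting $v_F(T_+^-) = v_-(T_+)$.

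The principal obstacle I foresee is the exclusion of $v^* = v_-(T_+)$: here the trajectory potentially touches the singular locus $\{\Delta_Z = 0\}$, so the standard first-crossing argument in the $v$-as-function-of-$Z$ picture fails and one must exploit the monotonicity of the difference $v_F - v_-$ combined with the blow-up rate of $v_F'$. The local unstable-manifold construction, the upper barrier $v_F < v_1$, and the ruling out of $v^* = v_+(T_+)$ are by contrast routine.
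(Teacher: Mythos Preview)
Your proposal is correct, and the global extension argument is essentially the paper's own: the paper traps $v_F$ in the region $\mathcal R_F=\{v_-(Z)<v<v_1(Z),\,0<Z<Z_1\}$ and rules out touching $v_1$ (via $v_F'=0$ versus $v_1'>0$) and $v_-$ (via the blow-up of $v_F'$ contradicting the difference-quotient inequality forced by $v_F>v_-$), exactly your two exclusion cases. Your splitting into separate barriers $v_F>0$, $v_F<v_1$, and then $v^*\notin\{v_-(T_+),v_+(T_+)\}$ is a slightly different bookkeeping of the same region, but the substance is identical; in particular the case you flag as the ``principal obstacle'' is handled in the paper by the one-line slope comparison
\[
\frac{v_F(Z)-v_F(Z_0)}{Z-Z_0}<\frac{v_-(Z)-v_-(Z_0)}{Z-Z_0},
\]
so it is not where the real difficulty lies.

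Where your proof genuinely differs is the local construction at $P_0$. The paper substitutes $\Phi=v/Z$, $\varkappa=Z^2$, derives a recurrence \eqref{eq_phi_n_induction} for the Taylor coefficients $\phi_n$, and proves convergence by a Catalan-number majorant. Your unstable-manifold argument is cleaner and more conceptual; it also gives uniqueness among all $C^\infty$ solutions directly (any such solution reparametrizes to an orbit with $\alpha$-limit $P_0$, hence lies on $W^u(P_0)$), whereas the paper's recurrence argument literally only pins down the analytic solution. On the other hand, the paper's method yields the explicit expansion $v_F(Z)=\tfrac{m}{k+1}Z+\phi_1 Z^3+O(Z^5)$ with $\phi_1$ given by \eqref{Eq.phi_1_1}; this coefficient is used later in Lemma~\ref{Lem.4.8} to prove \eqref{Eq.4.37}. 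Your approach would recover $\phi_1$ only after differentiating the analytic unstable manifold several times, so if you adopt the unstable-manifold route you should note that the higher-order expansion still needs to be extracted for downstream use.
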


\begin{proof}
	{\textbf{Step 1.} Existence of local solution.} \smallskip
	
	Let $\Phi=\frac vZ$, which satisfies 
	\begin{align*}
		\Phi+Z\frac{\mathrm d\Phi}{\dZ}&=\frac{\dv}{\dZ}=\frac{(1-Z^2\Phi^2)\left[m(1-Z^2\Phi^2)-k(\Phi-Z^2\Phi^2)\right]}{(1-Z^2\Phi)^2-\ell(1-\Phi)^2Z^2}\\
		&=\frac{m-k\Phi+(k-2m)Z^2\Phi^2+kZ^2\Phi^3+(m-k)Z^4\Phi^4}{1-\ell Z^2+2(\ell-1)Z^2\Phi-\ell Z^2\Phi^2+Z^4\Phi^2},
	\end{align*}
	which gives
	\begin{align*}
		&\left[1-\ell Z^2+2(\ell-1)Z^2\Phi-\ell Z^2\Phi^2+Z^4\Phi^2\right]Z\frac{\mathrm d\Phi}{\dZ}=m-(k+1)\Phi\\
		&\qquad +\ell Z^2\Phi+(k-2m-2\ell+2)Z^2\Phi^2+(k+\ell)Z^2\Phi^3-Z^4\Phi^3+(m-k)Z^4\Phi^4.
	\end{align*}
	Let $\varkappa=Z^2$. Then $Z\frac{\mathrm d\Phi}{\dZ}=2\varkappa\frac{\mathrm d\Phi}{\mathrm d\varkappa}$, and the above ODE is equivalent to
	\begin{align*}
		&2\left[1-\ell\varkappa+2(\ell-1)\varkappa\Phi-\ell\varkappa\Phi^2+\varkappa^2\Phi^2\right]\varkappa\frac{\mathrm d\Phi}{\mathrm d\varkappa}=m-(k+1)\Phi\\
		&\qquad +\ell\varkappa\Phi+(k-2m-2\ell+2)\varkappa\Phi^2+(k+\ell)\varkappa\Phi^3-\varkappa^2\Phi^3+(m-k)\varkappa^2\Phi^4,
	\end{align*}
	which can be further rewritten as
	\begin{equation}\label{eq_Phi_Z}
		2\varkappa\frac{\mathrm d\Phi}{\mathrm d\varkappa}+(k+1)\Phi=G,
	\end{equation}
	where
	\begin{align*}
		G&=m+\ell\varkappa\Phi+(k-2m-2\ell+2)\varkappa\Phi^2+(k+\ell)\varkappa\Phi^3-\varkappa^2\Phi^3+(m-k)\varkappa^2\Phi^4\\
		&\qquad-2\left[-\ell\varkappa+2(\ell-1)\varkappa\Phi-\ell\varkappa\Phi^2+\varkappa^2\Phi^2\right]\varkappa\frac{\mathrm d\Phi}{\mathrm d\varkappa}.
	\end{align*}
	We plug the formal expansion
	\[\Phi=\sum_{n=0}^\infty \phi_n\varkappa^n,\qquad G=\sum_{n=0}^\infty g_n\varkappa^n\]
	into \eqref{eq_Phi_Z}, then we get $(k+1+2n)\phi_n=g_n$ for each $n\geq0$. For a function $f$, we denote $f_n=\frac{f^{(n)}(0)}{n!}$, then by Leibniz's rule,
	\begin{equation}\label{Eq.Leibniz}
		(fg)_n=\frac1{n!}\frac{\mathrm d^n}{\mathrm d\varkappa^n}(fg)(0)=\frac1{n!}\sum_{n_1+n_2=n}\frac{n!}{n_1!n_2!}f^{(n_1)}(0)g^{(n_2)}(0)=\sum_{n_1+n_2=n}f_{n_1}g_{n_2}.
	\end{equation}
	Therefore, \eqref{eq_Phi_Z} yields the induction relation: $(k+1)\phi_0=m$ and for $n\geq1$,
		\begin{align}
			(k+1+2n)\phi_n=&\ell\phi_{n-1}+(k-2m-2\ell+2)\sum_{n_1+n_2=n-1}\phi_{n_1}\phi_{n_2}\nonumber\\
			&+(k+\ell)\sum_{n_1+n_2+n_3=n-1}\phi_{n_1}\phi_{n_2}\phi_{n_3}-\sum_{n_1+n_2+n_3=n-2}\phi_{n_1}\phi_{n_2}\phi_{n_3}\nonumber\\
			&+(m-k)\sum_{n_1+n_2+n_3+n_4=n-2}\phi_{n_1}\phi_{n_2}\phi_{n_3}\phi_{n_4}+2\ell(n-1)\phi_{n-1}\label{eq_phi_n_induction}\\
			&-4(\ell-1)\sum_{n_1+n_2=n-1}\phi_{n_1}(n_2\phi_{n_2})+2\ell\sum_{n_1+n_2+n_3=n-1}\phi_{n_1}\phi_{n_2}(n_3\phi_{n_3})\nonumber\\
			&-2\sum_{n_1+n_2+n_3=n-2}\phi_{n_1}\phi_{n_2}(n_3\phi_{n_3}).\nonumber
		\end{align}
	
	Next we prove that the corresponding power series is analytic in a small neighborhood of the origin. We inductively assume that for some $n\geq1$ we have
	\[|\phi_i|\leq \mathfrak{C}_i M^i\quad \text{for all \ }0\leq i\leq n-1,\]
	where $M>1$ is a big constant to be determined later, and $\mathfrak{C}_i$\index{Catalan numbers $\mathfrak{C}_i$} denotes the Catalan numbers defined by $\mathfrak{C}_0=1$ and the recurrence relation $\mathfrak{C}_n=\sum_{i=1}^n\mathfrak{C}_{i-1}\mathfrak{C}_{n-i}$ for $n\geq1$. Combining the Catalan $k$-fold convolution formula (see Lemma 27 in \cite{BR})
	\[\sum_{i_1+\cdots+i_m=n\atop i_1,\ldots,i_m\ge 0} \mathfrak{C}_{i_1}\cdots \mathfrak{C}_{i_m} = \begin{cases}
		\displaystyle \frac{m(n+1)(n+2)\cdots (n+m/2-1)}{2(n+m/2+2)(n+m/2+3)\cdots (n+m)}\mathfrak{C}_{n+\frac m2}, & m \text{ even,}\\
		\displaystyle\frac{m(n+1)(n+2)\cdots (n+(m-1)/2)}{(n+(m+3)/2)(n+(m+3)/2+1)\cdots (n+m)}\mathfrak{C}_{n+\frac{m-1}2}, & m \text{ odd}
	\end{cases}\]
	with $\mathfrak{C}_{n-1}\sim \mathfrak{C}_n$, \eqref{eq_phi_n_induction} and the inductive assumption, we infer
	\begin{align*}
		\frac{|\phi_n|}{\mathfrak{C}_nM^n}&\leq \frac{C_0}{nM}+\frac{C_0}{nM\mathfrak{C}_n}\sum_{n_1+n_2=n-1}\mathfrak{C}_{n_1}\mathfrak{C}_{n_2}+
\frac{C_0}{nM\mathfrak{C}_n}\sum_{n_1+n_2+n_3=n-1}\mathfrak{C}_{n_1}\mathfrak{C}_{n_2}\mathfrak{C}_{n_3}\\
		&\qquad+\frac{C_0}{nM^2\mathfrak{C}_n}\sum_{n_1+n_2+n_3=n-2}\mathfrak{C}_{n_1}\mathfrak{C}_{n_2}\mathfrak{C}_{n_3}
+\frac{C_0}{nM^2\mathfrak{C}_n}\sum_{n_1+n_2+n_3+n_4=n-2}\mathfrak{C}_{n_1}\mathfrak{C}_{n_2}\mathfrak{C}_{n_3}\mathfrak{C}_{n_4}\\
		&\qquad+\frac{C_0}{M}+\frac{C_0}{M\mathfrak{C}_n}\sum_{n_1+n_2=n-1}\mathfrak{C}_{n_1}\mathfrak{C}_{n_2}+
\frac{C_0}{M\mathfrak{C}_n}\sum_{n_1+n_2+n_3=n-1}\mathfrak{C}_{n_1}\mathfrak{C}_{n_2}\mathfrak{C}_{n_3}\\
		&\qquad+\frac{C_0}{M^2\mathfrak{C}_n}\sum_{n_1+n_2+n_3=n-2}\mathfrak{C}_{n_1}\mathfrak{C}_{n_2}\mathfrak{C}_{n_3}\\
		&\leq \frac{2C_0}{M}+\frac{2C_0}{M\mathfrak{C}_n}\mathfrak{C}_n+\frac{2C_0}{M\mathfrak{C}_n}\frac{3n}{n+2}\mathfrak{C}_n+
\frac{2C_0}{M^2\mathfrak{C}_n}\frac{3(n-1)}{n+1}\mathfrak{C}_{n-1}+\frac{C_0}{nM^2\mathfrak{C}_n}\frac{2(n-1)}{n+2}\mathfrak{C}_n\\
		&\leq \frac{C_1}{M},
	\end{align*}
	where $C_0, C_1>0$ are constants independent of $n$ and $M$. Hence, by taking $M>C_1$, we obtain $|\phi_n|\leq \mathfrak{C}_nM^n$,  and then close the induction. Due to  $\mathfrak{C}_n\leq 4^n$, we know that the power series $\Phi(\varkappa)=\sum_{n=0}^\infty \phi_n\varkappa^n$ is analytic in a small neighborhood of the origin $\varkappa=0$, i.e., $Z=0$. Recalling that $v(Z)=Z\Phi(Z^2)$, we have proved that the ODE
	\[\frac{\dv}{\dZ}=\frac{(1-v^2)\left[m(1-v^2)Z-kv\left(1-vZ\right)\right]}{Z\left[(1-Zv)^2-\ell(v-Z)^2\right]}=\frac{\Delta_v(Z, v)}{\Delta_Z(Z, v)}\]
	has an unique analytic local solution $v=v(Z), Z\in[0, \varepsilon_0]$ for some small $\varepsilon_0>0$ with the asymptotics
	\[v(Z)=\frac{m}{k+1}Z+\phi_1Z^3+O(Z^5)\quad \text{as }\quad Z\downarrow0,\]
	where $\phi_1$ is given by
	\begin{equation}\label{Eq.phi_1_1}
		(k+3)\phi_1=\ell\phi_0+(k-2m-2\ell+2)\phi_0^2+(k+\ell)\phi_0^3.
	\end{equation}
	
	{\textbf{Step 2.} Extension up to $P_1$.}\smallskip

	 Let $v_F=v_F(Z)$\index{$v_F(Z)$} be the unique curve entering $P_0$ constructed as above. It follows from the asymptotic behavior of $v_F$ that
	\[\Delta_v(Z, v_F(Z))=(1-v_F^2)\left((k-m)Zv_F^2-kv_F+mZ\right)=\frac m{k+1}Z+O(Z^3)\quad \text{as }\quad Z\downarrow0,\]
	and $v_F'(Z)>0$ if $Z\in(0, \wt\varepsilon_0)$ for some small $\wt\varepsilon_0\in(0, \varepsilon_0)$, hence $\Delta_Z(Z, v_F(Z))>0$ for $Z\in(0,\wt\varepsilon_0)$. It follows from \eqref{Eq.v_12} and \eqref{Eq.v_pm} that $v_-(0)=-\frac1{\sqrt\ell}<0$ and $v_1(Z)\sim \frac mkZ$ as $Z\downarrow0$, so
	\[v_-(Z)<v_F(Z)<v_1(Z)\qquad \text{for }\quad Z\in(0,\wt\varepsilon_0)\]
	by adjusting $\wt\varepsilon_0>0$ to a smaller number if necessary. We consider the open region
	\[\mathcal{R}_F=\left\{(Z, v): v_-(Z)<v<v_1(Z),\  Z\in(0, Z_1)\right\}.\]
	In the region $\mathcal R_F$, we have $\Delta_v>0$ and $\Delta_Z>0$. Assume that the maximal interval of existence of the solution $v_F$ (such that $(Z,v)\in\mathcal{R}_F$)
is $(0,Z_0)$. Then $Z_0\in(0,Z_1]$, $ (Z,v_F(Z))\in \mathcal R_F$, $v_F'(Z)=\frac{\Delta_v(Z,v_F(Z))}{\Delta_Z(Z,v_F(Z))}>0$ for $Z\in(0, Z_0)$.
Thus, the limit $v_0:=\lim_{Z\uparrow Z_0}v_F(Z)$ exists, and $v_-(Z_0)\leq v_0\leq v_1(Z_0) $. By Remark \ref{rem1}, we have $ (Z_0,v_0)\not\in \mathcal{R}_F$.
If $Z_0\in (0, Z_1)$, then
	\begin{itemize}
		\item either $v_0=v_-(Z_0)$, in which case we have $v_F'(Z_0)=\lim_{Z\uparrow Z_0}v_F'(Z)=+\infty$, and we have a contradiction since $v_-(Z)<v_F(Z)$
for all $Z\in(0, Z_0)$, which implies 
		\[\frac{v_F(Z)-v_F(Z_0)}{Z-Z_0}=\frac{v_F(Z)-v_-(Z_0)}{Z-Z_0}<\frac{v_-(Z)-v_-(Z_0)}{Z-Z_0},\ \ \text{for all }\ Z\in(0,Z_0);\]
		\item or $v_0=v_1(Z_0)$, in which case we have $v_F'(Z_0)=\lim_{Z\uparrow Z_0}v_F'(Z)=0$, and we also have a contradiction since $v_F(Z)<v_1(Z)$ for all $Z\in(0, Z_0)$ and $v_1'(Z_0)>0$.
	\end{itemize}
Therefore, $Z_0=Z_1$, the curve $v_F(Z)$ is defined on $[0, Z_1)$ and  $C^\infty$ on this interval, and also satisfies
$ (Z,v_F(Z))\in \mathcal{R}_F$, $v_F'(Z)>0$ and (using $0<v_1(Z)<v_1(Z_1)=v_1=v_-(Z_0)<1$) 
	\begin{align}
\label{vF1}&\Delta_v(Z,v_F(Z))>0,\quad \Delta_Z(Z,v_F(Z))>0,\quad v_F(Z)\in(0,1),\quad \text{for }\ Z\in(0,Z_1),\\
&\label{v01}{\lim_{Z\uparrow Z_1}v_F(Z)=\lim_{Z\uparrow Z_0}v_F(Z)=v_0=v_1},\\
\label{Eq.v_F_asymptotic}
		&v_F(Z)=\frac{m}{k+1}Z+\phi_1Z^3+O(Z^5)\quad \text{as}\qquad Z\downarrow0.
	\end{align}
	
	This completes the proof.
\end{proof}

\subsection{Renormalization}\label{Subsec.re-normalization}
In the last subsection, we construct the unique smooth $P_0-P_1$ solution $v_F(Z)$ to \eqref{Eq.mainODE}. However, it remains unknown whether $v_F$ is smooth at $P_1$ or not. Our idea is to construct a smooth local solution to \eqref{Eq.mainODE} near $P_1$ that passes through $P_1$ and then prove that this local solution can be connected to $v_F$. We can again use the power series method to construct a smooth local solution to \eqref{Eq.mainODE} near $P_1$. However, since we have the $v^4$ nonlinear term in \eqref{Eq.mainODE}, it will be quite complicated if we directly substitute the power series $v=\sum_{n=0}^\infty (v)_n(Z-Z_1)^n$ into \eqref{Eq.mainODE} (noting that the center of the power series is $Z_1\neq0$). As a consequence, we introduce a change of variables $(Z, v)\mapsto(z, u)$ such that in the $z-u$ ODE \eqref{Eq.ODE_P_1}, all nonlinear terms are quadratic, and moreover the point $P_1$ is transformed into $Q_1(z=0, u=\varepsilon)$. Then we substitute the Maclaurin series $u=\sum_{n=0}^\infty a_nz^n$ into the $z-u$ ODE (noting that the center of the power series is $0$ now). This will simplify our computations to a great extent, see Section \ref{Sec.local_P_1} for the details. Another advantage of our new formulation \eqref{Eq.ODE_P_1} is that it allows us to get a larger $\ell_*(k)$ than the critical value gotten through a direct analysis on the original $Z-v$ ODE \eqref{Eq.mainODE}.

\begin{lemma}\label{Lem.renormalization}
	Let
	\begin{equation}\label{Eq.change_of_variables}
		z=\frac{m(1-v^2)Z-kv\left(1-vZ\right)}{mZ(1-v^2)},\qquad u=\frac{k^2(1-vZ)^2}{m^2(1-v^2)Z^2}.
	\end{equation}
	Then \eqref{Eq.mainODE} is mapped to a quasi-linear problem
	\begin{equation}\label{Eq.ODE_P_1}
		\left[(1-(k+1)z)u+(1-z)f(z)\right]{\du}+2u\left[u+f(z)+kz(1-z)\right]{\dz}=0,
	\end{equation}
	where $f(z)=-\varepsilon-Az+Bz^2$\index{$f(z)$} with
	\begin{equation}\label{Eq.epsilon_A_B_gamma}\index{$\varepsilon, A, B, \gamma$}
		\varepsilon=\ell\g^2-1,\quad A=k+2-(k-2\ell)\g, \quad B=2k+1-\ell, \quad \gamma=\frac km-1.
	\end{equation}
\end{lemma}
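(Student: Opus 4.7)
The approach is a direct change-of-variables computation. My plan is to pull back the 1-form $\Delta_v\,\dZ - \Delta_Z\,\dv$ (which vanishes along solutions of \eqref{Eq.mainODE}) to the new coordinates $(z,u)$ and show that it agrees, up to a nonvanishing scalar factor, with the target 1-form
\[\left[(1-(k+1)z)u + (1-z)f(z)\right]\du + 2u\left[u + f(z) + kz(1-z)\right]\dz,\]
with the coefficients of $f(z)=-\varepsilon-Az+Bz^2$ to be determined by matching powers of $z$ at the end.

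The crucial first observation is that the numerator of $z$ is exactly the nontrivial factor of $\Delta_v$: from the definition of $z$ one reads $m(1-v^2)Z - kv(1-vZ) = mZ(1-v^2)z$, hence $\Delta_v = mZ(1-v^2)^2 z$. This is what motivates the substitution and keeps the algebra polynomial. Next I invert the change of variables: writing $S = \sqrt{u+(1-z)^2}$ and $T = k(1-z)+mu$, a short direct computation yields
\[v = \f{1-z}{S},\quad Z = \f{kS}{T},\quad 1-vZ = \f{mu}{T},\quad v-Z = -\f{mu(\gamma+z)}{ST},\quad 1-v^2 = \f{u}{S^2},\]
where the parameter $\gamma = k/m-1$ arises naturally (since $k-m = m\gamma$ appears when simplifying $v-Z$). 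From these one obtains $\Delta_v = kmu^2z/(TS^3)$ and $\Delta_Z = km^2u^2[u+(1-z)^2-\ell(\gamma+z)^2]/(T^3S)$, together with the differentials $\dv = -[2u\,\dz + (1-z)\du]/(2S^3)$ and $\dZ = (k[T-2mS^2]\du + 2kmu(\gamma+z)\dz)/(2ST^2)$.

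Substituting these expressions into $\Delta_v\,\dZ - \Delta_Z\,\dv$ and factoring out the common prefactor $km^2u^2/(2S^4T^3)$ reduces the identity to
\[\left[(1-(k+1)z)u + (1-z)\mathcal{F}(z)\right]\du + 2u\left[u + \mathcal{F}(z) + kz(1-z)\right]\dz = 0,\]
where $\mathcal{F}(z) := (1-z)^2 - \ell(\gamma+z)^2 + kz(\gamma-1+2z)$. Expanding $\mathcal{F}$ as a degree-two polynomial in $z$, the constant term is $1-\ell\gamma^2=-\varepsilon$, the linear coefficient is $k\gamma - k - 2 - 2\ell\gamma = -[k+2-(k-2\ell)\gamma] = -A$, and the quadratic coefficient is $2k+1-\ell=B$. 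Hence $\mathcal{F}(z)=f(z)$ and the lemma follows.

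The main obstacle is purely bookkeeping: one must verify that $kz(T-2mS^2) + m(1-z)[u+(1-z)^2 - \ell(\gamma+z)^2]$ equals $m\{(1-(k+1)z)u + (1-z)\mathcal{F}(z)\}$, and similarly for the $\dz$-coefficient. The cleanest route is to split off the $u$-linear terms first using $T - 2mS^2 = (1-z)[k-2m(1-z)] - mu$, which produces exactly the $mu[1-(k+1)z]$ piece, after which the remaining $(1-z)$-multiple matches $m(1-z)\mathcal{F}(z)$ via $k^2/m = (\gamma+1)k$ and a short comparison of coefficients in $z$.
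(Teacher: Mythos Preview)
Your proof is correct and follows essentially the same route as the paper: both invert the map to express $v,Z,1-vZ,v-Z,1-v^2$ in the $(z,u)$ variables (your $S,T$ are $\sqrt{u+(1-z)^2}$ and $m[u+(1+\gamma)(1-z)]$ in the paper's notation) and then verify by direct substitution that the transformed equation has the stated form. The only cosmetic differences are that the paper packages the computation as evaluating $\partial_Z\Psi_\bullet\,\Delta_Z+\partial_v\Psi_\bullet\,\Delta_v$ rather than pulling back the $1$-form, and that you isolate $\mathcal{F}(z)$ first and identify its coefficients with $-\varepsilon,-A,B$ at the end, whereas the paper uses $f$ from the outset.
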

\begin{remark}
	The inversion of \eqref{Eq.change_of_variables} is
	\begin{equation}\label{Eq.inversion_transform}
		Z=\frac{\sqrt{u+(1-z)^2}}{\frac mku+1-z},\qquad v=\frac{1-z}{\sqrt{u+(1-z)^2}}.
	\end{equation}
\end{remark}
\begin{remark}
	In the quasi-linear formulation \eqref{Eq.ODE_P_1}, the point $P_1$ corresponds to the point $Q_1(z=0, u=\varepsilon)$ and $P_0$ corresponds to the point $Q_0\left(z=\frac1{k+1}, u=+\infty\right)$.\index{$Q_0, Q_1$}
\end{remark}
\begin{remark}
	It follows from \eqref{Eq.parameter_range} that $\g>\frac1{\sqrt\ell}$, and thus $\varepsilon=\ell\g^2-1>0$.
\end{remark}
\begin{proof}[Proof of Lemma \ref{Lem.renormalization}]
We denote by $\Psi=(\Psi_z, \Psi_u)$\index{$\Psi=(\Psi_z, \Psi_u)$} the map $(Z, v)\mapsto(z, u)$, hence using \eqref{Eq.epsilon_A_B_gamma},
	\begin{equation}\label{Eq.Psi_z_u}
		\Psi_z(Z, v)=\frac{(1+\g v^2)Z-(1+\g)v}{Z(1-v^2)},\qquad \Psi_u(Z, v)=\frac{(1+\g)^2(1-Zv)^2}{Z^2(1-v^2)}.
	\end{equation}
	The inversion of $\Psi$ is denoted by $\Theta=(\Theta_Z, \Theta_v)$\index{$\Theta=(\Theta_Z, \Theta_v)$}, hence
	\begin{equation}\label{Eq.Theta}
		(Z, v)=\Theta(z, u)=\left(\frac{(1+\g)\sqrt{u+(1-z)^2}}{u+(1+\g)(1-z)}, \frac{1-z}{\sqrt{u+(1-z)^2}}\right).
	\end{equation}
	Using the notation of differential forms, we have
	\[\du=\frac{\pa\Psi_u}{\pa Z}\dZ+\frac{\pa\Psi_u}{\pa v}\dv,\qquad \dz=\frac{\pa\Psi_z}{\pa Z}\dZ+\frac{\pa\Psi_z}{\pa v}\dv.\]
	If $v$ solves \eqref{Eq.mainODE}, then
	\begin{align}\label{Eq.du}
		&\frac{\mathrm{d}u}{\mathrm{d}Z}=\frac{\pa\Psi_u}{\pa Z}+\frac{\pa\Psi_u}{\pa v}\frac{\mathrm{d}v}{\mathrm{d}Z}=\frac{\pa\Psi_u}{\pa Z}+\frac{\pa\Psi_u}{\pa v}\frac{\Delta_v}{\Delta_Z}=\frac{1}{\Delta_Z}\left(\frac{\pa\Psi_u}{\pa Z}\Delta_Z+\frac{\pa\Psi_u}{\pa v}{\Delta_v}\right),\\&\label{Eq.dz}\frac{\mathrm{d}z}{\mathrm{d}Z}=\frac{\pa\Psi_z}{\pa Z}+\frac{\pa\Psi_z}{\pa v}\frac{\mathrm{d}v}{\mathrm{d}Z}=\frac{\pa\Psi_z}{\pa Z}+\frac{\pa\Psi_z}{\pa v}\frac{\Delta_v}{\Delta_Z}=\frac{1}{\Delta_Z}\left(\frac{\pa\Psi_z}{\pa Z}\Delta_Z+\frac{\pa\Psi_z}{\pa v}{\Delta_v}\right).
	\end{align}
	A direct computation gives
	\begin{align}
		\frac{\pa\Psi_z}{\pa Z}=\frac{(1+\g)v}{Z^2(1-v^2)},\qquad \frac{\pa\Psi_z}{\pa v}=\frac{(1+\g)[1-v^2-2(1-Zv)]}{Z(1-v^2)^2},\label{Eq.dPsi_z}\\
		\frac{\pa\Psi_u}{\pa Z}=-\frac{2(1+\g)^2(1-Zv)}{Z^3(1-v^2)},\qquad \frac{\pa\Psi_u}{\pa v}=\frac{2(1+\g)^2(1-Zv)(v-Z)}{Z^2(1-v^2)^2}.\label{Eq.dPsi_u}
	\end{align}
	By \eqref{Eq.epsilon_A_B_gamma}, we may rewrite $\Delta_v, \Delta_Z$ as
	\begin{align}\label{Dev}
		\Delta_v(Z, v)&=\frac{k(1-v^2)}{1+\g}\left[Z(1-v^2)-(1+\g)v(1-Zv)\right],\\
		\label{DeZ}\Delta_Z(Z, v)&=Z\left[(1-Zv)^2-\ell(v-Z)^2\right].
	\end{align}
	For further usage, we compute from \eqref{Eq.Theta} that
	\begin{equation}\label{Eq.4.14}
		\begin{aligned}
			1-v^2&=\frac u{u+(1-z)^2},\qquad 1-Zv=\frac u{u+(1+\g)(1-z)},\\
			v-Z&=\frac{-u(z+\g)}{\left[u+(1+\g)(1-z)\right]\sqrt{u+(1-z)^2}},\qquad \frac vZ=\frac{(1-z)\left[u+(1+\g)(1-z)\right]}{(1+\g)\left[u+(1-z)^2\right]}.
		\end{aligned}
	\end{equation}
	Recalling $f(z)=-\varepsilon-Az+Bz^2$ and \eqref{Eq.epsilon_A_B_gamma}, we obtain
	\begin{align}
		&\quad \frac{\pa\Psi_z}{\pa Z}\Delta_Z+\frac{\pa\Psi_z}{\pa v}\Delta_v=\frac{1+\g}{1-v^2}\frac vZ\left[(1-Zv)^2-\ell(v-Z)^2\right]\nonumber\\
		&\qquad\qquad+\frac k{1-v^2}\left[1-v^2-2(1-Zv)\right]\left[1-v^2-(1+\g)\frac vZ(1-Zv)\right]\label{Eq.4.15}\\
		&\xlongequal{\eqref{Eq.4.14}}\frac{-u\left[\big((1+k)z-1\big)u+(z-1)f(z)\right]}{\left[u+(1-z)^2\right]\left[u+(1+\g)(1-z)\right]},\nonumber
	\end{align}
	and
	\begin{align}
		&\quad \frac{\pa\Psi_u}{\pa Z}\Delta_Z+\frac{\pa\Psi_u}{\pa v}\Delta_v=-\frac{2(1+\g)^2(1-Zv)}{Z^2(1-v^2)}\left[(1-Zv)^2-\ell(v-Z)^2\right]\nonumber\\
		&\qquad\qquad+\frac{2k(1+\g)(1-Zv)(v-Z)}{Z^2(1-v^2)}\left[Z(1-v^2)-(1+\g)v(1-Zv)\right]\nonumber\\
		&=-\frac{2(1+\g)^2(1-Zv)}{Z^2(1-v^2)}\bigg[(1-Zv)^2-\ell(v-Z)^2\label{Eq.4.16}\\
		&\qquad\qquad\qquad-\frac {k(v-Z)}{1+\g}\left[Z(1-v^2)-(1+\g)v(1-Zv)\right]\bigg]\nonumber\\
		&\xlongequal[\eqref{Eq.Theta}]{\eqref{Eq.4.14}}\frac{-2u^2\left[u+f(z)+kz(1-z)\right]}{\left[u+(1-z)^2\right]\left[u+(1+\g)(1-z)\right]}.\nonumber
	\end{align}
	Therefore, \eqref{Eq.ODE_P_1} follows from \eqref{Eq.du}, \eqref{Eq.dz}, \eqref{Eq.4.15} and \eqref{Eq.4.16}.
\end{proof}

As a consequence, it suffices to construct a solution to the $z-u$ ODE \eqref{Eq.ODE_P_1} corresponding to the desired solution of the $Z-v$ ODE \eqref{Eq.mainODE}. See Figure \ref{Fig.Phase_Portrait_zu} for the phase portrait of the  $z-u$ ODE \eqref{Eq.ODE_P_1}.

\begin{figure}
	\centering
	\includegraphics[width=1\textwidth]{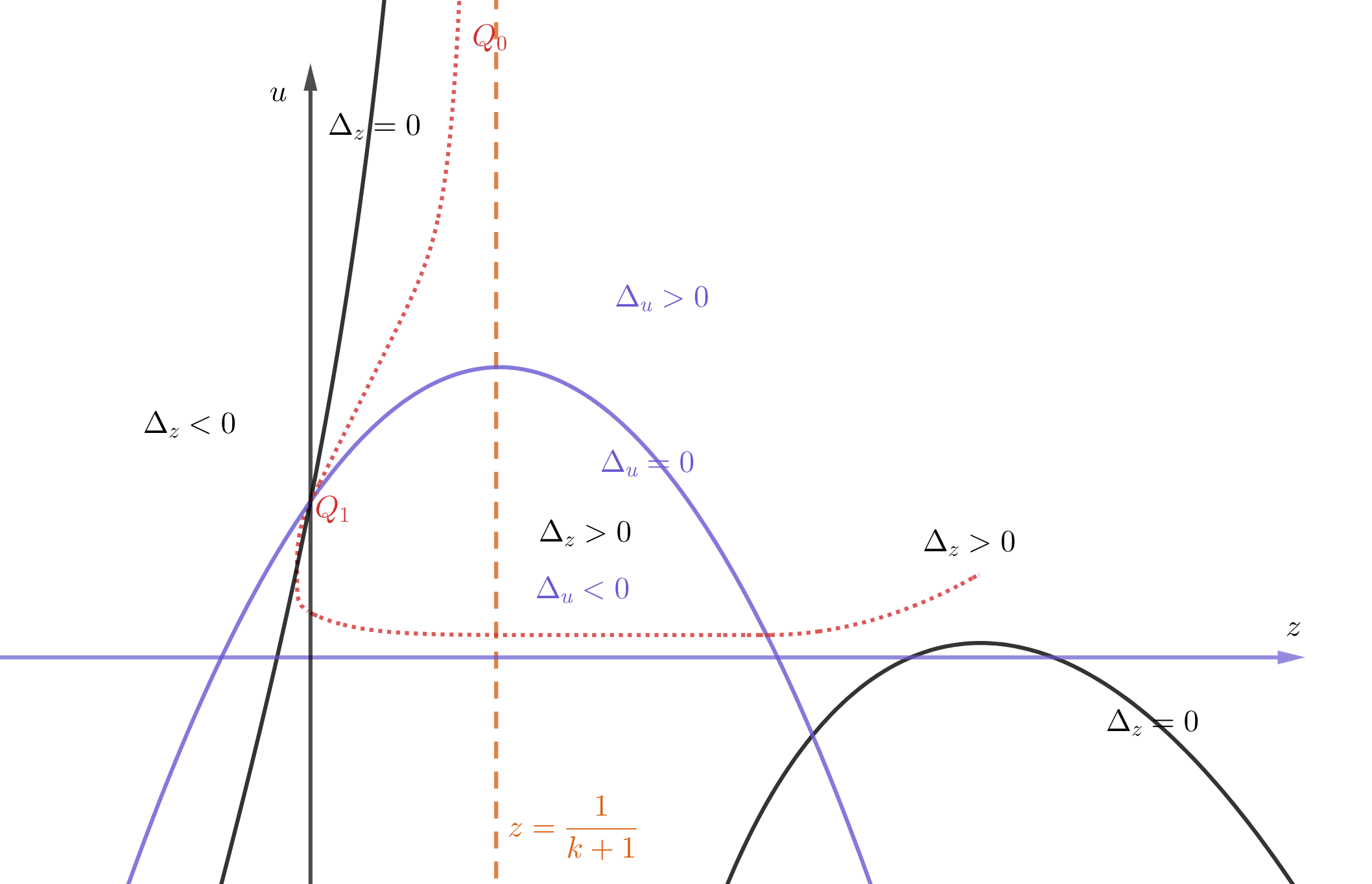}
	\caption{Phase portrait for the $z-u$ system, with $m=1.5, \ell=1.21$ and $k=3$.}
	\label{Fig.Phase_Portrait_zu}
\end{figure}

We rewrite \eqref{Eq.ODE_P_1} as (if $u$ is a function of $z$)
\begin{align}\label{Eq.ODE_z_u}
	\frac{\du}{\dz}=\frac{\Delta_u(z,u)}{\Delta_z(z,u)}=\frac{2u[u+f(z)+kz(1-z)]}{((k+1)z-1)u+(z-1)f(z)}\Leftrightarrow
\mathcal L(u)(z)=0,
\end{align}
where
\begin{align}\label{uz}&\Delta_u(z,u)=2u[u+f(z)+kz(1-z)]\index{$\Delta_u, \Delta_z$},\qquad\Delta_z(z,u)=((k+1)z-1)u+(z-1)f(z),\\
\label{Luz}&\mathcal L(u)(z):=-\Delta_z(z,u){\du}/{\dz}+\Delta_u(z,u).\index{$\mathcal L(u)(z)$}\end{align}

For further usage, we solve $\Delta_u=0$ and $\Delta_z=0$. The solutions to $\Delta_u=0$ are $u=0$ and the purple curve
\begin{equation}\label{Eq.u_p}
	u_{\text{p}}(z)=-f(z)-kz(1-z)=(k-B)z^2+(A-k)z+\varepsilon;\index{$u_\text p(z)$}
\end{equation}
the solution to $\Delta_z=0$ is the black curve
\begin{equation}\label{Eq.u_b}
	u_{\text{b}}(z)=-\frac{(1-z)f(z)}{1-(k+1)z}=\frac{\varepsilon+(A-\varepsilon)z-(A+B)z^2+Bz^3}{1-(k+1)z}.\index{$u_\text b(z)$}
\end{equation}

Now we prove that the change-of-variables mapping $\Psi$ defined in \eqref{Eq.Psi_z_u} is a bijection in the regions that we are interested in. Let
\begin{align}
	\mathcal R_0&=\left\{(Z, v): 0<v<1, 0<Zv<1\right\},\label{Eq.R_0}\index{$\mathcal R_0$}\\
	\mathcal D_0&=\left\{(z, u): u>0, z<1\right\}.\label{Eq.D_0}\index{$\mathcal D_0$}
\end{align}

\begin{lemma}\label{Lem.Psi(R_0)=D_0}
	The map $\Psi: \mathcal R_0\to\mathcal D_0$ is a bijection.
\end{lemma}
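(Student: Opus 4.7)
The strategy is to exhibit the candidate inverse $\Theta$ already given in \eqref{Eq.Theta}, check that $\Psi$ sends $\mathcal{R}_0$ into $\mathcal{D}_0$ and $\Theta$ sends $\mathcal{D}_0$ into $\mathcal{R}_0$, and then verify the two composition identities $\Psi\circ\Theta=\mathrm{id}_{\mathcal{D}_0}$ and $\Theta\circ\Psi=\mathrm{id}_{\mathcal{R}_0}$ by direct algebra, using the ancillary identities in \eqref{Eq.4.14}. Since $1+\gamma>0$ by \eqref{Eq.parameter_range}, all square roots are real and positive where needed.

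For the forward inclusion, let $(Z,v)\in\mathcal{R}_0$. Positivity of $\Psi_u$ is immediate from \eqref{Eq.Psi_z_u} since $0<v<1$, $Zv<1$, and $1+\gamma>0$. For $\Psi_z<1$ one computes
\[
\Psi_z(Z,v)-1=\frac{(1+\gamma v^2)Z-(1+\gamma)v-Z(1-v^2)}{Z(1-v^2)}=\frac{(1+\gamma)v(Zv-1)}{Z(1-v^2)}<0,
\]
where the sign follows from $Zv<1$ together with $0<v<1$. Hence $\Psi(\mathcal{R}_0)\subset\mathcal{D}_0$.

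For the reverse inclusion, let $(z,u)\in\mathcal{D}_0$ and set $(Z,v)=\Theta(z,u)$. From the second component of \eqref{Eq.Theta}, $v=(1-z)/\sqrt{u+(1-z)^2}$, so $v>0$ since $1-z>0$, and $v<1$ since $u>0$. A direct multiplication gives
\[
Zv=\frac{(1+\gamma)(1-z)}{u+(1+\gamma)(1-z)},
\]
which lies in $(0,1)$ again by $z<1$ and $u>0$. Thus $\Theta(\mathcal{D}_0)\subset\mathcal{R}_0$.

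It remains to verify that $\Psi$ and $\Theta$ are mutual inverses. Computing with $(Z,v)=\Theta(z,u)$ yields $1-v^2=u/(u+(1-z)^2)$ and $1-Zv=u/(u+(1+\gamma)(1-z))$ as in \eqref{Eq.4.14}, from which
\[
\Psi_u(Z,v)=\frac{(1+\gamma)^2(1-Zv)^2}{Z^2(1-v^2)}=u,
\]
after substituting $Z^2=(1+\gamma)^2(u+(1-z)^2)/(u+(1+\gamma)(1-z))^2$. A parallel calculation for $\Psi_z$ gives
\[
(1+\gamma v^2)Z-(1+\gamma)v=\frac{(1+\gamma)\,uz}{\sqrt{u+(1-z)^2}\,\bigl(u+(1+\gamma)(1-z)\bigr)},\qquad Z(1-v^2)=\frac{(1+\gamma)\,u}{\sqrt{u+(1-z)^2}\,\bigl(u+(1+\gamma)(1-z)\bigr)},
\]
so the ratio equals $z$. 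Hence $\Psi\circ\Theta=\mathrm{id}_{\mathcal{D}_0}$. Together with the two inclusions already established, this forces $\Theta\circ\Psi=\mathrm{id}_{\mathcal{R}_0}$, and $\Psi:\mathcal{R}_0\to\mathcal{D}_0$ is a bijection.

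The only mild subtlety is the inequality $\Psi_z<1$, which crucially uses $Zv<1$ rather than merely $v<1$; everything else is bookkeeping with the rational expressions in \eqref{Eq.4.14}.
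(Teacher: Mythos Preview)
Your approach is the same as the paper's: show $\Psi(\mathcal R_0)\subset\mathcal D_0$, show $\Theta(\mathcal D_0)\subset\mathcal R_0$, and verify the composition identities. Your explicit calculations are correct and in fact more detailed than the paper, which simply asserts that both composition identities are ``direct to check.''

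There is, however, one genuine logical gap. After verifying only $\Psi\circ\Theta=\mathrm{id}_{\mathcal D_0}$, you write that this together with the two inclusions ``forces $\Theta\circ\Psi=\mathrm{id}_{\mathcal R_0}$.'' That inference is false in general: if $f:A\to B$ and $g:B\to A$ satisfy $f\circ g=\mathrm{id}_B$, nothing prevents $f$ from being non-injective (take $A=\{1,2\}$, $B=\{\ast\}$, $f$ constant, $g(\ast)=1$). What is still missing is either injectivity of $\Psi$ on $\mathcal R_0$ or a direct verification of $\Theta\circ\Psi=\mathrm{id}_{\mathcal R_0}$. The latter is just as mechanical as the composition you did check: starting from \eqref{Eq.Psi_z_u} one finds
\[
\sqrt{u+(1-z)^2}=\frac{(1+\gamma)(1-Zv)}{Z(1-v^2)},\qquad u+(1+\gamma)(1-z)=\frac{(1+\gamma)^2(1-Zv)}{Z^2(1-v^2)},
\]
from which $\Theta_Z=Z$ and $\Theta_v=v$ follow immediately. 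The paper avoids the issue by asserting both composition identities; you should do the same rather than claiming one implies the other.
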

\begin{proof}
	We first show that $\Psi(\mathcal R_0)\subset\mathcal D_0$. Let $(Z, v)\in\mathcal R_0$ and $(z, u)=\Psi(Z, v)$. Then $Z>0$, $v>0$, $1-v^2>0$, $1-vZ>0$  and
	\begin{align*}
		1-z=\frac{(\g+1)v(1-vZ)}{Z(1-v^2)}>0,\quad u=\frac{(\g+1)^2(1-vZ)^2}{(1-v^2)Z^2}>0,
	\end{align*}
	thus $(z,u)\in\mathcal D_0$. Hence $\Psi(\mathcal R_0)\subset\mathcal D_0$. Next we show that $\Theta(\mathcal D_0)\subset\mathcal R_0$. Let $(z, u)\in\mathcal D_0$ and $(Z, v)=\Theta(z, u)$, then $u>0$, $1-z>0$, $1-z<\sqrt{u+(1-z)^2}$, thus $0<v<1$, $Z>0$, $ Zv=\frac{1-z}{\frac u{1+\g}+1-z}<1$, and $(Z, v)\in\mathcal R_0$. Hence $\Theta(\mathcal D_0)\subset\mathcal R_0$. Finally, it is direct to check that $\Theta(\Psi(Z, v))=(Z, v)$
	for all $(Z, v)\in\mathcal R_0$ and $\Psi(\Theta(z, u))=(z, u)$ for all $(z, u)\in\mathcal D_0$. Therefore, $\Psi: \mathcal R_0\to\mathcal D_0$ is a bijection.
\end{proof}

\begin{remark}\label{rem2}
	We define
	\begin{equation*}
		\mathcal N(z, u)={\left[u+(1-z)^2\right]\left[u+(1+\g)(1-z)\right]}/{u}\in C^\infty(\mathcal D_0).\index{$\mathcal N(z, u)$}
	\end{equation*}Then  \eqref{Eq.4.15} and \eqref{Eq.4.16} become(with $(z,u)=\Psi(Z,v)=(\Psi_z,\Psi_u)$)\begin{align}
		\label{Eq1}& \frac{\pa\Psi_z}{\pa Z}\Delta_Z+\frac{\pa\Psi_z}{\pa v}\Delta_v=\frac{-\Delta_z(z,u)}{\mathcal N(z, u)},
		\\
		\label{Eq2}& \frac{\pa\Psi_u}{\pa Z}\Delta_Z+\frac{\pa\Psi_u}{\pa v}\Delta_v=\frac{-\Delta_u(z,u)}{\mathcal N(z, u)}.
	\end{align}
	If $(z,u)$ solves \eqref{Eq.ODE_P_1}, let $ (Z,v)=\Theta(z,u)$, then $(z,u)=\Psi(Z,v)$ and\begin{align*}
		0=&\Delta_z\mathrm{d}u-\Delta_u\mathrm{d}z=\Delta_z\left(\frac{\pa\Psi_u}{\pa Z}\mathrm{d}Z+\frac{\pa\Psi_u}{\pa v}\mathrm{d}v\right)-\Delta_u\left(\frac{\pa\Psi_z}{\pa Z}\mathrm{d}Z+\frac{\pa\Psi_z}{\pa v}\mathrm{d}v\right)\\
		=&-\mathcal N\left(\frac{\pa\Psi_z}{\pa Z}\Delta_Z+\frac{\pa\Psi_z}{\pa v}\Delta_v\right)\left(\frac{\pa\Psi_u}{\pa Z}\mathrm{d}Z+\frac{\pa\Psi_u}{\pa v}\mathrm{d}v\right)\\&+\mathcal N\left(\frac{\pa\Psi_u}{\pa Z}\Delta_Z+\frac{\pa\Psi_u}{\pa v}\Delta_v\right)\left(\frac{\pa\Psi_z}{\pa Z}\mathrm{d}Z+\frac{\pa\Psi_z}{\pa v}\mathrm{d}v\right)\\
		=&\mathcal N\left(\frac{\pa\Psi_z}{\pa v}\frac{\pa\Psi_u}{\pa Z}-\frac{\pa\Psi_z}{\pa Z}\frac{\pa\Psi_u}{\pa v}\right)(\Delta_Z\mathrm{d}v-\Delta_v\mathrm{d}Z).
	\end{align*}
	By \eqref{Eq.dPsi_z} and \eqref{Eq.dPsi_u}, we have
	\begin{align*}
		\frac{\pa\Psi_z}{\pa v}\frac{\pa\Psi_u}{\pa Z}-\frac{\pa\Psi_z}{\pa Z}\frac{\pa\Psi_u}{\pa v}=\frac{2(1+\g)^3(1-Zv)^2}{Z^4(1-v^2)^3}>0\quad\text{in}\ \mathcal R_0.
	\end{align*}
	We also have $\mathcal N(z, u)>0$ in $\mathcal D_0 $. Thus, $\Delta_Z\mathrm{d}v-\Delta_v\mathrm{d}Z=0 $.
\end{remark}

Let
\begin{align}
	\mathcal R_1&=\left\{(Z, v): v\in(0, v_1), Z\in\left(\frac{(\g+1)v}{1+\g v^2}, \frac{\sqrt\ell v+1}{v+\sqrt\ell}\right)=(Z_\text b(v), Z_g(v))\right\},\label{Eq.R_1}\index{$\mathcal R_1$}\\
	\mathcal R_2&=\left\{(Z, v): v\in(v_1, 1), Z\in\left(\frac{\sqrt\ell v+1}{v+\sqrt\ell}, \frac{(\g+1)v}{1+\g v^2}\right)=(Z_g(v), Z_\text b(v))\right\}.\label{Eq.R_2}\index{$\mathcal R_2$}
\end{align}
Then we have
\begin{align}
\label{R1}	&\mathcal R_1=\left\{(Z, v)\in \mathcal R_0: \Delta_v(Z,v)>0,\ \Delta_Z(Z,v)>0\right\}\subset
\left\{(Z, v)\in \mathcal R_0: v<Z\right\},\\
\label{R2}	&\mathcal R_2=\left\{(Z, v)\in \mathcal R_0: \Delta_v(Z,v)<0,\ \Delta_Z(Z,v)<0,\ v<Z\right\}.
\end{align}
Let $\mathcal D_i=\Theta^{-1}(\mathcal R_i)$. Then $\Psi: \mathcal R_i\to\mathcal D_i$ is a bijection for $i=1,2$, and (here $ \chi(v,Z)=v-Z$)
\begin{align*}
	&\mathcal D_1=\left\{(z, u)\in \mathcal D_0: \Delta_v(\Theta(z, u))>0,\ \Delta_Z(\Theta(z, u))>0\right\},\index{$\mathcal D_1$}\\
	&\mathcal D_2=\left\{(z, u)\in \mathcal D_0: \Delta_v(\Theta(z, u))<0,\ \Delta_Z(\Theta(z, u))<0,\ \chi(\Theta(z, u))<0\right\}.\index{$\mathcal D_2$}
\end{align*}
Now we need to evaluate $ \Delta_v$ and $ \Delta_Z$ in $(z,u)$. Let $(z, u)\in\mathcal D_0$ and $(Z, v)=\Theta(z, u)\in\mathcal R_0$. Then by \eqref{Eq.Psi_z_u}, \eqref{Dev}, \eqref{DeZ} and \eqref{Eq.4.14}, we get
\begin{align}
	\Delta_v(Z, v)&=\frac{k(1-v^2)}{1+\g}\left[Z(1-v^2)-(1+\g)v(1-Zv)\right]=\frac{k(1-v^2)^2Z}{1+\g}z,\label{Eq.Delta_v}\\
	\Delta_Z(Z, v)&=Z\left[(1-Zv)^2-\ell(v-Z)^2\right]=\frac{Zu^2\left[u+(1-z)^2-\ell(z+\g)^2\right]}{\left[u+(1-z)^2\right]\left[u+(1+\g)(1-z)\right]^2}.\label{Eq.Delta_Z}
\end{align}
Therefore, we  obtain (as $u>0$, $1-z>0$, $Z>0$, $0<v<1$, $k>0$, $\gamma>0$)
\begin{align*}
	&\mathcal D_1=\left\{(z, u)\in \mathcal D_0: z>0,\ u+(1-z)^2-\ell(z+\gamma)^2>0\right\},\\
	&\mathcal D_2=\left\{(z, u)\in \mathcal D_0: z<0,\ u+(1-z)^2-\ell(z+\gamma)^2<0,\ z+\gamma>0\right\}.
\end{align*}
Let
\begin{align}\label{Eq.u_g}	
u_g(z)&=\ell(z+\gamma)^2-(1-z)^2\index{$u_g(z)$}=\ell\g^2-1+2(1+\ell\g)z+(\ell-1)z^2,
\end{align}
then $u_g(z)=(\ell-1)(z-z_g)(z-z_g^-)$ with
\begin{equation*}
	z_g:=\frac{1-\sqrt\ell\g}{\sqrt\ell+1},\quad z_g^-:=-\frac{1+\sqrt\ell\g}{\sqrt\ell-1}, \quad z_g^-<-\g<z_g<0.\index{$z_g, z_g^-$}
\end{equation*}
Thus, we can write $\mathcal D_1, \mathcal D_2$ as
\begin{align*}
	\mathcal D_1=\left\{(z, u): u>u_g(z),\ 0<z<1\right\},\quad \mathcal D_2=\left\{(z, u): 0<u<u_g(z),\ z_g<z<0\right\}.
\end{align*}

Similarly, let
\begin{align*}
	&\mathcal R_2^+:=\left\{(Z, v): v\in(v_1, 1), Z=\frac{\sqrt\ell v+1}{v+\sqrt\ell}=Z_g(v)\right\}.\index{$\mathcal R_2^+$}
\end{align*}
Then we have
\begin{align*}
	&\mathcal R_2^+=\left\{(Z, v)\in \mathcal R_0: \Delta_v(Z,v)<0,\ \Delta_Z(Z,v)=0,\ v<Z\right\}.
\end{align*}
Let $\mathcal D_2^+=\Theta^{-1}(\mathcal R_2^+)$.\index{$\mathcal D_2^+$} Then $\Psi: \mathcal R_2^+\to\mathcal D_2^+$ is a bijection and
\begin{align*}
	\mathcal D_2^+&=\left\{(z, u)\in \mathcal D_0: \Delta_v(\Theta(z, u))<0,\ \Delta_Z(\Theta(z, u))=0,\ \chi(\Theta(z, u))<0\right\}\\
	&=\left\{(z, u)\in \mathcal D_0: z<0,\ u+(1-z)^2-\ell(z+\gamma)^2=0,\ z+\gamma>0\right\}\\
	&=\left\{(z, u): u=u_g(z),\ z_g<z<0\right\}.
\end{align*}

\subsection{The $Q_0-Q_1$ curve}\label{Subsec.Q_0-Q_1_curve}
Here we prove that the $P_0-P_1$ solution curve $v_F$ of the $Z-v$ ODE \eqref{Eq.mainODE} corresponds to a solution curve $u_F$ of the $z-u$ ODE \eqref{Eq.ODE_z_u}  that connects $Q_0$ to $Q_1$, where $Q_0=\left(z=\frac1{k+1}, u=+\infty\right)$, see the dashed red curve in Figure \ref{Fig.Phase_Portrait_zu} connecting $Q_0$ and $Q_1$.

\begin{lemma}\label{Lem.4.8}
	It holds that
	\begin{align}
		&\lim_{Z\downarrow0}\Psi_z(Z, v_F(Z))=\frac1{k+1},\qquad \lim_{Z\downarrow0}\Psi_u(Z, v_F(Z))=+\infty,\\
		&\qquad\qquad\lim_{Z\downarrow0}\frac1{Z}\frac{\mathrm{d}}{\mathrm{d}Z}\Psi_z(Z, v_F(Z))<0.\label{Eq.4.37}
	\end{align}
\end{lemma}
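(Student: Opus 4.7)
The plan is to substitute the local expansion $v_F(Z) = \phi_0 Z + \phi_1 Z^3 + O(Z^5)$ from \eqref{Eq.v_F_asymptotic}, with $\phi_0 = m/(k+1)$, directly into the formulas for $\Psi_z, \Psi_u$ in \eqref{Eq.Psi_z_u} and read off the asymptotics. The key algebraic identity is $1+\gamma = k/m$, which gives $(1+\gamma)\phi_0 = k/(k+1)$, so that
\[
(1+\gamma v_F^2)Z - (1+\gamma)v_F = \Bigl(1 - \tfrac{k}{k+1}\Bigr)Z + O(Z^3) = \tfrac{Z}{k+1} + O(Z^3).
\]
Dividing by $Z(1-v_F^2) = Z + O(Z^3)$ yields $\Psi_z(Z, v_F(Z)) \to 1/(k+1)$, which is the first limit. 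For the second limit, the numerator $(1+\gamma)^2(1-Zv_F(Z))^2$ tends to $(k/m)^2 > 0$ while the denominator $Z^2(1-v_F^2) \sim Z^2$ vanishes, so $\Psi_u(Z, v_F(Z)) \to +\infty$.

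For \eqref{Eq.4.37}, I would expand to second order. Writing $z_F(Z) := \Psi_z(Z, v_F(Z)) = \tfrac{1}{k+1} + cZ^2 + O(Z^4)$, one has $\lim_{Z\downarrow 0} Z^{-1}\tfrac{d}{dZ}z_F(Z) = 2c$, so it suffices to prove $c < 0$. A careful expansion of both numerator and denominator of $\Psi_z$ to order $Z^3$, then geometric-series inversion of the denominator, yields
\[
c = \gamma\phi_0^2 + \frac{\phi_0^2}{k+1} - \frac{k\,\phi_1}{m} = \frac{k}{m}\left[\frac{\phi_0^2(k+1-m)}{k+1} - \phi_1\right],
\]
where the second equality uses $\gamma + \tfrac{1}{k+1} = \tfrac{k(k+1-m)}{m(k+1)}$ obtained from $\gamma = (k-m)/m$. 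Since $k/m > 0$, the sign of $c$ is opposite to the sign of $\phi_1 - \tfrac{m^2(k+1-m)}{(k+1)^3}$, so everything reduces to a sharp lower bound on $\phi_1$.

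The explicit value of $\phi_1$ comes from \eqref{Eq.phi_1_1}. Substituting $\phi_0 = m/(k+1)$ and collecting terms by whether they involve $\ell$, one finds that the $\ell$-terms factor using $(k+1)^2 - 2m(k+1) + m^2 = (k+1-m)^2$ as $\ell m (k+1-m)^2/(k+1)^3$, and the remaining terms factor using $(k+1)(k+2) - m(k+2) = (k+2)(k+1-m)$ as $m^2(k+2)(k+1-m)/(k+1)^3$. Consequently,
\[
\phi_1 \;=\; \frac{m(k+1-m)\bigl[\ell(k+1-m) + m(k+2)\bigr]}{(k+1)^3(k+3)}.
\]

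It remains to show $c < 0$, i.e.\ $(k+1)^3\phi_1 > m^2(k+1-m)$. Using the above formula and dividing by $m(k+1-m) > 0$ (positivity follows from \eqref{Eq.parameter_range}, which forces $k > m$), this is equivalent to $\ell(k+1-m) + m(k+2) > m(k+3)$, i.e.\ $m < \ell(k+1)/(\ell+1)$. From \eqref{Eq.parameter_range} we have $m < k\sqrt{\ell}/(\sqrt{\ell}+1)$; the comparison $k\sqrt{\ell}/(\sqrt{\ell}+1) \leq \ell(k+1)/(\ell+1)$ reduces after clearing denominators to $\sqrt{\ell}(k-\ell) \leq \ell(k+1)$, which is trivial since $k/(k+1) < 1 < \sqrt{\ell}$. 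Therefore $c < 0$, completing the proof. The only real obstacle is the bookkeeping in the factorization of $\phi_1$; once that is in hand, the final inequality is a short consequence of \eqref{Eq.parameter_range}.
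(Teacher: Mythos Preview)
Your proof is correct and follows essentially the same route as the paper: substitute the expansion \eqref{Eq.v_F_asymptotic} into \eqref{Eq.Psi_z_u}, reduce \eqref{Eq.4.37} to the lower bound $\phi_1>\frac{m^2(k+1-m)}{(k+1)^3}$ (which is exactly the paper's \eqref{Eq.4.39_1} rewritten via $m=k/(1+\gamma)$), compute $\phi_1$ explicitly from \eqref{Eq.phi_1_1}, and verify the inequality using \eqref{Eq.parameter_range}. The only cosmetic difference is that you work in the variable $m$ while the paper works in $\gamma$; your final step $m<\frac{k\sqrt\ell}{\sqrt\ell+1}\le\frac{\ell(k+1)}{\ell+1}$ is equivalent to the paper's observation that $(\ell\gamma-1)k+\ell+\ell\gamma>0$ because $\ell\gamma>\sqrt\ell>1$.
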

\begin{proof}
	Since $v_F$ has the asymptotic behavior \eqref{Eq.v_F_asymptotic}, we compute
	\begin{align*}
		\lim_{Z\downarrow0}\Psi_u(Z, v_F(Z))&=\lim_{Z\downarrow0}\frac{(1+\g)^2\big(1-Zv_F(Z)\big)^2}{Z^2\big(1-v_F(Z)^2\big)}=(1+\g)^2\lim_{Z\downarrow0}\frac1{Z^2}=+\infty,\\
		\lim_{Z\downarrow0}\Psi_z(Z, v_F(Z))&=\lim_{Z\downarrow0}\frac{1+\g v_F(Z)^2-(1+\g)\frac{v_F(Z)}{Z}}{1-v_F(Z)^2}=1-(1+\g)\frac m{k+1}=\frac1{k+1}.
	\end{align*}
	Also, as $Z\downarrow 0$, we have
	\begin{align*}
		&\frac1{k+1}-\Psi_z(Z, v_F(Z))=\frac1{k+1}-\frac{1+\g v_F(Z)^2-(1+\g)\frac{v_F(Z)}{Z}}{1-v_F(Z)^2}\\
		&=\frac{(1+\g)\frac{v_F(Z)}{Z}+\frac1{k+1}-1-\left(\g+\frac1{k+1}\right)v_F(Z)^2}{1-v_F(Z)^2}\\
		&=\frac{(1+\g)\left(\frac m{k+1}+\phi_1Z^2+O(Z^4)\right)-\frac k{k+1}-\left(\g+\frac1{k+1}\right)\left(\frac{m^2}{(k+1)^2}Z^2+O(Z^4)\right)}{1-v_F(Z)^2}\\
		&=\left[(1+\g)\phi_1-\left(\g+\frac1{k+1}\right)\frac{m^2}{(k+1)^2}\right]Z^2+O(Z^4).
	\end{align*}
	As $\frac{v_F(Z)}{Z} $ is real analytic in ${Z} $, so is $\Psi_z(Z, v_F(Z)) $. Thus, for \eqref{Eq.4.37}, it suffices to show
	\begin{equation}\label{Eq.4.39_1}
		\phi_1>\frac1{1+\g}\left(\g+\frac1{k+1}\right)\frac{m^2}{(k+1)^2}=\frac{k^2\left[(k+1)\g+1\right]}{(k+1)^3(\g+1)^3}.
	\end{equation}
	By \eqref{Eq.phi_1_1} and $m=k/(\g+1)$, we compute
	\begin{align*}
		(k+3)\phi_1&=\ell\frac{k}{(k+1)(\g+1)}+\left(k-2\ell+2-\frac{2k}{\g+1}\right)\frac{k^2}{(k+1)^2(\g+1)^2}+\frac{k^3(k+\ell)}{(k+1)^3(\g+1)^3}\\
		&=\frac{k\Big[\ell(k+1)^2(\g+1)^2+k(k+1)\left[(k+2-2\ell)(\g+1)-2k\right]+k^2(k+\ell)\Big]}{(k+1)^3(\g+1)^3}\\
		&=\frac{k\Big[\ell(k+1)^2\g^2+(k+1)(k^2+2k+2\ell)\g+k^2+2k+\ell\Big]}{(k+1)^3(\g+1)^3}\\
		&=\frac{k[(k+1)\g+1]\Big[\ell(k+1)\g+k^2+2k+\ell\Big]}{(k+1)^3(\g+1)^3}.
	\end{align*}
	Therefore, we arrive at
		\begin{align*}
			&\phi_1-\frac{k^2\left[(k+1)\g+1\right]}{(k+1)^3(\g+1)^3}=\frac{k\left[(k+1)\g+1\right]\big[\ell(k+1)\g+k^2+2k+\ell-k(k+3)\big]}{(k+3)(k+1)^3(\g+1)^3}\\
			&=\frac{k\left[(k+1)\g+1\right]\big[\ell(k+1)\g+\ell-k\big]}{(k+3)(k+1)^3(\g+1)^3}=\frac{k\left[(k+1)\g+1\right]\big[(\ell\g-1)k+\ell+\ell\g\big]}{(k+3)(k+1)^3(\g+1)^3}>0,
		\end{align*}
	where we have used $\ell\g>\ell\cdot\frac1{\sqrt\ell}=\sqrt\ell>1$. This proves \eqref{Eq.4.39_1} and thus \eqref{Eq.4.37}.
	\end{proof}

\begin{lemma}\label{Lem.4.9}
	It holds that
	\begin{align}
		u_\text p(z)<u_g(z)<u_\text b(z)\quad \forall\ z\in\left(0,\frac1{k+1}\right),\label{Eq.u_g_location}
	\end{align}
	where $u_p, u_b, u_g$ are given by \eqref{Eq.u_p}, \eqref{Eq.u_b} and \eqref{Eq.u_g} respectively.  
\end{lemma}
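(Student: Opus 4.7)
The plan is to reduce each of the two inequalities to a direct polynomial identity and verify positivity of a few coefficients using the standing constraint $\gamma>1/\sqrt{\ell}$ (which follows from the parameter range \eqref{Eq.parameter_range}, since $k>m(1+1/\sqrt{\ell})$ is equivalent to $\gamma=k/m-1>1/\sqrt{\ell}$).

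For $u_p<u_g$, I would simply subtract the two quadratics defined in \eqref{Eq.u_p} and \eqref{Eq.u_g}, using the explicit formulas $\varepsilon=\ell\g^2-1$, $A=k+2-(k-2\ell)\g$, $B=2k+1-\ell$. The constant terms coincide and a short computation gives the clean factorization
\begin{equation*}
u_g(z)-u_p(z)=kz(z+\g).
\end{equation*}
On $(0,1/(k+1))$ both factors are strictly positive since $\g>1/\sqrt{\ell}>0$, yielding the lower inequality.

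For $u_g<u_b$, the denominator in \eqref{Eq.u_b} satisfies $1-(k+1)z>0$ on the interval, so it suffices to show the numerator of
\begin{equation*}
u_b(z)-u_g(z)=\frac{-(1-z)f(z)-u_g(z)\bigl(1-(k+1)z\bigr)}{1-(k+1)z}=:\frac{N(z)}{1-(k+1)z}
\end{equation*}
is positive for $z\in(0,1/(k+1))$. I would expand $-(1-z)f(z)=(1-z)(\varepsilon+Az-Bz^2)$ and the product $u_g(z)(1-(k+1)z)$, then collect powers of $z$. The constant term vanishes automatically because $u_b(0)=\varepsilon=u_g(0)$, and after substituting the values of $A$, $B$, $\varepsilon$ the remaining cubic should collapse to
\begin{equation*}
N(z)=k\g(\ell\g-1)z+k\bigl[(1+2\ell)\g-1\bigr]z^2+k(1+\ell)z^3.
\end{equation*}
All three coefficients are positive: $\ell\g>\sqrt{\ell}>1$ handles the $z$-term; $(1+2\ell)\g=\g+2\ell\g>0+2>1$ handles the $z^2$-term (using $\ell\g>1$ again); and $k(1+\ell)>0$ is immediate. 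Hence $N(z)>0$ on $(0,1/(k+1))$, completing the proof.

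The only obstacle is bookkeeping: one must carefully track the contributions of $(k,\ell,\g)$ when expanding and simplifying, and in particular verify that the constant term in $N(z)$ cancels and that the $z^1$ and $z^2$ coefficients simplify into the stated forms. No nontrivial estimate is required beyond $\g>1/\sqrt{\ell}$, which is already built into \eqref{Eq.parameter_range}, so this is essentially an algebraic verification rather than an analytic argument.
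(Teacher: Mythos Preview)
Your proof is correct and essentially identical to the paper's: both compute $u_g-u_p=kz(z+\g)$ and $u_b-u_g=\bigl(k\g(\ell\g-1)z+k(2\ell\g+\g-1)z^2+k(\ell+1)z^3\bigr)/(1-(k+1)z)$, then verify positivity of the coefficients via $\ell\g>\sqrt{\ell}>1$. Your $z^2$-coefficient $(1+2\ell)\g-1$ is just a rewriting of the paper's $2\ell\g+\g-1$.
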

\begin{proof}
	For $z\in\R$, we compute (using \eqref{Eq.epsilon_A_B_gamma})
	\begin{equation}\label{Eq.u_p-u_g}
		u_\text p(z)-u_g(z)=(k-B-\ell+1)z^2+(A-k-2(1+\ell\g))z=-kz(z+\g),
	\end{equation}
	and
	\begin{align*}
		u_\text b(z)-u_g(z)&=\frac{\varepsilon+(A-\varepsilon)z-(A+B)z^2+Bz^3}{1-(k+1)z}-\big(\varepsilon+2(1+\ell\g)z+(\ell-1)z^2\big)\\
		&=\frac{k\g(\ell\g-1)z+k(2\ell\g+\g-1)z^2+k(\ell+1)z^3}{1-(k+1)z},
	\end{align*}
	which gives \eqref{Eq.u_g_location} by using $\ell\g-1>0$ and $2\ell\g+\g-1=\ell\g-1+\ell\g+\g>0$.
\end{proof}

We consider the domain
\begin{align}
	\mathcal D_1'=&\left\{(z, u): u_g(z)<u<u_\text b(z),\quad z\in\left(0,\frac1{k+1}\right)\right\}\\\index{$\mathcal D_1'$}
	\notag=&\left\{(z, u)\in \mathcal D_1: \Delta_z(z,u)>0,\quad z\in\left(0,\frac1{k+1}\right)\right\}.
\end{align}

The main result of this subsection is stated as follows.

\begin{proposition}\label{Prop.Q_0-Q_1}
	The unique $P_0-P_1$ smooth solution $v_F(Z)\ (0<Z<Z_1)$ to the $Z-v$ ODE \eqref{Eq.mainODE} is mapped through $\Psi$ to a smooth solution $u=u_F(z)\ \left(0<z<\frac1{k+1}\right)$\index{$u_F(z)$} of the $z-u$ ODE \eqref{Eq.ODE_z_u} with the following properties:
	\begin{itemize}
		\item $u_F$ is a strictly increasing function with respect to $z\in\left(0,\frac1{k+1}\right)$;
		\item we have \begin{equation}\label{Eq.u_p<u_F<u_b}
			u_\text p(z)<u_g(z)<u_F(z)<u_\text b(z),\quad\Delta_z(z,u_F(z))>0\quad \forall\ z\in\left(0,\frac1{k+1}\right);
		\end{equation}
		\item $\lim_{z\downarrow0}u_F(z)=\varepsilon$, $\lim_{z\uparrow1/(k+1)}u_F(z)=+\infty$.
	\end{itemize}
	Moreover, we have
	\begin{equation}\label{Eq.v_F_expression}
		\Theta_v(z, u_F(z))=v_F\big(\Theta_Z(z, u_F(z))\big)\quad \forall\ z\in\left(0,\frac1{k+1}\right).
	\end{equation}
\end{proposition}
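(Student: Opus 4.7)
My plan is to pull back the graph $Z\mapsto(Z,v_F(Z))$ through $\Psi$ and re-parametrize by $z$. Set $\zeta(Z):=\Psi_z(Z,v_F(Z))$ and $\eta(Z):=\Psi_u(Z,v_F(Z))$ for $Z\in(0,Z_1)$; both are smooth. Using \eqref{vF1} together with $v_F(Z)\in(0,1)$ and $Zv_F(Z)<Z<Z_1<1$, the pair $(Z,v_F(Z))$ lies in $\mathcal{R}_0$, and then by \eqref{R1} actually in $\mathcal{R}_1$, so $(\zeta(Z),\eta(Z))\in\mathcal{D}_1=\Psi(\mathcal{R}_1)$, giving $\zeta>0$ and $\eta>u_g(\zeta)$ throughout. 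For the endpoint values, Lemma~\ref{Lem.4.8} yields $\zeta(0^+)=1/(k+1)$ and $\eta(0^+)=+\infty$; at the other end, since $v_F(Z_1^-)=v_1$ and $P_1=(Z_1,v_1)$ lies on both the black curve $Z=Z_\text{b}(v)$ (forcing $\Psi_z(P_1)=0$) and the green curve $Z=Z_g(v)$, a direct substitution using $v_1=1/(\gamma\sqrt{\ell})$ gives $\Psi_u(P_1)=\ell\gamma^2-1=\varepsilon$, so $(\zeta,\eta)\to Q_1$ as $Z\uparrow Z_1$.

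\textbf{Monotonicity of $\zeta$ via a barrier argument.} Applying \eqref{Eq1}--\eqref{Eq2} along the curve $v=v_F(Z)$ (so that $v_F'=\Delta_v/\Delta_Z$) gives
\[\zeta'(Z)=\frac{-\Delta_z(\zeta(Z),\eta(Z))}{\Delta_Z(Z,v_F(Z))\,\mathcal{N}(\zeta,\eta)},\qquad \eta'(Z)=\frac{-\Delta_u(\zeta(Z),\eta(Z))}{\Delta_Z(Z,v_F(Z))\,\mathcal{N}(\zeta,\eta)}.\]
Since $\Delta_Z>0$ and $\mathcal{N}>0$, one has $\zeta'<0\Leftrightarrow\Delta_z>0\Leftrightarrow\eta<u_\text{b}(\zeta)$, and \eqref{Eq.4.37} supplies $\zeta'<0$ for $Z$ near $0^+$. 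My strategy to propagate this sign to all of $(0,Z_1)$ is a barrier argument on $g(Z):=\eta(Z)-u_\text{b}(\zeta(Z))$. Suppose the set $\{Z\in(0,Z_1):\zeta'(Z)=0\}$ is nonempty, and let $Z^*$ be its infimum; then $\zeta'<0$ on $(0,Z^*)$, so $g<0$ there, while continuity forces $\zeta'(Z^*)=0$, i.e., $g(Z^*)=0$. Hence $g'(Z^*)\geq 0$. However $\zeta'(Z^*)=0$ gives $g'(Z^*)=\eta'(Z^*)$, and because $\zeta$ is decreasing on $(0,Z^*)$ with $\zeta(0^+)=1/(k+1)$ and $\zeta>0$ (from $\Delta_v>0$), $\zeta(Z^*)\in(0,1/(k+1))$. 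Lemma~\ref{Lem.4.9} then yields $u_\text{p}(\zeta(Z^*))<u_g(\zeta(Z^*))<u_\text{b}(\zeta(Z^*))=\eta(Z^*)$, whence $\Delta_u(\zeta(Z^*),\eta(Z^*))=2\eta(\eta-u_\text{p}(\zeta))>0$ and consequently $\eta'(Z^*)<0$, contradicting $g'(Z^*)\geq 0$.

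\textbf{Assembly and verification.} With $\zeta$ now a smooth strictly decreasing bijection from $(0,Z_1)$ onto $(0,1/(k+1))$, I define $u_F:=\eta\circ\zeta^{-1}$; the endpoint limits transfer to $u_F(0^+)=\varepsilon$ and $u_F(z)\to+\infty$ as $z\uparrow 1/(k+1)$. Remark~\ref{rem2} shows that the pullback under $\Psi$ of any solution curve satisfies $\Delta_z\,\mathrm{d}u-\Delta_u\,\mathrm{d}z=0$, so $u_F$ solves \eqref{Eq.ODE_z_u}; strict monotonicity of $u_F$ comes from $\mathrm{d}u_F/\mathrm{d}z=\eta'/\zeta'=\Delta_u/\Delta_z>0$ in $\mathcal{D}_1'$. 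The inequality chain $u_\text{p}<u_g<u_F<u_\text{b}$ combines Lemma~\ref{Lem.4.9} with the lower bound $u_F>u_g$ (from $(\zeta,\eta)\in\mathcal{D}_1$) and the upper bound $u_F<u_\text{b}$ (from the strict decrease of $\zeta$ established above). Finally, \eqref{Eq.v_F_expression} is immediate from $\Theta\circ\Psi=\mathrm{id}$ on $\mathcal{R}_0$: since $(z,u_F(z))=\Psi(\zeta^{-1}(z),v_F(\zeta^{-1}(z)))$, applying $\Theta$ returns $(\zeta^{-1}(z),v_F(\zeta^{-1}(z)))$, i.e.\ $\Theta_Z(z,u_F(z))=\zeta^{-1}(z)$ and $\Theta_v(z,u_F(z))=v_F(\Theta_Z(z,u_F(z)))$. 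The hard part is the barrier argument in the second paragraph; it rests crucially on the strict ordering $u_\text{p}<u_g$ supplied by Lemma~\ref{Lem.4.9}, which is the arithmetic manifestation of \eqref{Eq.parameter_range}.
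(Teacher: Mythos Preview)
Your proof is correct and follows essentially the same approach as the paper. The paper's contradiction argument differentiates $\Delta_z(\zeta,\eta)$ directly rather than your $g=\eta-u_\text{b}(\zeta)$, but since $\Delta_z=((k+1)\zeta-1)g$ these are equivalent at the first zero: the paper obtains $\tfrac{d}{dZ}\Delta_z\big|_{Z^*}=((k+1)\zeta-1)\eta'(Z^*)>0$, which is exactly your $g'(Z^*)=\eta'(Z^*)<0$ after accounting for the negative factor.
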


\begin{proof}
By \eqref{vF1}, \eqref{R1} and $Z_1\in(0,1)$, we have $ (Z,v_F(Z))\in\mathcal{R}_1$ for $Z\in(0,Z_1)$, and then $$(z(Z),u(Z)):=\Psi(Z,v_F(Z))\in\mathcal{D}_1\subset\mathcal{D}_0,\qquad \forall\  Z\in(0,Z_1).$$ By \eqref{Eq.du}, \eqref{Eq.dz}, \eqref{Eq1} and \eqref{Eq2}, we get
	\begin{align*}
		\frac{\mathrm{d}z}{\mathrm{d}Z}=\frac{-\Delta_z(z,u)}{\Delta_Z(Z,v_F)\mathcal{N}(z,u)},
		\quad\frac{\mathrm{d}u}{\mathrm{d}Z}=\frac{-\Delta_u(z,u)}{\Delta_Z(Z,v_F)\mathcal{N}(z,u)}.
	\end{align*}
	Let $\eta_1(Z):=[\Delta_Z(Z,v_F(Z))\mathcal{N}(z(Z),u(Z))]^ {-1}$. As $\Delta_Z>0 $ in $ \mathcal{R}_1$ and $\mathcal{N}>0 $ in $ \mathcal{D}_0$, we have 
	\begin{align}\label{dz}
		\frac{\mathrm{d}z}{\mathrm{d}Z}=-\eta_1(Z)\Delta_z(z,u),
		\quad\frac{\mathrm{d}u}{\mathrm{d}Z}=-\eta_1(Z)\Delta_u(z,u),\quad \eta_1(Z)>0\quad \text{for}\ Z\in(0, Z_1).
	\end{align}
	According to Lemma \ref{Lem.4.8}, there exists a small $\Upsilon_0>0$ such that for all $Z\in(0, \Upsilon_0)$, $\frac{\mathrm{d}z}{\mathrm{d}Z}<0 $ and $ \lim_{Z\downarrow0}z(Z)=\frac{1}{k+1}$. Then $0<z(Z)<\frac{1}{k+1} $ for all $Z\in(0, \Upsilon_0)$. By \eqref{dz}, we have $ \Delta_z(z(Z),u(Z))>0$ for all $Z\in(0, \Upsilon_0)$. 
	Now we claim that 
	$$ \Delta_z(z(Z),u(Z))>0\quad \forall\ Z\in(0, Z_1).$$ 
	Otherwise, there exists $Z_2\in(0, Z_1)$ such that $ \Delta_z(z(Z),u(Z))>0$ for all $Z\in(0, Z_2)$ and $ \Delta_z(z(Z_2),u(Z_2))=0$. Then $\frac{\mathrm{d}}{\mathrm{d}Z}\Delta_z(z,u)|_{Z=Z_2}\leq0 $.
	By \eqref{dz}, we have $\frac{\mathrm{d}z}{\mathrm{d}Z}<0 $ for all $Z\in(0, Z_2)$, $\frac{\mathrm{d}z}{\mathrm{d}Z}|_{Z=Z_2}=0 $ and $0<z(Z)<\frac{1}{k+1} $ for all $Z\in(0, Z_2]$. By $ (z(Z),u(Z))\in\mathcal{D}_1$ and Lemma \ref{Lem.4.9}, we have $u(Z)> u_g(z(Z))> u_{\text p}(z(Z))$, $u(Z)> 0$ and $ \Delta_u(z(Z),u(Z))=2u(Z)(u(Z)- u_{\text p}(z(Z)))>0$ for all $Z\in(0, Z_2]$. Then by \eqref{dz} again, we have $\frac{\mathrm{d}u}{\mathrm{d}Z}<0 $ for all $Z\in(0, Z_2]$. As $(k+1)z-1<0 $ and $\frac{\mathrm{d}u}{\mathrm{d}Z}<0 $ for $z=Z_2$, we also have
	\begin{align*}
		\frac{\mathrm{d}}{\mathrm{d}Z}\Delta_z(z,u)\Big|_{Z=Z_2}&=\left(\partial_z\Delta_z(z,u)\frac{\mathrm{d}z}{\mathrm{d}Z}+
		\partial_u\Delta_z(z,u)\frac{\mathrm{d}u}{\mathrm{d}Z}\right)\Big|_{Z=Z_2}=\partial_u\Delta_z(z,u)\frac{\mathrm{d}u}{\mathrm{d}Z}\Big|_{Z=Z_2}\\
		&=((k+1)z-1)\frac{\mathrm{d}u}{\mathrm{d}Z}\Big|_{Z=Z_2}>0,
	\end{align*}
	which reaches a contradiction. So, $ \Delta_z(z(Z),u(Z))>0$ for all $Z\in(0, Z_1)$.
	
By $ (z(Z),u(Z))\in\mathcal{D}_1$ and Lemma \ref{Lem.4.9}, we have $u(Z)> u_g(z(Z))> u_{\text p}(z(Z))$, and $$ \Delta_u(z(Z),u(Z))=2u(Z)\big(u(Z)- u_{\text p}(z(Z))\big)>0\quad \forall\  Z\in(0, Z_1).$$
Therefore, we conclude by \eqref{dz} that $\frac{\mathrm{d}z}{\mathrm{d}Z}<0 $ and $\frac{\mathrm{d}u}{\mathrm{d}Z}<0$. Hence, both $z(Z)$ and $u(Z)$ are strictly decreasing
	for $Z\in(0, Z_1)$. Thus, $0<z(Z)<\frac{1}{k+1} $ for all $Z\in(0, Z_1)$, which along with $ (z(Z),u(Z))\in\mathcal{D}_1$,
	$ \Delta_z(z(Z),u(Z))>0$ for all $Z\in(0, Z_1)$ implies $ (z(Z),u(Z))\in\mathcal{D}_1'$ for all $Z\in(0, Z_1)$.
	By Lemma \ref{Lem.4.8}, we have 
	\begin{equation}\label{Eq.4.57_1}
		\lim_{Z\downarrow0}z(Z)=\lim_{Z\downarrow0}\Psi_z(Z, v_F(Z))=\frac1{k+1},\quad
		\lim_{Z\downarrow0}u(Z)=\lim_{Z\downarrow0}\Psi_u(Z, v_F(Z))=+\infty.
	\end{equation}
	By $\Psi(Z_1, v_1)=Q_1=(0,\varepsilon)$ and \eqref{v01}, we have
	\begin{equation}\label{Eq.4.58}
		\lim_{Z\uparrow Z_1}(z(Z), u(Z))=\lim_{Z\uparrow Z_1}\Psi(Z, v_F(Z))=Q_1=(0, \varepsilon).
	\end{equation}
	Let $Z(z)\ (0<z<\frac{1}{k+1})$ denote the inverse function of $z=z(Z)\ (0<Z<Z_1)$, and we define $$u_F(z)=u(Z(z))\quad\forall \ z\in\left(0, \frac1{k+1}\right).$$
	
	We check that $u_F$ is a desired solution to the $z-u$ ODE connecting $Q_0$ and $Q_1$. Since $z(Z)$ and $u(Z)$ are strictly decreasing, $u_F$ is also strictly increasing; by \eqref{dz}, we have
	\[\frac{\mathrm{d}u_F}{\mathrm{d}z}=\frac{\mathrm{d}u/\mathrm{d}Z}{\mathrm{d}z/\mathrm{d}Z}=\frac{\Delta_u(z, u_F(z))}{\Delta_z(z, u_F(z))};\]
	by $(z,u_F(z))=(z,u)(Z(z))\in \mathcal{D}_1'$, we have $u_g(z)<u_F(z)<u_\text b(z)$ and $\Delta_z(z,u_F(z))>0$ for all $z\in(0, 1/(k+1))$; by \eqref{Eq.4.57_1} and \eqref{Eq.4.58}, we have
	\[\lim_{z\downarrow0}u_F(z)=\varepsilon,\quad  \lim_{z\uparrow1/(k+1)}u_F(z)=+\infty.\]
	
	Finally, by the definition, we have $(z(Z), u(Z))=\Psi\big(Z, v_F(Z)\big)\in\mathcal D_0$ for all $Z\in(0, Z_1)$, hence by Lemma \ref{Lem.Psi(R_0)=D_0} and the definition of $u_F$, we have $\big(Z, v_F(Z)\big)=\Theta\big(z(Z), u_F(Z)\big)$ for all $Z\in(0, Z_1)$,
	and thus $\Theta_v\big(z(Z), u_F(z(Z))\big)=v_F\Big(\Theta_Z\big(z(Z), u_F(z(Z))\big)\Big)$ for all $Z\in(0, Z_1)$. Now since $z(Z)$ is a bijection from $Z\in(0, Z_1)$ onto $z\in(0, 1/(k+1))$, we deduce \eqref{Eq.v_F_expression}. 
\end{proof}

\section{Local smooth solution around  the sonic point}\label{Sec.local_P_1}

In this section, we construct a smooth local solution $u_L$ to the $z-u$ ODE \eqref{Eq.ODE_z_u} passing through the sonic point $Q_1(z=0, u=\varepsilon)$. In Subsection \ref{Subsec.slope_Q_1}, we first show that all smooth solutions to the $z-u$ ODE \eqref{Eq.ODE_z_u} passing through $Q_1$ has two possible slopes at $Q_1$, and we determine the desired slope of $u_L$ at $Q_1$ using the information of $u_F$ constructed in the previous section.  In Subsection \ref{Subsec.eigenvalues}, we show that the ratio $R>1$ of the eigenvalues of the linearized system of \eqref{Eq.ODE_z_u} at $Q_1$ plays a very important role in the construction of $u_L$. In Subsection \ref{Subsec.local_z_u}, we use the power series method to construct $u_L$, which can be transformed by $\Theta$ to a local solution $v_L(Z)$ of the $Z-v$ ODE passing through $P_1$. 

Let's recall the notations introduced in last section:
\begin{equation}\nonumber
		\varepsilon=\ell\g^2-1,\quad A=k+2-(k-2\ell)\g, \quad B=2k+1-\ell, \quad \gamma=k/m-1.
	\end{equation}

\subsection{Slope at $Q_1$}\label{Subsec.slope_Q_1}
Assume that $u(z)$ is a local smooth solution of \eqref{Eq.ODE_z_u} passing through $Q_1$. Let $a_1=u'(0)$. By L'Hôpital's rule, we have
\[a_1=\frac{\du}{\dz}\Big|_{z=0}=\frac{\pa_z\Delta_u(Q_1)+\pa_u\Delta_u(Q_1)a_1}{\pa_z\Delta_z(Q_1)+\pa_u\Delta_z(Q_1)a_1}.\index{$a_1$}\]
We define
\begin{equation}\label{Eq.Q1_coefficients}
	\begin{aligned}\index{$\tilde c_1, \tilde c_2, \tilde c_3, \tilde c_4$}
		\tilde c_1:&=\pa_u \Delta_u(Q_1)=2\varepsilon,\qquad \qquad \tilde c_2:=\pa_u \Delta_z(Q_1)=-1,\\
		\tilde c_3:&=\pa_z \Delta_u(Q_1)=2\varepsilon(k-A),\quad \tilde c_4:=\pa_z \Delta_z(Q_1)=k\varepsilon+A.
	\end{aligned}
\end{equation}
Then $a_1$ satisfies the quadratic equation $-\tilde c_2a_1^2+(\tilde c_1-\tilde c_4)a_1+\tilde c_3=0$, i.e.,
\begin{equation}\label{Eq.quad_u_1}\index{$p_0$}
	p_0(a_1):=a_1^2-((k-2)\varepsilon+A)a_1+2(k-A)\varepsilon=0.
\end{equation}
Since
\begin{align*}
	&p_0(2(1+\ell\g))=\big(2(1+\ell\g)\big)^2-2\big((k-2)\varepsilon+A\big)(1+\ell\g)+2(k-A)\varepsilon\\
	=&4(1+\ell\g)^2-2(1+\ell\g)\big(k(\ell\g^2-\g)+4-2\ell\g^2+2\ell\g\big)+2k\g(\ell\g^2-1)-4(1+\ell\g)(\ell\g^2-1)\\
	=&2k\ell\g^3(1-\ell)<0,
\end{align*}
we know that the quadratic polynomial $p_0$ has two real roots, where the larger one is greater than $2(1+\ell\g)>4>0$ and  the smaller one is smaller than $2(1+\ell\g)$, and the discriminant of $p_0$ is $(\tilde c_1-\tilde c_4)^2+4\tilde c_2\tilde c_3>0 $. Similarly, we have
\begin{align}\label{p0g}
	p_0({2}(1+s\g))&=2(2s+k)(s-\ell)\g^2+2(k-2\ell-(k-2)s)\ell\g^3,
\end{align}
whose proof will be delayed until we prove Lemma \ref{Lem.A.3}.

We claim that $a_1$ must be the larger root of $p_0$. In fact, as $u_F(z)>u_g(z)$ for $z\in(0,1/(k+1))$, $u_F(0)=\varepsilon=\ell\g^2-1=u_g(0)$, we have $a_1= u'(0)=u_F'(0)\geq u_g'(0)=2(1+\ell\g)$. Thus
\begin{align}\label{Eq.a_1>4}
	&a_1>2(1+\ell\g)>4,
\\ \label{Eq.a_1_expression}
	&a_1=\frac{\tilde c_4-\tilde c_1+\sqrt{(\tilde c_1-\tilde c_4)^2+4\tilde c_2\tilde c_3}}{2}
	=\frac{(k-2)\varepsilon+A+\sqrt{\big((k-2)\varepsilon+A\big)^2-8(k-A)\varepsilon}}{2}.
\end{align}

\subsection{Eigenvalues}\label{Subsec.eigenvalues}
We start with some heuristic discussions. Consider the ODE
\begin{equation}\label{Eq.heuristiceq}
	\frac{\mathrm dy}{\mathrm dx}=\frac{a_{11}y+a_{12}x+R_1(x,y)}{a_{21}y+a_{22}x+R_2(x,y)},
\end{equation}
where $a_{11}, a_{12}, a_{21}, a_{22}$ are constants and $R_1, R_2$ are smooth functions with $R_1(x,y)=O(x^2)+O(y^2)$, $R_2(x,y)=O(x^2)+O(y^2)$ as $(x,y)\to(0,0)$. We introduce a time variable $\tau$ such that $\frac{\mathrm dx}{\mathrm d\tau}=a_{21}y+a_{22}x+R_2(x,y)$, then \eqref{Eq.heuristiceq} is converted to the ODE system
\begin{equation}\label{Eq.heuristic_ODE_system}
	\frac{\mathrm d}{\mathrm d\tau}\begin{pmatrix} y\\ x \end{pmatrix}=\begin{pmatrix}a_{11} & a_{12}\\ a_{21}& a_{22}\end{pmatrix}\begin{pmatrix} y\\ x \end{pmatrix}+\begin{pmatrix} R_1(x,y)\\ R_2(x,y) \end{pmatrix}.
\end{equation}
Clearly, \eqref{Eq.heuristic_ODE_system} has a trivial solution $(x,y)=(0,0)$. Near this equilibrium, the behavior of the solutions of \eqref{Eq.heuristic_ODE_system} can be approximated by the linear ODE system
\begin{equation}\label{Eq.heuristic_linear_ODE}
	\frac{\mathrm d}{\mathrm d\tau}\begin{pmatrix} y\\ x \end{pmatrix}=\mathcal{A}\begin{pmatrix} y\\ x \end{pmatrix}=\begin{pmatrix}a_{11} & a_{12}\\ a_{21}& a_{22}\end{pmatrix}\begin{pmatrix} y\\ x \end{pmatrix}.
\end{equation}
We assume that the matrix $\mathcal A$ has two distinct positive eigenvalues $\lambda_1>\lambda_2>0$ with $R=\frac{\lambda_1}{\la_2}\in(1,+\infty)\setminus\Z$. Let $\mathcal P$ be a $2\times 2$ matrix such that
\[\mathcal P\mathcal A\mathcal P^{-1}=\begin{pmatrix}\lambda_1 & \\ & \lambda_2\end{pmatrix},\]
and let
\begin{equation}\label{Eq.heuristic_changevariable}
	\begin{pmatrix} Y\\ X \end{pmatrix}=\mathcal P\begin{pmatrix} y\\ x \end{pmatrix},
\end{equation}
then the linear ODE system \eqref{Eq.heuristic_linear_ODE} is reduced to
\begin{equation}\label{Eq.heuristic_diagonal_ODE}
	\frac{\mathrm d}{\mathrm d\tau}\begin{pmatrix} Y\\ X \end{pmatrix}=\begin{pmatrix}\lambda_1 & \\ & \lambda_2\end{pmatrix}\begin{pmatrix} Y\\ X \end{pmatrix}.
\end{equation}
Thus, $Y(\tau)=Y(0)e^{\la_1\tau}, X(\tau)=X(0)e^{\la_2\tau}$. We observe that the curve $\tau\mapsto(X(\tau), Y(\tau))$ gives a smooth curve in the phase space $(X,Y)$ if and only $X(0)=0$ or $Y(0)=0$, which corresponds to the solution $X\equiv 0$ or $Y\equiv0$ respectively; and if $X(0)\neq 0$ and $Y(0)\ne 0$, the solution satisfies $Y=C_0X^R$ with some non-zero constant $C_0$, hence has only $C^R$ regularity, recalling that we assume $R=\frac{\lambda_1}{\la_2}\in(1,+\infty)\setminus\Z$.

Due to the smoothness of the change of variables \eqref{Eq.heuristic_changevariable}, we expect that among all solutions of nonlinear ODE \eqref{Eq.heuristiceq} passing through $(0,0)$, only two of them are smooth and any other solutions have $C^R$ regularity; moreover, we can select the only smooth solution of \eqref{Eq.heuristiceq} passing through $(0,0)$ as long as we specify the slope of this solution. As a result, when we use the power series method to construct the smooth solution $y=\sum_{n=0}^\infty y_nx^n$ of \eqref{Eq.heuristiceq} passing through $(0,0)$, we need to analyze the coefficients $y_n$ at least from $y_0$ to $y_{\lfloor R\rfloor+1}$, in order to distinguish it from  non-smooth solutions, which are only $C^R$ regular.

\smallskip

The above heuristic discussions inspire us to consider the eigenvalues of our $z-u$ ODE \eqref{Eq.ODE_z_u} near $Q_1$. The linearization of \eqref{Eq.ODE_z_u} near $Q_1$ is
$$\frac{\mathrm d(u-\varepsilon)}{\dz}=\frac{\pa_z\Delta_u(Q_1)z+\pa_u\Delta_u(Q_1)(u-\varepsilon)}{\pa_z\Delta_z(Q_1)z+\pa_u\Delta_z(Q_1)(u-\varepsilon)},$$
whose coefficient matrix is
\begin{equation}\label{Eq.linear_P_1}
	\mathcal{A}=\begin{pmatrix} \pa_u\Delta_u(Q_1) & \pa_z\Delta_u(Q_1) \\ \pa_u\Delta_z(Q_1) & \pa_z\Delta_z(Q_1)\end{pmatrix}=\begin{pmatrix}\tilde c_1 & \tilde c_3 \\ \tilde c_2 & \tilde c_4\end{pmatrix}=\begin{pmatrix}2\varepsilon & 2\varepsilon(k-A)\\ -1 &k\varepsilon+A\end{pmatrix},
\end{equation}
with characteristic polynomial $\varphi_{\mathcal{A}}(\lambda)=\la^2-(\tilde c_1+\tilde c_4)\la+\tilde c_1\tilde c_4-\tilde c_2\tilde c_3=\la^2-((k+2)\varepsilon+A)\la+2k\varepsilon(\varepsilon+1)$, which has two positive roots $0<\la_-<\la_+$\index{$\la_-,\la_+$}, since $2k\varepsilon(\varepsilon+1)>0$,
\begin{align*}
	\tilde c_1+\tilde c_4&=(k+2)\varepsilon+A=(k+2)(\ell\g^2-1)+k+2-(k-2\ell)\g=\left[(k+2)\ell\g-k+2\ell\right]\g\\
	&>\left[(k+2)\ell\frac1{\sqrt\ell}-k+2\ell\right]\g=\left[k(\sqrt\ell-1)+2\sqrt\ell(1+\sqrt\ell)\right]\g>0,
\end{align*}
and the discriminant $(\tilde c_1-\tilde c_4)^2+4\tilde c_2\tilde c_3>0$ (which is the same as the discriminant of $p_0$).

By a direct computation, we have
\begin{equation}\label{la}
		\la_\pm=\frac{\tilde c_1+\tilde c_4\pm\sqrt{(\tilde c_1-\tilde c_4)^2+4\tilde c_2\tilde c_3}}{2}.
\end{equation}
Let $R=\frac{\la_+}{\la_-}>1$.\index{$R=\frac{\la_+}{\la_-}$} Then we have
\begin{equation}\label{Eq.(R+1)^2/R_1}
	\frac{(R+1)^2}{R}=\frac{(\la_++\la_-)^2}{\la_+\la_-}=\frac{(\tilde c_1+\tilde c_4)^2}{\tilde c_1\tilde c_4-\tilde c_2\tilde c_3}=\frac{[(k+2)\ell\g-(k-2\ell)]^2}{2k\ell(\ell\g^2-1)}.
\end{equation}

The subsequent analysis is based on the assumption $R\in(3,4)$. However, it turns out that in some circumstances, $R$ may not belong to $(3,4)$. As a consequence, we need to explore the conditions of  parameters ensuring $R\in(3,4)$.

\begin{lemma}\label{Lem.ell_0(k)}
	For each $\ell>1, k\in\Z_{+}, \g>\frac1{\sqrt\ell}$, we define the ratio
	\begin{equation}
		R=R(k, \ell,  \g)=\frac{\la_+}{\la_-}>1.\label{def:R}
	\end{equation}
	Let $\mathcal K$ be the admissible set of $(k, \ell)$ defined by
	\begin{equation}\label{Eq.admissible_set}
		\mathcal{K}=\left\{(k, \ell)\in\Z_{+}\times[1,+\infty): k\leq 5 \text{ or } 1<\ell<\ell_0(k) \text{ for } k\geq6 \right\},\index{$\mathcal K$}
	\end{equation}
	where
	\begin{equation}\label{Eq.ell_0(k)}
		\ell_0(k):=\frac{3k^2-8k+12-\sqrt{(3k-2)(k-6)(3k^2+4k+12)}}{24}\qquad \text{for}\quad k\geq6.\index{$\ell_0(k)$}
	\end{equation}
For each $\ell>1, k\in\Z_{+}$, there exists $R_{\text{inf}}=R_{\text{inf}}(k, \ell)\geq1$\index{$R_{\text{inf}}(k, \ell)$} such that the range of the function $\g\mapsto R(k, \ell, \g)$ is the interval $(R_{\text{inf}}, +\infty)$ or $[R_{\text{inf}}, +\infty)$; furthermore, $R_{\text{inf}}(k, \ell)<3$ if and only if $(k, \ell)\in\mathcal K$.
\end{lemma}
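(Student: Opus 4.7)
The plan is to reduce the problem to a one-variable optimization. First, observe that $h(R)=(R+1)^2/R=R+2+1/R$ is a strictly increasing bijection from $[1,+\infty)$ onto $[4,+\infty)$, since $h'(R)=1-1/R^2\geq0$. The identity \eqref{Eq.(R+1)^2/R_1}, after the substitution $u=\sqrt{\ell}\,\gamma$ (so that $\gamma>1/\sqrt{\ell}$ becomes $u>1$), yields
\[h(R(k,\ell,\gamma))=g(u):=\frac{[(k+2)\sqrt{\ell}\,u-(k-2\ell)]^2}{2k\ell(u^2-1)}.\]
Hence the range of $\gamma\mapsto R(k,\ell,\gamma)$ is exactly $h^{-1}$ applied to the range of $g$ on $(1,+\infty)$; defining $R_{\text{inf}}$ by $h(R_{\text{inf}})=\inf_{u>1}g(u)$ gives the structural description claimed in the lemma, and reduces the final assertion to the equivalence $\inf_{u>1}g(u)<16/3\iff (k,\ell)\in\mathcal K$.

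A direct differentiation gives
\[g'(u)=\frac{2N(u)\,[(k-2\ell)u-(k+2)\sqrt{\ell}]}{(u^2-1)^2},\qquad N(u):=(k+2)\sqrt{\ell}\,u-(k-2\ell),\]
and the bound $N(u)\geq (k+2)\sqrt{\ell}+2\ell-k\geq 2\ell+2>0$ for $u\geq 1$ shows that $\operatorname{sgn} g'(u)=\operatorname{sgn}[(k-2\ell)u-(k+2)\sqrt{\ell}]$. If $k\leq 2\ell$, then $g$ is strictly decreasing from $+\infty$ down to the limit $(k+2)^2/(2k)$ at $u=+\infty$, which is the unattained infimum and gives range $(R_{\text{inf}},+\infty)$. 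If $k>2\ell$, there is a unique critical point $u_\star=(k+2)\sqrt{\ell}/(k-2\ell)>1$ (the inequality $u_\star>1$ following from $(k+2)^2\ell-(k-2\ell)^2=k^2(\ell-1)+4\ell(2k+1-\ell)>0$), and $g$ decreases on $(1,u_\star)$, then increases, so
\[\min_{u>1}g(u)=g(u_\star)=\frac{(k+2)^2\ell-(k-2\ell)^2}{2k\ell}\]
is attained, giving range $[R_{\text{inf}},+\infty)$. A short algebraic check (reducing to $(\ell-1)(k^2-4\ell)\geq 0$ and using $\ell<k^2/4$ in the second case) shows that $\inf g\geq 4$ in both cases, so $R_{\text{inf}}\geq 1$.

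It remains to translate $\inf g<16/3$ into the explicit condition $(k,\ell)\in\mathcal K$. In the case $k\leq 2\ell$, the condition $(k+2)^2/(2k)<16/3$ is $3k^2-20k+12<0$, whose integer solutions are $k\in\{1,2,3,4,5\}$. In the case $k>2\ell$ (which forces $k\geq 3$ since $\ell>1$), the condition is
\[P(\ell):=12\ell^2-(3k^2-8k+12)\ell+3k^2>0,\]
whose discriminant factors as $(3k^2-20k+12)(3k^2+4k+12)$. For $k\in\{3,4,5\}$ the first factor is negative, so $P>0$ everywhere and $R_{\text{inf}}<3$ is automatic; for $k\geq 6$ the roots of $P$ are $\ell_0(k)$ from \eqref{Eq.ell_0(k)} and a larger partner $\ell_0^+(k)$, and Vieta gives $\ell_0(k)\cdot\ell_0^+(k)=k^2/4$, hence by AM--GM $\ell_0(k)\leq k/2\leq \ell_0^+(k)$. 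The hypothesis $\ell<k/2$ of this case excludes $\ell>\ell_0^+(k)$, so $P(\ell)>0$ under $\ell<k/2$ collapses to $\ell<\ell_0(k)$. Merging the two cases recovers exactly $\mathcal K$.

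The only technical care required is the splitting at $\ell=k/2$: one should verify that as $\ell\uparrow k/2$ in the regime $k>2\ell$, the minimizer $u_\star\to+\infty$ and $g(u_\star)\to(k+2)^2/(2k)$, so the two case formulas glue continuously and the transition from "minimum attained" to "infimum not attained" is the only subtle point in deciding whether the range of $R$ is open or closed at $R_{\text{inf}}$. Beyond this bookkeeping and the Vieta relation singling out the smaller root $\ell_0(k)$, the proof is a routine single-variable calculus argument.
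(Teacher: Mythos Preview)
Your proposal is correct and follows essentially the same route as the paper's proof: both reduce the question to analyzing the function $\gamma\mapsto\frac{(R+1)^2}{R}$ given by \eqref{Eq.(R+1)^2/R_1}, split into the cases $k\le 2\ell$ and $k>2\ell$, compute the infimum in each case, and arrive at the same quadratic $P(\ell)=12\ell^2-(3k^2-8k+12)\ell+3k^2$. Your substitution $u=\sqrt{\ell}\,\gamma$ is a cosmetic repackaging of the paper's direct computation of $F'(\gamma)$; your use of Vieta's relation $\ell_0(k)\ell_0^+(k)=k^2/4$ to place $\ell_0(k)\le k/2$ is a mild variant of the paper's check that $P_k(1)>0$ and $P_k(k/2)<0$. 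One small slip: your displayed formula for $g'(u)$ is off by the positive constant factor $1/(2k\ell)$, but since only the sign of $g'$ is used this is harmless.
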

\begin{remark}
	The symbol $R_{\text{inf}}(k, \ell)$ stands for the infimum value of the function $\g\mapsto R(k, \ell, \g)$. We have $1<\ell_0(k)<k/2$ for $k>6$, $\lim_{k\to\infty}\ell_0(k)=1$ and $\ell_0(7)\approx 1.81$, $\ell_0(6)=3$.
\end{remark}
\begin{proof}
	For fixed $k,\ell$, we denote the right hand side of \eqref{Eq.(R+1)^2/R_1} by $F(\g)$, and then
	\begin{align*}
		F'(\g)=-\frac{[(k+2)-(k-2\ell)\g][(k+2)\ell\g-(k-2\ell)]}{k(\ell\g^2-1)^2}.
	\end{align*}
	As $\g>\frac1{\sqrt\ell}$, we have $ \ell\g>\ell\frac1{\sqrt\ell}>1$ and\begin{align*}
		(k+2)\ell\g-(k-2\ell)>(k+2)-(k-2\ell)=2+2\ell>0.
	\end{align*}
	
	If $k\leq2\ell$, then $(k+2)-(k-2\ell)\g>0 $ and $ F'(\g)<0$ for $\g>\frac1{\sqrt\ell} $, the function $F(\g)$ is strictly decreasing for
	$\g>\frac1{\sqrt\ell} $, with $\lim_{\g\downarrow\frac1{\sqrt\ell}}F(\g)=+\infty$ and  $\lim_{\g\to+\infty}F(\g)=\frac{(k+2)^2}{2k}$.
	Also note that the function $R\mapsto \frac{(R+1)^2}{R}$ is strictly increasing for $R>1$, hence the function $\g\mapsto R(k, \ell, \g)$ is strictly decreasing for $\g>\frac1{\sqrt\ell}$, with $\lim_{\g\downarrow\frac1{\sqrt\ell}}R(k, \ell, \g)=+\infty$ and  $R_{\text{inf}}:=\lim_{\g\to+\infty}R(k, \ell, \g)\geq1$ satisfying
	\begin{equation}\label{Eq.R_inf=k/2}
		\frac{(R_{\text{inf}}+1)^2}{R_{\text{inf}}}=\frac{(k+2)^2}{2k}\geq4\Longrightarrow R_{\text{inf}}=\max\left\{\frac k2, \frac 2k\right\}\geq1.
	\end{equation}
	Therefore, $R_{\text{inf}}<3$ if and only if $1\leq k\leq5$.
	
	If $k>2\ell$, let $\g_1=\frac{k+2}{k-2\ell}$, then $\g_1>1>\frac1{\sqrt\ell}$ and
	\begin{align*}
		&(k+2)-(k-2\ell)\g>0,\quad F'(\g)<0\quad \text{for}\ 1/\sqrt\ell<\g<\g_1,\\
		&(k+2)-(k-2\ell)\g<0,\quad F'(\g)>0\quad \text{for}\ \g>\g_1.
	\end{align*}
	Thus, the function $F(\g)$ is strictly decreasing for
	$\frac1{\sqrt\ell}<\g\leq\g_1 $, strictly increasing for
	$\g\geq\g_1 $ with $\lim_{\g\downarrow\frac1{\sqrt\ell}}F(\g)=+\infty$ and\begin{align*}
		&F(\g_1)=\frac{[(k+2)\ell\g_1-(k-2\ell)]^2}{2k\ell(\ell\g_1^2-1)}=\frac{[(k+2)^2\ell/(k-2\ell)-(k-2\ell)]^2}{2k\ell(\ell(k+2)^2/(k-2\ell)^2-1)}=
		\frac{(k+2)^2\ell-(k-2\ell)^2}{2k\ell}.
	\end{align*}
	Since $R\mapsto \frac{(R+1)^2}{R}$ is strictly increasing for $R>1$,  the function $\g\mapsto R(k, \ell, \g)$ is strictly decreasing for $\frac1{\sqrt\ell}<\g\leq\g_1 $, strictly increasing for
	$\g\geq\g_1 $ with $\lim_{\g\downarrow\frac1{\sqrt\ell}}R(k, \ell, \g)=+\infty$ and it has a minimum value $R_{\text{inf}}>1$ at $\g=\g_1$ such that
	\begin{equation*}
		\frac{(R_{\text{inf}}+1)^2}{R_{\text{inf}}}=F(\g_1)=\frac{(k+2)^2\ell-(k-2\ell)^2}{2k\ell}=\frac{(k^2-4\ell)(\ell-1)}{2k\ell}+4>4.
	\end{equation*}
	Therefore,
	\begin{align*}
		R_{\text{inf}}<3&\Longleftrightarrow \frac{\ell(k+2)^2-(k-2\ell)^2}{2k\ell}<\frac{16}{3}\\
		&\Longleftrightarrow P_k(\ell):=12\ell^2-(3k^2-8k+12)\ell+3k^2>0.
	\end{align*}
	The discriminant of the quadratic polynomial $P_k$ is $$\Delta(k)=(3k^2-8k+12)^2-(12k)^2=(3k-2)(k-6)(3k^2+4k+12).$$ As a consequence, $P_k(\ell)>0$ if $1\leq k\leq5$. For $k\geq7$, we have $\Delta(k)>0$ and thus $P_k(\ell)$ has two real roots; also note that $P_k(1)=8k>0$, $P_k\left(\frac k2\right)=-\frac k2(3k-2)(k-6)<0$, hence the inequality $P_k(\ell)>0$ holds if and only if $1<\ell<\ell_0(k)$ (recalling that we are dealing with the case $1<\ell<\frac k2$), where $\ell_0(k)$ is given by \eqref{Eq.ell_0(k)} (such that $P_k(\ell_0(k))=0$). At the same time, we obtain $1<\ell_0(k)<\frac k2$ for $k>6$. For $k=6$, we have $P_k(\ell)=12\ell^2-72\ell+108=12(\ell-3)^2>0$ for $1<\ell<\frac k2=\ell_0(k)$.
	
	Combining all things together completes the proof.
\end{proof}

\begin{remark}\label{Rmk.R_parameter}
	For further usage, recall that $A=k+2-(k-2\ell)\g\in \mathbb R$ and we define
	\begin{equation}
		\G=\G(k,\ell):=\left\{\g>\frac1{\sqrt\ell}\Big|A(k,\ell,\g)>0\right\}.\index{$\Gamma(k,\ell)$}
	\end{equation}
From the proof of Lemma \ref{Lem.ell_0(k)}, when $k\in\mathbb Z_{+}$ and $\ell>1$, the function $\g\mapsto R(k, \ell, \g)$ is strictly decreasing for $\g\in\G(k,\ell)$, in which case we can view $(k, \ell, R)$ as free parameters and write $\g\in\G(k,\ell)$ as a function $\g=\g(k, \ell, R)$ that depends uniquely on $(k, \ell, R)$ for $k\in\mathbb Z_{+}$, $\ell>1$ and $R\in(R_{\text{inf}}, +\infty)$. Moreover, the function $(\ell, R)\mapsto \g(k,\ell, R)$ is smooth, hence $(\ell, R)\mapsto A$, $(\ell, R)\mapsto \varepsilon$ are also smooth, recalling \eqref{Eq.epsilon_A_B_gamma}.
\end{remark}

\if0
\begin{remark}\label{Rmk.gamma>1}
	It is easy to check that $1\in\G(k, \ell)$. Plugging $\g=1$ into \eqref{Eq.(R+1)^2/R_1}, we get
	\[\frac{(R+1)^2}{R}=\frac{\big[k(\ell-1)+4\ell\big]^2}{2k\ell(\ell-1)}>8>\frac{16}3=\frac{(3+1)^2}{3}.\]
	Therefore, if $(k,\ell)\in\mathcal K$ and $R\in[3,4]$, then $\g>1$.
\end{remark}
\fi

In the rest of the proof, we assume that $(k,\ell)\in\mathcal K$, hence for any $R_v\in(3,4)$, there exists a unique $\g\in\G(k,\ell)$ such that $R(k,\ell,\g)=R_v$.

Let $ \delta=\la_-$\index{$\delta$} then $ \delta>0$, $ \la_+=R\la_-=R\delta$. It follows from \eqref{Eq.a_1_expression} and \eqref{la} that
\begin{equation}\label{Eq.a_1+c_1=Rdelta}
	a_1+\tilde c_1=a_1+2\varepsilon=\la_+=R\la_-=R\delta.
\end{equation}
We infer from $\varphi_{\mathcal{A}}(\lambda)=\la^2-((k+2)\varepsilon+A)\la+2k\varepsilon(\varepsilon+1)=(\lambda-\la_+)(\lambda-\la_-)$
that
\begin{equation}\label{Eq.Rdelta^2}
	R\delta^2=\la_+\la_-=2k\varepsilon(1+\varepsilon),\quad (R+1)\delta=\la_++\la_-=(k+2)\varepsilon+A.
\end{equation}
Hence we get
\begin{equation}\label{Eq.delta}
	\delta=(R+1)\delta-R\delta=(k+2)\varepsilon+A-(a_1+2\varepsilon)=A+k\varepsilon-a_1>0.
\end{equation}

\if0 Note that $(k+2)(1+\ell\gamma)-A=(k+2)\ell\g-(k-2\ell)\g=k(\ell-1)\g+4\ell\g>0$, hence by \eqref{Eq.a_1>4} we have $(k+2)a_1>2(k+2)(1+\ell\g)>2A$, thus
\begin{equation}\label{Eq.kR>2}
	kR=\frac{k(a_1+2\varepsilon)}{\delta}>\frac{2(A+k\varepsilon-a_1)}{\delta}=2.
\end{equation}\fi

By \eqref{Eq.epsilon_A_B_gamma} and \eqref{Eq.Rdelta^2}, we get $k+2-(k-2\ell)\g=A=(1+R)\delta-(k+2)\varepsilon$ and
$$(k+2)(1+\varepsilon)=(1+R)\delta+(k-2\ell)\g=(1+R)\sqrt{\frac{2k\varepsilon(1+\varepsilon)}{R}}+(k-2\ell)\g.$$
Dividing the above identity by $\sqrt{1+\varepsilon}=\sqrt\ell\g$ gives
\begin{equation}\label{Eq.4.39}
	(k+2)\sqrt{1+\varepsilon}=(1+R)\sqrt{\frac{2k\varepsilon}{R}}+\frac{k-2\ell}{\sqrt\ell}.
\end{equation}

\subsection{Analytic solutions near $Q_1$}\label{Subsec.local_z_u}

Here we construct an analytic local solution to \eqref{Eq.ODE_z_u} passing through $Q_1$ with slope $a_1$ at $Q_1$. We write the Taylor series of a general analytic function $u$ around the point $Q_1$ with $u(0)=\varepsilon$ and $u'(0)=a_1$ as
\begin{equation}\label{Eq.u_z_series}
	u(z)=\sum_{n=0}^\infty u_nz^n=\varepsilon+\sum_{n=1}^\infty u_nz^n,
\end{equation}
where $u_0=\varepsilon$ and $u_1=a_1$ is the larger root of \eqref{Eq.quad_u_1}.

\begin{lemma}\label{Lem.recurrence_relation}
	Let $u(z)$ be defined by \eqref{Eq.u_z_series}, and $\mathcal L(u)(z)$ be defined by \eqref{Luz}. Then we have
	\begin{equation}
		\mathcal L(u)(z)=\sum_{n=0}^\infty \mathcal U_n z^n,
	\end{equation}
where $\mathcal U_0=\mathcal U_1=0$ and for each $n\geq2$,
	\begin{equation}\label{Eq.mathcal_U_n}
		\mathcal U_n=(R-n)\delta u_n-E_n,
	\end{equation}
	with $\delta=\lambda_-$, $R=\lambda_+/\lambda_-$ and $E_n$ given by
	\begin{equation}\label{Eq.E_n}
		\begin{aligned}
			E_n&=-\frac{n+1}2\sum_{j=2}^{n-1}u_{j}u_{n+1-j}-\left(2-\frac{k+1}{2}n\right)\sum_{j=1}^{n-1}u_{j}u_{n-j}\\
			&\qquad -\big((n-3)A+(n-1)B+2k\big)u_{n-1}+\big((n-4)B+2k\big)u_{n-2}.
		\end{aligned}
	\end{equation}
\end{lemma}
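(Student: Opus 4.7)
The statement is a direct computation identifying the $z^n$ Maclaurin coefficient of $\mathcal L(u)=-\Delta_z(z,u)\,u'+\Delta_u(z,u)$ with the formal series $u=\sum_{n\ge 0}u_n z^n$, $u_0=\varepsilon$, $u_1=a_1$. The plan is to expand $\Delta_u$ and $\Delta_z$ as polynomials in $(z,u)$ using $f(z)=-\varepsilon-Az+Bz^2$, multiply out by Cauchy products, and isolate the contributions involving $u_{n+1}$ and $u_n$ from the rest, which will become $-E_n$.

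Concretely, I would first rewrite
\[\mathcal L(u) = uu' -(k+1)z\,uu' +(1-z)f(z)\,u' + 2u^2 + 2uf(z) + 2kz(1-z)u,\]
and use
\[[z^n](uu') = \tfrac{n+1}{2}\sum_{j=0}^{n+1}u_j u_{n+1-j}, \qquad [z^n](u^2) = \sum_{j=0}^n u_j u_{n-j},\]
plus $[z^n]u=u_n$ and $[z^n]u'=(n+1)u_{n+1}$, weighting each by the appropriate monomial from $f(z)$ and from $kz(1-z)$. After collecting, $\mathcal U_n$ splits into a diagonal part in $u_n,u_{n+1}$ plus lower-order convolution and linear terms.

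The algebraic heart of the lemma is the simplification of the diagonal part. The coefficient of $u_{n+1}$ in $[z^n]\mathcal L(u)$ receives a $(n+1)u_0 u_{n+1}$ from $[z^n](uu')$ and a $-(n+1)\varepsilon u_{n+1}$ from $-\varepsilon[z^n]u'$, so it vanishes because $u_0=\varepsilon$; this is exactly what makes the relation a recurrence for $u_n$ rather than for $u_{n+1}$. The coefficient of $u_n$ assembles to
\[(n+1)a_1 - n(A+k\varepsilon)+2\varepsilon,\]
and using the $Q_1$ identities $a_1+2\varepsilon=R\delta$ from \eqref{Eq.a_1+c_1=Rdelta} and $A+k\varepsilon = a_1+\delta$ from \eqref{Eq.delta}, this collapses to $(R-n)\delta$. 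The residual terms are then reorganized into the stated $E_n$: the convolution $\sum_{j=2}^{n-1}u_j u_{n+1-j}$ comes from $[z^n](uu')$ after extracting $j\in\{0,1,n,n+1\}$; the convolution $\sum_{j=1}^{n-1}u_j u_{n-j}$ comes from combining $-(k+1)z[z^{n-1}](uu')$ with $2[z^n]u^2$ after extracting $j\in\{0,n\}$; and the $u_{n-1},u_{n-2}$ linear terms assemble from $(1-z)f(z)u'$ together with $2uf(z)+2kz(1-z)u$, giving coefficients $(n-3)A+(n-1)B+2k$ and $-((n-4)B+2k)$ respectively.

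For the boundary cases, I would handle $n=0,1$ directly rather than via \eqref{Eq.mathcal_U_n}. $\mathcal U_0=\mathcal L(u)(0)=-\Delta_z(Q_1)\,a_1+\Delta_u(Q_1)=0$ since $Q_1$ is a simultaneous zero of $\Delta_z$ and $\Delta_u$. For $\mathcal U_1$, differentiating $\mathcal L(u)$ once at $z=0$ and using $\Delta_z(Q_1)=0$ yields
\[\mathcal U_1 = a_1^2+(\tilde c_1-\tilde c_4)a_1+\tilde c_3 = p_0(a_1)=0\]
by the very definition of $a_1$ as a root of \eqref{Eq.quad_u_1}. The main obstacle is purely bookkeeping — carefully tracking which convolution indices contribute to the diagonal $u_n,u_{n+1}$ and which to $E_n$, and grouping the remaining polynomial-in-$n$ coefficients — but no delicate estimates are required; the conceptual content is entirely encoded in the two $Q_1$ relations that turn the coefficient of $u_n$ into $(R-n)\delta$, foreshadowing why $R\notin\mathbb Z$ will later be needed to solve the recurrence for all $n$.
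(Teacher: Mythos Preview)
Your proposal is correct and follows essentially the same route as the paper: expand $\mathcal L(u)=-\Delta_z u'+\Delta_u$ term by term, use Cauchy products for $uu'$ and $u^2$, observe that the $u_{n+1}$ contributions cancel because $u_0=\varepsilon$, and then simplify the $u_n$ coefficient to $(R-n)\delta$ via the identities $a_1+2\varepsilon=R\delta$ and $A+k\varepsilon-a_1=\delta$. Your handling of $\mathcal U_0,\mathcal U_1$ by direct evaluation at $Q_1$ and invoking $p_0(a_1)=0$ is a slight cosmetic streamlining of the paper's explicit computation, but the content is identical.
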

\begin{proof}
	In a neighborhood of $Q_1$, we have $\frac{\du}{\dz}=\sum_{n=1}^\infty nu_nz^{n-1}$. For notational simplicity, we set $u_n=0$ for $n<0$ and
	\begin{equation}
		(u^2)_n=\sum_{n_1+n_2=n}u_{n_1}u_{n_2}\quad\forall\ n\geq0.
	\end{equation}
	With this notation, we have
	\begin{equation}\label{Eq.u^2}
		u^2=\sum_{n=0}^\infty (u^2)_nz^n,\qquad uu'=\frac12(u^2)'=\frac12\sum_{n=1}^\infty n(u^2)_nz^{n-1}.
	\end{equation}
	Recall that $\mathcal L(u)(z)=-\Delta_z(z,u)\frac{\du}{\dz}+\Delta_u(z,u)$ with (see \eqref{Luz} and \eqref{uz})
	\begin{equation}\label{Eq.Delta_z_u}
		\begin{aligned}
			\Delta_u(z,u)&=2u\left[u+f(z)+kz(1-z)\right]=2u^2+2(B-k)z^2u+2(k-A)zu-2\varepsilon u,\\
			-\Delta_z(z,u)&=(1-(k+1)z)u+(1-z)f(z)\\
			&=-(k+1)zu+u-Bz^3+(A+B)z^2+(\varepsilon-A)z-\varepsilon.
		\end{aligned}
	\end{equation}
	Substituting \eqref{Eq.u^2} into \eqref{Eq.Delta_z_u}  gives
	\begin{align*}
		\Delta_u(z,u)&=2\sum_{n=0}^\infty (u^2)_nz^n+2(B-k)\sum_{n=0}^\infty u_nz^{n+2}+2(k-A)\sum_{n=0}^\infty u_nz^{n+1}-2\varepsilon\sum_{n=0}^\infty u_nz^{n},\\
		-\Delta_z(z,u)\frac{\du}{\dz}&=-\frac{k+1}{2}\sum_{n=0}^\infty n(u^2)_nz^{n}+\frac12\sum_{n=1}^\infty n(u^2)_nz^{n-1}-B\sum_{n=0}^\infty nu_nz^{n+2}\\
		&\qquad+(A+B)\sum_{n=0}^\infty nu_nz^{n+1}+(\varepsilon-A)\sum_{n=0}^\infty nu_nz^{n}-\varepsilon\sum_{n=1}^\infty nu_nz^{n-1}.
	\end{align*}
	We write $\mathcal L(u)(z)=\sum_{n=0}^\infty \mathcal U_nz^n$, then
	\begin{align*}
		\mathcal U_0=2(u^2)_0-2\varepsilon u_0+\frac12(u^2)_1-\varepsilon u_1=2u_0^2-2\varepsilon u_0+u_0u_1-\varepsilon u_1=0,
	\end{align*}
	recalling  $u_0=\varepsilon$; and
	\begin{align*}
		\mathcal U_1&=2(u^2)_1+2(k-A)u_0-2\varepsilon u_1-\frac{k+1}2(u^2)_1+(u^2)_2+(\varepsilon-A)u_1-2\varepsilon u_2\\
		&=u_1^2-\big((k-2)\varepsilon+A\big)u_1+2(k-A)\varepsilon=0;
	\end{align*}
	and for $n\geq 2$, 
	\begin{equation}\label{Eq.mathcal_U_n1}
		\begin{aligned}
			\mathcal U_n&=2(u^2)_n+2(B-k)u_{n-2}+2(k-A)u_{n-1}-2\varepsilon u_n-\frac{k+1}2n(u^2)_n-B(n-2)u_{n-2}\\&\qquad+\frac12(n+1)(u^2)_{n+1}+(A+B)(n-1)u_{n-1}+(\varepsilon-A)nu_n-\varepsilon(n+1)u_{n+1}\\
			&=\frac{n+1}{2}(u^2)_{n+1}-\varepsilon(n+1)u_{n+1}+\left(2-\frac{k+1}{2}n\right)(u^2)_n+\big((\varepsilon-A)n-2\varepsilon\big)u_n\\
			&\qquad+\big((A+B)(n-1)+2(k-A)\big)u_{n-1}+\big(2(B-k)-B(n-2)\big)u_{n-2}.
		\end{aligned}
	\end{equation}
	Recalling that
	\begin{align*}
		(u^2)_n=2u_0u_n+\sum_{j=1}^{n-1}u_{j}u_{n-j},\quad
		(u^2)_{n+1}=2u_0u_{n+1}+2u_1u_n+\sum_{j=2}^{n-1}u_{j}u_{n+1-j},
	\end{align*}
	we isolate the terms containing $u_{n+1}$ and $u_n$ from \eqref{Eq.mathcal_U_n1}, i.e., $u_{n+1}\big((n+1)u_0-\varepsilon(n+1)\big)=0$ and
	\begin{align*}
		&u_n\left((n+1)u_1+\left(2-\frac{k+1}{2}n\right)\cdot 2u_0+(\varepsilon-A)n-2\varepsilon\right)\\
		=&u_n\big(u_1+2\varepsilon+n(u_1-A-k\varepsilon)\big)=(A+k\varepsilon-u_1)\left(\frac{u_1+2\varepsilon}{A+k\varepsilon-u_1}-n\right)u_n\\
		=&(R-n)\delta u_n,
	\end{align*}
	where we have used \eqref{Eq.delta} and \eqref{Eq.a_1+c_1=Rdelta}. Substituting these into \eqref{Eq.mathcal_U_n1} and rearranging terms by order of $u_n$, we get \eqref{Eq.mathcal_U_n} and thus the proof is complete.
\end{proof}

\begin{lemma}\label{Lem.a_n_boun}
	Let $\{a_n\}_{n=0}^\infty$ be a sequence such that $a_0=\varepsilon$, $a_1$ is given by \eqref{Eq.a_1_expression} and $a_n\ (n\geq2)$\index{$a_n (n\geq 0)$} is given by the recurrence relation
	\begin{equation}\label{Eq.a_n_recurrence}
		\begin{aligned}
			(R-n)\delta a_n&=\wt E_n=-\frac{n+1}2\sum_{j=2}^{n-1}a_{j}a_{n+1-j}-\left(2-\frac{k+1}{2}n\right)\sum_{j=1}^{n-1}a_{j}a_{n-j}\\
			&\qquad -\big((n-3)A+(n-1)B+2k\big)a_{n-1}+\big((n-4)B+2k\big)a_{n-2}.
		\end{aligned}
	\end{equation}
	Assume that $\al\in(1, 2)$, $\mathcal C\subset(1, +\infty)\setminus\Z$ is compact and $R\in\mathcal C$. Then there exists a constant $K=K(\mathcal C)>1$ independent of $R$ such that
	\begin{equation}\label{Eq.a_n_bound}
		|a_n|\leq \mathfrak{C}_{n-1}K^{n-\al}\quad \forall\ n\geq2.
	\end{equation}
	Here $ \mathfrak{C}_{n}$ is the Catalan number.
\end{lemma}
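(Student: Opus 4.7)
The plan is to prove the bound \eqref{Eq.a_n_bound} by strong induction on $n$, with $K = K(\mathcal{C}) > 1$ chosen large at the end. The first preparatory observation is that since $\mathcal{C}$ is a compact subset of $(1,+\infty)\setminus \Z$, the distance $d := \operatorname{dist}(\mathcal{C},\Z) > 0$, and therefore
\[
|R-n|\delta \;\geq\; c_0 := d\cdot \inf_{R\in\mathcal C}\delta(R) \;>\; 0 \qquad \forall\,n\in\N,\ R\in\mathcal{C},
\]
so we can divide safely by $(R-n)\delta$ in the recurrence \eqref{Eq.a_n_recurrence}. (The continuous dependence of $\delta$ on $R$ comes from Remark \ref{Rmk.R_parameter}.) Moreover, for $n$ large we will use the sharper inequality $|R-n|\delta \geq (n/2)\inf_{\mathcal{C}}\delta$.

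For the inductive step I would proceed as follows. The recurrence contains four terms: two quadratic convolutions in $\{a_j\}$ and two linear terms, with polynomial-in-$n$ coefficients. Using the induction hypothesis $|a_j|\leq \mathfrak{C}_{j-1}K^{j-\alpha}$ for $2\leq j\leq n-1$ and treating $|a_0|,|a_1|$ as fixed constants, the Catalan convolution identity $\sum_{i=0}^{m-1}\mathfrak{C}_i\mathfrak{C}_{m-1-i}=\mathfrak{C}_m$ together with $\mathfrak{C}_m\leq 4\mathfrak{C}_{m-1}$ yields, after separating the indices $j=1$ and $j=n-1$ (where one factor is $a_1$) from the bulk,
\[
\sum_{j=2}^{n-1}|a_j||a_{n+1-j}| \;\lesssim\; \mathfrak{C}_{n-1}K^{n+1-2\alpha},\qquad
\sum_{j=1}^{n-1}|a_j||a_{n-j}| \;\lesssim\; \mathfrak{C}_{n-1}K^{n-2\alpha} + |a_1|\mathfrak{C}_{n-2}K^{n-1-\alpha}.
\]
The linear terms are bounded by $O(n)\cdot \mathfrak{C}_{n-2}K^{n-1-\alpha}$ and $O(n)\cdot \mathfrak{C}_{n-3}K^{n-2-\alpha}$. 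Combining and using $\mathfrak{C}_{n-2},\mathfrak{C}_{n-3}\leq \mathfrak{C}_{n-1}$, we obtain
\[
|\wt E_n| \;\leq\; C_1\, n\, \mathfrak{C}_{n-1}\, K^{n+1-2\alpha} \;+\; C_1\, n\, \mathfrak{C}_{n-1}\, K^{n-1-\alpha},
\]
with $C_1$ depending only on $k,\ell$ and $\sup_\mathcal{C} R$.

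Dividing by $|R-n|\delta \geq (n/2)\inf\delta$ for $n$ larger than some threshold $n_0(\mathcal{C})$ absorbs the factor of $n$ and gives
\[
|a_n| \;\leq\; C_2\, \mathfrak{C}_{n-1}\Bigl(K^{n+1-2\alpha} + K^{n-1-\alpha}\Bigr) \;=\; \mathfrak{C}_{n-1} K^{n-\alpha}\cdot C_2\bigl(K^{1-\alpha}+K^{-1}\bigr).
\]
Since $\alpha > 1$ (so $1-\alpha < 0$), both $K^{1-\alpha}$ and $K^{-1}$ tend to $0$ as $K\to\infty$, so choosing $K$ large enough makes the last factor $\leq 1$ and closes the induction for $n>n_0$. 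For the finitely many remaining cases $2\leq n\leq n_0$, I would handle them directly: the recurrence \eqref{Eq.a_n_recurrence} expresses $a_n$ as a fixed polynomial expression in $a_0,a_1,\dots,a_{n-1}$ divided by $(R-n)\delta$, all uniformly bounded on the compact set $\mathcal{C}$, and further enlarging $K$ accommodates the bound $|a_n|\leq \mathfrak{C}_{n-1}K^{n-\alpha}$ for each such $n$.

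The main obstacle is purely bookkeeping: ensuring that the powers of $K$ we get after applying the induction hypothesis are $K^{n-\alpha}$ times something small, rather than something of order $1$. This requires exploiting the condition $\alpha > 1$ (so $K^{1-\alpha}$ is genuinely small) and treating the mixed terms involving $a_1$ carefully, since they are the ones that produce the $K^{n-1-\alpha}$ factor rather than $K^{n+1-2\alpha}$. Once the right splitting of indices is done and the Catalan convolution identity is applied correctly, the uniformity in $R\in\mathcal{C}$ follows automatically from the uniform lower bound on $|R-n|\delta$ and the compactness of $\mathcal{C}$.
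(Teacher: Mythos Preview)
Your proposal is correct and follows essentially the same strategy as the paper: strong induction on $n$, Catalan convolution to control the quadratic sums, the linear growth $|R-n|\sim n$ to cancel the polynomial prefactors, and the condition $\alpha>1$ to ensure the resulting factor $K^{1-\alpha}$ is small enough to close the induction for $K$ large. The only cosmetic difference is that the paper uses $\alpha<2$ to absorb your $K^{n-1-\alpha}$ term into $K^{n+1-2\alpha}$ and thus works with a single error term, whereas you keep both and observe that each of $K^{1-\alpha}$ and $K^{-1}$ is small; both routes are fine.
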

\begin{proof}
	Assume that for all $R\in\mathcal C$ and $n\in\Z_+$ we have $R\leq R_M$ and $|R-n|\geq R_m>0$. It follows from Remark \ref{Rmk.R_parameter}, \eqref{Eq.a_1_expression} and \eqref{Eq.delta} that $R\mapsto a_1$, $R\mapsto \delta$ are smooth functions.
By \eqref{Eq.a_1+c_1=Rdelta}, \eqref{Eq.a_1>4} and $\varepsilon>0$, we have
	\begin{equation}\label{Eq.delta_low_bound}
		\delta=\frac{R\delta}{R}=\frac{a_1+2\varepsilon}{R}>\frac{a_1}{R}>
		\frac{4}{R}\geq\frac 4{R_M},\quad \forall\ R\in\mathcal C.
	\end{equation}
	Letting $n=2$ in \eqref{Eq.a_n_recurrence} gives
	\begin{equation}\label{Eq.a_2}
		(R-2)\delta a_2=(k-1)a_1^2-(-A+B+2k)a_1+2(k-B)a_0,
	\end{equation}
	hence $R\mapsto a_2$ is smooth for $R\in(1, +\infty)\setminus\{2\}$ and $|a_2|\lesssim_{k,\ell}1$ uniformly for $R\in\mathcal C$, thus \eqref{Eq.a_n_bound} holds for $n=2$ if we take $K>1$ large enough. Letting $n=3$ in \eqref{Eq.a_n_recurrence} yields
	\begin{equation}\label{Eq.a_3}
		\begin{aligned}
			(R-3)\delta a_3&=\left[(3k-1)a_1-2a_2-2(B+k)\right]a_2+(2k-B)a_1\\
			&=\left[(3k-1)a_1-2a_2-2(B+k)\right]a_2+(\ell-1)a_1,
		\end{aligned}
	\end{equation}
	hence $R\mapsto a_3$ is smooth for $R\in(1, +\infty)\setminus\{2, 3\}$ and $|a_3|\lesssim_{k,\ell}1$ uniformly for $R\in\mathcal C$, thus \eqref{Eq.a_n_bound} holds for $n=3$ if we take $K>1$ large enough. Letting $n=4$ gives
	\begin{equation}\label{Eq.a_4}
		(R-4)\delta a_4=\left[-5a_2+4ka_1-(2k+A+3B)\right]a_3+2ka_2(a_2+1),
	\end{equation}
	hence $R\mapsto a_4$ is smooth for $R\in(1, +\infty)\setminus\{2, 3, 4\}$ and $|a_4|\lesssim_{k,\ell}1$ uniformly for $R\in\mathcal C$, thus \eqref{Eq.a_n_bound} holds for $n=4$ if we take $K>1$ large enough.
	
	Now we assume for induction that $R\mapsto a_n$ is continuous in $R\in(1, +\infty)\setminus\{2, 3, \cdots, n\}$ and \eqref{Eq.a_n_bound} holds for $2\leq n\leq N-1$, where $N\geq5$. We have
	\begin{align*}
		\left|\sum_{j=2}^{N-1}a_ja_{N+1-j}\right|\leq \sum_{j=2}^{N-1}\mathfrak{C}_{j-1}\mathfrak{C}_{N-j}K^{N+1-2\al}\lesssim \mathfrak{C}_{N-1}K^{N+1-2\al},
	\end{align*}
	where we have used $\mathfrak{C}_n\sim \mathfrak{C}_{n-1}$; and similarly,
	\begin{align*}
		\left|\sum_{j=1}^{N-1}a_ja_{N-j}\right|&=\left|2a_1a_{N-1}+\sum_{j=2}^{N-2}a_ja_{N-j}\right|\lesssim \mathfrak{C}_{N-2}K^{N-1-\al}\\
		&\qquad+ \sum_{j=2}^{N-2}\mathfrak{C}_{j-1}\mathfrak{C}_{N-j-1}K^{N-2\al}\lesssim \mathfrak{C}_{N-1}K^{N-1-\al};
	\end{align*}
	hence,
	\begin{align*}
		\left|\wt E_N\right|&\lesssim_{k,\ell}(N+1)\left(\left|\sum_{j=2}^{N-1}a_ja_{N+1-j}\right|+\left|\sum_{j=1}^{N-1}a_ja_{N-j}\right|+|a_{N-1}|+|a_{N-2}|\right)\\
		&\lesssim (N+1)\big(\mathfrak{C}_{N-1}K^{N+1-2\al}+\mathfrak{C}_{N-1}K^{N-1-\al}\\
		&\qquad+\mathfrak{C}_{N-2}K^{N-1-\al}+\mathfrak{C}_{N-3}K^{N-2-\al}\big)
		\lesssim N \mathfrak{C}_{N-1}K^{N+1-2\al}.
	\end{align*}
	Therefore, by the recurrence relation \eqref{Eq.a_n_recurrence}, we know that $R\mapsto a_N$ is smooth in $R\in(1, +\infty)\setminus\{2, 3, \cdots, N\}$ and
(by \eqref{Eq.delta_low_bound}, $R\leq R_M$ and $|R-N|\geq R_m>0$)
	\[|a_N|=\frac{\left|\wt E_N\right|}{|R-N|\delta}\leq \frac{C_0(k,\ell,\mathcal C)}N\left|\wt E_N\right|\leq C_1(k,\ell,\mathcal C) \mathfrak{C}_{N-1}K^{N+1-2\al}\leq\mathfrak{C}_{N-1}K^{N-\al},\]
	if we choose $K>C_1^{1/(\al-1)}$ and thus we close the induction.
\end{proof}

\begin{remark}
	From the proof of Lemma \ref{Lem.a_n_boun}, we see that for all $n\geq0$, $a_n$ is a continuous function in $R\in(3,4)$, and $a_0, a_1, a_2$ are continuous in the closed interval $R\in[3,4]$. Similarly, by the induction, we can show that for all $n\geq 0$, $a_n$ is continuous with respect to $\ell>1$.
\end{remark}

\begin{proposition}\label{Prop.local_solution_Q_1}
	Assume that $(k,\ell)\in\mathcal K$ and $\mathcal C\subset(3, 4)$ is compact. Let $\{a_n\}_{n=0}^\infty$ be defined as in Lemma \ref{Lem.a_n_boun}. Then there exists $\zeta_0=\zeta_0(k,\ell, \mathcal C)>0$ such that the Taylor series
	\[u_L(z)=u_L(z;R)=\sum_{n=0}^\infty a_nz^n\index{$u_L(z)$}\]
	converges absolutely for $z\in[-\zeta_0, \zeta_0]$ and it gives the unique analytic solution to the $z-u$ ODE \eqref{Eq.ODE_P_1} in $[-\zeta_0, \zeta_0]$ with $u_L(0)=\varepsilon$ and $u_L'(0)=a_1$. Moreover, for fixed $(k,\ell)\in\mathcal K$, the function $u_L(z)=u_L(z; R)$ is continuous in the domain $\mathcal D_{\mathcal C}=\{(z, R): z\in[-\zeta_0, \zeta_0], R\in\mathcal C\}$.
\end{proposition}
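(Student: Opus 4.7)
The plan is to exploit the quantitative bound on the Taylor coefficients already supplied by Lemma~\ref{Lem.a_n_boun}. Since $\mathcal C\subset(3,4)$ is compact, we can pick one $\alpha\in(1,2)$ and obtain a single constant $K=K(\mathcal C,k,\ell)>1$ such that $|a_n|\le \mathfrak C_{n-1}K^{n-\alpha}$ for all $n\ge 2$ and all $R\in\mathcal C$. Combined with the elementary bound $\mathfrak C_{n-1}\le 4^{n-1}$, this gives $|a_n|\le 4^{n-1}K^{n-\alpha}$, so the power series $\sum_{n\ge 0}a_n z^n$ is majorized by a geometric series of ratio $4K|z|$. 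Choosing $\zeta_0=1/(8K)$ (any number strictly less than $1/(4K)$ will do) guarantees absolute and uniform convergence for $(z,R)\in[-\zeta_0,\zeta_0]\times\mathcal C$, together with term-by-term differentiation on the same set.

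Next I would verify that $u_L$ really solves \eqref{Eq.ODE_P_1}. By construction $a_0=\varepsilon=u_L(0)$ and $a_1=u_L'(0)$, while the recurrence \eqref{Eq.a_n_recurrence} was built precisely so that, when one expands $\mathcal L(u_L)$ as in Lemma~\ref{Lem.recurrence_relation}, every Taylor coefficient $\mathcal U_n$ vanishes: $\mathcal U_0=\mathcal U_1=0$ holds by Lemma~\ref{Lem.recurrence_relation} since $a_0=\varepsilon$ and $a_1$ solves $p_0(a_1)=0$, and for $n\ge 2$ one has $\mathcal U_n=(R-n)\delta a_n-E_n=0$ by \eqref{Eq.a_n_recurrence}. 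Since $\mathcal L(u_L)$ is analytic on $[-\zeta_0,\zeta_0]$ with vanishing Taylor series, $\mathcal L(u_L)\equiv 0$, and this is equivalent to \eqref{Eq.ODE_P_1}.

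For uniqueness, suppose $\tilde u(z)=\sum b_nz^n$ is another analytic solution on a neighborhood of $0$ with $b_0=\varepsilon$, $b_1=a_1$. Plugging into Lemma~\ref{Lem.recurrence_relation} gives $\mathcal U_n(\tilde u)=0$ for all $n\ge 0$; the $n=0$ and $n=1$ identities are automatic from $b_0=\varepsilon$ and $p_0(b_1)=0$, while for $n\ge 2$ the equation reads $(R-n)\delta b_n=E_n(b_0,\ldots,b_{n-1})$. Because $R\in(3,4)\setminus\Z$, the factor $(R-n)\delta$ never vanishes, so this recursion uniquely determines $b_n$ inductively, forcing $b_n=a_n$ for all $n$.

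Finally, for joint continuity in $(z,R)$ on $\mathcal D_{\mathcal C}$, the remark following Lemma~\ref{Lem.a_n_boun} shows that each $a_n(R)$ is continuous in $R\in\mathcal C$ (since $|R-n|\ge R_m>0$ for all integers $n$, uniformly in $R\in\mathcal C$). The uniform bound $|a_nz^n|\le K^{1-\alpha}(4K\zeta_0)^{n-1}|z|$ supplies the Weierstrass $M$-test, so the series $\sum a_n(R)z^n$ converges uniformly on $\mathcal D_{\mathcal C}$ to a continuous function. The only mild subtlety here is to make sure that the bound from Lemma~\ref{Lem.a_n_boun}, obtained with a compact $\mathcal C$ bounded away from the integers $\{2,3,4\}$, is indeed uniform in $R\in\mathcal C$; but this is built into the statement of that lemma, which is the main obstacle already dispatched.
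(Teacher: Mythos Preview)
Your proof is correct and follows essentially the same approach as the paper: use the coefficient bound from Lemma~\ref{Lem.a_n_boun} together with $\mathfrak C_{n-1}\le 4^{n-1}$ to majorize the series geometrically, choose $\zeta_0$ accordingly, and then invoke Lemma~\ref{Lem.recurrence_relation} and uniform convergence for the remaining claims. You supply more detail than the paper (which simply records $|a_n|\le K^n$, takes $\zeta_0=1/(2K)$, and cites Lemma~\ref{Lem.recurrence_relation} for the solution property), but the strategy is identical.
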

\begin{proof}
	Since $\mathfrak{C}_n\leq 4^n$ for all $n\geq 0$, by Lemma \ref{Lem.a_n_boun}, we know that there exists a constant $K=K(k,\ell, \mathcal C)>1$ such that $|a_n|\leq K^n$ for all $n\geq0$ and all $R\in\mathcal C$. Taking $\zeta_0=\zeta_0(\mathcal C)=\frac{1}{2K}>0$, the power series $u_L(z)=\sum_{n=0}^\infty a_nz^n$ is uniformly convergent and analytic in $z\in[-\zeta_0, \zeta_0]$. It follows from Lemma \ref{Lem.recurrence_relation} that the function $u_L$ is the unique analytic solution to \eqref{Eq.ODE_P_1} in $[-\zeta_0, \zeta_0]$ with $u_L(0)=\varepsilon$ and $u_L'(0)=a_1$. Finally, the continuity of $u_L(z; R)$ in $\mathcal D_\mathcal C$ follows from the fact that uniform convergence preserves continuity and the continuity of $a_n$ in $R$.
\end{proof}

\begin{remark}\label{Rmk.u_F_continuous}
	The $Q_0-Q_1$ solution curve $u_F(z; R)$ is continuous in $(z, R)\in(0,\frac{1}{k+1})\times (1,+\infty)$. Indeed, alike Proposition \ref{Prop.local_solution_Q_1},  we can show that $v_F(Z; R)$ is continuous on $(Z, R)\in[0, Z_2)\times (1, +\infty)$ for some $Z_2\in(0, Z_1)$ small (also using Remark \ref{Rmk.R_parameter}); hence $u_F(z; R)$ is continuous on $(z, R)\in(\frac{1}{k+1}-z_2, \frac{1}{k+1})\times(1, +\infty)$ for some $z_2>0$ small; then consider the initial value problem $\du/\dz=\Delta_u/\Delta_z$ with $u(\frac{1}{k+1}-z_2)=u_F(\frac{1}{k+1}-z_2)$, and by using the standard theory about the continuous dependence on parameters and initial data for the solutions to ODEs, we can conclude that $u_F(z; R)$ is continuous for $(z, R)\in(0,\frac{1}{k+1})\times (1, +\infty)$.
\end{remark}

\section{Qualitative properties of local analytic solution}\label{Sec.qualitative}
In Proposition \ref{Prop.local_solution_Q_1}, we show that for $(k,\ell)\in\mathcal K$ and $R\in(3,4)$, the $z-u$ ODE \eqref{Eq.ODE_P_1} has a unique local analytic solution
\[u_L(z)=\sum_{n=0}^\infty a_nz^n\]
with $u_L(0)=\varepsilon$ and $u_L'(0)=a_1$. Sometimes, we denote $u_L(z)$ by $u_L(z; R)$ to emphasize the dependence on $R$. Recall that we also construct the $Q_0-Q_1$ solution curve $u=u_F$. If $u_L$ and $u_F$ are equal in a neighborhood of $z=0$, then we know that $u_F$ is smooth at $Q_1$. However, it turns out that $u_L$ and $u_F$ are not the same for some parameters. Our strategy is to use the intermediate value theorem to find the suitable parameters such that $u_L$ and $u_F$ are equal.

To this end, we establish some qualitative properties of the coefficients $a_2, a_3$ and $a_4$. It turns out that if $(k,\ell)$ belongs to a subset of $\mathcal K$, then $a_3>0$, $a_4<0$ for all $R\in(3,4)$. Now we give a heuristic argument on how to use this to show that $u_L=u_F$. If $R$ is close to $3$, then $a_3\sim\frac1{R-3}$ is very large, and the behavior of $u_L$ will be determined by the term $a_3z^3$, hence $u_L$ will be very large such that $u_L(z; R)>u_F(z; R)$ for $z\in(0, \zeta_1(R)]$; similarly, if $R$ is close to $4$, then $-a_4\sim\frac1{4-R}$ is very large, and the behavior of $u_L$ will be determined by the term $a_4z^4$, hence $u_L$ will be very negative such that $u_L(z; R)<u_F(z; R)$ for $z\in(0, \zeta_2(R)]$. Therefore, we can find $3<R_1<R_2<4$ and $\zeta>0$ such that $u_L(\zeta; R_1)>u_F(\zeta; R_1)$ and $u_L(\zeta; R_2)<u_F(\zeta; R_2)$, hence by the intermediate value theorem, there exists $R_0\in(3,4)$ such that $u_L(\zeta; R_0)=u_F(\zeta; R_0)$. For this $R_0$, by the uniqueness of solutions to the ODE \eqref{Eq.ODE_P_1} with $u(\zeta)=u_L(\zeta)$, we get $u_L=u_F$.

We emphasize that for $k=2$ and $\ell$ large, we may not have $a_4<0$ for $R\in(3,4)$, hence the proof for this case is much involved, see Section \ref{Sec.Proof_k=2_l_large}.

\begin{lemma}\label{Lem.a_2>0}
	Let
\begin{equation}\label{B0}
	B_0=(k-2)a_1+2A-4k.\index{$B_0$}
\end{equation}
	Then $B_0>0$ and
	\begin{equation}\label{Eq.a_2>0}
		f\left(-\frac\varepsilon{a_1}\right)\frac{a_1^2}{\varepsilon^2}=-B_0+1-\ell<0,\quad (R-2)a_2=(\ell-1)R+B_0R-\varepsilon \frac{B_0}{\delta}>0.
	\end{equation}
\end{lemma}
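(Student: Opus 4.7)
My plan is to first establish the algebraic identity for $f(-\varepsilon/a_1)\cdot a_1^2/\varepsilon^2$, then prove $B_0 > 0$ (which together with $\ell > 1$ yields the strict inequality on the first line), and finally derive the closed form for $(R-2)a_2$ and read off its positivity. For the identity, I expand
\[f(-\varepsilon/a_1)\,\frac{a_1^2}{\varepsilon^2}=-\frac{a_1^2}{\varepsilon}+\frac{A}{\varepsilon}a_1+B\]
and substitute $a_1^2=((k-2)\varepsilon+A)a_1-2(k-A)\varepsilon$ from the sonic quadratic $p_0(a_1)=0$ in \eqref{Eq.quad_u_1}. The $(A/\varepsilon)a_1$ contributions cancel, and what remains is $-(k-2)a_1+2(k-A)+B=-B_0+(B-2k)=-B_0+1-\ell$, using $B-2k=1-\ell$ from \eqref{Eq.epsilon_A_B_gamma}.

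To show $B_0>0$, I rewrite $B_0=(k-2)(a_1-2)-2(k-2\ell)\gamma$. For $k\ge 2$, the lower bound $a_1>2(1+\ell\gamma)$ from \eqref{Eq.a_1>4} gives $(k-2)(a_1-2)\ge 2(k-2)\ell\gamma$, hence
\[B_0\ge 2\gamma\bigl[(k-2)\ell-(k-2\ell)\bigr]=2k(\ell-1)\gamma>0.\]
The case $k=1$ is more delicate because $k-2<0$ flips this estimate, so I need an upper bound on $a_1$ instead. I would apply \eqref{p0g} with $s=2\ell-1$: the $\gamma^3$ coefficient $k-2\ell-(k-2)s=1-2\ell-(1-2\ell)$ vanishes, leaving $p_0\bigl(2(1+(2\ell-1)\gamma)\bigr)=2(4\ell-1)(\ell-1)\gamma^2>0$. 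Since $2(1+(2\ell-1)\gamma)>2(1+\ell\gamma)$ strictly exceeds the smaller root of $p_0$ (which sits below $2(1+\ell\gamma)<a_1$), positivity at this point forces it above the larger root $a_1$ as well, so $a_1<2(1+(2\ell-1)\gamma)$, which is precisely $B_0>0$ for $k=1$.

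For the $(R-2)a_2$ identity, I would start from the recursion \eqref{Eq.a_2}, use $p_0(a_1)=0$ to eliminate $a_1^2$, and reorganize the result by grouping the $B_0$-contributions. Using $B=2k+1-\ell$ and $R\delta=a_1+2\varepsilon$ from \eqref{Eq.a_1+c_1=Rdelta}, the recursion collapses to
\[(R-2)\delta a_2=R\delta(\ell-1+B_0)-\varepsilon B_0=R\delta(\ell-1)+B_0(a_1+\varepsilon),\]
where the second form uses $R\delta-\varepsilon=a_1+\varepsilon$. Dividing by $\delta>0$ yields the stated formula $(R-2)a_2=(\ell-1)R+B_0R-\varepsilon B_0/\delta$, and the rewriting $R\delta(\ell-1)+B_0(a_1+\varepsilon)$ makes positivity transparent since $\ell>1$, $\delta>0$, $B_0>0$, and $a_1,\varepsilon>0$. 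The main obstacle is the $k=1$ sub-case of $B_0>0$: the default estimate on $a_1$ goes the wrong way, and one must extract an upper bound via the specific choice $s=2\ell-1$ in \eqref{p0g} that annihilates the cubic term in $\gamma$. Everything else is routine bookkeeping once $B_0$ is recognized as the natural grouping variable.
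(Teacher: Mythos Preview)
Your proof is correct. The arguments for the identity $f(-\varepsilon/a_1)\,a_1^2/\varepsilon^2=-B_0+1-\ell$ and for $B_0>0$ (including the $k=1$ sub-case via the choice $s=2\ell-1$ in \eqref{p0g}) match the paper's essentially line by line.

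The one genuine difference is in how you obtain the closed form for $(R-2)a_2$. You substitute $a_1^2=((k-2)\varepsilon+A)a_1-2(k-A)\varepsilon$ directly into the recursion \eqref{Eq.a_2} and regroup; this is straightforward (if slightly tedious) polynomial bookkeeping. The paper instead exploits the operator $\mathcal L$: from \eqref{Lu1} one knows $\mathcal L(u_{(1)})(z)=-(R-2)\delta a_2 z^2-(\ell-1)a_1 z^3$, while evaluating $\mathcal L(u_{(1)})$ at the zero $\xi_1=-\varepsilon/a_1$ of $u_{(1)}$ via \eqref{Luz} gives $\mathcal L(u_{(1)})(\xi_1)=(1-\xi_1)f(\xi_1)a_1$. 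Equating and plugging in the first identity immediately yields $(R-2)\delta a_2=(\ell-1)R\delta+B_0R\delta-B_0\varepsilon$. The paper's route is slicker because it recycles the $f(\xi_1)$ computation rather than redoing an independent substitution, and it foreshadows the barrier-function technique (evaluating $\mathcal L$ on polynomial truncations) used throughout Sections~\ref{Sec.left_P_1}--\ref{Sec.right_P_1}. Your route is more self-contained and avoids introducing $\mathcal L$ at this stage. Both reach the same endpoint $(R-2)\delta a_2=R\delta(\ell-1)+B_0(a_1+\varepsilon)>0$.
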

\begin{proof}
	We first prove $B_0>0$. If $k\geq 2$, by \eqref{Eq.epsilon_A_B_gamma} and $a_1>2(1+\ell\g)$ we have
	\begin{align*}
		B_0=a_1(k-2)+2A-4k\geq2(1+\ell\g)(k-2)+2(k+2-(k-2\ell)\g)-4k=2k(\ell-1)\g>0.
	\end{align*}
	If $k=1$, then $B_0=-a_1+2A-4k=-a_1+2(k+2-(k-2\ell)\g)-4k=-a_1+2(1+(2\ell-1)\g)$. 
	Since $a_1$ is the larger root of $p_0$ in \eqref{Eq.quad_u_1}	and $p_0(2(1+\ell\g))<0,$ $2(1+(2\ell-1)\g)>2(1+\ell\g) $, and (take $ s=2\ell-1$, $k=1$ in \eqref{p0g})
	\begin{align*}
		p_0(2(1+(2\ell-1)\g))&=2\big(4\ell-1\big)\big(\ell-1\big)\g^2>0,
	\end{align*}we have $a_1<2(1+(2\ell-1)\g)$ and $B_0=-a_1+2(1+(2\ell-1)\g)>0$.
	Thus, we always have $B_0>0$. 	
	
	Let $\xi_1=-\varepsilon/a_1<0$\index{$\xi_1=-\varepsilon/a_1$}, then $f(\xi_1)=-\varepsilon-A\xi_1+B\xi_1^2=-\varepsilon+A\varepsilon/a_1+B\varepsilon^2/a_1^2$. By \eqref{Eq.quad_u_1},
	\begin{align*}
		f(\xi_1)\frac{a_1^2}{\varepsilon}&=-a_1^2+Aa_1+B\varepsilon=-\big((k-2)\varepsilon+A\big)a_1+2(k-A)\varepsilon+Aa_1+B\varepsilon\\
		&=-(k-2)a_1\varepsilon+(2k-2A+B)\varepsilon=(-B_0+B-2k)\varepsilon.
	\end{align*}
	Now it follows from $B_0>0$, \eqref{Eq.epsilon_A_B_gamma}, and $\ell>1$ that
	\begin{equation}\label{Eq.f(xi_1)<0}
		f\left(-\frac\varepsilon{a_1}\right)\frac{a_1^2}{\varepsilon^2}=-B_0+B-2k=-B_0+1-\ell<0.
	\end{equation}
	
	Let $u_{(1)}(z)=\varepsilon+a_1z$\index{$u_{(1)}(z)$}, then we get by Lemma \ref{Lem.recurrence_relation} and \eqref{Eq.a_n_recurrence} that
	\begin{equation}\label{Lu1}\mathcal L\left(u_{(1)}\right)(z)=-(R-2)\delta a_2z^2+(B-2k)a_1z^3=-(R-2)\delta a_2z^2-(\ell-1)a_1z^3.\end{equation}
	Plugging $u_{(1)}$ into the definition of $\mathcal L$ in \eqref{Luz} gives
	\begin{equation}\label{Eq.L(u_1)(xi_1)}
		\mathcal L\left(u_{(1)}\right)(\xi_1)=(1-\xi_1)f(\xi_1)a_1,
	\end{equation}
	hence $(1-\xi_1)f(\xi_1)a_1=-(R-2)\delta a_2\xi_1^2-(\ell-1)a_1\xi_1^3$, and thus
	\begin{align*}
		(R-2)\delta a_2&=-(\ell-1)a_1\xi_1-(1-\xi_1)f(\xi_1)\frac{a_1}{\xi_1^2}=(\ell-1)\varepsilon-(a_1+\varepsilon)f(\xi_1)\frac{a_1^2}{\varepsilon^2}\\
		&=(\ell-1)\varepsilon-(R\delta-\varepsilon)(-B_0+1-\ell)=(\ell-1)R\delta+B_0R\delta-B_0\varepsilon,
	\end{align*}
	where we have used \eqref{Eq.a_1+c_1=Rdelta} and \eqref{Eq.f(xi_1)<0}. Dividing the both sides of the above equality by $\delta>0$ and using $R\delta-\varepsilon=a_1+\varepsilon>0 $ gives the second item of \eqref{Eq.a_2>0}.
\end{proof}

Next we prove several identities that will be very useful in the sequel. Define
\begin{equation}\label{Eq.B_1_2_def}
	B_1:=(3k-1)a_1-2a_2-2(k+B), \qquad B_2:=-5a_2+4ka_1-(2k+A+3B).\index{$B_1, B_2$}
\end{equation}
Note that by \eqref{Eq.a_3} and \eqref{Eq.a_4}, we have
\begin{equation}\label{Eq.a_3_4}
	(R-3)\delta a_3=B_1a_2+(\ell-1)a_1,\qquad (R-4)\delta a_4=B_2a_3+2ka_2(a_2+1).
\end{equation}

\begin{lemma}
	It  holds that
	\begin{align}
		(R-2)a_2&=\big((k-3)R+1\big)a_1+3AR-(2k+B)R,\label{Eq.a_2_expression}\\
		(R-2)B_1&=\big((k+5)R-6k\big)a_1-6AR+2kR+4k+4B,\label{Eq.B_1_expression}\\
		(R-2)B_2&=-\big((k-15)R+8k+5\big)a_1-(16R-2)A+2(4k+B)R+4k+6B.\label{Eq.B_2_expression}
	\end{align}
\end{lemma}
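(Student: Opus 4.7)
My plan is to observe that \eqref{Eq.B_1_expression} and \eqref{Eq.B_2_expression} follow from \eqref{Eq.a_2_expression} by a purely mechanical substitution: since $B_1=(3k-1)a_1-2a_2-2(k+B)$ and $B_2=-5a_2+4ka_1-(2k+A+3B)$ are both affine in $a_2$, multiplying by $(R-2)$ and replacing $(R-2)a_2$ by the right-hand side of \eqref{Eq.a_2_expression} produces the claimed expressions. Concretely, for $B_1$:
\begin{align*}
(R-2)B_1&=(R-2)(3k-1)a_1-2(R-2)a_2-2(R-2)(k+B)\\
&=(R-2)(3k-1)a_1-2\big[((k-3)R+1)a_1+(3A-2k-B)R\big]-2(R-2)(k+B),
\end{align*}
and a short collection of like powers of $R$ recovers \eqref{Eq.B_1_expression}. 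The same substitution plus grouping gives \eqref{Eq.B_2_expression}.

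The main work is \eqref{Eq.a_2_expression}. Starting from the intermediate formula in the proof of Lemma \ref{Lem.a_2>0},
\[(R-2)a_2=(\ell-1)R+B_0R-B_0\varepsilon/\delta,\]
I substitute $\ell-1=2k-B$ and $B_0=(k-2)a_1+2A-4k$, which gives
\[(R-2)a_2=(k-2)Ra_1+(2A-2k-B)R-B_0\varepsilon/\delta.\]
Comparing with the target \eqref{Eq.a_2_expression}, the identity reduces to the single non-polynomial relation
\[B_0\,\varepsilon=\big[(R-1)a_1-AR\big]\,\delta.\]
This is the only step where the quadratic nature of the sonic point enters.

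To prove this remaining identity, I expand both sides using the three structural relations from Subsection \ref{Subsec.eigenvalues}:
\[R\delta=a_1+2\varepsilon,\qquad (R+1)\delta=(k+2)\varepsilon+A,\qquad R\delta^2=2k\varepsilon(1+\varepsilon).\]
For the left side, writing $B_0\varepsilon=(k-2)a_1\varepsilon+2A\varepsilon-4k\varepsilon$ and applying the first two relations to substitute $a_1=R\delta-2\varepsilon$ and $A=(R+1)\delta-(k+2)\varepsilon$ yields
\[B_0\varepsilon=(kR+2)\,\varepsilon\delta-4k\varepsilon^2-4k\varepsilon.\]
For the right side, a similar substitution gives
\[\big[(R-1)a_1-AR\big]\delta=\big[R(R-1)-R(R+1)\big]\delta^2+\big[R(k+2)-2(R-1)\big]\varepsilon\delta=-2R\delta^2+(kR+2)\varepsilon\delta,\]
and invoking $R\delta^2=2k\varepsilon(1+\varepsilon)$ converts this to $(kR+2)\varepsilon\delta-4k\varepsilon^2-4k\varepsilon$, matching the left side.

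The hard part is purely notational: there is no conceptual obstacle, only the need to keep the bookkeeping straight across three substitutions. The essential point is that \eqref{Eq.a_2_expression} packages the quadratic relation $p_0(a_1)=0$ (equivalently $R\delta^2=2k\varepsilon(1+\varepsilon)$) together with the two linear relations tying $\varepsilon$, $\delta$, $a_1$ to $R$ into a clean polynomial identity in $a_1$ and $R$, and then \eqref{Eq.B_1_expression}–\eqref{Eq.B_2_expression} inherit polynomiality from \eqref{Eq.a_2_expression} without any further use of the eigenvalue structure.
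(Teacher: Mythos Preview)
Your proof is correct and follows essentially the same route as the paper: both start from the formula $(R-2)a_2=(\ell-1)R+B_0R-B_0\varepsilon/\delta$ established in Lemma~\ref{Lem.a_2>0}, reduce \eqref{Eq.a_2_expression} to the identity $B_0\varepsilon/\delta=(R-1)a_1-AR$, and verify it via the three relations $R\delta=a_1+2\varepsilon$, $(R+1)\delta=(k+2)\varepsilon+A$, $R\delta^2=2k\varepsilon(1+\varepsilon)$; the paper then records this identity as \eqref{B0d} and derives \eqref{Eq.B_1_expression}, \eqref{Eq.B_2_expression} by the same affine substitution you outline.
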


\begin{proof}
	By \eqref{Eq.epsilon_A_B_gamma}, \eqref{Eq.a_1+c_1=Rdelta} and \eqref{Eq.Rdelta^2}, we have
	\begin{align}\notag
		B_0&=(k-2)a_1+2A-4k=(k-2)(R\delta-2\varepsilon)+2((R+1)\delta-(k+2)\varepsilon)-4k\\
		\label{B01}&=(kR+2)\delta-4k(1+\varepsilon),\\
		\varepsilon B_0&=(kR+2)\varepsilon\delta-4k\varepsilon(1+\varepsilon)=(kR+2)\varepsilon\delta-2R\delta^2,\\
		\varepsilon B_0/\delta&=(kR+2)\varepsilon-2R\delta=(kR-2)\varepsilon-2a_1.
	\end{align}	
	Eliminating $\delta$ in \eqref{Eq.a_1+c_1=Rdelta} and \eqref{Eq.Rdelta^2} gives 
	\begin{equation}\label{Eq.eliminate_delta}
		(R+1)a_1-AR=(kR-2)\varepsilon.
	\end{equation}
	Then we have (as $B_0>0$, $ \varepsilon>0$, $ \delta>0$)\begin{align}\label{B0d}
		0<\varepsilon B_0/\delta&=(kR-2)\varepsilon-2a_1=(R+1)a_1-AR-2a_1=(R-1)a_1-AR.
	\end{align}
	By \eqref{Eq.a_2>0}, \eqref{B0} and \eqref{B0d}, we obtain
	\begin{align*}
		a_2(R-2)&=(\ell-1)R+B_0R-\varepsilon{B_0}/{\delta}\\
		&=(\ell-1)R+((k-2)a_1+2A-4k)R-((R-1)a_1-AR)\\&=\big((k-3)R+1\big)a_1+3AR-(4k+1-\ell)R.
	\end{align*}
	Then \eqref{Eq.a_2_expression} follows from $B=2k+1-\ell$.
		
	By \eqref{Eq.B_1_2_def} and \eqref{Eq.a_2_expression}, we get
	\begin{align*}
		(R-2)B_1&=(R-2)(3k-1)a_1-2\big((k-3)R+1\big)a_1-6AR\\
		&\qquad\qquad+2(2k+B)R-2(k+B)(R-2)\\
		&=\big((k+5)R-6k\big)a_1-6AR+2kR+4k+4B,
	\end{align*}
	which proves \eqref{Eq.B_1_expression}. Finally, by \eqref{Eq.B_1_2_def} and \eqref{Eq.a_2_expression}, we also have
	\begin{align*}
		(R-2)B_2&=4k(R-2)a_1-5\big((k-3)R+1\big)a_1-15AR+5(2k+B)R\\
		&\qquad\qquad-A(R-2)-(2k+3B)(R-2)\\
		&=-\big((k-15)R+8k+5\big)a_1-(16R-2)A+2(4k+B)R+4k+6B.
	\end{align*}
	This proves \eqref{Eq.B_2_expression}.
\end{proof}

\begin{lemma}\label{Lem.a_4<0}
	For each $k\geq 2$, there exists $\ell_1(k)\in(1,2)$\index{$\ell_1(k)$} such that for all $1<\ell<\ell_1(k)$ and all $R\in[3,4]$, we have $B_2>0$. Moreover, when $k=1$, the inequality $B_2>0$ holds for all $\ell>1$ and all $R\in[3,4]$.
\end{lemma}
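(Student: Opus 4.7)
The plan is to leverage the explicit formula \eqref{Eq.B_2_expression}, namely
\begin{equation*}
(R-2)B_2=-\big((k-15)R+8k+5\big)a_1-(16R-2)A+2(4k+B)R+4k+6B.
\end{equation*}
Since $R-2\in[1,2]$ is strictly positive on $[3,4]$, the sign of $B_2$ equals the sign of $(R-2)B_2$. By Remark \ref{Rmk.R_parameter} and Lemma \ref{Lem.ell_0(k)}, when $(k,\ell)\in\mathcal K$ and $R\in[3,4]\subset(R_{\mathrm{inf}}(k,\ell),+\infty)$, the parameter $\gamma$ is uniquely determined and depends smoothly on $(\ell,R)$. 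Consequently $\varepsilon,A,B,a_1$ (the last via \eqref{Eq.a_1_expression} or equivalently \eqref{Eq.a_1+c_1=Rdelta} combined with \eqref{Eq.Rdelta^2}) are smooth functions of $(\ell,R)$, and so is $(R-2)B_2$.

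For the case $k\geq 2$, I would argue by continuity at $\ell=1$. At $\ell=1$, the defining relation \eqref{Eq.(R+1)^2/R_1} becomes an explicit quadratic in $\gamma$, giving a closed-form $\gamma=\gamma(k,1,R)$; substituting into the formula yields $(R-2)B_2(k,1,R)$ as an elementary expression in $R\in[3,4]$ and $k$. A direct verification (e.g.\ test values at $R=3$ and $R=4$ together with a simple monotonicity check in $R$) then shows $(R-2)B_2(k,1,R)>0$ on $[3,4]$ for every $k\geq 2$. Since $(R-2)B_2(k,\ell,R)$ is jointly continuous in $(\ell,R)$ on a neighborhood of $\{1\}\times[3,4]$ and the latter is compact, there exists $\ell_1(k)\in(1,2)$ such that $(R-2)B_2(k,\ell,R)>0$ for all $(\ell,R)\in(1,\ell_1(k))\times[3,4]$, which yields the first assertion.

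For $k=1$, the assertion must hold for every $\ell>1$, so a compactness-near-$\ell=1$ argument is insufficient and an explicit positivity proof is required. Setting $k=1$ and $B=3-\ell$, the formula reduces to
\begin{equation*}
(R-2)B_2=(14R-13)a_1-(16R-2)A+(14-2\ell)R+22-6\ell.
\end{equation*}
I would eliminate $a_1$ and $A$ using $A=(R+1)\delta-(k+2)\varepsilon$, $a_1=R\delta-2\varepsilon$, and $R\delta^2=2k\varepsilon(1+\varepsilon)$ from \eqref{Eq.Rdelta^2}, with $\varepsilon=\ell\gamma^2-1$ and $\gamma=\gamma(1,\ell,R)$ determined by Remark \ref{Rmk.R_parameter}. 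This converts $(R-2)B_2$ into a rational function of $(\gamma,\ell,R)$; clearing the (positive) denominators and using the constraint relation \eqref{Eq.(R+1)^2/R_1} to eliminate one of the variables should reduce the problem to a polynomial inequality in two variables. Positivity would then be established by grouping terms so as to display a sum of manifestly nonnegative contributions, mimicking the style of the identity $\varepsilon B_0/\delta=(R-1)a_1-AR>0$ used in Lemma \ref{Lem.a_2>0}.

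The main obstacle will be the $k=1$ case: unlike $k\geq 2$, the parameter $\ell$ ranges over the unbounded interval $(1,\infty)$, so no compactness is available. The delicate step will be to find a representation that makes the positivity transparent, presumably by choosing the right primary variable (the natural candidate being $\gamma$, since $\gamma>1$ is forced by $\ell>1$ and a short bound from \eqref{Eq.(R+1)^2/R_1}) and grouping terms by degree in $R$. If a direct algebraic proof becomes unwieldy, a fallback strategy is to locate the infimum of $(R-2)B_2$ over $(\ell,R)\in(1,\infty)\times[3,4]$ via the first-order conditions and verify it is strictly positive; the boundary behaviors ($\ell\downarrow 1$, $\ell\to+\infty$, $R\in\{3,4\}$) can be computed explicitly for comparison.
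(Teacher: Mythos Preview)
Your proposal has genuine gaps in both cases.

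For $k=1$, you have not given a proof: the plan to ``eliminate variables and group terms'' is a wish, not an argument, and the fallback of locating the infimum via first-order conditions is likewise not carried out. The paper's route is quite different and hinges on Lemma~\ref{Lem.A_a_1}, which gives $(2R+k+4)A=(2-k/R)(R+1)a_1+\mu$ with $\mu=(k+2)^2-(k-2\ell)^2/\ell$. Multiplying \eqref{Eq.B_2_expression} by $2R+k+4$ and substituting this identity yields
\[
(2R+k+4)(R-2)B_2=-R\,g_k(R)\,a_1+2R\,h_{k,\ell}(R),
\]
with $g_k,h_{k,\ell}$ explicit rational functions of $R$ (and $h$ depending on $\ell$ only through $B$ and $\mu$). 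For $k=1$ one verifies directly that $g_1(R)<0$ on $[3,4]$, so the $a_1$-term is \emph{positive}; then, using only the crude estimate $\mu<13-4\ell$ from \eqref{mu}, one checks $Rh_{1,\ell}(R)>0$ by an explicit polynomial factorization. Without this reorganization through Lemma~\ref{Lem.A_a_1} it is not clear how you would control the $a_1$-contribution, since $a_1$ depends on $\ell$ in a complicated, unbounded way as $\ell\to\infty$.

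For $k\geq 2$, your continuity-from-$\ell=1$ strategy is sound in principle, but the step you label ``a direct verification (e.g.\ test values at $R=3$ and $R=4$ together with a simple monotonicity check in $R$)'' is precisely the hard part and is not simple. Even at the boundary value $\ell=1$, establishing $(R-2)B_2>0$ uniformly in $R\in[3,4]$ requires the same machinery the paper develops for general $\ell$: one needs $g_k(R)>0$ and $g_k$ decreasing (Lemma~\ref{Lem.A.2}), the nontrivial fact that $R\mapsto a_1$ is strictly decreasing (Lemma~\ref{Lem.A.3}, proved via a change of variables $s=(a_1-2)/(2\gamma)$), and that $h_{k,\ell}^*:=h_{k,\ell}-2g_k$ is increasing in $R$. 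These combine to reduce the problem to the single endpoint $R=3$, where $B_2=F(k,\ell)$ is computed in closed form, shown to be strictly decreasing in $\ell$ via a further reparametrization through $\varepsilon$, and shown to satisfy $F(k,1)>0$ with $\lim_{\ell\uparrow\ell_+(k)}F(k,\ell)<0$. This produces a \emph{specific} $\ell_1(k)$ (the one tabulated in Table~\ref{Tab.1} and used in \eqref{Eq.ell_*}), and the bound $\ell_1(k)<2$ is then checked separately (Lemma~\ref{Lem.l_*_upbound}). Your soft compactness argument yields only an unspecified neighborhood of $\ell=1$, and the ``simple monotonicity check in $R$'' you invoke is asserted, not demonstrated.
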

\begin{proof}
	See Appendix \ref{Appen.B_2>0}.
\end{proof}

Let $\ell_1(k)$ be given by Lemma \ref{Lem.a_4<0}, and for $k\in\mathbb Z_{+}$ we let
\begin{equation}\label{Eq.ell_*}\index{$\ell_*(k)$}
	\ell_*(k)=\begin{cases}
		+\infty, & \text{if }k\in\{1,2\},\\
		\ell_1(k), & \text{if }k\geq 3.
	\end{cases}
\end{equation}
We define the subsets $\mathcal K_1$, $\mathcal K_*$ of $\mathcal K$ by
\begin{align}
	\mathcal K_1&=\left\{(k,\ell)\in \mathcal K: 1<\ell<\ell_1(k)\quad \text{if }k\geq2\quad \text{and }\  \ell>1 \quad \text{if }k=1\right\},\label{Eq.mathcal_K_0}\index{$\mathcal K_1$}\\
	\mathcal K_*&=\left\{(k,\ell)\in \mathcal K: 1<\ell<\ell_*(k),\quad k\in\mathbb{N}_{\geq 1}\right\}.\label{Eq.mathcal_K_1}\index{$\mathcal K_*$}
\end{align}
Then $\mathcal K_1\subset \mathcal K_*\subset \mathcal K$ and $\mathcal K_*\setminus\mathcal K_1=\{(2,\ell): \ell\geq \ell_1(2)\}$.
See Appendix \ref{Appen.B_2>0} for the proof of $\mathcal K_*\subset \mathcal K$.

\begin{lemma}\label{Lem.B_1>_k>2}
	If $k\in [3,+\infty)\cap \Z$ and $\ell>1$, then we have
	\[I:=5\left(B_1-\left(\frac{kR}{2}-1\right)\delta\right)-2B_2>0.\]
\end{lemma}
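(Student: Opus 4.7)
The plan is to eliminate $R$ and $\delta$ from $I$ using the two linear identities $R\delta = a_1 + 2\varepsilon$ from \eqref{Eq.a_1+c_1=Rdelta} and $\delta = A + k\varepsilon - a_1$ from \eqref{Eq.delta}, reducing the quantity to an affine function of $a_1$, and then applying the strict lower bound $a_1 > 2(1+\ell\gamma)$ from \eqref{Eq.a_1>4}. In the combination $5B_1 - 2B_2$ the $a_2$ terms cancel by construction—this is precisely the reason for the coefficients $5$ and $2$ in the definition of $I$—and a direct expansion using \eqref{Eq.B_1_2_def} yields the clean expression
\[
5B_1 - 2B_2 = (7k-5)a_1 - 6k - 4B + 2A.
\]

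For the remaining piece I would write $-5\!\left(\tfrac{kR}{2}-1\right)\!\delta = -\tfrac{5k}{2}(a_1 + 2\varepsilon) + 5\delta$ via the first identity, and then substitute $5\delta = 5A + 5k\varepsilon - 5a_1$ from the second identity; at this point the $\varepsilon$ contributions cancel exactly and $R$ has disappeared completely, leaving
\[
I = \left(\tfrac{9k}{2} - 10\right)a_1 - 6k - 4B + 7A.
\]
Since $k \geq 3$, the coefficient $\tfrac{9k}{2} - 10 \geq \tfrac{7}{2}$ is strictly positive, so \eqref{Eq.a_1>4} gives the lower bound $I > (9k-20)(1+\ell\gamma) - 6k - 4B + 7A$.

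Finally I would expand this lower bound using $A = k + 2 - (k-2\ell)\gamma$ and $B = 2k + 1 - \ell$ from \eqref{Eq.epsilon_A_B_gamma}. A short rearrangement recasts it as
\[
I > \big(2k - 10 + 4\ell\big) + \big[3(3k-2)\ell - 7k\big]\gamma.
\]
For $k \geq 3$ and $\ell > 1$, the first summand equals at least $4(\ell-1) > 0$ at $k=3$ and is strictly positive for larger $k$; the bracket in the second summand satisfies $3(3k-2)\ell - 7k > 3(3k-2) - 7k = 2k - 6 \geq 0$, hence is strictly positive whenever $\ell > 1$. Since $\gamma > 1/\sqrt{\ell} > 0$, both contributions are strictly positive and $I > 0$ follows.

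There is no real analytical obstacle here; the only genuine task is careful algebraic bookkeeping to verify that $a_2$, $R$, and $\varepsilon$ all cancel cleanly at the indicated stages. It is instructive to note that the argument breaks down exactly at $k=2$, where the coefficient $\tfrac{9k}{2} - 10$ becomes negative and the lower bound on $a_1$ no longer helps—this is consistent with the hypothesis $k \geq 3$ and explains the need for the separate treatment of $k=2$ in Section \ref{Sec.Proof_k=2_l_large}.
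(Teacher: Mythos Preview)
Your proof is correct and follows essentially the same route as the paper: both arrive at the identity $I=\big(\tfrac{9k}{2}-10\big)a_1+7A-4B-6k$. The only difference is the final positivity check---the paper rewrites this as $(k-3)a_1+\tfrac{7}{2}B_0+4(\ell-1)$ and cites the already-established $B_0>0$ from Lemma~\ref{Lem.a_2>0}, whereas you substitute the lower bound $a_1>2(1+\ell\gamma)$ and expand in $\gamma,\ell$; both endings are straightforward and equivalent in strength.
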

\begin{proof}
	It follows from the definition of $B_1$ and $B_2$ that $5B_1-2B_2=(7k-5)a_1+2A-4B-6k$. Eliminating $\varepsilon$ in
	\eqref{Eq.a_1+c_1=Rdelta} and \eqref{Eq.Rdelta^2}, we obtain
	\begin{equation}\label{Eq.eliminate_epsilon}
		(k+2)a_1-2A=(kR-2)\delta.
	\end{equation}
	Hence, by Lemma \ref{Lem.a_2>0} and \eqref{Eq.epsilon_A_B_gamma}, we get
	\begin{align*}
		I&=5B_1-2B_2-\frac52(kR-2)\delta=(7k-5)a_1+2A-4B-6k-\frac52\big((k+2)a_1-2A\big)\\
		&=\left(9k/2-10\right)a_1+7A-4B-6k\\
		&=(k-3)a_1+7((k-2)a_1+2A-4k)/2+4(2k-B)\\
		&=(k-3)a_1+7B_0/2+4(\ell-1)>0.
	\end{align*}
	\end{proof}

\begin{lemma}\label{Lem.A_a_1}
	Let
	\begin{equation}\label{Eq.mu}
		\mu:=(k+2)^2-\frac{(k-2\ell)^2}{\ell}\in\R.\index{$\mu$}
	\end{equation}
	Then $(2R+k+4)A=\left(2-\frac kR\right)(R+1)a_1+\mu$.
\end{lemma}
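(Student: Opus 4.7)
The plan is to square the identity \eqref{Eq.4.39} to produce a quadratic identity for $A^2$, combine it with a direct algebraic expression of $\mu$ in terms of $A^2$ to cancel the $A^2$ term, and then invoke \eqref{Eq.eliminate_delta} to replace the remaining $\varepsilon$ by a linear combination of $a_1$ and $A$.

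First, I would derive a convenient form for $\mu$ in terms of $A$. Since $(k+2)-A=(k-2\ell)\g$ and $\ell\g^2=1+\varepsilon$, the elementary identity $2(k+2)A-A^2=(k+2)^2-((k+2)-A)^2$ becomes
\begin{equation*}
2(k+2)A-A^2=(k+2)^2-\frac{(k-2\ell)^2(1+\varepsilon)}{\ell}=\mu-\frac{(k-2\ell)^2\varepsilon}{\ell},
\end{equation*}
using the definition $\mu=(k+2)^2-(k-2\ell)^2/\ell$. Hence $\mu=2(k+2)A-A^2+\frac{(k-2\ell)^2\varepsilon}{\ell}$.

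Next, I rewrite \eqref{Eq.4.39} as $(k+2)\sqrt{1+\varepsilon}-(k-2\ell)/\sqrt\ell=(R+1)\sqrt{2k\varepsilon/R}$ and square. Using $\sqrt\ell\,\g=\sqrt{1+\varepsilon}$ to recognize the cross term as $-2(k+2)(k-2\ell)\g$, and then subtracting the direct expansion $A^2=(k+2)^2-2(k+2)(k-2\ell)\g+(k-2\ell)^2(1+\varepsilon)/\ell$, all the irrational cross terms cancel, leaving
\begin{equation*}
A^2-\frac{(k-2\ell)^2\varepsilon}{\ell}=\frac{2k(R+1)^2-R(k+2)^2}{R}\varepsilon.
\end{equation*}
A direct polynomial expansion gives $2k(R+1)^2-R(k+2)^2=(2R-k)(kR-2)$, so
\begin{equation*}
A^2=\frac{(k-2\ell)^2\varepsilon}{\ell}+\frac{(2R-k)(kR-2)\varepsilon}{R}.
\end{equation*}

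Substituting this expression for $A^2$ into the formula $\mu=2(k+2)A-A^2+\frac{(k-2\ell)^2\varepsilon}{\ell}$ makes the $(k-2\ell)^2\varepsilon/\ell$ terms cancel, so $\mu=2(k+2)A-(2R-k)(kR-2)\varepsilon/R$. Finally, applying \eqref{Eq.eliminate_delta}, namely $(kR-2)\varepsilon=(R+1)a_1-RA$, yields
\begin{equation*}
\mu=2(k+2)A-\frac{2R-k}{R}\bigl[(R+1)a_1-RA\bigr]=\bigl[2(k+2)+(2R-k)\bigr]A-\Bigl(2-\frac{k}{R}\Bigr)(R+1)a_1,
\end{equation*}
which is the claimed identity since $2(k+2)+(2R-k)=2R+k+4$. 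The only nontrivial step is the squaring that produces the quadratic identity for $A^2$, and this is unambiguous because both sides of the rearranged form of \eqref{Eq.4.39} are manifestly non-negative; the rest is routine algebraic bookkeeping.
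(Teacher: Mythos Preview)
Your proof is correct and follows essentially the same strategy as the paper: square the identity \eqref{Eq.4.39} and then rewrite everything linearly in $a_1$ and $A$. The paper's version isolates $(k-2\ell)/\sqrt\ell$ before squaring and recognizes the resulting cross term $2(k+2)(R+1)\sqrt{2k\varepsilon(1+\varepsilon)/R}$ as $2(k+2)(R+1)\delta$ via \eqref{Eq.Rdelta^2}, then uses both \eqref{Eq.eliminate_delta} and \eqref{Eq.eliminate_epsilon} to eliminate $\varepsilon$ and $\delta$; your version instead isolates $(R+1)\sqrt{2k\varepsilon/R}$ and compares the squared left-hand side directly with the expansion of $A^2=((k+2)-(k-2\ell)\g)^2$, so the cross term cancels algebraically and $\delta$ never appears, leaving only \eqref{Eq.eliminate_delta} to invoke. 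This is a mildly slicker bookkeeping choice, but the underlying idea is the same.
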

\begin{proof}
	It follows from \eqref{Eq.4.39} that
	\begin{equation}\label{Eq.5}(k+2)\sqrt{1+\varepsilon}-(1+R)\sqrt{\frac{2k\varepsilon}{R}}=\frac{k-2\ell}{\sqrt\ell}.\end{equation}
	Taking square and using \eqref{Eq.Rdelta^2}, \eqref{Eq.eliminate_delta}, \eqref{Eq.eliminate_epsilon} , we obtain
	\begin{align*}
		\frac{(k-2\ell)^2}{\ell}&=(k+2)^2(1+\varepsilon)+(1+R)^2\frac{2k\varepsilon}{R}-2(k+2)(1+R)\sqrt{\frac{2k\varepsilon(1+\varepsilon)}{R}}\\
		&=(k+2)^2+\left[(k+2)^2+\frac{2k(1+R)^2}{R}\right]\varepsilon-2(k+2)(1+R)\delta\\
		&=(k+2)^2+\left[(k+2)^2+\frac{2k(1+R)^2}{R}\right]\frac{(R+1)a_1-RA}{kR-2}\\
		&\qquad\qquad-2(k+2)(1+R)\frac{(k+2)a_1-2A}{kR-2}.
	\end{align*}
	Rearranging terms gives $\left(2-\frac kR\right)(R+1)a_1-(2R+k+4)A=-\mu$.
\end{proof}

\begin{lemma}\label{Lem.B_1>_k=2}
	If $k\in\{1,2\},$ $\ell>1,$ and $R\in[3, 4]$, then $(R-2)B_1>(R-3)(kR-2)\delta$.
\end{lemma}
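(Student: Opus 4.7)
\noindent The plan is to reduce $E := (R-2)B_1 - (R-3)(kR-2)\delta > 0$ to an elementary polynomial inequality that is verified in the two cases $k = 1$ and $k = 2$ in parallel.

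First I will combine the closed form \eqref{Eq.B_1_expression} for $(R-2)B_1$ with the identity \eqref{Eq.eliminate_epsilon}, i.e. $(kR-2)\delta = (k+2)a_1 - 2A$, so that the $a_1$- and $A$-coefficients collapse neatly and (using $4B = 8k + 4 - 4\ell$)
\[ E = 3(R+2-k)\,a_1 - 2(2R+3)\,A + 2kR + 12k + 4 - 4\ell. \]
Substituting $a_1 = R\delta - 2\varepsilon$ from \eqref{Eq.a_1+c_1=Rdelta} and $A = (R+1)\delta - (k+2)\varepsilon$ from \eqref{Eq.Rdelta^2} eliminates both $a_1$ and $A$, yielding
\[ E = -\bigl(R^2 + (3k+4)R + 6\bigr)\delta + (4kR + 2R + 12k)\,\varepsilon + 2kR + 12k + 4 - 4\ell. \]

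Next I will parametrize by $x := \sqrt{1+\varepsilon} = \sqrt{\ell}\,\gamma > 1$, so that $\varepsilon = x^2 - 1$ and $\delta = xy$ with $y := \sqrt{2k\varepsilon/R}$; then \eqref{Eq.4.39} rewrites as $y = ((k+2)x + \beta)/(R+1)$, where $\beta := (2\ell-k)/\sqrt{\ell} > 0$ for $k \in \{1,2\}$ and $\ell > 1$. Plugging $y$ into $E$ makes $E$ quadratic in $x$, and squaring the defining relation for $y$ gives the algebraic constraints
\[ (2R-1)(R-2)\,x^2 = 6R\beta x + R\beta^2 + 2(R+1)^2 \qquad (k=1), \]
\[ 4(R-1)^2\,x^2 = 8R\beta x + R\beta^2 + 4(R+1)^2 \qquad (k=2). \]
Using these to eliminate the $x^2$-term converts $E > 0$ into a strictly linear inequality in $x$ whose coefficient of $\beta x$ equals $-2R^2 + 7R + 6$ (for $k=1$) or $-2R^3 + 6R^2 + 8R + 12$ (for $k=2$), and a direct check shows both are positive on $R \in [3,4]$.

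Finally, since the coefficient of $x$ is positive and $x > 1$, it suffices to verify positivity of the residual constant obtained by setting $x = 1$. For $k=1$ this residual is $(-2R^2 + 7R + 6)\beta + 3R\beta^2 + 6(R+1)^2 - 4(2R-1)(R+\ell-1)$, in which the expansion of $3R\beta^2$ contributes $12R\ell$ which dominates the negative $-4(2R-1)\ell$, producing a net leading term $\sim 4(R+1)\ell$ that easily overwhelms the remaining $R$-polynomial for $\ell > 1$ and $R \in [3,4]$. The $k=2$ case is strictly parallel, with leading $\ell$-contribution $\sim 4R^2 \ell$. The main obstacle is the bookkeeping; the restriction $k \le 2$ enters exactly because it keeps the $x^2$-coefficient on the left of the squared constraint ($(2R-1)(R-2)$ or $4(R-1)^2$) positive and of the right size for the elimination step to yield a linear inequality with a positive and controllable slope.
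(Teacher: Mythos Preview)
Your reduction is correct and the argument goes through, but it is genuinely different from the paper's route. After reaching the same first expression $E = 3(R-k+2)a_1 - (4R+6)A + 2kR+4k+4B$, the paper does \emph{not} pass to $(\delta,\varepsilon)$ and then to $x=\sqrt{1+\varepsilon}$. Instead it multiplies by $2R+k+4$ and invokes Lemma~\ref{Lem.A_a_1}, which packages the squared form of \eqref{Eq.4.39} into the identity $(2R+k+4)A=(2-k/R)(R+1)a_1+\mu$; this eliminates $A$ in favor of $a_1$ and the single scalar $\mu=(k+2)^2-(k-2\ell)^2/\ell$. The resulting expression is $\wt g_k(R)\,a_1 - (4R+6)\mu + (2R+k+4)(2kR+4k+4B)$ with $\wt g_k(R)>0$ on $[3,4]$ for $k\in\{1,2\}$, so one simply drops the positive $a_1$-term, bounds $\mu<k^2+8k+4-4\ell$ from \eqref{mu}, and checks a short polynomial inequality in $R$ and $\ell$. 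Your approach trades the use of Lemma~\ref{Lem.A_a_1} for an explicit $x$-parametrization and quadratic elimination; the payoff is the same, but the paper's version is shorter and avoids the two-variable residual.

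Two small points on your write-up. First, for $k=1$ your residual at $x=1$ is in fact a sum of four manifestly positive pieces, namely $(-2R^2+7R+6)\beta + 4(R+1)\ell + 3R/\ell + (-2R^2+12R+2)$, so ``easily overwhelms'' is honest there. For $k=2$, however, the analogous residual $P(R)\beta + 3R^2\beta^2 + 8(R+1)(4R-1) - 8(R^2-1)\ell$ has a genuinely negative piece for $\ell$ in a bounded range, and one needs a short extra estimate (e.g.\ bounding the minimum of the quadratic-in-$t$ term against $24R(R+1)$) to close it; ``strictly parallel'' understates this. Second, your stated reason for the restriction $k\le 2$ is not quite the operative one: the constraint's $x^2$-coefficient $2k(R+1)^2-(k+2)^2R$ stays positive on $[3,4]$ for $k=3$ as well. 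What actually breaks for $k\ge 3$ in your scheme is the sign of $\beta=(2\ell-k)/\sqrt\ell$, which is needed to make the linear-in-$x$ coefficient positive and hence to reduce to $x=1$.
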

\begin{proof}
	By \eqref{Eq.B_1_expression} and \eqref{Eq.eliminate_epsilon}, we compute
	\begin{align*}
		&(R-2)B_1-(R-3)(kR-2)\delta\\
		=&\big((k+5)R-6k\big)a_1-6RA+2kR+4k+4B-(R-3)\big((k+2)a_1-2A\big)\\
		=&3(R-k+2)a_1-(4R+6)A+2kR+4k+4B.
	\end{align*}
	Now it follows from Lemma \ref{Lem.A_a_1} that
	\begin{align*}
		\wt J_k:&=(2R+k+4)\left[(R-2)B_1-(R-3)(kR-2)\delta\right]\\
		&=3(2R+k+4)(R-k+2)a_1-(4R+6)\left(2-k/R\right)(R+1)a_1\\
		&\qquad\qquad-(4R+6)\mu+(2R+k+4)(2kR+4k+4B)\\
		&=\wt g_k(R)a_1-(4R+6)\mu+(2R+k+4)(2kR+4k+4B),
	\end{align*}
	where (for $k=1,2$, $R\in[3,4]$)
	\begin{align*}
		\wt g_k(R)&=3(2R+k+4)(R-k+2)-(4R+6)\left(2-k/R\right)(R+1)\\
		&=-2R^2+(k+4)R-3k^2+4k+12+{6k}/{R}\\
		&=(4-R)(2R+4-k)+(2-k)(3k-2)+{6k}/{R}>0,
	\end{align*}
	and
	\begin{align}\label{mu}\mu=(k+2)^2-\frac{(k-2\ell)^2}{\ell}=k^2+8k+4-4\ell-\frac{k^2}{\ell}<k^2+8k+4-4\ell.\end{align}
	Hence,
	\begin{align*}
		\wt J_k&>-(4R+6)\mu+(2R+k+4)(2kR+4k+4B)\\
		&>-(4R+6)(k^2+8k+4-4\ell)+(2R+k+4)(2kR+4k+4(2k+1-\ell))\\
		&=2k(R(2R-k)+3k)+4(\ell-1)(2R+2-k)>0.
	\end{align*}
	\end{proof}
	
\begin{lemma}\label{Lem.a_3>0}
	For $(k,\ell)\in\mathcal K_*$ and $R\in[3,4]$, we have $B_1>0$.
\end{lemma}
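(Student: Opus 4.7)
The plan is to split into the two cases $k\in\{1,2\}$ and $k\geq 3$, applying a different one of the previously established auxiliary inequalities in each case.

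\textbf{Case 1: $k\in\{1,2\}$.} Here I would invoke Lemma \ref{Lem.B_1>_k=2} directly. It gives
\[
(R-2)B_1 > (R-3)(kR-2)\delta.
\]
Since $R\in[3,4]$, all three factors on the right are non-negative: $R-3\geq 0$, $kR-2\geq k\cdot 3-2\geq 1>0$, and $\delta=\lambda_->0$. Hence the right-hand side is $\geq 0$, and because $R-2\geq 1>0$ we conclude $B_1>0$. Note this works for \emph{every} $\ell>1$, so it covers both $\mathcal K_1\cap\{k\in\{1,2\}\}$ and the extra part $\mathcal K_*\setminus \mathcal K_1=\{(2,\ell):\ell\geq\ell_1(2)\}$.

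\textbf{Case 2: $k\geq 3$.} Here the assumption $(k,\ell)\in\mathcal K_*$ forces $(k,\ell)\in\mathcal K_1$, i.e.\ $1<\ell<\ell_1(k)$, so Lemma \ref{Lem.a_4<0} applies and yields $B_2>0$ for all $R\in[3,4]$. Meanwhile, Lemma \ref{Lem.B_1>_k>2} provides the identity-free inequality
\[
5\bigl(B_1-(kR/2-1)\delta\bigr)-2B_2>0,
\]
valid for $k\geq 3$ and $\ell>1$. Rearranging,
\[
5B_1 > 2B_2 + 5\bigl(kR/2-1\bigr)\delta.
\]
With $R\in[3,4]$ we have $kR/2-1\geq 3\cdot 3/2-1 = 7/2>0$, and $\delta>0$ as before. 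Combined with $B_2>0$ from Lemma \ref{Lem.a_4<0}, the right-hand side is strictly positive, giving $B_1>0$.

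There is no real obstacle here: both cases reduce to assembling already-proven inequalities and checking that the relevant coefficients ($R-2$, $R-3$, $kR-2$, $kR/2-1$, $\delta$) have the correct signs on $R\in[3,4]$. The only point requiring a little care is that $\mathcal K_*$ for $k=2$ is strictly larger than $\mathcal K_1$ (the set extends to $\ell\geq\ell_1(2)$), but Lemma \ref{Lem.B_1>_k=2} covers all $\ell>1$ when $k\in\{1,2\}$, so this extension is absorbed by Case 1 without needing any sign information on $B_2$.
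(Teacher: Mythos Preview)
Your proof is correct and follows essentially the same approach as the paper: the paper also splits into $k\in\{1,2\}$ (invoking Lemma \ref{Lem.B_1>_k=2} to get $(R-2)B_1>(R-3)(kR-2)\delta\geq 0$) and $k\geq 3$ (combining $B_2>0$ from Lemma \ref{Lem.a_4<0} with Lemma \ref{Lem.B_1>_k>2}). Your additional remark that Case~1 absorbs the extra region $\mathcal K_*\setminus\mathcal K_1$ without needing any sign on $B_2$ is a helpful clarification that the paper leaves implicit.
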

\begin{proof}
	If $k\geq 3$, then $\ell<\ell_*(k)=\ell_1(k)$, and by Lemma \ref{Lem.a_4<0} we have $B_2>0$. Hence by Lemma \ref{Lem.B_1>_k>2}, we obtain 
	$B_1>\left(\frac{kR}2-1\right)\delta\geq\left(\frac R2-1\right)\delta\geq\frac12\delta>0.$
	For $k\in\{1,2\}$, we get by Lemma \ref{Lem.B_1>_k=2} that $(R-2)B_1>(R-3)(kR-2)\delta\geq0$, hence $B_1>0$.
\end{proof}

\begin{lemma}\label{Lem.a_3>0a_4<0}
	For $(k,\ell)\in\mathcal K_1$ and $R\in(3,4)$, we have
	\begin{equation}\label{Eq.a_3>0a_4<0}
		a_3>0,\qquad a_4<0,\qquad \frac{a_3}{a_2}>2k-1>\frac{kR}2-1>\frac{a_1}\varepsilon>0.
	\end{equation}
\end{lemma}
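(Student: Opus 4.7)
The plan is to run through the four inequalities in \eqref{Eq.a_3>0a_4<0} in the order that makes the bookkeeping easiest: the two inner inequalities first, then the bound on $a_3/a_2$, and finally extract the signs of $a_3$ and $a_4$ from the recurrence \eqref{Eq.a_3_4}. Throughout, $(k,\ell)\in\mathcal K_1$ and $R\in(3,4)$, so in particular Lemma \ref{Lem.a_2>0} gives $a_2>0$ and $B_0>0$, Lemma \ref{Lem.a_3>0} gives $B_1>0$, and Lemma \ref{Lem.a_4<0} gives $B_2>0$.

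The two trivial ends are easy: $a_1>4>0$ and $\varepsilon>0$ give $a_1/\varepsilon>0$, and $R<4$ gives $2k-1>kR/2-1$. For the central inequality $kR/2-1>a_1/\varepsilon$, I would rewrite it as $(kR-2)\varepsilon>2a_1$. Using \eqref{Eq.eliminate_delta}, which says $(kR-2)\varepsilon=(R+1)a_1-RA$, this becomes $(R-1)a_1-RA>0$, and by \eqref{B0d} the left-hand side equals $\varepsilon B_0/\delta>0$. So this step reduces cleanly to $B_0>0$.

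The heart of the argument is $a_3/a_2>2k-1$. From \eqref{Eq.a_3_4} and $a_2>0$, $a_1>0$, $\ell>1$, $R-3>0$, $\delta>0$, it suffices to prove
\[
B_1>(R-3)(2k-1)\delta.
\]
I would split into cases. For $k\in\{1,2\}$, Lemma \ref{Lem.B_1>_k=2} yields $(R-2)B_1>(R-3)(kR-2)\delta$; a direct check that $(kR-2)/(R-2)\geq 2k-1$ on $R\in(3,4)$ (equality at $R=4$ when $k=2$, strict elsewhere) closes this case. For $k\geq 3$, Lemma \ref{Lem.B_1>_k>2} combined with $B_2>0$ (Lemma \ref{Lem.a_4<0}) gives $B_1>(kR/2-1)\delta$, and I would then verify the inequality $kR/2-1>(R-3)(2k-1)$ for $R\in(3,4)$, $k\geq 3$. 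Explicitly, the difference equals $R(1-3k/2)+6k-4$, which is a decreasing affine function of $R$ that vanishes at $R=4$, hence is strictly positive on $(3,4)$. So the sole obstacle here is the routine but non-trivial bookkeeping of the case $k\geq 3$; everything else follows directly from the quoted lemmas.

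Finally, with $B_1>0$, $a_2>0$, $a_1>0$, and $\ell>1$, the first identity in \eqref{Eq.a_3_4} and $R>3$ give $a_3>0$. Using now $B_2>0$ (Lemma \ref{Lem.a_4<0}, available on $\mathcal K_1$), $a_3>0$, and $a_2>0$, the second identity in \eqref{Eq.a_3_4} gives $(R-4)\delta a_4=B_2 a_3+2ka_2(a_2+1)>0$; since $R-4<0$ and $\delta>0$, this forces $a_4<0$, completing the proof.
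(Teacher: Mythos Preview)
Your proof is correct and follows essentially the same route as the paper: the same two case-split lemmas (Lemma \ref{Lem.B_1>_k=2} for $k\in\{1,2\}$ and Lemma \ref{Lem.B_1>_k>2} for $k\geq 3$) feed the same elementary inequalities $(kR-2)/(R-2)\geq 2k-1$ and $kR/2-1>(R-3)(2k-1)$, and the signs of $a_3,a_4$ drop out of \eqref{Eq.a_3_4} exactly as in the paper. One small inaccuracy: your parenthetical ``equality at $R=4$ when $k=2$, strict elsewhere'' is off for $k=1$, where $(kR-2)/(R-2)\equiv 1=2k-1$ identically; this is harmless since you only need $\geq$ and the strictness of $a_3/a_2>2k-1$ is inherited from the strict inequality in Lemma \ref{Lem.B_1>_k=2}.
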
\begin{proof}
	By \eqref{Eq.a_3_4}, Lemma \ref{Lem.a_4<0}, Lemma \ref{Lem.a_3>0}, $a_2>0$ and $a_1>4>0$, we get
	\begin{align*}
		(R-3)\delta a_3=B_1a_2+(\ell-1)a_1>0&\Longrightarrow a_3>0,\\
		(R-4)\delta a_4=B_2a_3+2ka_2(a_2+1)>0&\Longrightarrow a_4<0.
	\end{align*}
	
	By \eqref{Eq.a_3_4}, Lemma \ref{Lem.B_1>_k>2}, for $k\in\Z$, $k\geq3$ we have
	\[(R-3)\delta a_3>B_1a_2>\left(\frac{kR}2-1\right)\delta a_2,\]
	hence,
	\[\frac{a_3}{a_2}>\frac{{kR}/{2}-1}{R-3}=\frac k2+\frac{3k-2}{2(R-3)}>\frac k2+\frac{3k-2}{2}=2k-1,\qquad R\in(3,4).\]
	For $k=1,2$, we get by Lemma \ref{Lem.B_1>_k=2} that  
	$$(R-3)\delta a_3>B_1a_2>\frac{(R-3)(kR-2)\delta a_2}{R-2},$$
	hence $\frac{a_3}{a_2}>\frac{kR-2}{R-2}=k+\frac{2k-2}{R-2}\geq k+\frac{2k-2}{2}=2k-1$. Hence $\frac{a_3}{a_2}>2k-1$. As $R<4$, we have $2k-1>\frac{kR}2-1$.
	Thus, we have shown that $\frac{a_3}{a_2}>2k-1>\frac{kR}2-1$.
	
	Finally, $\frac{kR}2-1>\frac{a_1}\varepsilon>0$ follows from \eqref{B0d}, \eqref{Eq.a_1>4}  and $ \varepsilon>0$. 
\end{proof}

\begin{remark}\label{rem3}
	For $(k,\ell)\in\mathcal K_*$, we still have $a_3>0$.
\end{remark}

\section{Solution curve passing through the sonic point}\label{Sec.left_P_1}

Fix $(k,\ell)\in\mathcal K_*$. In this section, we prove that there exists $R_0\in(3,4)$ such that the local solution $u_L(\cdot; R_0)$ connects to the $Q_0-Q_1$ curve
$u_F(\cdot; R_0)$. We use the barrier function method. 

\begin{lemma}\label{Lem.Rto3}
	For $(k,\ell)\in \mathcal K_*$, there exists $\theta_1\in(0,1)$ such that for all $R\in(3, 3+\theta_1)$,
	\begin{equation}\label{Eq.u_L>u_F}
		u_L(z; R)>u_F(z; R)\quad \forall \ z\in(0, \zeta_1(R)]\ \ \text{for some }\ \zeta_1(R)>0.
	\end{equation}
\end{lemma}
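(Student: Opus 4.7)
The plan is to exploit the blowup $a_3(R)\to+\infty$ as $R\to 3^+$, combined with the rigidity of the ODE \eqref{Eq.ODE_z_u} at regular points, to force $u_L$ strictly above $u_F$ on a small right-neighborhood of the sonic point whenever $R$ is sufficiently close to $3$. First I would establish the blowup: from the recurrence $(R-3)\delta a_3=B_1a_2+(\ell-1)a_1$ in \eqref{Eq.a_3_4}, Lemma \ref{Lem.a_3>0} gives $B_1>0$, Lemma \ref{Lem.a_2>0} gives $a_2>0$, and \eqref{Eq.a_1>4} gives $a_1>4$, all uniformly in $R\in[3,4]$ by continuity in $R$ (Remark \ref{Rmk.R_parameter}); since $\delta$ is also bounded below on $[3,4]$ via \eqref{Eq.delta_low_bound}, this yields $a_3(R)\to+\infty$ as $R\to 3^+$.

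Next I would invoke ODE rigidity. For $z$ in the interior $\left(0,\min\{\zeta_0(R),1/(k+1)\}\right)$, the graphs of both $u_L(\cdot;R)$ and $u_F(\cdot;R)$ lie in $\mathcal D_1'$ where $\Delta_z>0$: for $u_F$ this is Proposition \ref{Prop.Q_0-Q_1}, and for $u_L$ it follows from $u_L(0)=\varepsilon=u_g(0)=u_b(0)$ together with $u_L'(0)=a_1\in(u_g'(0),u_b'(0))$. Hence \eqref{Eq.ODE_z_u} is a classical ODE with smooth right-hand side on that interval, and standard uniqueness yields the dichotomy: either $u_L\equiv u_F$ there, or $u_L-u_F$ has constant sign throughout. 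Consequently it suffices to exhibit a single point $z_*=z_*(R)>0$ with $u_L(z_*;R)>u_F(z_*;R)$; one then sets $\zeta_1(R)=z_*(R)$ and \eqref{Eq.u_L>u_F} follows immediately.

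To produce such $z_*$, I would define it by $a_3(R)z_*^3=\Lambda$ for a large fixed constant $\Lambda>0$; the previous step ensures $z_*(R)\downarrow 0$ as $R\to 3^+$. Decomposing
\[
u_L(z_*;R)=\varepsilon+a_1z_*+a_2z_*^2+\Lambda+\sum_{n\geq 4}a_nz_*^n,
\]
the first three correction terms vanish as $z_*\to 0$ (since $a_1,a_2$ remain bounded on $[3,4]$), while $u_F(z_*;R)\leq u_b(z_*;R)=\varepsilon+O(z_*)\to\varepsilon$ by Proposition \ref{Prop.Q_0-Q_1}. Provided that the tail $\sum_{n\geq 4}a_nz_*^n$ is bounded in absolute value by $\Lambda/2$ uniformly as $R\to 3^+$, one obtains $u_L(z_*;R)\geq \varepsilon+\Lambda/2>u_F(z_*;R)$, closing the comparison.

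The hard part will be the uniform tail estimate. The bound from Lemma \ref{Lem.a_n_boun} degenerates as $R\to 3^+$, since its constant $K$ depends inversely on the distance of $R$ from $\mathbb Z$. A refined induction is required to track the natural scaling $|a_n(R)|\lesssim a_3(R)^{(n-3)/3}$, which one extracts from \eqref{Eq.a_n_recurrence} by observing that $a_3$ becomes the dominant coefficient under the rescaling $z\mapsto a_3^{-1/3}w$. With such scaling in hand and $\Lambda$ chosen suitably, each term satisfies $|a_nz_*^n|\lesssim C_n\Lambda^{n/3}/a_3$ with the family $\{C_n\Lambda^{n/3}\}$ summable, giving a tail of size $O(1/a_3)\to 0$ and closing the argument.
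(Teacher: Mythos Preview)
Your tail estimate is where the argument breaks, and it is not a technicality. The claimed scaling $|a_n(R)|\lesssim a_3(R)^{(n-3)/3}$ is simply false. Already at $n=4$, \eqref{Eq.a_4} reads $(R-4)\delta a_4=B_2a_3+2ka_2(a_2+1)$ with $B_2$ continuous and generically nonzero at $R=3$, so $a_4\sim a_3$, not $a_3^{1/3}$. At $n=5$, the first convolution in \eqref{Eq.a_n_recurrence} contains $a_3^2$, forcing $a_5\sim a_3^2$; at your test point $z_*=(\Lambda/a_3)^{1/3}$ this gives $|a_5z_*^5|\sim a_3^{1/3}\to\infty$, so the tail cannot be made smaller than $\Lambda/2$. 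An induction on the bilinear convolution in fact gives $|a_n|\lesssim a_3^{\lfloor(n-1)/2\rfloor}$, so the convergence radius of the $u_L$ series shrinks like $a_3^{-1/2}\sim(R-3)^{1/2}$, which is \emph{strictly smaller} than your $z_*\sim(R-3)^{1/3}$; the series cannot be evaluated there at all. No rescaling of $z_*$ rescues this: taking $z_*\lesssim a_3^{-1/2}$ kills the cubic term $a_3z_*^3\lesssim a_3^{-1/2}\to 0$, so you lose the separation from $u_F$.

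The paper never attempts to control the full series. It inserts a degree-$4$ barrier $U_1(z)=\varepsilon+a_1z+a_2z^2+a_3z^3+(a_4-1)z^4$, so that $u_L>U_1$ for small $z>0$ is automatic from the Taylor expansion. It then proves $U_1>u_F$ on $(0,(R-3)^{7/16}]$ via the ODE comparison principle: one checks $U_1>u_{\text b}>u_F$ at the right endpoint (where the $a_3$-term in $(U_1-u_{\text b})/z$ dominates, of size $(R-3)^{-1/8}$), and computes $\mathcal L(U_1)(z)=z^4q_1(z)$ with $q_1$ an \emph{explicit degree-$4$} polynomial whose coefficients are all $O((R-3)^{-2})$ and whose linear term $\sim 3a_3^2 z$ is positive and dominant. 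Because $U_1$ has fixed degree, these coefficients can be tracked by hand --- precisely what is impossible for the infinite tail of $u_L$. Your dichotomy observation (constant sign of $u_L-u_F$ by ODE uniqueness) is correct and would slightly streamline the conclusion, but it does not bypass the need for a barrier: you still must produce one point where $u_L>u_F$, and for that the paper's polynomial comparison is the right tool.
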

\begin{proof}
	Let
	\begin{equation*}
		U_1(z)=U_1(z; R)=\varepsilon+a_1z+a_2z^2+a_3z^3+M_1z^4,\quad M_1:=a_4-1.
	\end{equation*}
	Then there exists $\zeta_{1,1}(R)>0$ on $R\in(3, 4)$ such that
	\begin{equation}\label{uL>U1}
		u_L(z; R)>U_1(z; R)\quad\forall\ z\in(0, \zeta_{1,1}(R)].
	\end{equation}
	We define $\zeta_{1,2}(R)=(R-3)^{7/16}$ for $R\in(3,4)$ and we claim that there exists $\theta_1\in(0,1)$ such that for all $R\in(3, 3+\theta_1)$, we have
	\begin{equation}\label{Eq.U_1>u_F}
		U_1(z; R)>u_F(z; R)\quad \forall \ z\in(0,\zeta_{1,2}(R)].
	\end{equation}
	Assuming \eqref{Eq.U_1>u_F}, we take $\zeta_1(R)=\min\{\zeta_{1,1}(R), \zeta_{1,2}(R)\}>0$, then by \eqref{uL>U1} we know that \eqref{Eq.u_L>u_F} holds for $R\in(3, 3+\theta_1)$.
	
	Recalling \eqref{Eq.u_b} and \eqref{Eq.u_p<u_F<u_b}, we first prove \eqref{Eq.U_1>u_F} assuming the following two inequalities
	\begin{equation}\label{Eq.U_1(z)>u_F}
		U_1(\zeta_{1,2}(R))>u_{\text{b}}(\zeta_{1,2}(R))>u_F(\zeta_{1,2}(R)),
	\end{equation}
	\begin{equation}\label{Eq.U_1compare}
		\mathcal L(U_1)(z)>0\qquad \forall \ z\in(0, \zeta_{1,2}(R)].
	\end{equation}
	Indeed, assume on the contrary that \eqref{Eq.U_1>u_F} does not hold. By continuity and \eqref{Eq.U_1(z)>u_F}, there exists $\zeta_*\in(0, \zeta_{1,2}(R))$ such that $U_1(\zeta_*)=u_F(\zeta_*)$ and $U_1(z)>u_F(z)$ for $z\in(\zeta_*, \zeta_{1,2}(R))$, hence $U_1'(\zeta_*)\geq u_F'(\zeta_*)$. Since $\Delta_z(\zeta_*, U_1(\zeta_*))=\Delta_z(\zeta_*, u_F(\zeta_*))>0$ (see \eqref{Eq.u_p<u_F<u_b}), we have
	\begin{align*}
		\mathcal L(U_1)(\zeta_*)&=-\Delta_z(\zeta_*, U_1(\zeta_*))U_1'(\zeta_*)+\Delta_u(\zeta_*, U_1(\zeta_*))\\
		&\leq -\Delta_z(\zeta_*, u_F(\zeta_*))u_F'(\zeta_*)+\Delta_u(\zeta_*, u_F(\zeta_*))=\mathcal L(u_F)(\zeta_*)=0,
	\end{align*}
	which contradicts with \eqref{Eq.U_1compare}. 
	
	So, it suffices to show \eqref{Eq.U_1(z)>u_F} and \eqref{Eq.U_1compare} for all $0<R-3\ll1$. To start with, we recall that from Remark \ref{Rmk.R_parameter} and the proof of Lemma \ref{Lem.a_n_boun}, we have
	\begin{equation}\label{Eq.a_1_2bounded}
		|\varepsilon|+|\delta|+|A|+|B|+|a_1|+|a_2|\lesssim_{k,\ell}1,\quad \delta>4/R\geq1\quad\forall \ R\in[3,4].
	\end{equation}
	By \eqref{Eq.a_3_4}, \eqref{Eq.B_1_2_def}, \eqref{Eq.a_1>4}, \eqref{Eq.a_2>0}, Lemma \ref{Lem.a_3>0} and \eqref{Eq.a_1_2bounded}, we have
	\begin{equation}\label{Eq.asymptotic_R_3}
		a_3\sim (R-3)^{-1}>0,\qquad |M_1|\leq|a_4|+1\lesssim(R-3)^{-1}\quad \text{as}\quad R\downarrow3.
	\end{equation}
	
	Now we prove \eqref{Eq.U_1(z)>u_F}. Thanks to \eqref{Eq.u_p<u_F<u_b}, it suffices to show that $U_1(\zeta_{1,2}(R))>u_{\text{b}}(\zeta_{1,2}(R))$, which by \eqref{Eq.u_b} is equivalent to
	\begin{equation*}
		\big(1-(k+1)z\big)(\varepsilon+a_1z+a_2z^2+a_3z^3+M_1z^4)>\varepsilon+(A-\varepsilon)z-(A+B)z^2+Bz^3
	\end{equation*}
	for $z=\zeta_{1,2}(R)=(R-3)^{7/16}>0$, which is further equivalent to
	\begin{equation}\label{Eq.6.10}
		\begin{aligned}
			&a_1-k\varepsilon-A+\big(a_2-(k+1)a_1+A+B\big)\zeta_{1,2}(R)+\big(a_3-(k+1)a_2-B\big)\zeta_{1,2}(R)^2\\
			&\qquad+\big(M_1-(k+1)a_3\big)\zeta_{1,2}(R)^3-(k+1)M_1\zeta_{1,2}(R)^4>0.
		\end{aligned}
	\end{equation}
	By \eqref{Eq.a_1_2bounded} and \eqref{Eq.asymptotic_R_3}, as $R\downarrow 3$ we have
	\begin{align*}
		&\left|a_1-k\varepsilon-A\right|\lesssim 1,\qquad \left|\big(a_2-(k+1)a_1+A+B\big)\zeta_{1,2}(R)\right|\lesssim \zeta_{1,2}(R)\lesssim 1,\\
		&\big(a_3-(k+1)a_2-B\big)\zeta_{1,2}(R)^2\sim a_3\zeta_{1,2}(R)^2\sim (R-3)^{-1/8}>0,\\
		&\left|\big(M_1-(k+1)a_3\big)\zeta_{1,2}(R)^3-(k+1)M_1\zeta_{1,2}(R)^4\right|\lesssim (R-3)^{-1}\zeta_{1,2}(R)^3\lesssim 1,
	\end{align*}
	hence, in \eqref{Eq.6.10}, as $R\downarrow3$, the main contribution comes from the $\zeta_{1,2}(R)^2$ term, which is positive. This checks the desired \eqref{Eq.U_1(z)>u_F} for $0<R-3\ll1$.
	
	Finally, we prove \eqref{Eq.U_1compare}. Plugging the definition of $U_1$ into Lemma \ref{Lem.recurrence_relation} and using \eqref{Eq.a_n_recurrence}, we compute that $\mathcal L(U_1)(z)=z^4q_1(z)$, where $q_1$ is a polynomial of degree $4$ given by
	\[q_1(z)=\mathcal U_4^{(1)}+\mathcal U_5^{(1)}z+\mathcal U_6^{(1)}z^2+\mathcal U_7^{(1)}z^3+\mathcal U_8^{(1)}z^4,\]
	where
	\begin{align*}
		\mathcal U_4^{(1)}&=-(R-4)\delta,\\
		\mathcal U_5^{(1)}&=\left(6a_2-(5k+1)a_1+2A+4B+2k\right)M_1+3a_3^2-(5k+1)a_2a_3-(B+2k)a_3,\\
		\mathcal U_6^{(1)}&=7a_3M_1-(3k+1)(2a_2M_1+a_3^2)-2(B+k)M_1,\\
		\mathcal U_7^{(1)}&=4M_1^2-(7k+3)a_3M_1,\qquad \mathcal U_8^{(1)}=-2(2k+1)M_1^2.
	\end{align*}
	By \eqref{Eq.a_1_2bounded} and \eqref{Eq.asymptotic_R_3}, as $R\downarrow 3$ we have
	\begin{align*}
		\mathcal U_4^{(1)}>0,\quad \mathcal U_5^{(1)}\sim 3a_3^2\sim (R-3)^{-2}>0,\quad\left|\mathcal U_6^{(1)}\right|+\left|\mathcal U_7^{(1)}\right|+\left|\mathcal U_8^{(1)}\right|\lesssim (R-3)^{-2}.
	\end{align*}
	If $z\in(0,(R-3)^{7/16}]$, then $z\ll1$,
	$\mathcal U_5^{(1)}z\gtrsim (R-3)^{-2}z$ and $\left|{\mathcal U_6^{(1)}}z^2+\mathcal U_7^{(1)}z^3+\mathcal U_8^{(1)}z^4\right|\lesssim (R-3)^{-2}z^2\ll(R-3)^{-2}z$, hence $q_1(z)>\mathcal U_5^{(1)}z+\mathcal U_6^{(1)}z^2+\mathcal U_7^{(1)}z^3+\mathcal U_8^{(1)}z^4\gtrsim (R-3)^{-1}z>0$. This proves $\mathcal L(U_1)(z)=z^4q_1(z)>0$ for $z\in(0, \zeta_{1,2}(R)]$ if $0<R-3\ll1$, i.e., \eqref{Eq.U_1compare}.
\end{proof}

Next we consider the regime where $R\uparrow4$. To start with, we prove the following lemma.
\begin{lemma}\label{Lem.u_2<u_F}
	Assume that $(k,\ell)\in\mathcal K_*$ and $R\in(3,4)$. Let $u_{(2)}(z)=\varepsilon+a_1z+a_2z^2$.\index{$u_{(2)}(z)$} Then
	\begin{equation}\label{Eq.u_2<u_F}
		u_{(2)}(z)<u_F(z)\quad \forall\ z\in\left(0, \frac1{k+1}\right).
	\end{equation}
\end{lemma}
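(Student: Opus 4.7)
The plan is to run a barrier-function (comparison) argument for the operator $\mathcal L$ defined in \eqref{Luz}. Since $u_{(2)}$ and $u_F$ share the same boundary value $\varepsilon$ at $z=0$ with common slope $a_1$ (and $u_F\uparrow+\infty$ at $z=1/(k+1)$ by Proposition \ref{Prop.Q_0-Q_1} while $u_{(2)}$ stays bounded), it suffices to show $\mathcal L(u_{(2)})(z)<0$ on $(0,1/(k+1))$ and then to rule out any interior crossing via the sign of $\Delta_z(z,u_F(z))$, which is already known to be positive on the interval by \eqref{Eq.u_p<u_F<u_b}.

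First, I would compute $\mathcal L(u_{(2)})$ explicitly as a polynomial in $z$. Because $u_{(2)}$ is the order-$2$ truncation of the series $u_L$ and the coefficients $(\varepsilon,a_1,a_2)$ already enforce $\mathcal U_0=\mathcal U_1=\mathcal U_2=0$ in Lemma \ref{Lem.recurrence_relation}, the polynomial $\mathcal L(u_{(2)})$ has degree $4$ with only cubic and quartic contributions. Since the quantity $E_n$ in \eqref{Eq.E_n} depends only on $u_0,\dots,u_{n-1}$, the cubic coefficient $E_3$ is identical for $u_{(2)}$ and $u_L$, namely $E_3=B_1 a_2+(\ell-1)a_1$ by \eqref{Eq.a_3_4}; substituting $u_3=0$ in \eqref{Eq.E_n} for $n=4$ leaves only $E_4=2k\,a_2(a_2+1)$. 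Hence
\begin{equation*}
\mathcal L(u_{(2)})(z)=-\big(B_1 a_2+(\ell-1)a_1\big)z^3-2k\,a_2(a_2+1)z^4.
\end{equation*}
Invoking $a_1>4$ from \eqref{Eq.a_1>4}, $a_2>0$ from Lemma \ref{Lem.a_2>0}, $B_1>0$ from Lemma \ref{Lem.a_3>0}, and $\ell>1$, both coefficients are strictly negative, so $\mathcal L(u_{(2)})(z)<0$ for every $z\in(0,1/(k+1))$.

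Next I would execute the comparison. Set $w(z)=u_F(z)-u_{(2)}(z)$; since $u_F(z)\to+\infty$ as $z\uparrow 1/(k+1)$ and $u_{(2)}$ is a bounded quadratic, $w(z)>0$ near that right endpoint. Suppose, for contradiction, that $w(z_0)\le 0$ at some $z_0\in(0,1/(k+1))$, and let $z_*=\sup\{z\in[z_0,1/(k+1)):w(z)\le 0\}$. By continuity and the right-endpoint blowup, $z_*\in(0,1/(k+1))$, $w(z_*)=0$, and $w(z)>0$ on $(z_*,1/(k+1))$, so $w'(z_*)\ge 0$, i.e., $u_F'(z_*)\ge u_{(2)}'(z_*)$. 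Since $u_F(z_*)=u_{(2)}(z_*)$ and $\Delta_z(z_*,u_F(z_*))>0$ by \eqref{Eq.u_p<u_F<u_b}, the definition of $\mathcal L$ gives
\begin{equation*}
\mathcal L(u_{(2)})(z_*)-\mathcal L(u_F)(z_*)=\Delta_z\!\big(z_*,u_F(z_*)\big)\big(u_F'(z_*)-u_{(2)}'(z_*)\big)\ge 0,
\end{equation*}
which, combined with $\mathcal L(u_F)(z_*)=0$, contradicts $\mathcal L(u_{(2)})(z_*)<0$.

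The only substantive technical step is the explicit computation of $\mathcal L(u_{(2)})$, which is essentially bookkeeping once Lemma \ref{Lem.recurrence_relation} is in place; the comparison argument itself is a standard barrier application. In particular there is no analog here of the delicate small-$(R-3)$ or large-$(4-R)$ asymptotic work needed in Lemma \ref{Lem.Rto3}, because both coefficients in $\mathcal L(u_{(2)})$ have a fixed (negative) sign uniformly in $R\in(3,4)$.
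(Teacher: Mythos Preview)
Your proposal is correct and follows essentially the same approach as the paper: compute $\mathcal L(u_{(2)})(z)$ explicitly as $-z^3\big((R-3)\delta a_3+2ka_2(a_2+1)z\big)$ (the paper writes the cubic coefficient as $(R-3)\delta a_3$ rather than $B_1a_2+(\ell-1)a_1$, but these are equal by \eqref{Eq.a_3_4}), observe both coefficients are positive, and then run the identical barrier argument using $\Delta_z(z,u_F(z))>0$ from \eqref{Eq.u_p<u_F<u_b}. The only cosmetic difference is that the paper invokes $a_3>0$ (Remark~\ref{rem3}) for the sign of the cubic term, whereas you invoke $B_1>0$ (Lemma~\ref{Lem.a_3>0}) directly---logically equivalent.
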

\begin{proof}
	By Lemma \ref{Lem.recurrence_relation}, \eqref{Eq.a_n_recurrence} and Remark \ref{rem3}, we have
	\begin{equation}\label{Eq.u_2compare}
		\mathcal L\left(u_{(2)}\right)(z)=-z^3\big((R-3)\delta a_3+2ka_2(a_2+1)z\big)<0\quad \forall \ z>0.
	\end{equation}
	Since $\lim_{z\uparrow 1/(k+1)}u_F(z)=+\infty$ and $u_{(2)}$ is bounded on any finite interval, there exists $\wt \zeta_*\in\left(0,1/(k+1)\right)$ such that $u_{(2)}(z)<u_F(z)$ for all $z\in\left[\wt\zeta_*, 1/(k+1)\right)$. Assume for contradiction that \eqref{Eq.u_2<u_F} does not hold. Then by the continuity, there exists $\zeta_*\in \left(0,\wt\zeta_*\right)$ such that $u_{(2)}\left(\zeta_*\right)=u_F\left(\zeta_*\right)$ and $u_{(2)}(z)<u_F(z)$ for all $z\in\left(\zeta_*, \wt \zeta_*\right)$, then $u_{(2)}'(\zeta_*)\leq u_F'(\zeta_*)$. Since $\Delta_z\left(\zeta_*, u_{(2)}(\zeta_*)\right)=\Delta_z(\zeta_*, u_F(\zeta_*))>0$ (see \eqref{Eq.u_p<u_F<u_b}), we have
	\begin{align*}
		\mathcal L\left(u_{(2)}\right)(\zeta_*)&=-\Delta_z\left(\zeta_*, u_{(2)}(\zeta_*)\right)u_{(2)}'(\zeta_*)+\Delta_u\left(\zeta_*, u_{(2)}(\zeta_*)\right)\\
		&\geq -\Delta_z(\zeta_*, u_F(\zeta_*))u_F'(\zeta_*)+\Delta_u(\zeta_*, u_F(\zeta_*))=\mathcal L(u_F)(\zeta_*)=0,
	\end{align*}
	which contradicts with \eqref{Eq.u_2compare}. This completes the proof of \eqref{Eq.u_2<u_F}.
\end{proof}

\begin{lemma}\label{Lem.Rto4}
	For $(k,\ell)\in\mathcal K_1$, there exists $\theta_2\in(0, 1)$ such that for all $R\in(4-\theta_2, 4)$,
	\begin{equation}\label{Eq.u_L<u_F}
		u_L(z; R)<u_F(z; R)\quad \forall \ z\in(0,\zeta_2(R)]\ \ \text{for some }\ \zeta_2(R)>0.
	\end{equation}
\end{lemma}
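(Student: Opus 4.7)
The plan is to mirror Lemma \ref{Lem.Rto3} with the roles of super- and sub-barrier reversed. The key driving mechanism is that as $R\uparrow 4$, the recurrence $(R-4)\delta a_4 = B_2 a_3 + 2k a_2(a_2+1)$, together with $B_2>0$ (Lemma \ref{Lem.a_4<0}), $a_3>0$ (Lemma \ref{Lem.a_3>0}), and $a_2>0$ (Lemma \ref{Lem.a_2>0})---all valid on $\mathcal K_1$---forces $a_4 \sim -(4-R)^{-1}\to -\infty$, so the quartic term in $u_L$'s Taylor expansion will pull $u_L$ sharply below $u_F$ near $z=0$.

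Concretely, I would define the polynomial barrier
\[U_2(z):=\varepsilon+a_1 z+a_2 z^2+a_3 z^3+M_2 z^4,\qquad M_2:=a_4+1,\]
so that $U_2(z)-u_L(z)=z^4+O(z^5)$ as $z\to 0$, whence $u_L(z;R)<U_2(z;R)$ on some interval $(0,\zeta_{2,1}(R)]$. It will then suffice to establish $U_2(z;R)<u_F(z;R)$ on some $(0,\zeta_{2,2}(R)]$, after which the lemma follows with $\zeta_2(R):=\min\{\zeta_{2,1}(R),\zeta_{2,2}(R)\}$. Following the scaling of Lemma \ref{Lem.Rto3}, I will take $\zeta_{2,2}(R):=(4-R)^{\alpha}$ for a suitable $\alpha\in(0,1)$ to be pinned down. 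Two checks are needed. \emph{(i) Endpoint bound:} $U_2(\zeta_{2,2}(R))<u_F(\zeta_{2,2}(R))$, which by Lemma \ref{Lem.u_2<u_F} reduces to $U_2(\zeta_{2,2}(R))<u_{(2)}(\zeta_{2,2}(R))$, i.e., $a_3+M_2\,\zeta_{2,2}(R)<0$; since $a_3$ stays bounded while $|M_2|\sim(4-R)^{-1}$, one has $a_3/|M_2|\sim 4-R\ll \zeta_{2,2}(R)$ for $R$ close to $4$. \emph{(ii) ODE barrier:} $\mathcal L(U_2)(z)<0$ for all $z\in(0,\zeta_{2,2}(R)]$. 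By Lemma \ref{Lem.recurrence_relation} one has $\mathcal L(U_2)(z)=z^4 q_2(z)$ with $q_2(z)=\mathcal U_4^{(2)}+\mathcal U_5^{(2)} z+\cdots+\mathcal U_8^{(2)} z^4$, where $\mathcal U_4^{(2)}=(R-4)\delta(M_2-a_4)=(R-4)\delta<0$; the remaining $\mathcal U_n^{(2)}$ have the same functional form as the corresponding $\mathcal U_n^{(1)}$ in the proof of Lemma \ref{Lem.Rto3}, with $M_1$ replaced by $M_2$.

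Given (i) and (ii), the crossing-point contradiction will apply in reverse: assuming $U_2(z_0)\geq u_F(z_0)$ for some $z_0\in(0,\zeta_{2,2}(R)]$, continuity and (i) yield a first crossing $\zeta_*\in(0,\zeta_{2,2}(R))$ with $U_2(\zeta_*)=u_F(\zeta_*)$ and $U_2<u_F$ on $(\zeta_*,\zeta_{2,2}(R)]$, so $U_2'(\zeta_*)\leq u_F'(\zeta_*)$; combined with $\Delta_z(\zeta_*,u_F(\zeta_*))>0$ (from \eqref{Eq.u_p<u_F<u_b}), this gives $\mathcal L(U_2)(\zeta_*)\geq \mathcal L(u_F)(\zeta_*)=0$, contradicting (ii).

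The hard part will be step (ii). Whereas in Lemma \ref{Lem.Rto3} a single term ($3a_3^2 z\sim(R-3)^{-2}z$) cleanly dominates $q_1$, here $\mathcal U_7^{(2)}=4M_2^2-(7k+3)a_3 M_2$ is unavoidably \emph{positive} of order $(4-R)^{-2}$, working against the desired inequality. One must therefore choose $\alpha$ large enough to suppress $\mathcal U_7^{(2)} z^3$ (roughly $\alpha>2/3$) yet small enough to keep (i) workable. The negative contributions---namely $\mathcal U_4^{(2)}$, the term $\mathcal U_8^{(2)}=-2(2k+1)M_2^2<0$, and the $M_2$-linear parts of $\mathcal U_5^{(2)},\mathcal U_6^{(2)}$ whose signs must be identified using the structural identities \eqref{Eq.a_2_expression}, \eqref{Eq.B_1_expression}, \eqref{Eq.B_2_expression} and the positivity of $B_1,B_2$ on $\mathcal K_1$---must then be shown to outweigh $\mathcal U_7^{(2)} z^3$ throughout $(0,\zeta_{2,2}(R)]$.
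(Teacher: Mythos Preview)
Your degree-four barrier $U_2=\varepsilon+a_1z+a_2z^2+a_3z^3+(a_4+1)z^4$ does not work, and the obstruction is exactly the sign question you defer to ``must be identified.'' With $M_2=a_4+1$ one has $\mathcal U_4^{(2)}=(R-4)\delta$, which is only $O(4-R)$ and vanishes as $R\uparrow4$; it cannot serve as the dominant negative term. The burden then falls on $\mathcal U_5^{(2)}=B_4M_2+O(1)$ with $B_4:=6a_2-(5k+1)a_1+2A+4B+2k$, and $B_4$ is \emph{negative} throughout the relevant range (for instance at $k=1$, $\ell=2$, $R=4$ one finds $\gamma=(9+10\sqrt2)/7$, $a_1\approx18.7$, $a_2\approx6.1$, giving $B_4\approx-43$; similar computations at $k=2,3$ give the same sign). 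Since $M_2\sim a_4\sim-c_0/(4-R)$ with $c_0>0$, this makes $\mathcal U_5^{(2)}\sim|B_4|c_0/(4-R)>0$. Moreover $\mathcal U_7^{(2)}z^3+\mathcal U_8^{(2)}z^4=M_2^2z^3\bigl(4-(7k+3)a_3/M_2-2(2k+1)z\bigr)>0$ for all small $z$, and $\mathcal U_5^{(2)}z+\mathcal U_6^{(2)}z^2>0$ for $z<|B_4|/C_6$ (an order-one constant). Hence $q_2(z)>0$ on an interval $(c_1(4-R)^2,\,c_2)$, which for any $\alpha<1$ is contained in $(0,(4-R)^\alpha]$; the inequality $\mathcal L(U_2)<0$ fails.

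The paper's fix is to use a \emph{degree-five} barrier $U_2=\varepsilon+a_1z+a_2z^2+a_3z^3+\tfrac12a_4z^4+M_2z^5$ with $M_2=(4-R)^{-5/4}>0$. Two things change. First, taking $\tfrac12a_4$ (not $a_4+1$) at degree four gives $\mathcal U_4^{(2)}=-\tfrac12(R-4)\delta a_4=-\tfrac12\bigl(B_2a_3+2ka_2(a_2+1)\bigr)$, which is negative and of order one (it does \emph{not} vanish as $R\uparrow4$). Second---and this is the missing idea---the free fifth-order coefficient enters $\mathcal U_5^{(2)}$ through the clean term $(R-5)\delta M_2$; since $R$ is near $4$, not $5$, this is robustly negative of size $(4-R)^{-5/4}$, swamping the uncontrolled $B_4$-type contributions (which are only $O((4-R)^{-1})$). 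With $\zeta_{2,2}=(4-R)^{3/4}$, all remaining $\mathcal U_n^{(2)}z^{n-4}$ are then $o(1)$, and the barrier closes. Injecting a large coefficient at a level where the recurrence prefactor $(R-n)\delta$ is safely bounded away from zero is precisely what the naive mirror of Lemma~\ref{Lem.Rto3} lacks.
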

\begin{proof}
	Let
	\begin{equation}\label{Eq.U_2}
		U_2(z)=U_2(z; R)=\varepsilon+a_1z+a_2z^2+a_3z^3+\frac12a_4z^4+M_2z^5,\quad M_2:=(4-R)^{-5/4}.
	\end{equation}
	Since $a_4<0$ by \eqref{Eq.a_3>0a_4<0}, there exists $\zeta_{2,1}(R)>0$ on $R\in(3,4)$ such that
	\begin{equation}\label{Eq.u_L<U_2}
		u_L(z; R)<U_2(z; R)\quad \forall \ z\in(0, \zeta_{2,1}(R)].
	\end{equation}
	We define $\zeta_{2,2}(R)=(4-R)^{3/4}>0$ for $R\in(3,4)$ and we claim that there exists $\theta_2\in(0, 1)$ such that for all $R\in(4-\theta_2, 4)$, we have
	\begin{equation}\label{Eq.U_2<u_F}
		U_2(z; R)< u_F(z; R)\quad \forall\ z\in\left(0, \zeta_{2,2}(R)\right].
	\end{equation}
	Assuming \eqref{Eq.U_2<u_F}, we take $\zeta_2(R)=\min\left\{\zeta_{2,1}(R), \zeta_{2,2}(R)\right\}>0$, then by \eqref{Eq.u_L<U_2} we know that \eqref{Eq.u_L<u_F} holds for $R\in(4-\theta_2, 4)$.
	
	We first prove \eqref{Eq.U_2<u_F} assuming the following two inequalities:
	\begin{equation}\label{Eq.U_2(z)<u_2}
		U_2(\zeta_{2,2}(R))<u_{(2)}(\zeta_{2,2}(R)),
	\end{equation}
	\begin{equation}\label{Eq.U_2compare}
		\mathcal L(U_2)(z)<0\qquad \forall\ z\in(0, \zeta_{2,2}(R)].
	\end{equation}
	Indeed, assume on the contrary that \eqref{Eq.U_2<u_F} does not hold, by \eqref{Eq.U_2(z)<u_2} and \eqref{Eq.u_2<u_F} we have $U_2(\zeta_{2,2}(R))<u_{F}(\zeta_{2,2}(R))$,
	then by the continuity, there exists $\zeta_*\in(0, \zeta_{2,2}(R))$ such that $U_2(\zeta_*)=u_F(\zeta_*)$ and $U_2(z)<u_F(z)$ for $z\in(\zeta_*, \zeta_{2,2}(R))$, hence $U_2'(\zeta_*)\leq u_F'(\zeta_*)$. Since $\Delta_z(\zeta_*, U_2(\zeta_*))=\Delta_z(\zeta_*, u_F(\zeta_*))>0$ (see \eqref{Eq.u_p<u_F<u_b}), we have
	\begin{align*}
		\mathcal L(U_2)(\zeta_*)&=-\Delta_z(\zeta_*, U_2(\zeta_*))U_2'(\zeta_*)+\Delta_u(\zeta_*, U_2(\zeta_*))\\
		&\geq -\Delta_z(\zeta_*, u_F(\zeta_*))u_F'(\zeta_*)+\Delta_u(\zeta_*, u_F(\zeta_*))=\mathcal L(u_F)(\zeta_*)=0,
	\end{align*}
	which contradicts with \eqref{Eq.U_2compare}. This completes the proof of \eqref{Eq.U_2<u_F}.
	
	So, it suffices to show our claim  \eqref{Eq.U_2(z)<u_2} and \eqref{Eq.U_2compare} for all $0<4-R\ll1$. Now \eqref{Eq.a_1_2bounded}
	is still true. By \eqref{Eq.a_3_4}, \eqref{Eq.B_1_2_def}, \eqref{Eq.a_1>4}, Lemma \ref{Lem.a_2>0}, Lemma \ref{Lem.a_3>0}, Lemma \ref{Lem.a_4<0} and \eqref{Eq.a_1_2bounded}, we have
	\begin{equation}\label{eq:R4}
		a_2>R(\ell-1)/(R-2)>0,\quad 0<a_3\sim 1,\quad 0<-a_4\sim (4-R)^{-1}\quad \text{as}\quad R\uparrow4.
	\end{equation}
	Thanks to $u_{(2)}(z)=\varepsilon+a_1z+a_2z^2$ and \eqref{Eq.U_2}, \eqref{Eq.U_2(z)<u_2} is equivalent to $
	a_3z^3+\frac12a_4z^4+M_2z^5<0$ for $z=\zeta_{2,2}(R)=(4-R)^{3/4},$ $ M_2=(4-R)^{-5/4}$, and is further equivalent to
	\begin{align}\label{Eq.6.20}
		a_3+\frac12a_4\zeta_{2,2}(R)+M_2\zeta_{2,2}(R)^2<0\quad \text{for}\quad M_2=(4-R)^{-5/4}.
	\end{align}By \eqref{eq:R4}, as $R\uparrow4$ we have $a_4\zeta_{2,2}(R)\sim -(4-R)^{-1/4}<0$, $|a_3|\lesssim1$ and $|M_2\zeta_{2,2}(R)^2|= (4-R)^{1/4}\leq 1$, hence \eqref{Eq.6.20} holds for $0<4-R\ll1$, and thus \eqref{Eq.U_2(z)<u_2} is checked.
	\if0	\begin{align*}
		&|a_3\zeta_{1,2}(R)^3|\sim\zeta_{1,2}(R)^3= (R-3)^{9/4},\\
		&\frac12a_4\zeta_{1,2}(R)^4\sim-(4-R)^{-1}\zeta_{1,2}(R)^4=-(4-R)^{2}<0,\\
		&M_2\zeta_{1,2}(R)^5=(4-R)^{-5/4}\zeta_{1,2}(R)^{5}=(4-R)^{5/2},
	\end{align*}
	hence, in \eqref{eq:6.20}, as $R\to 4-$, the main contribution comes from the $\frac12a_4\zeta_{1,2}(R)^4$ term, which is negative. This checks the desired \eqref{Eq.U_1(z)>u_F} for $0<4-R\ll1$.\fi
	
	Finally, we prove \eqref{Eq.U_1compare}. Plugging the definition of $U_2$ into Lemma \ref{Lem.recurrence_relation} and using \eqref{Eq.a_n_recurrence}, we infer that $\mathcal L(U_2)(z)=z^4q_2(z)$, where $q_2$ is a polynomial of degree $6$ given by
	\[q_2(z)=\mathcal U_4^{(2)}+\mathcal U_5^{(2)}z+\mathcal U_6^{(2)}z^2+\mathcal U_7^{(2)}z^3+\mathcal U_8^{(2)}z^4
	+\mathcal U_9^{(2)}z^5+\mathcal U_{10}^{(2)}z^6,\]
	where
	\begin{align*}
		\mathcal U_4^{(2)}=&-(R-4)\delta a_4/2,\\
		\mathcal U_5^{(2)}=&(R-5)\delta M_2+\left(6a_2-(5k+1)a_1+2A+4B+2k\right)a_4/2\\
		&+3a_3^2-(5k+1)a_2a_3-(B+2k)a_3,\\
		\mathcal U_6^{(2)}=&7\left(M_2a_2+a_3a_4/2\right)-(3k+1)(2M_2a_1+a_2a_4+a_3^2)+(3A+5B+2k)M_2-(B+k)a_4,\\
		\mathcal U_7^{(2)}=&8M_2a_3+a_4^2-(7k+3)\left(M_2a_2+a_3a_4/2\right)-(2k+3B)M_2,\\
		\mathcal U_8^{(2)}=&9a_{4}M_2/2-(4k+2)(2M_2a_{3}+a_4^2/4),\\
		\mathcal U_9^{(2)}=&5M_2^2-(9k+5)a_{4}M_2/2,\quad \mathcal U_{10}^{(2)}=-(5k+3)M_2^2.
	\end{align*}By \eqref{Eq.a_1_2bounded} and \eqref{eq:R4}, as $R\uparrow 4$ we have
	\begin{align*}
		&-\mathcal U_4^{(2)}\sim 1>0,\quad |\mathcal U_5^{(2)}-(R-5)\delta M_2|\lesssim|a_4|+1\lesssim(4-R)^{-1},\\
		&-(R-5)\delta M_2\sim M_2=(4-R)^{-5/4},\quad -\mathcal U_5^{(2)}\sim-(R-5)\delta M_2\sim(4-R)^{-5/4}>0,\\
		&\left|\mathcal U_6^{(2)}\right|\lesssim M_2+|a_4|+1\lesssim(4-R)^{-5/4},\quad
		\left|\mathcal U_7^{(2)}\right|\lesssim M_2+|a_4|^2+|a_4|\lesssim(4-R)^{-2},\\
		&\left|\mathcal U_8^{(2)}\right|+\left|\mathcal U_9^{(2)}\right|+\left|\mathcal U_{10}^{(2)}\right|\lesssim M_2^2+|a_4|^2+1\lesssim(4-R)^{-5/2}.
	\end{align*}If $z\in(0, (4-R)^{3/4}]$, then 
	\begin{align*}
		&\left|\mathcal U_6^{(2)}z^2+\mathcal U_7^{(2)}z^3+\mathcal U_8^{(2)}z^4
		+\mathcal U_9^{(2)}z^5+\mathcal U_{10}^{(2)}z^6\right|\\
		&\lesssim (4-R)^{-\frac54}(4-R)^{\frac32}+(4-R)^{-2}(4-R)^{\frac94}+(4-R)^{-\frac52}(4-R)^{3}\lesssim(4-R)^{\frac14}\ll|\mathcal U_4^{(2)}|,
	\end{align*} hence $q_2(z)<\mathcal U_4^{(2)}+\mathcal U_6^{(2)}z^2+\mathcal U_7^{(2)}z^3+\mathcal U_8^{(2)}z^4
	+\mathcal U_9^{(2)}z^5+\mathcal U_{10}^{(2)}z^6<\mathcal U_4^{(2)}/2<0$.  This proves $\mathcal L(U_2)(z)=z^4q_2(z)<0$ for $z\in(0, (4-R)^{3/4}]=(0, \zeta_{2,2}(R)]$, if $0<4-R\ll1$, i.e.,  \eqref{Eq.U_2compare}.
\end{proof}

\begin{proposition}\label{Prop.R_0}
	Let $(k,\ell)\in \mathcal K_*$. Then there exists $R_0\in(3,4)$ such that $u_L(\cdot; R_0)\equiv u_F(\cdot; R_0)$. As a result, for this special $R=R_0$, the $Q_0-Q_1$ solution curve $u_F$ of the $z-u$ ODE can be continued to pass through $Q_1$ smoothly.
\end{proposition}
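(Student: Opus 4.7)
The plan is to run an intermediate value argument in the parameter $R\in(3,4)$, bootstrapped by the one-sided bounds of Lemmas \ref{Lem.Rto3} and \ref{Lem.Rto4}. Assume first that $(k,\ell)\in\mathcal K_1$. I would pick $R_1\in(3,3+\theta_1)$ and $R_2\in(4-\theta_2,4)$, fix a compact interval $\mathcal C\subset(3,4)$ containing $[R_1,R_2]$, and let $\zeta_0=\zeta_0(k,\ell,\mathcal C)>0$ be the joint analyticity radius furnished by Proposition \ref{Prop.local_solution_Q_1}. Then I choose a single
$$\zeta_*\in\bigl(0,\min\{\zeta_0,\zeta_1(R_1),\zeta_2(R_2),1/(k+1)\}\bigr),$$
so that by Lemmas \ref{Lem.Rto3}--\ref{Lem.Rto4} the strict inequalities $u_L(\zeta_*;R_1)>u_F(\zeta_*;R_1)$ and $u_L(\zeta_*;R_2)<u_F(\zeta_*;R_2)$ both hold at the same point.

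Next I would define $g(R):=u_L(\zeta_*;R)-u_F(\zeta_*;R)$ on $[R_1,R_2]$. Proposition \ref{Prop.local_solution_Q_1} provides continuity of $u_L(\zeta_*;\cdot)$ on $\mathcal C$, and Remark \ref{Rmk.u_F_continuous} provides continuity of $u_F(\zeta_*;\cdot)$ on $(1,+\infty)$, so $g$ is continuous on $[R_1,R_2]$ with $g(R_1)>0>g(R_2)$. The intermediate value theorem then yields some $R_0\in(R_1,R_2)\subset(3,4)$ with $g(R_0)=0$, i.e., $u_L(\zeta_*;R_0)=u_F(\zeta_*;R_0)$. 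Because $\zeta_*\in(0,1/(k+1))$, \eqref{Eq.u_p<u_F<u_b} gives $\Delta_z(\zeta_*,u_F(\zeta_*;R_0))>0$, so near $(\zeta_*,u_F(\zeta_*;R_0))$ equation \eqref{Eq.ODE_z_u} is a regular locally Lipschitz scalar ODE. Standard uniqueness at this regular point forces $u_L(\cdot;R_0)\equiv u_F(\cdot;R_0)$ on the overlap of their intervals of definition, and since $u_L$ is analytic at $z=0$ by Proposition \ref{Prop.local_solution_Q_1}, this identification yields the required smooth continuation of the $Q_0$--$Q_1$ curve across the sonic point $Q_1$ with slope $a_1$.

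For the remaining case $(k,\ell)\in\mathcal K_*\setminus\mathcal K_1=\{(2,\ell):\ell\geq\ell_1(2)\}$, Lemma \ref{Lem.a_3>0a_4<0} can fail because $a_4$ need not be negative, so the quintic barrier $U_2$ of Lemma \ref{Lem.Rto4} is not admissible. Following the comment at the end of the introduction to this section, the substitute is a polynomial barrier of different shape, constructed in Section \ref{Sec.Proof_k=2_l_large}; once that replacement yields the analogue of Lemma \ref{Lem.Rto4} (i.e.\ $u_L(z;R)<u_F(z;R)$ on a small right-neighborhood of $0$ for $R$ near $4$), the continuity/IVT argument above applies verbatim. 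The principal obstacle is precisely this construction of a replacement barrier: one must produce a polynomial whose $\mathcal L$-image has the correct sign on $(0,\zeta_2(R)]$ and whose endpoint value lies below $u_{(2)}(\zeta_2(R))$ (hence below $u_F$ by Lemma \ref{Lem.u_2<u_F}), despite losing the quartic coercivity afforded by $a_4<0$. A secondary but essential bookkeeping point is the \emph{order of quantifiers}: one must pick $R_1,R_2$ first and only then shrink $\zeta_*$, because $\zeta_1(R)$ and $\zeta_2(R)$ degenerate as $R$ approaches the endpoints of $(3,4)$.
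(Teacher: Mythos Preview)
Your argument for $(k,\ell)\in\mathcal K_1$ is correct and matches the paper's proof essentially line for line: fix $R_1\in(3,3+\theta_1)$, $R_2\in(4-\theta_2,4)$, choose $\zeta$ below $\min\{\zeta_0([R_1,R_2]),\zeta_1(R_1),\zeta_2(R_2)\}$, apply continuity from Proposition~\ref{Prop.local_solution_Q_1} and Remark~\ref{Rmk.u_F_continuous}, then IVT and local ODE uniqueness at the regular point $\zeta$.

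For $(k,\ell)\in\mathcal K_*\setminus\mathcal K_1$ your deferral to Section~\ref{Sec.Proof_k=2_l_large} is appropriate, but your summary of what happens there is slightly off. You anticipate an analogue of Lemma~\ref{Lem.Rto4} ``for $R$ near $4$'' via a new polynomial barrier replacing $U_2$. What the paper actually does is different and in some sense simpler: it abandons the limit $R\uparrow 4$ entirely and instead fixes the \emph{single} value $R_2=\min\{4-1/\ell,R_*\}$ with $R_*=100/27$, proves directly that $a_4(R_2)<0$ at that one point (Lemma~\ref{Lemu3a}), and uses the cubic $u_{(3)}$ itself as the upper barrier (so $u_L<u_{(3)}$ near $0$ from $a_4<0$, and $u_{(3)}<u_F$ on $(0,1/(k+1))$ by Lemma~\ref{Lem.u_3<u_F}). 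No new polynomial and no asymptotic regime near $4$ are needed on the upper side; the nontrivial new barrier work in Section~\ref{Sec.Proof_k=2_l_large} is for the \emph{left} side of $Q_1$ (the extension argument), not for producing $R_2$.
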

Since Lemma \ref{Lem.Rto4} holds only for $(k,\ell)\in \mathcal K_1$, here we only prove Proposition \ref{Prop.R_0} for $(k,\ell)\in\mathcal K_1$. The proof of Proposition \ref{Prop.R_0} for $(k,\ell)\in\mathcal K_*\setminus\mathcal K_1$ can be found in Section \ref{Sec.Proof_k=2_l_large}.

\begin{proof}[Proof of Proposition \ref{Prop.R_0} for $(k,\ell)\in\mathcal K_1$]
	Assume that $(k,\ell)\in\mathcal K_1$. Let $\theta_1, \theta_2\in(0,1)$ and $\zeta_1, \zeta_2$ be given by Lemma \ref{Lem.Rto3} and Lemma \ref{Lem.Rto4}. Let $R_1=3+\theta_1/2\in(3,4)$, $R_2=4-\theta_2/2\in(3,4)$ and $\zeta=\min\{\zeta_1(R_1), \zeta_2(R_2),\zeta_0([R_1,R_2])\}$. Lemma \ref{Lem.Rto3} implies that $u_L(\zeta; R_1)>u_F(\zeta; R_1)$ and Lemma \ref{Lem.Rto4} implies that $u_L(\zeta; R_2)<u_F(\zeta; R_2)$.	By Proposition \ref{Prop.local_solution_Q_1} and Remark \ref{Rmk.u_F_continuous}, we know that the function $[R_1,R_2]\ni R\mapsto u_L(\zeta; R)-u_F(\zeta; R)$ is continuous. By the {intermediate} value theorem, there exists $R_0\in[R_1,R_2]\subset (3,4)$ such that $u_L(\zeta; R_0)=u_F(\zeta; R_0)$. Therefore, by the uniqueness of solutions to the $z-u$ ODE \eqref{Eq.ODE_z_u} with $u(\zeta)=u_F(\zeta)$, we know that $u_L(z; R_0)=u_F(z; R_0)$ for $z\in(0, \zeta)$, hence the solution $u_F$ can be continued using $u_L$ to pass through $Q_1$ smoothly.
\end{proof}

\section{Global extension of the solution curve}\label{Sec.right_P_1}

In previous sections, we have seen that for $R=R_0\in(3,4)$ given by Proposition \ref{Prop.R_0}, the $P_0-P_1$ solution $v_F$ of the $Z-v$ ODE \eqref{Eq.mainODE} crosses the sonic point $P_1$ smoothly, and thus it is a smooth solution defined on $Z\in[0, Z_1+\Upsilon_1']$ for some small enough $\Upsilon_1'>0$. In this section, we prove that on the right of $P_1$, this extended solution will leave the region $\mathcal R_2$ by crossing the black curve $\Delta_v=0$ in Figure \ref{Fig.Phase_Portrait_vZ} and then it can be continued to $Z\to+\infty$, where we recall
\begin{align*}
	\mathcal R_2&=\left\{(Z, v): v\in(v_1, 1), Z\in\left(\frac{\sqrt\ell v+1}{v+\sqrt\ell}, \frac{(\g+1)v}{1+\g v^2}\right)\right\}\\
	&=\left\{(Z, v)\in\mathcal R_0:\Delta_v(Z, v)<0, \Delta_{Z}(Z, v)<0, v<Z\right\}.
\end{align*}
\if0 The main idea of this section is still to use the barrier function method on the $z-u$ ODE. In Subsection \ref{Subsec.barrier_functions}, several barrier functions are introduced and we investigate their relative positions, which allows us to define a domain $\mathcal D_2'\subset \mathcal D_2$ (see \eqref{Eq.D_2'}) such that in Subsection \ref{Subsec.Z-v_right_P_1} we can show that the local solution $u_L(z)$ will be trapped in this domain along its continuation, before it exits $\mathcal D_2'$ from the right boundary $z=0$, which corresponds to the black curve $\Delta_v=0$ in Figure \ref{Fig.Phase_Portrait_vZ}.\fi

\subsection{Barrier functions}\label{Subsec.barrier_functions}

\begin{lemma}\label{Lem.u_3_compare}
	Assume that $(k,\ell)\in\mathcal K_1$ and $R\in(3,4)$. Let $u_{(3)}(z)=\varepsilon+a_1z+a_2z^2+a_3z^3$.\index{$u_{(3)}(z)$} Then we have
	\begin{equation}\label{Eq.u_3_compare}
		\mathcal L\left(u_{(3)}\right)(z)<0\quad \forall \ z\in\left[-\varepsilon/a_1, 0\right).
	\end{equation}
\end{lemma}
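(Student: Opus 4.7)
The strategy is to expand $\mathcal{L}(u_{(3)})(z)$ as a polynomial in $z$, reduce the claim to a sign analysis of a concave quadratic, and then exploit the endpoint values together with concavity. Applying Lemma~\ref{Lem.recurrence_relation} to $u_{(3)}$, and using that $\varepsilon,a_1,a_2,a_3$ satisfy the defining recurrence \eqref{Eq.a_n_recurrence}, the first four Taylor coefficients $\mathcal{U}_0,\mathcal{U}_1,\mathcal{U}_2,\mathcal{U}_3$ vanish automatically. A degree count ($u_{(3)}$ has degree~$3$, so $\Delta_z(z,u_{(3)})u_{(3)}'$ and $\Delta_u(z,u_{(3)})=2u_{(3)}^2+\cdots$ are polynomials of degree at most~$6$) yields
\begin{equation*}
\mathcal{L}(u_{(3)})(z)=z^4 P(z),\qquad P(z):=\mathcal{U}_4+\mathcal{U}_5 z+\mathcal{U}_6 z^2.
\end{equation*}

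A direct computation from \eqref{Eq.E_n}, specialised to $u_n=0$ for $n\ge 4$, gives
\begin{align*}
\mathcal{U}_4 &= -E_4 = -\bigl(B_2 a_3+2k a_2(a_2+1)\bigr)=(4-R)\delta a_4,\\
\mathcal{U}_5 &= a_3\bigl(3a_3-(5k+1)a_2-(B+2k)\bigr),\\
\mathcal{U}_6 &= -(3k+1)a_3^2,
\end{align*}
where \eqref{Eq.a_3_4} is used for the last form of $\mathcal{U}_4$. Since $(k,\ell)\in\mathcal{K}_1$ and $R\in(3,4)$, Lemma~\ref{Lem.a_3>0a_4<0} yields $a_3>0$ and $a_4<0$, so $\mathcal{U}_4<0$ and $\mathcal{U}_6<0$; because $z^4>0$ on $[-\varepsilon/a_1,0)$, the lemma reduces to proving $P(z)<0$ on that interval. $P$ is a concave parabola with $P(0)=\mathcal{U}_4<0$, so in the easy subcase $\mathcal{U}_5\ge 0$ one has $\mathcal{U}_5 z\le 0$ and $\mathcal{U}_6 z^2<0$ for $z<0$, hence $P(z)<0$ immediately.

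In the remaining subcase $\mathcal{U}_5<0$, the vertex $z^{*}=-\mathcal{U}_5/(2\mathcal{U}_6)$ is negative and can fall inside the interval, so one must control $P$ there. My plan is to evaluate $\mathcal{L}(u_{(3)})(\xi_1)=\xi_1^4 P(\xi_1)$ at $\xi_1=-\varepsilon/a_1$ directly from the definition of $\mathcal{L}$, using $u_{(3)}(\xi_1)=\xi_1^2(a_2+a_3\xi_1)$ and the identity $f(\xi_1)=(1-\ell-B_0)\varepsilon^2/a_1^2$ from Lemma~\ref{Lem.a_2>0} to obtain an explicit expression for $P(\xi_1)$. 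Combining $P(\xi_1)<0$, $P(0)<0$, and the concavity of $P$ via a discriminant bound $\mathcal{U}_5^2<4\mathcal{U}_4\mathcal{U}_6$ — or equivalently by checking that any positive lobe of $P$ lies outside $[\xi_1,0]$ — will then conclude. The principal obstacle is quantitative: making the middle-term analysis tight when $\mathcal{U}_5<0$ requires the refined bounds $a_3/a_2>2k-1$ and $kR/2-1>a_1/\varepsilon$ from Lemma~\ref{Lem.a_3>0a_4<0}, combined with the identities \eqref{Eq.a_2_expression} and \eqref{Eq.eliminate_epsilon} linking $a_1,a_2,\delta,A,B,\varepsilon$, so that the middle coefficient admits a usable estimate; converting all this into a clean discriminant bound is the chief bookkeeping hurdle.
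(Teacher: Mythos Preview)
Your polynomial decomposition and the coefficients $\mathcal{U}_4,\mathcal{U}_5,\mathcal{U}_6$ are exactly what the paper obtains (the paper writes $q_4=P$ and $B_3=\mathcal{U}_5/a_3$), and your easy subcase $\mathcal{U}_5\ge 0$ is correct.

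The hard subcase is where your proposal remains a sketch, and the route you outline is not the one that works cleanly. Concavity of $P$ together with $P(\xi_1)<0$ and $P(0)<0$ does \emph{not} exclude a positive hump strictly inside $(\xi_1,0)$; you recognise this, but the two remedies you propose---proving the discriminant inequality $\mathcal{U}_5^2<4\mathcal{U}_4\mathcal{U}_6$, or localising the roots outside $[\xi_1,0]$---require comparing $\mathcal{U}_5^2/|\mathcal{U}_6|$ against $|\mathcal{U}_4|$, and the available inequalities do not deliver this directly. In particular, evaluating $\mathcal{L}(u_{(3)})(\xi_1)$ brings in $u_{(3)}(\xi_1)=\xi_1^2(a_2+a_3\xi_1)$ and does not collapse the way the analogous computation for $u_{(1)}$ did (where $u_{(1)}(\xi_1)=0$ made $\mathcal{L}(u_{(1)})(\xi_1)=(1-\xi_1)f(\xi_1)a_1$ tractable).

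The paper avoids the discriminant altogether. Writing $q_4(z)=-B_2a_3+q_5(z)$ with $q_5(z)=-2ka_2^2-2ka_2+B_3a_3z-(3k+1)a_3^2z^2$, it first uses $a_3/a_2>2k-1$ and $B<2k$ to obtain the \emph{lower} bound $B_3>(k-4)a_2-4k\ge -3a_2-4k$, which for $z<0$ turns into an \emph{upper} bound on $B_3a_3z$. Feeding this into $q_5$ and completing the square yields
\[
q_5(z)\le -\tfrac{1}{8}(4a_2+3a_3z)^2-k\Big(2a_2+4a_3z+\tfrac{23}{8}a_3^2z^2\Big)-\tfrac{k-1}{8}a_3^2z^2,
\]
which reduces everything to the sign of $2a_2+4a_3z+\tfrac{23}{8}a_3^2z^2$. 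A final case split on $a_2\ge 16/23$ versus $a_2<16/23$ (the latter using $B_2-4k\varepsilon/a_1>0$, proved from $a_1>A$ and $a_1-4>4\varepsilon/a_1$) closes the argument. The key step you are missing is this explicit lower bound on $B_3$ combined with the particular completion of squares; the discriminant or root-localisation route, while not unreasonable in spirit, is not how the estimate is actually closed here.
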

\begin{proof}
	Lemma \ref{Lem.a_4<0} implies that $B_2>0$. By Lemma \ref{Lem.recurrence_relation} and \eqref{Eq.a_n_recurrence}, we have
	\[\mathcal L\left(u_{(3)}\right)(z)=-(R-4)\delta a_4z^4+\big(3a_3-(5k+1)a_2-B-2k\big)a_3z^5-(3k+1)a_3^2z^6=z^4q_4(z),\]
	where by \eqref{Eq.a_3_4},
	\begin{align*}
		q_4(z)&=-(R-4)\delta a_4+\big(3a_3-(5k+1)a_2-B-2k\big)a_3z-(3k+1)a_3^2z^2\\
		&=-B_2a_3-2ka_2^2-2ka_2+B_3a_3z-(3k+1)a_3^2z^2,
	\end{align*}
	with $B_3:=3a_3-(5k+1)a_2-B-2k$\index{$B_3$}. It follows from \eqref{Eq.a_3>0a_4<0}, \eqref{Eq.a_2>0} and $B=2k+1-\ell<2k$ that
	\[B_3>3(2k-1)a_2-(5k+1)a_2-2k-2k=(k-4)a_2-4k\geq -3a_2-4k.\]
	Hence, for $z<0$, we get by $a_3>0$ (Lemma \ref{Lem.a_3>0a_4<0})  that
	\begin{align*}
		q_5(z):&=-2ka_2^2-2ka_2+B_3a_3z-(3k+1)a_3^2z^2\\
		&\leq-2a_2^2-2ka_2-(3a_2+4k)a_3z-(3k+1)a_3^2z^2\\
		&=-\frac{(4a_2+3a_3z)^2}8-k\left(2a_2+4a_3z+\frac{23}8a_3z^2\right)-\frac{k-1}8a_3^2z^2\\
		&\leq -k\left(2a_2+4a_3z+\frac{23}8a_3z^2\right)=-2k\left(a_2-\frac{16}{23}\right)-k\frac{(23a_3z+16)^2}{184}.
	\end{align*}
	If $a_2\geq\frac{16}{23}$, then $q_5(z)\leq 0$, hence $q_4(z)=-B_2a_3+q_5(z)<0$ for $z<0$. If $0<a_2<\frac{16}{23}$ and $z\in[-\varepsilon/a_1, 0)$, then $q_5(z)\leq -k(2a_2+4a_3z+23a_3z^2/8)< -k(4a_3z)\leq 4ka_3\varepsilon/a_1$, hence,
	\begin{align*}
		&q_4(z)=-B_2a_3+q_5(z)<-B_2a_3+4ka_3\varepsilon/a_1
		=-a_3(B_2-4k\varepsilon/a_1)<0
	\end{align*}
	due to the fact
	\begin{align*}
		&B_2-4k\varepsilon/a_1=4ka_1-5a_2-(2k+A+3B)-4k\varepsilon/a_1\\
		&=(3k-1)(a_1-4)+(a_1-A)+3(2k-B)+8(k-1)+(4-5a_2)+k(a_1-4-4\varepsilon/a_1)>0,
	\end{align*}
	where we have used $a_1>4$, $2k-B=\ell-1>0$ and
	\begin{equation}\label{Eq.a_1>A}
		a_1>A,\quad a_1-4>4\varepsilon/a_1.
	\end{equation}
	In fact, by \eqref{B0d}, we have $RA<(R-1)a_1<Ra_1$, thus $a_1>A$. By \eqref{Eq.a_1>4}, we have $a_1>2(1+\ell\g)$, hence $a_1(a_1-4)>2(1+\ell\g)\cdot2(\ell\g-1)=4(\ell^2\g^2-1)>4(\ell\g^2-1)=4\varepsilon$, $ a_1-4>4\varepsilon/a_1 $.
\end{proof}

\begin{lemma}\label{Lem.f<0}
Recall that $f(z)=-\varepsilon-Az+Bz^2$. It hods that
	\[f(z)<0\quad \forall\ z\in\left[-\varepsilon/{a_1}, 0\right].\]
\end{lemma}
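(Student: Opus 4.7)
My plan is to control the maximum of the quadratic $f(z)=-\varepsilon-Az+Bz^2$ on the interval $[-\varepsilon/a_1,0]$ through a case analysis on $\mathrm{sgn}(B)$. The endpoint values are already both negative: $f(0)=-\varepsilon<0$ is immediate, and Lemma~\ref{Lem.a_2>0} supplies the identity
\[f\!\left(-\frac{\varepsilon}{a_1}\right)\frac{a_1^2}{\varepsilon^2}=-B_0+1-\ell,\]
which is strictly negative since $B_0>0$ and $\ell>1$. The task is therefore to rule out $f$ becoming nonnegative somewhere in the interior.

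Differentiating gives $f'(z)=2Bz-A$, and since $\gamma\in\Gamma(k,\ell)$ we have $A>0$. When $B\geq 0$ this forces $f'(z)\leq -A<0$ for all $z\leq 0$, so $f$ is strictly decreasing on $[-\varepsilon/a_1,0]$ and the two endpoint bounds suffice. In particular, this already disposes of every $(k,\ell)\in\mathcal K_1$ with $k\geq 2$, since there $\ell<\ell_1(k)<2$ forces $B=2k+1-\ell>0$; only the $k=1$, $\ell\geq 3$ range demands more care.

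When $B<0$, the concave parabola $f$ has vertex $z^{*}=A/(2B)<0$. If $z^{*}\leq -\varepsilon/a_1$, then $f$ is monotone on our interval and the endpoint analysis again concludes. The only delicate subcase — and the main obstacle I expect — is when the vertex lies strictly inside the interval, i.e.\ $Aa_1+2B\varepsilon<0$; here the maximum becomes $f(z^{*})=-\varepsilon-A^2/(4B)$, and I must prove the equivalent sharp inequality $A^2+4B\varepsilon<0$.

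To overcome this obstacle I would combine two observations: the subcase assumption rewrites as $-4B\varepsilon>2Aa_1$, while the bound $a_1>A>0$ (established as \eqref{Eq.a_1>A} in the proof of Lemma~\ref{Lem.u_3_compare}) gives $2Aa_1>A^2$. Chaining these yields $A^2<2Aa_1<-4B\varepsilon$, so $f(z^{*})<0$, completing the argument in all regimes.
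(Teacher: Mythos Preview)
Your proof is correct and follows essentially the same strategy as the paper: establish negativity at both endpoints, then split on the sign of $B$, invoking $a_1>A$ from \eqref{Eq.a_1>A} to handle the case $B<0$. The one notable difference is that for $B\leq 0$ the paper bypasses the vertex analysis entirely: since $Bz^2\leq 0$ it bounds $f(z)\leq -\varepsilon-Az<-\varepsilon-a_1z\leq 0$ directly for $z\in[-\varepsilon/a_1,0)$, which is a one-line argument that avoids your subcase split on the location of $z^*$. Your route through the vertex and the chain $A^2<2Aa_1<-4B\varepsilon$ works, but the paper's linearization is tidier.
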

\begin{proof}
	We have $f(0)=-\varepsilon<0$ and by Lemma \ref{Lem.a_2>0} $f\left(-\varepsilon/{a_1}\right)<0$. If  $B=2k+1-\ell>0$, then $f(z)\leq \max\left\{f\left(-\varepsilon/{a_1}\right),f(0)\right\}<0$ for all $z\in\left[-\varepsilon/{a_1}, 0\right]$.  If $B\leq 0$, as $a_1>A$ we have $f(z)\leq -\varepsilon-Az<-\varepsilon-a_1z\leq0$ for $z\in\left[-\varepsilon/{a_1}, 0\right)$ and $f(0)=-\varepsilon<0$.
	\end{proof}

\if0 \begin{lemma}\label{Lem.u_i_compare}
	There exists $M_0>0$ such that for all $M\in(M_0, +\infty)$ there is $a_0=a_0(M)>0$ such that for all $a\in(0, a_0)$, we have
	\begin{equation}
		\mathcal L\big(a(1-z)^{M}\big)(z)>0,\qquad\forall\ z\in \left(-\frac\varepsilon{a_1}, 0\right).
	\end{equation}
\end{lemma}
\begin{proof}
	Recalling the definition of $\mathcal L$ in \eqref{Luz}, for $a>0, M>0$ we compute
	\begin{align*}
		\mathcal L\big(a(1-z)^{M}\big)(z)&=-aM(1-z)^{M-1}\left[(1-(k+1)z)a(1-z)^M+(1-z)f(z)\right]\\
		&\qquad +2a(1-z)^M\left[a(1-z)^M+f(z)+kz(1-z)\right]\\
		&=aM(1-z)^MG(z; a, M),
	\end{align*}
	where
	\[G(z; a, M):=-f(z)-(1-(k+1)z)a(1-z)^{M-1}+\frac2M\left[a(1-z)^M+f(z)+kz(1-z)\right].\]
	We have
	\[\wt G(z; M):= \lim_{a\to 0+}G(z; a, M)=-f(z)+\frac2M\big[f(z)+kz(1-z)\big],\]
	and $\lim_{M\to+\infty}\wt G(z; M)=-f(z)$. By Lemma \ref{Lem.f<0}, if we write $\xi_1=-\varepsilon/a_1<0$, then
	\[-M_1:=\sup_{z\in[\xi_1,0]}f(z)<0.\]
	We define $M_2:=\sup_{z\in[\xi_1,0]}\left|f(z)+kz(1-z)\right|\geq0$, and $M_0:=\frac{2M_2+1}{M_1}>0.$ Then, if $M>M_0$, we have $\wt G(z; M)\geq M_1-\frac{2M_2}{M}>\frac{M_1}2>0$ for all $z\in[\xi_1,0]$. Now we fix an arbitrary $M>M_0$, then we have
	\[G(z; a, M)=\wt G(z; M)+a\left[\frac2M(1-z)^M-\big(1-(k+1)z\big)(1-z)^{M-1}\right].\]
	Let $$M_3=M_3(M):=\sup_{z\in[\xi_1,0]}\left|\frac2M(1-z)^M-\big(1-(k+1)z\big)(1-z)^{M-1}\right|\geq 0,$$
	and $a_0=a_0(M):=\frac{M_1}{4M_3+1}>0$, then for all $a\in(0, a_0)$ we have $G(z; a, M)>\frac{M_1}2-aM_3>\frac{M_1}2-\frac{M_1}{4M_3+1}M_3>\frac{M_1}4>0$ for all $z\in[\xi_1, 0]$ and thus $\mathcal L\big(a(1-z)^{M}\big)(z)>0$ for all $z\in[\xi_1, 0]$.
\end{proof}
\fi

\begin{lemma}\label{Lem.u_g_compare}
	It holds that
	\begin{equation}\label{Eq.u_g_compare}
		\mathcal L(u_g)(z)>0,\quad {-\Delta_z(z, u_g(z))>0},\quad \forall\ z\in\left[z_g, 0\right).
	\end{equation}
\end{lemma}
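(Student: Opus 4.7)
The plan is to compute both $-\Delta_z(z,u_g(z))$ and $\mathcal{L}(u_g)(z)$ in closed form by exploiting the factorizations $u_b(z)-u_g(z)$ and $u_g(z)-u_p(z)$ already established in Lemma \ref{Lem.4.9}. Since $u_b$ annihilates $-\Delta_z$ and $u_p$ annihilates $\Delta_u/(2u)$, the evaluation of $\Delta_z,\Delta_u$ on $u_g$ reduces to the differences $u_g-u_b$ and $u_g-u_p$, which are polynomials in $z$ with a very clean factored structure.

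More precisely, from \eqref{Eq.u_p-u_g} I read off $u_g(z)+f(z)+kz(1-z)=u_g(z)-u_p(z)=kz(z+\gamma)$, giving
\[
\Delta_u(z,u_g(z))=2u_g(z)\cdot kz(z+\gamma).
\]
For $\Delta_z$, I observe that $\Delta_z(z,u_b(z))\equiv 0$, and from the expression in the proof of Lemma \ref{Lem.4.9} the numerator of $u_b-u_g$ factors as $k\gamma(\ell\gamma-1)z+k(2\ell\gamma+\gamma-1)z^2+k(\ell+1)z^3=kz\bigl((\ell+1)z+\ell\gamma-1\bigr)(z+\gamma)$. Therefore
\[
-\Delta_z(z,u_g(z))=\bigl(1-(k+1)z\bigr)\bigl(u_g(z)-u_b(z)\bigr)=-kz\bigl((\ell+1)z+\ell\gamma-1\bigr)(z+\gamma).
\]

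Next I assemble $\mathcal{L}(u_g)(z)=-\Delta_z(z,u_g(z))\,u_g'(z)+\Delta_u(z,u_g(z))$ with $u_g'(z)/2=1+\ell\gamma+(\ell-1)z$, factor out $2kz(z+\gamma)$ and reduce to showing the algebraic identity
\[
-\bigl((\ell+1)z+\ell\gamma-1\bigr)\bigl(1+\ell\gamma+(\ell-1)z\bigr)+u_g(z)=-\ell(\ell-1)(z+\gamma)^2,
\]
which is a direct coefficient-by-coefficient check (constant term $\ell\gamma^2-\ell^2\gamma^2=-\ell(\ell-1)\gamma^2$, etc.). Consequently
\[
\mathcal{L}(u_g)(z)=-2k\ell(\ell-1)\,z\,(z+\gamma)^3.
\]

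To conclude, I verify the signs on $[z_g,0)$. First, $z+\gamma>0$ throughout since $z_g-(-\gamma)=\tfrac{1+\gamma}{\sqrt{\ell}+1}>0$. For $-\Delta_z(z,u_g(z))$, the remaining factor $(\ell+1)z+\ell\gamma-1$ is linear and increasing in $z$, so its minimum on $[z_g,0)$ occurs at $z=z_g$; substitution and simplification give $(\ell+1)z_g+\ell\gamma-1=\tfrac{\sqrt{\ell}(\sqrt{\ell}-1)(1+\gamma)}{\sqrt{\ell}+1}>0$. Together with $-kz>0$ (as $z<0$) this yields $-\Delta_z(z,u_g(z))>0$. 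For $\mathcal{L}(u_g)(z)$, using $k,\ell-1>0$, $z<0$, and $(z+\gamma)^3>0$, the formula above is strictly positive. The only point requiring care is that at $z=z_g$ we have $u_g(z_g)=0$, so $\Delta_u$ vanishes; however $u_g'(z_g)>0$ (since $z_g$ lies to the right of the vertex of the parabola $u_g$, a quick comparison with $-\tfrac{1+\ell\gamma}{\ell-1}$), and the identity still delivers the correct sign. The only real hurdle is spotting the miraculous collapse to $-\ell(\ell-1)(z+\gamma)^2$; everything else is sign-tracking.
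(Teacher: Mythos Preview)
Your proof is correct and follows essentially the same approach as the paper: both arrive at the identical closed-form expressions $-\Delta_z(z,u_g(z))=-kz(z+\gamma)\bigl((\ell+1)z+\ell\gamma-1\bigr)$ and $\mathcal{L}(u_g)(z)=2k\ell(1-\ell)z(z+\gamma)^3$, and then check the signs on $[z_g,0)$ via the same two inequalities $z_g+\gamma>0$ and $(\ell+1)z_g+\ell\gamma-1>0$. The one pleasant twist in your write-up is that instead of expanding $-\Delta_z$ and $\Delta_u$ from scratch (as the paper does), you recycle the formulas for $u_b-u_g$ and $u_g-u_p$ already obtained in Lemma~\ref{Lem.4.9}, observing that $\Delta_z(\cdot,u_b)\equiv 0$ and $u+f+kz(1-z)=u-u_p$; this is a tidy shortcut but not a different method. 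Your final paragraph worrying about $z=z_g$ is unnecessary: once you have the closed formula $\mathcal{L}(u_g)(z)=-2k\ell(\ell-1)z(z+\gamma)^3$, positivity on all of $[z_g,0)$ follows immediately from $z<0$ and $z+\gamma>0$, with no special treatment of the endpoint needed.
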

\begin{proof}
	This is a brute force computation. We claim that
	\begin{equation}\label{Eq.L(u_g)}
		\mathcal L(u_g)(z)=2k\ell(1-\ell)z(\g+z)^3.
	\end{equation}
	Recall that {(see \eqref{Luz})}
	\[\mathcal L(u_g)(z)=-\Delta_z(z, u_g(z))u_g'(z)+\Delta_u(z, u_g(z)).\]
	By \eqref{Eq.epsilon_A_B_gamma}, {\eqref{Eq.u_g} and \eqref{uz}}, we have
	\begin{align}
		-\Delta_z(z, u_g(z))&=\big(1-(k+1)z\big)u_g(z)+(1-z)f(z)\nonumber\\
		&=\big(1-(k+1)z\big)\big(\varepsilon+2(1+\ell\g)z+(\ell-1)z^2\big)+(1-z)(-\varepsilon-Az+Bz^2)\nonumber\\
		&=\big(2(1+\ell\g)-(k+1)\varepsilon-A+\varepsilon\big)z+\big(\ell-1-2(k+1)(1+\ell\g)+A+B\big)z^2\nonumber\\
		&\qquad\qquad-\big((\ell-1)(k+1)+B\big)z^3\nonumber\\
		&=k\g(1-\ell\g)z-k(-1+\g+2\ell\g)z^2-k(\ell+1)z^3\nonumber\\
		&=-kz(z+\g)\big((\ell+1)z+\ell\g-1\big),\label{Eq.Delta_z(u_g)}\\
		\Delta_u(z, u_g(z))&=2u_g(z)\big(u_g(z)+f(z)+kz(1-z)\big)\nonumber\\
		&=2u_g(z)\big(\varepsilon+2(1+\ell\g)z+(\ell-1)z^2-\varepsilon-Az+Bz^2+kz-kz^2\big)\nonumber\\
		&=2kzu_g(z)(z+\g)=2kz(z+\g)\big(\ell(z+\g)^2-(z-1)^2\big).\nonumber
	\end{align}
	Therefore, we arrive at
	\begin{align*}
		\mathcal L(u_g)(z)&=-2kz(z+\g)\big((\ell+1)z+\ell\g-1\big)\big((\ell-1)z+\ell\g+1\big)\\
		&\qquad\qquad+2kz(z+\g)\big(\ell(z+\g)^2-(z-1)^2\big)\\
		&=-2kz(z+\g)\big(\ell^2(z+\g)^2{-}(z-1)^2\big)+2kz(z+\g)\big(\ell(z+\g)^2-(z-1)^2\big)\\
		&=2k\ell(1-\ell)z(z+\g)^3,
	\end{align*}
	which gives our claim \eqref{Eq.L(u_g)}. Now \eqref{Eq.u_g_compare} follows from \eqref{Eq.L(u_g)}, \eqref{Eq.Delta_z(u_g)}, $z_g+\g=\frac{1+\g}{\sqrt\ell+1}>0$ and
	\begin{align*}
		(\ell+1)z_g+\ell\g-1&=(\ell+1)\frac{1-\sqrt\ell\g}{\sqrt\ell+1}+\ell\g-1=\frac{(\ell-\sqrt\ell)(1+\gamma)}{\sqrt\ell+1}>0.
	\end{align*}This completes the proof.
\end{proof}

\begin{lemma}[Relative positions of barrier functions]\label{Lem.relative_positions}
	It holds that
	\begin{align}
		&u_g(z)>u_{(1)}(z), \quad  \forall\ z<0;\quad -\g<z_g<-\varepsilon/a_1;\label{Eq.u_g_u_1_3}\\
		&u_{(1)}(z)<u_{(3)}(z)\quad \forall\ z\in\left(-\frac{a_2}{a_3}, 0\right); \quad u_{(1)}(z)\geq u_{(3)}(z)\quad \forall\ z\in\left(-\infty, -\frac{a_2}{a_3}\right].\label{Eq.u_1><u_3}
	\end{align}
	{Moreover, if $(k,\ell)\in\mathcal K_1$ and $R\in(3,4)$ then $u_g(z)>u_{(3)}(z)$ for $z<0$}.
\end{lemma}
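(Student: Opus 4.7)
The first three assertions are elementary algebra and only the last requires work.

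For the first, the identity $u_g(z)-u_{(1)}(z)=(2(1+\ell\g)-a_1)z+(\ell-1)z^2$ has both nonzero terms positive on $z<0$ by $a_1>2(1+\ell\g)$ (from \eqref{Eq.a_1>4}) and $\ell>1$. The inequality $-\g<z_g$ follows at once from $z_g+\g=(1+\g)/(\sqrt\ell+1)>0$, while $z_g<-\varepsilon/a_1$ rearranges (multiplying by positive quantities and using $\sqrt\ell\g>1$) to $a_1>(\sqrt\ell+1)(1+\sqrt\ell\g)$, which in turn follows from $a_1>2(1+\ell\g)$ via
\[2(1+\ell\g)-(\sqrt\ell+1)(1+\sqrt\ell\g)=(\sqrt\ell-1)(\sqrt\ell\g-1)>0.\]
The third assertion is immediate from the factorization $u_{(3)}(z)-u_{(1)}(z)=z^2(a_2+a_3z)$ together with $a_2,a_3>0$ (Lemma \ref{Lem.a_2>0} and Remark \ref{rem3}).

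For the final assertion, now under $(k,\ell)\in\mathcal K_1$ and $R\in(3,4)$, Lemma \ref{Lem.a_3>0a_4<0} supplies $a_1/\varepsilon<a_3/a_2$, so $-\varepsilon/a_1<-a_2/a_3$. On $z\leq-a_2/a_3$ the first and third parts chain to $u_g(z)>u_{(1)}(z)\geq u_{(3)}(z)$, so it suffices to prove $u_g>u_{(3)}$ on $[-\varepsilon/a_1,0)$; together these two regions cover $(-\infty,0)$ since they overlap on $[-\varepsilon/a_1,-a_2/a_3]$.

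On $[-\varepsilon/a_1,0)$ I would run a barrier argument for $h:=u_g-u_{(3)}$. Two boundary facts are required: $h(0)=0$ with $h'(0)=2(1+\ell\g)-a_1<0$ (so $h>0$ just to the left of $0$), and $h(-\varepsilon/a_1)>0$, because $u_{(1)}(-\varepsilon/a_1)=0$ and the third assertion applied at $-\varepsilon/a_1<-a_2/a_3$ yield $u_{(3)}(-\varepsilon/a_1)<0$, while $u_g(-\varepsilon/a_1)>0$ by $u_g(z_g)=0$ and strict monotonicity of $u_g$ on $[z_g,0]$. The crux is a crossing identity: at any interior zero $z_*\in(-\varepsilon/a_1,0)$ of $h$, the coincidence $u_g(z_*)=u_{(3)}(z_*)$ reduces the defining formula \eqref{Luz} to
\[\mathcal L(u_g)(z_*)-\mathcal L(u_{(3)})(z_*)=-\Delta_z(z_*,u_g(z_*))\,h'(z_*).\]
Lemmas \ref{Lem.u_g_compare} and \ref{Lem.u_3_compare} provide $\mathcal L(u_g)>0$ and $-\Delta_z(z,u_g(z))>0$ on $[z_g,0)$ and $\mathcal L(u_{(3)})<0$ on $[-\varepsilon/a_1,0)$, forcing $h'(z_*)>0$. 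If $h$ had a zero in $(-\varepsilon/a_1,0)$, taking the smallest such $z_*$ would give $h>0$ on $[-\varepsilon/a_1,z_*)$ by continuity, whence $h'(z_*)\leq 0$, contradicting the crossing identity. Hence $h>0$ on $[-\varepsilon/a_1,0)$, which closes the proof. The main obstacle is arranging the barrier cleanly; in particular, $h(-\varepsilon/a_1)>0$ depends on $a_1/\varepsilon<a_3/a_2$, which is exactly where the restriction $(k,\ell)\in\mathcal K_1$ enters.
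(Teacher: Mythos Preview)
Your proof is correct and follows essentially the same approach as the paper. The only notable cosmetic differences are: (i) for $z_g<-\varepsilon/a_1$ you compute directly, while the paper observes it from $u_g(z_g)=0>u_{(1)}(z_g)=\varepsilon+a_1z_g$; (ii) for the final comparison, the paper runs the barrier argument on $(-a_2/a_3,0)$ (using the chain $u_g>u_{(1)}\geq u_{(3)}$ on $(-\infty,-a_2/a_3]$ as the left boundary condition) whereas you run it on $[-\varepsilon/a_1,0)$ with an explicit check that $h(-\varepsilon/a_1)>0$ --- but the crossing contradiction via $\mathcal L(u_g)>0>\mathcal L(u_{(3)})$ and $-\Delta_z(z,u_g(z))>0$ is identical.
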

\begin{proof}
	The inequality \eqref{Eq.u_1><u_3} follows from $u_{(3)}(z)-u_{(1)}(z)=z^2(a_3z+a_2)$.
	
	Now we prove \eqref{Eq.u_g_u_1_3}. Recall that $u_{(1)}(z)=\varepsilon+a_1z$, by \eqref{Eq.a_1>4}, \eqref{Eq.u_g} and $\ell>1$, we have
	\[u_{(1)}(z)-u_g(z)=z\big(a_1-2(1+\ell\g)-(\ell-1)z\big)<0,\qquad\forall\ z<0.\]
	Taking $z=z_g$ we have $0=u_g(z_g)>u_{(1)}(z_g)=\varepsilon+a_1z_g$, thus $-\g<z_g<-\varepsilon/a_1=:\xi_1$.	We also have $u_g(z)>u_{(1)}(z)\geq u_{(3)}(z)$ for $z\leq \xi_2:=-a_2/a_3$.
	Assume for contradiction that there exists $\zeta_*\in(\xi_2, 0)$ such that $u_g(z)>u_{(3)}(z)$ for all $z<\zeta_*$ but $u_g(\zeta_*)=u_{(3)}(\zeta_*)$, then $u_g'(\zeta_*)\leq u_{(3)}'(\zeta_*)$. \if0Using \eqref{Eq.Delta_z(u_g)}, $-\g<z_g$ and
	
	we infer
	\begin{equation}\label{ug1}
		-\Delta_z(z, u_g(z))=-kz(z+\g)\big((\ell+1)z+\ell\g-1\big)>0,\qquad\forall\ z\in[z_g, 0).
	\end{equation}\fi
	Since $\zeta_*\in(\xi_2, 0)\subset (\xi_1, 0)\subset(z_g, 0)$ due to Lemma \ref{Lem.a_3>0a_4<0} and \eqref{Eq.u_g_u_1_3}, by \eqref{Eq.u_g_compare} we obtain $-\Delta_z(\zeta_*, u_g(\zeta_*))>0$ and then we get by \eqref{Luz} and Lemma \ref{Lem.u_3_compare} that
	\begin{align*}
		\mathcal L(u_g)(\zeta_*)&=-\Delta_z(\zeta_*, u_g(\zeta_*))u_g'(\zeta_*)+\Delta_u(\zeta_*, u_g(\zeta_*))\\
		&\leq -\Delta_z\left(\zeta_*, u_{(3)}(\zeta_*)\right)u_{(3)}'(\zeta_*)+\Delta_u\left(\zeta_*, u_{(3)}(\zeta_*)\right)=\mathcal L\left(u_{(3)}\right)(\zeta_*)<0,
	\end{align*}
	which contradicts with Lemma \ref{Lem.u_g_compare}.
\end{proof}

\if0 Now we define a domain $\mathcal D_2'\subset \mathcal D_2$ by
\begin{equation}\label{Eq.D_2'}
	\mathcal D_2'=\left\{(z, u): u_{\text{i}}(z)<u<u_{(1)}(z) \text{ if } z_{\text{i}}<z\leq z_{\text{y}}, u_{\text{i}}(z)<u<u_{(3)}(z) \text{ if } z_{\text{y}}<z<0\right\}.
\end{equation}\fi

\subsection{Global extension of the $Z-v$ solution curve}\label{Subsec.Z-v_right_P_1}
Here we prove that the local solution $v_L=v_L(z)$ near $P_1$ constructed in Section \ref{Sec.local_P_1} leaves the region $\mathcal R_2$ through the black curve $Z=Z_\text{b}$ solving $\Delta_v=0$.

\begin{lemma}\label{Lem.Z_v_alternative}
	Given $(Z_0, v_0)\in\mathcal R_2$ and $Z_0<1$. Let $v(Z)$ be the unique solution of
	\begin{equation}\label{Eq.Z_v_tau}
		\frac{\mathrm{d}v}{\mathrm{d}Z}=\frac{\Delta_v(Z, v)}{\Delta_Z(Z, v)}\quad
		\text{ with } v(Z_0)=v_0.
	\end{equation}
	Then one of the following holds: either
	\begin{enumerate}[(i)]
		\item the solution exists for $Z\in[Z_0,+\infty)$ and $\Delta_Z(Z, v(Z))<0 $, $|v(Z)|<1$, $v(Z)<Z$ for $Z\in[Z_0,+\infty)$ or
		\item there exists $Z_*\in(Z_0,1]$ such that the solution exists for $Z\in[Z_0,Z_*)$ and $(Z, v(Z))\in\mathcal R_2$ for all $Z\in[Z_0,Z_*)$, and
		$Z_*=\frac{\sqrt\ell v_*+1}{v_*+\sqrt\ell}$, here $v_*:=\lim_{Z\uparrow Z_*}v(Z)$.
	\end{enumerate}
\end{lemma}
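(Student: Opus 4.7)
My plan combines standard maximal-existence theory with barrier inequalities read off from the geometry of $\{\Delta_Z=0\}$. The right-hand side $\Delta_v/\Delta_Z$ is real-analytic on the open set $\mathcal V:=\{(Z,v):Z>0,\ \Delta_Z(Z,v)<0\}$, and $\mathcal R_2\subset\mathcal V$, so starting from $(Z_0,v_0)\in\mathcal R_2$ I obtain a unique maximal forward solution $v\in C^\infty([Z_0,Z_+))$. Along this solution I would first show that, as long as $\Delta_Z(Z,v(Z))<0$, the inequalities $|v|<1$ and $v<Z$ are preserved by uniqueness barriers: the constants $v\equiv\pm1$ solve the ODE wherever $\Delta_Z\ne0$ (indeed $\Delta_Z(Z,\pm1)=Z(1\mp Z)^2(1-\ell)\ne0$ for $\ell>1$, $Z>0$, $Z\ne1$), so uniqueness forbids crossing; and at any point $v=Z$ with $Z\ne1$ one has $\Delta_Z(Z,Z)=Z(1-Z^2)^2>0$, incompatible with $\Delta_Z<0$ on the solution.

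Next I would distinguish two scenarios according to how the trajectory leaves $\mathcal R_2$. While $(Z,v(Z))\in\mathcal R_2$, $v'>0$, so exit through $v=v_1$ is ruled out by monotonicity from $v_0>v_1$ and exit through $v=1$ by the uniqueness barrier; only the green curve $\{Z=Z_g(v)\}$ and the black curve $\{Z=Z_b(v)\}$ remain. If the trajectory first meets the green curve, by continuity there is $Z_*\in(Z_0,1]$ with $v_*=\lim_{Z\uparrow Z_*}v(Z)$ satisfying $Z_*=Z_g(v_*)$, which is case~(ii), with the corner $P_2$ giving the degenerate case $(Z_*,v_*)=(1,1)$. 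If instead the solution first crosses the black curve at some $(Z_c,v_c)$ with $Z_c\in(Z_1,1)$ and $v_c\in(v_1,1)$, then there $\Delta_v$ vanishes while $\Delta_Z(Z_c,v_c)<0$, so the solution continues smoothly with $v'(Z_c)=0$ into the region $\mathcal S:=\{\Delta_v>0,\ \Delta_Z<0,\ |v|<1,\ v<Z\}$ and $v'<0$ thereafter; it remains to extend to $Z=+\infty$.

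The main and most delicate step is this extension, and it is the part I expect to be the principal obstacle. The key point is that inside the slab $\{|v|<1,\ v<Z,\ Z>0\}$ the locus $\Delta_Z=0$ reduces to the single arc $\{v=v_-(Z):Z\in(0,1)\}$: direct computations $v_+(Z)-Z=(1-Z^2)/(Z+\sqrt\ell)$ and $v_+(Z)-1=(\sqrt\ell-1)(Z-1)/(Z+\sqrt\ell)$ show $v_+(Z)>\max(Z,1)$ off $Z=1$, while $v_-(Z)-1=(1+\sqrt\ell)(1-Z)/(Z-\sqrt\ell)>0$ for $Z\in(1,\sqrt\ell)$ and $v_-(Z)+1=(1-\sqrt\ell)(1+Z)/(Z-\sqrt\ell)<0$ for $Z>\sqrt\ell$. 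Inside $\mathcal R_2$ one has the strict inequality $v<v_-(Z)$, and at the crossing $v_c<v_-(Z_c)$ is strict since equality would force $(Z_c,v_c)=P_1$, contradicting $v_c>v_1$. On $[Z_c,1)$ the function $v$ decreases from $v_c$ while $v_-$ increases from $v_-(Z_c)$, so $v_-(Z)-v(Z)\geq v_-(Z_c)-v_c=:\eta>0$ and the solution never touches the green curve. At $Z=1$, $\Delta_Z(1,v)=(1-v)^2(1-\ell)<0$ for $v<1$, so the solution extends smoothly past $Z=1$; for $Z>1$ the singular locus is absent from the slab. A further uniqueness barrier against $v\equiv-1$ (combined with $\Delta_v=(1-v)(1+v)[\cdots]$ and Gronwall) keeps $v$ bounded away from $-1$ on any finite interval. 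Applying Proposition~\ref{Prop.IVP} to a compact box $[Z_c,Z_+]\times[-1+\epsilon,v_c]\subset\mathcal V$ then forces $Z_+=+\infty$, giving case~(i).
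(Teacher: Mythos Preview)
Your overall strategy is sound and parallels the paper's: work in the open set $\{\Delta_Z<0\}$, use the barriers $v\equiv\pm1$ and the fact that $\Delta_Z(Z,Z)>0$ to confine the trajectory, and split into the two scenarios ``hit the green curve'' versus ``cross the black curve''. Your singular-locus analysis for $Z>1$ is correct and gives a clean geometric alternative to the paper's endpoint contradiction. However, two points need attention.

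The main gap is the assertion ``$v'<0$ thereafter'' once the black curve is crossed at $(Z_c,v_c)$. You have not excluded a recrossing of $\{\Delta_v=0\}$, and without this your monotonicity argument on $[Z_c,1)$ (and hence the inclusion of the compact box in $\mathcal V$) collapses. The paper handles this by writing $V(Z):=Z-Z_b(v(Z))$, computing $V'=1-\eta_2 V$ for a continuous $\eta_2$, and observing that $\widetilde V:=Ve^{\int\eta_2}$ satisfies $\widetilde V'>0$, hence is strictly increasing. An equivalent one-line fix in your framework: at any zero of $V$ one has $\Delta_v=0$, hence $v'=0$, hence $V'=1-Z_b'(v)\cdot 0=1>0$; so every zero of $V$ is simple with positive slope, and $V$ cannot return to $0$ from above. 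Either argument closes the gap, but one of them must be stated.

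A secondary imprecision: you assert $Z_c\in(Z_1,1)$, but when $\gamma>1$ the black curve satisfies $Z_b(v)>1$ for $v\in(1/\gamma,1)$, so the crossing may occur at $Z_c\ge 1$. This does not damage your argument---if $Z_c\ge 1$ you simply skip the $[Z_c,1)$ step and invoke directly that the singular locus is absent from the slab for $Z\ge 1$---but the case distinction should be made explicit. With these two points supplied, your proof goes through; structurally it differs from the paper mainly in that after crossing the black curve you argue \emph{directly} for global extension via the singular-locus geometry, whereas the paper assumes $Z_*<\infty$ in that scenario and derives a contradiction by classifying the possible limit points on $\{\Delta_Z=0\}$.
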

\begin{proof}
	Let $U=\{(Z,v):\Delta_Z(Z, v)<0\}$. Then $(Z_0, v_0)\in\mathcal R_2\subset U$, $U\subset\R^2$ is an open set and $ \frac{\Delta_v}{\Delta_Z}$ is a smooth function on $U$. Assume that the maximal interval of existence of the solution to \eqref{Eq.Z_v_tau} (such that $(Z,v)\in U$) is $(Z_-,Z_*)$ (then $Z_0\in(Z_-,Z_*)$).  Note that\begin{align*}
		\frac{\mathrm{d}v}{\mathrm{d}Z}=\frac{\Delta_v(Z, v)}{\Delta_Z(Z, v)}=(1-v^2)F_1(Z,v),\quad
		F_1(Z,v):=\frac{m(1-v^2)Z-kv\left(1-vZ\right)}{\Delta_Z(Z, v)}\in C(U),
	\end{align*}
	and $v(Z_0)=v_0\in(-1,1)$. Then we have $v(Z)\in(-1,1)$ for all $Z\in[Z_0,Z_*)$.
	In fact, let
	$$\xi(Z)=F_1(Z,v(Z)),\qquad \xi_0=\frac{1}{2}\ln\frac{1+v_0}{1-v_0},$$
	then  $ \xi\in C([Z_0,Z_*))$, $v(Z_0)=v_0=\tanh\xi_0$ and
	\[v(Z)=\tanh\left(\xi_0+\int_{Z_0}^Z\xi(s)\mathrm{d}s\right)\in(-1,1)\quad\forall\ Z\in[Z_0,Z_*).\]
	
	As $(Z,v(Z))\in U$, we have $ \Delta_Z(Z, v(Z))<0$, $Z\neq Z_g(v(Z))$ for all $Z\in[Z_0,Z_*)$. Due to $(Z_0, v_0)\in\mathcal R_2$, we have $Z_0>Z_g(v_0)=Z_g(v(Z))$, then by the continuity of $v$, we have $$Z>\frac{\sqrt\ell v(Z)+1}{v(Z)+\sqrt\ell}=Z_g(v(Z))>v(Z)\quad  \forall\ Z\in[Z_0,Z_*).$$
	
	If $Z_*=+\infty$, then (i) holds. Now we assume $Z_*<+\infty$. Let $V=Z-\frac{(\g+1)v}{1+\g v^2}=Z-Z_\text b(v)$. Then we have
	\begin{align*}
		&F_1(Z,v)=\frac{m(1+\gamma v^2)V}{\Delta_Z(Z, v)},\quad
		\frac{\mathrm{d}v}{\mathrm{d}Z}=(1-v^2)F_1(Z,v)=VF_2(Z,v)=V\eta_1(Z),\\
		& F_2(Z,v):=\frac{m(1+\gamma v^2)(1-v^2)}{\Delta_Z(Z, v)}\in C(U),\quad \eta_1(Z):=F_2(Z,v(Z))\in C([Z_0,Z_*)),\\
		&\frac{\mathrm{d}V}{\mathrm{d}Z}=1-\frac{(\g+1)(1-\g v^2)}{(1+\g v^2)^2}\frac{\mathrm{d}v}{\mathrm{d}Z}=1-V\eta_2(Z),\quad \eta_2(Z):=
		\frac{(\g+1)(1-\g v(Z)^2)}{(1+\g v(Z)^2)^2}\eta_1(Z).
	\end{align*}
	We also have $F_2(Z,v)<0 $ for $(Z,v)\in U$, $|v|<1$; $\eta_1(Z)<0 $ for all $Z\in[Z_0,Z_*)$; and $ \eta_2\in C([Z_0,Z_*))$.
	Let $$ \eta_3(Z):=\int_{Z_0}^Z\eta_2(s)\mathrm{d}s,\qquad \widetilde{V}(Z)=V(Z)\mathrm{e}^{\eta_3(Z)},\qquad \forall\ Z\in[Z_0, Z_*),$$
	then $\frac{\mathrm{d}\widetilde{V}}{\mathrm{d}Z}=\mathrm{e}^{\eta_3(Z)}>0 $, hence $\widetilde{V}$ is strictly increasing on $[Z_0,Z_*)$, then one of the following holds:
	\[\text{(a)}\quad \widetilde{V}(Z)<0\text{ for all }Z\in[Z_0,Z_*);\qquad\text{(b)}\quad \widetilde{V}(Z_2)\geq0 \text{ for some }Z_2\in[Z_0,Z_*).\]
	
	If (b) holds, then $\widetilde{V}(Z)>0$, ${V}(Z)>0$, $ \frac{\mathrm{d}v}{\mathrm{d}Z}=V\eta_1(Z)<0$  for all $Z\in(Z_2,Z_*)$, and then $v$ is strictly decreasing; also we have $v>-1$ in $[Z_2,Z_*)$, thus the limit $v_*:=\lim_{Z\uparrow Z_*}v(Z)$ exists, and $-1\leq v_*\leq v(Z_2)<1$. By Remark \ref{rem1}, we have $ (Z_*,v_*)\not\in U$.
	Since $ \Delta_Z(Z, v(Z))<0$ for all $Z\in(Z_1,Z_*)$, we have $ \Delta_Z(Z_*, v_*)\leq0$. As $ (Z_*,v_*)\not\in U$, we have $ \Delta_Z(Z_*, v_*)=0$  and then $$Z_*=0\quad  \text{or}\quad Z_*=\frac{\sqrt\ell v_*+1}{v_*+\sqrt\ell}\quad \text{or}\quad Z_*=\frac{\sqrt\ell v_*-1}{\sqrt\ell-v_*}.$$ But in fact, $$Z_*>Z_0>0,\quad Z_*>Z_2>\frac{\sqrt\ell v(Z_2)+1}{v(Z_2)+\sqrt\ell}\geq\frac{\sqrt\ell v_*+1}{v_*+\sqrt\ell}\geq\frac{\sqrt\ell v_*-1}{\sqrt\ell-v_*},$$
	which reaches a contradiction.
	
	So, (a) have to hold. Hence, $\widetilde{V}(Z)<0$, ${V}(Z)<0$ and $ \frac{\mathrm{d}v}{\mathrm{d}Z}=V\eta_1(Z)>0$  for all $Z\in[Z_0,Z_*)$, then
	$v$ is strictly increasing and $v<1$ in $[Z_0,Z_*)$, thus the limit $v_*:=\lim_{Z\uparrow Z_*}v(Z)$ exists and $0<v_0=v(Z_0)\leq v_*\leq1$.
	By Remark \ref{rem1}, we have $ (Z_*,v_*)\not\in U$. Thanks to $$Z>\frac{\sqrt\ell v(Z)+1}{v(Z)+\sqrt\ell}, \quad {V}(Z)=Z-\frac{(\g+1)v(Z)}{1+\g v(Z)^2}<0$$
	and $v_1<v_0=v(Z_0)\leq v(Z)<1$ for all $Z\in[Z_0,Z_*)$, we have $(Z, v(Z))\in\mathcal R_2$ for all $Z\in[Z_0,Z_*)$,
	and $Z_*\geq\frac{\sqrt\ell v_*+1}{v_*+\sqrt\ell} $.
	As $ \Delta_Z(Z, v(Z))<0$ for all $Z\in[Z_0,Z_*)$, we have $ \Delta_Z(Z_*, v_*)\leq0$. As $ (Z_*,v_*)\not\in U$, we have $ \Delta_Z(Z_*, v_*)=0$. Thus, $$Z_*=0\quad  \text{or}\quad Z_*=\frac{\sqrt\ell v_*+1}{v_*+\sqrt\ell}\quad \text{or}\quad Z_*=\frac{\sqrt\ell v_*-1}{\sqrt\ell-v_*}.$$ But as $Z_*>Z_0>0$, $Z_*\geq\frac{\sqrt\ell v_*+1}{v_*+\sqrt\ell}$ and
	\[\frac{\sqrt\ell v+1}{v+\sqrt\ell}\geq\frac{\sqrt\ell v-1}{\sqrt\ell-v}\qquad\forall\ v\in(0,1],\]
	we must have $Z_*=\frac{\sqrt\ell v_*+1}{v_*+\sqrt\ell}=Z_g(v_*)$. As $0<v_*\leq 1$, we have $Z_*\leq1$.
	Therefore,  (ii) holds. 
	
	This completes the proof.
\end{proof}

Now we define a domain $\mathcal D_2'$ by
\begin{equation}
	\mathcal D_2'=\{(z, u): 0<u<u_{(3)}(z),\ z_g<z<0\}.\index{$\mathcal D_2'$}
\end{equation}
By Lemma \ref{Lem.relative_positions}, we have $\mathcal D_2'\subset\mathcal D_2 $ (for $(k,\ell)\in\mathcal K_1$ and $R\in(3,4)$). Let $\mathcal R_2'=\Psi^{-1}(\mathcal D_2')$\index{$\mathcal R_2'$} then $\Psi: \mathcal R_2'\to\mathcal D_2'$ is a bijection and $\mathcal R_2'\subset\mathcal R_2 $. Let $F(z,u)=u-u_{(3)}(z)$. Then
\begin{align}
	&\label{Fzu}\Delta_u\frac{\partial F}{\pa u}+\Delta_z\frac{\partial F}{\pa z}=\Delta_u(z,u)-\Delta_z(z,u)u_{(3)}'(z)\\
	\notag&=\mathcal L(u_{(3)})(z)+\Delta_u(z,u)-\Delta_u(z,u_{(3)}(z))-\big(\Delta_z(z,u)-\Delta_z(z,u_{(3)}(z))\big)u_{(3)}'(z)\\ \notag&=\mathcal L(u_{(3)})(z)+(u-u_{(3)}(z))G(z,u)=\mathcal L(u_{(3)})(z)+F(z,u)G(z,u),\\
	&\label{Gzu}G(z,u):=2\big(u+u_{(3)}(z)+f(z)+kz(1-z)\big)+(1-(k+1)z)u_{(3)}'(z).
\end{align}

\begin{lemma}\label{Lem.Z_v_global}
	Assume that $(k,\ell)\in\mathcal K_1$ and $R\in(3,4)$. Given $(Z_0, v_0)\in\mathcal R_2'$ and $Z_0<1$. Let $v(Z)$ be the unique solution of \eqref{Eq.Z_v_tau}. Then the solution exists for $Z\in[Z_0,+\infty)$ and $\Delta_Z(Z, v(Z))<0,$ $|v(Z)|<1$, $v(Z)<Z$  for $Z\in[Z_0,+\infty)$.
\end{lemma}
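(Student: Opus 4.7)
Plan: I would promote the dichotomy of Lemma \ref{Lem.Z_v_alternative} to its branch (i) by contradiction. Suppose branch (ii) holds: there exist $Z_*\in(Z_0,1]$ and $v_*\in(v_0,1]$ with $Z_*=Z_g(v_*)$ and $(Z,v(Z))\in\mathcal R_2$ for $Z\in[Z_0,Z_*)$. Via the bijection $\Psi:\mathcal R_2\to\mathcal D_2$, the image curve $(z(Z),u(Z)):=\Psi(Z,v(Z))$ lies in $\mathcal D_2$ on $[Z_0,Z_*)$, starts in $\mathcal D_2'$ (since $(Z_0,v_0)\in\mathcal R_2'$), and limits as $Z\uparrow Z_*$ to a point $(z_*,u_g(z_*))$ on the boundary curve $u=u_g(z)$ of $\mathcal D_2$ with $z_*\in[z_g,0)$; this is verified by evaluating $\Psi$ along $Z=Z_g(v)$, which gives $z=(1-\gamma\sqrt\ell v)/(\sqrt\ell v+1)$ and $u=u_g(z)$.

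The contradiction comes from a super-solution argument using $u_{(3)}$. Set $w(Z):=u_{(3)}(z(Z))-u(Z)$: $w(Z_0)>0$ by definition of $\mathcal D_2'$, while Lemma \ref{Lem.relative_positions} yields $u_{(3)}(z_*)<u_g(z_*)=u(Z_*)$, so $w(Z_*)<0$. Choosing $Z'\in(Z_0,Z_*)$ to be the first zero of $w$, we have $w(Z')=0$ and $w'(Z')\le 0$. Combining \eqref{Eq.du}, \eqref{Eq.dz} with the identity \eqref{Fzu} at the touching point $u(Z')=u_{(3)}(z(Z'))$ gives
\[
w'(Z')=\frac{\mathcal L(u_{(3)})(z(Z'))}{\Delta_Z(Z',v(Z'))\,\mathcal N(z(Z'),u(Z'))}.
\]
In $\mathcal R_2$ one has $\Delta_Z<0$ and $\mathcal N>0$, so the denominator is strictly negative; Lemma \ref{Lem.u_3_compare} supplies $\mathcal L(u_{(3)})<0$ on $[-\varepsilon/a_1,0)$, forcing $w'(Z')>0$---a contradiction, provided $z(Z')\in[-\varepsilon/a_1,0)$.

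The main obstacle is the case $z(Z')\in(z_g,-\varepsilon/a_1)$, not covered by Lemma \ref{Lem.u_3_compare}. I would close this gap by chaining with the linear barrier $u_{(1)}$: Lemma \ref{Lem.relative_positions} furnishes $u_{(3)}\le u_{(1)}$ for $z\le-a_2/a_3$ and $u_{(1)}<u_g$ throughout $z<0$, so any touching of $u$ with $u_{(3)}$ in the outer interval must be preceded by a touching of $u$ with $u_{(1)}$. The analogous computation produces $w_1'(Z')=\mathcal L(u_{(1)})(z(Z'))/(\Delta_Z\mathcal N)$, and by \eqref{Lu1}
\[
\mathcal L(u_{(1)})(z)=-(R-2)\delta a_2 z^2-(\ell-1)a_1 z^3,
\]
which is strictly negative on $|z|<(R-2)\delta a_2/((\ell-1)a_1)$---a window that exceeds $\varepsilon/a_1$ by the bounds $a_1/\varepsilon<kR/2-1$ (Lemma \ref{Lem.a_3>0a_4<0}) and $a_2>R(\ell-1)/(R-2)$ used in the proof of Lemma \ref{Lem.Rto4}. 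Iterating this procedure with finitely many such intermediate barriers over the compact interval $[z_g,0]$, and invoking Lemma \ref{Lem.u_g_compare} near the left endpoint $z_g$, traps the trajectory strictly below $u_g$ on the whole of $(z_g,0)$, excluding branch (ii) and hence forcing (i).
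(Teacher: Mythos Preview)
Your overall strategy---rule out branch (ii) of Lemma~\ref{Lem.Z_v_alternative} by a supersolution argument with $u_{(3)}$---is the same as the paper's.  But two genuine gaps remain.

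\textbf{(1) The range of $z$ at the first touching point.}  You correctly identify the obstacle that Lemma~\ref{Lem.u_3_compare} only covers $z\in[-\varepsilon/a_1,0)$, and propose to handle $z\in(z_g,-\varepsilon/a_1)$ by ``chaining'' with $u_{(1)}$ and then ``iterating with finitely many intermediate barriers''.  This is neither carried out nor needed.  The paper avoids the issue altogether with a one-line observation (its Claim~1): as long as $u(Z)\le u_{(3)}(z(Z))$, one must have $z(Z)>-\varepsilon/a_1$.  Indeed, if $z\le -\varepsilon/a_1$ then by Lemma~\ref{Lem.a_3>0a_4<0} $z\le -\varepsilon/a_1<-a_2/a_3$, so \eqref{Eq.u_1><u_3} gives $u_{(3)}(z)\le u_{(1)}(z)\le u_{(1)}(-\varepsilon/a_1)=0$; but $u>0$ in $\mathcal D_2$, so $u>u_{(3)}$ there.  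Hence the first touching point automatically has $z(Z')\in(-\varepsilon/a_1,0)$, and Lemma~\ref{Lem.u_3_compare} applies directly.  No chaining is required.

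\textbf{(2) The degenerate endpoint $Z_*=1$, $v_*=1$.}  Your assertion that $(z(Z),u(Z))\to(z_*,u_g(z_*))$ with $z_*\in[z_g,0)$, ``verified by evaluating $\Psi$ along $Z=Z_g(v)$'', only makes sense when $(Z_*,v_*)\in\mathcal R_2^+\subset\mathcal R_0$, i.e.\ $v_*<1$.  When $Z_*=v_*=1$, the point $P_2=(1,1)$ lies outside the domain of $\Psi$, the quotient $\Psi_z(Z,v)$ is an indeterminate form $0/0$ there, and you have not shown that $z(Z)$ converges at all.  The paper treats this case separately: it shows directly that $u(Z)\to 0$ as $Z\uparrow 1$ (from $0<u<(1+\gamma)^2(1-vZ)/Z^2$), then builds the auxiliary function $h(Z)=z(Z)+c_0\ln u(Z)$ with $c_0=L_1/(2L_2+1)$ determined by Lemma~\ref{Lem.f<0}.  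A differential inequality shows $h$ is eventually increasing, yet $h\to-\infty$ since $u\to 0$ and $z$ is bounded---a contradiction.  Your proposal does not contain this step.
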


\begin{proof}
	As $\mathcal R_2'\subset\mathcal R_2 $, by Lemma \ref{Lem.Z_v_alternative}, we only need to exclude the case (ii). If (ii) holds, then $(Z, v(Z))\in\mathcal R_2$ for all $Z\in[Z_0,Z_*)$. Let $(z(Z),u(Z)):=\Psi(Z, v(Z))$ then $(z(Z),u(Z))\in\mathcal D_2$, $u(Z)>0$, $z(Z)<0$ for all $Z\in[Z_0,Z_*)$. By \eqref{Eq.du}, \eqref{Eq.dz}, \eqref{Eq1} and \eqref{Eq2}, we have
	\begin{align*}
		\frac{\mathrm{d}z}{\mathrm{d}Z}=\frac{-\Delta_z(z,u)}{\Delta_Z(Z,v)\mathcal{N}(z,u)},
		\quad\frac{\mathrm{d}u}{\mathrm{d}Z}=\frac{-\Delta_u(z,u)}{\Delta_Z(Z,v)\mathcal{N}(z,u)}.
	\end{align*}
	Let $\eta_1(Z):=-[\Delta_Z(Z,v(Z))\mathcal{N}(z(Z),u(Z))]^{-1} $. As $\Delta_Z<0 $ in $ \mathcal{R}_2$ and $\mathcal{N}>0 $ in $ \mathcal{D}_0$, we have \begin{align}\label{dz1}
		\frac{\mathrm{d}z}{\mathrm{d}Z}=\eta_1(Z)\Delta_z(z,u),
		\quad\frac{\mathrm{d}u}{\mathrm{d}Z}=\eta_1(Z)\Delta_u(z,u),\quad \eta_1(Z)>0\quad \text{for}\ Z\in[Z_0,Z_*).
	\end{align}
	 Let $Y(Z):=F(z(Z),u(Z))$ (for $F(z,u)=u-u_{(3)}(z)$), which satisfies
	 \begin{align}
	 	\notag\frac{\mathrm{d}Y}{\mathrm{d}Z}&=\frac{\mathrm{d}u}{\mathrm{d}Z}\frac{\pa F}{\pa u}+\frac{\mathrm{d}z}{\mathrm{d}Z}\frac{\pa F}{\pa z}=
	 	\eta_1(Z)\Delta_u(z,u)\partial_uF(z,u)+\eta_1(Z)\Delta_z(z,u)\partial_zF(z,u)\\
	 	\label{dYdZ}&=\eta_1(Z)\left(\mathcal L(u_{(3)})(z)+F(z,u)G(z,u)\right)=\eta_1(Z)\big(\eta_2(Z)+Y(Z)\eta_3(Z)\big),
	 \end{align}
	 where $ \eta_2(Z):=\mathcal L(u_{(3)})(z(Z))$, $\eta_3(Z):=G(z(Z),u(Z))$, {and $G$ is defined in \eqref{Gzu}}.   As\\ $(Z_0, v(Z_0))=(Z_0, v_0)\in\mathcal R_2'$, we have
	 $(z(Z_0),u(Z_0))=\Psi(Z_0, v(Z_0))\in\mathcal D_2'$,
	 then\\ $u(Z_0)<u_{(3)}(z(Z_0)) $, $Y(Z_0)=F(z(Z_0),u(Z_0))=u(Z_0)-u_{(3)}(z(Z_0))<0 $. \smallskip
	 
	 \underline{Claim 1.}
	 If $Y(Z)\leq0$, $Z\in[Z_0, Z^*)$, then $z(Z)>-\varepsilon/a_1$.\smallskip
	 
	 Otherwise, we have $z(Z)\leq-\varepsilon/a_1<-a_2/a_3$ (see \eqref{Eq.a_3>0a_4<0}), by \eqref{Eq.u_1><u_3} we have $u_{(3)}(z(Z))\leq u_{(1)}(z(Z))\le u_{(1)}(-\varepsilon/a_1)=0$,
	 but $0\geq Y(Z)=F(z(Z),u(Z))=u(Z)-u_{(3)}(z(Z))>-u_{(3)}(z(Z))$, which is a contradiction.\smallskip

	 \underline{Claim 2.} $Y(Z)<0$ for all $Z\in[Z_0, Z_*)$.\smallskip
	 
	 Otherwise, let $Z^*:=\inf\{Z\in[Z_0, Z_*): Y(Z)\geq0\}$, then $Z^*\in(Z_0, Z_*)$ and $Y(Z)<0$ for all $Z\in[Z_0, Z^*)$, $Y(Z^*)=0$, hence $Y'(Z^*)\geq 0$.
	 By Claim 1, we have $z^*:=z(Z^*)\in[-\varepsilon/a_1,0)$. On the other hand, by $Y(Z^*)=0$, $Y'(Z^*)\geq 0$, $\eta_1(Z^*)>0$ {and \eqref{dYdZ}} we have $\eta_2(Z^*)\geq0$, hence $\mathcal L(u_{(3)})(z^*)\geq0$, which contradicts with \eqref{Eq.u_3_compare}.

	 We also have $Z_*=\frac{\sqrt\ell v_*+1}{v_*+\sqrt\ell}$ with $v_*:=\lim_{Z\uparrow Z_*}v(Z)$ and $Z_*\in(Z_0,1]$. If $Z_*<1$, then $v_1<v_0\leq v_*<1$, hence $(Z_*,v_*)\in \mathcal R_2^+\subset\mathcal R_0$, and thus $(z_*,u_*):=\Psi(Z_*,v_*)\in \mathcal D_2^+$. By the continuity of $ \Psi$, we have
	 $$(z(Z),u(Z))=\Psi(Z, v(Z))\to\Psi(Z_*,v_*)=(z_*,u_*)\qquad \text{as }Z\uparrow Z_{*}.$$
	 By the continuity of $F$, we have $0>Y(Z)=F(z(Z),u(Z))\to F(z_*,u_*)$ as  $Z\uparrow Z_{*}$, thus $F(z_*,u_*)\leq0$. On the other hand, as $(z_*,u_*)\in \mathcal D_2^+$ we have $u_*=u_g(z_*)$, $z_g<z_*<0$. By Lemma \ref{Lem.relative_positions}, we have $u_g(z_*)>u_{(3)}(z_*) .$ Then $F(z_*,u_*)=u_*-u_{(3)}(z_*)=u_g(z_*)-u_{(3)}(z_*)>0$, which is a contradiction.
	 
	 So, we must have $ Z_*=1$, then $v_*=1$. 
	 By Claim 1 and Claim 2, we have  $z(Z)>-\varepsilon/a_1$ for all $Z\in[Z_0, Z_*)$. By Lemma \ref{Lem.f<0}, we have $ {L_1}:=-\sup_{z\in[-\varepsilon/a_1,0]}f(z)>0$, then $f(z(Z))\leq -{L_1} $ for  all $Z\in[Z_0,Z_*)$.
	 
	 As $(Z, v(Z))\in\mathcal R_2$, we have $0<v(Z)<Z$, $0<1-v(Z)Z<1-v(Z)^2$ for  all $Z\in[Z_0,Z_*)$. Due to $(z(Z),u(Z))=\Psi(Z, v(Z))$, we get $$u(Z)=\frac{(1+\g)^2(1-v(Z)Z)^2}{(1-v(Z)^2)Z^2}  \quad \text{and}\quad  0<u(Z)<\frac{(1+\g)^2(1-v(Z)Z)}{Z^2},\quad  \forall\ Z\in[Z_0,Z_*).$$
	 As $ Z_*=1$, $v_*=1$ and $v_*:=\lim_{Z\uparrow Z_{*}}v(Z)$, we have
	 $$\lim_{Z\uparrow1}(1-v(Z)Z)=1-v_*Z_*=0\quad\Longrightarrow\quad\lim_{Z\uparrow1}u(Z)=0.$$
	 We define $$ {L_2}:=\sup_{z\in\left[-\varepsilon/a_1,0\right]}|f(z)+kz(1-z)|\geq0,\qquad c_0:=\frac{L_1}{2L_2+1}>0,$$ and $h(Z):=z(Z)+c_0\ln u(Z)$. Then by \eqref{dz1} and \eqref{uz},
	 we get
	 \begin{align*}
	 	&\frac{\mathrm{d}h}{\mathrm{d}Z}=\frac{\mathrm{d}z}{\mathrm{d}Z}+\frac{c_0}{u}\frac{\mathrm{d}u}{\mathrm{d}Z}
	 	=\eta_1(Z)\Delta_z(z,u)+\frac{c_0}{u}\eta_1(Z)\Delta_u(z,u)\\
	 	&=\eta_1(Z)\Big(-\big(1-(k+1)z\big)u-(1-z)f(z)+2c_0\big(u+f(z)+kz(1-z)\big)\Big).
	 \end{align*}
	 Since $\eta_1(Z)>0$, $-\varepsilon/a_1<z(Z)<0 $, $u(Z)>0$ for all $Z\in[Z_0,Z_*)=[Z_0,1)$, we have
	 \begin{align*}
	 	&\frac{\mathrm{d}h}{\mathrm{d}Z}>\eta_1(Z)\left(-\left(1+(k+1)\frac\varepsilon{a_1}\right)u+{L_1}-2c_0{L_2}\right)=
	 	\eta_1(Z)\left(c_0-\left(1+(k+1)\frac\varepsilon{a_1}\right)u\right).
	 \end{align*}
	 Thanks to $c_0>0$  and $\lim_{Z\uparrow 1}u(Z)=0$, there exists $Z_2\in (Z_0,1)$ such that
	 $$c_0-\left(1+(k+1)\frac\varepsilon{a_1}\right)u(Z)>0\quad \forall\ Z\in(Z_2, 1),$$
	 then $\frac{\mathrm{d}h}{\mathrm{d}Z} >0$ and $h(Z)>h(Z_2)$ for $Z\in(Z_2,1)$. However, we have
	 $h(Z)=z(Z)+c_0\ln u(Z)<c_0\ln u(Z)\to-\infty$ as $Z\uparrow1,$
	 which reaches a contradiction. So, (ii) can't be true, and (i) holds.
\end{proof}

\if0 \begin{proposition}\label{Prop.Z-v_exit_R_2}
	Let $(k,\ell)\in\mathcal K_1$ and $R\in(3,4)$, then the unique local smooth solution $v_L(Z)$ near $P_1$ extends increasingly (in $Z$) to across the point $P_*(Z=Z_*, v=v_*)$ lying in the curve $Z_\text{b}(v)$ solving $\Delta_v=0$ with $v_1<v_*<1$.
\end{proposition}
\begin{proof}
	Recall from the proof of Proposition \ref{Prop.local_solution_P_1}, the function $Z(z)$ defined by \eqref{Eq.Z(z)} is a strictly decreasing smooth function in $z\in(-\zeta_0', \zeta_0')$ for some small $\zeta_0'\in(0, \zeta_0)$, with $Z(0)=Z_1$. The inverse function of $Z(z)$ is denoted by $z(Z)$.
	
	It follows from \eqref{Eq.D_2'} and $a_4<0$ that if we adjust $\zeta_0'>0$ to a smaller number then
	\[\left(z, u_L(z)\right)\in\mathcal D_2',\qquad\forall\ z\in(-\zeta_0', 0).\]
	Thus, there exists $\Upsilon_1'\in(0, \Upsilon_1)$ such that for all $Z\in(Z_1, Z_1+\Upsilon_1')$ we have $z(Z)\in(-\zeta_0', 0)$, and hence by \eqref{Eq.4.55_1} we have $\Psi(Z, v_L(Z))=\big(z(Z), u_L\big(z(Z)\big)\in \mathcal D_2'$. Now we fix $Z_0\in(Z_1, Z_1+\Upsilon_1')$ and we define $v_0=v_L(Z_0)$, then $(Z_0, v_0)\in\Theta(\mathcal D_2')$. Now we consider the initial value problem \eqref{Eq.Z_v_tau} for this $(Z_0, v_0)$ and we denote the unique solution by $(Z(\tau), v(\tau))$. By Lemma \ref{Lem.7.10}, there exists a finite time $\tau_*\in(0,+\infty)$ such that $(Z(\tau), v(\tau))\in\mathcal R_2$ for all $\tau\in[0,\tau_*)$ and
	\[Z(\tau_*)=Z_\text{b}(v(\tau_*)), \qquad v(\tau_*)\in(v_1, 1).\]
	Since $(Z(\tau), v(\tau))\in\mathcal R_2$ for all $\tau\in[0,\tau_*)$ and $\Delta_v(Z, v)<0, \Delta_Z(Z, v)<0$ for all $(Z, v)\in \mathcal R_2$, we know from \eqref{Eq.Z_v_tau} that $Z(\tau)$ and $v(\tau)$ are both strictly increasing functions in $\tau\in[0,\tau_*]$. Let
	\[v_*=v(\tau_*)\in(v_1, 1), \qquad Z_*=Z(\tau_*)=Z_\text b(v_*).\]
	We denote the inverse function of $Z(\tau)\ (0\leq \tau\leq \tau_*)$ by $\tau(Z)\ (Z_0\leq Z\leq Z_*)$, and we denote $\wt v_L(Z)=v(\tau(Z))$ for $Z\in(Z_0, Z_*)$, then $\wt v_L(Z_0)=v_0$, $\wt v_L(Z_*)=v_*$, $\wt v_L$ is a strictly increasing smooth function, and $\wt v_L$ solves the ODE
	\[\frac{d\wt v_L}{dZ}=\frac{\Delta_v\left(Z, \wt v_L(Z)\right)}{\Delta_Z\left(Z, \wt v_L(Z)\right)}\qquad\text{with }\quad\wt v_L(Z_0)=v_0.\]
	Since $v_L$ and $\wt v_L$ solves the same $Z-v$ ODE \eqref{Eq.mainODE} with the same data $v_0=v_L(Z_0)=\wt v_L(Z_0)$, by the uniqueness we know that $\wt v_L\equiv v_L$. Finally, since $\tau_*<+\infty$, and $\wt v_L'(Z_*)=0$, we know that it crosses the point $P_1$. This completes the proof.
\end{proof}\fi

\section{Proof of  main results}\label{Sec.proof}

This section is devoted to proving  our main theorems.

\begin{proof}[Proof of Theorem \ref{Thm.mainthm_ODE} for $(k,\ell)\in\mathcal K_1$]
	Let $(k, \ell)\in \mathcal K_1$ and let $R=R_0\in(3, 4), \zeta>0$ be given by Proposition \ref{Prop.R_0}. It follows from $a_4<0$ (by Lemma \ref{Lem.a_3>0a_4<0}) that there exists $\zeta_0'\in(0, \zeta)$ such that for all $z\in(-\zeta_0', 0)$, we have
	\[u_L(z)=\varepsilon+a_1z+a_2z^2+a_3z^3+a_4z^4+\cdots<\varepsilon+a_1z+a_2z^2+a_3z^3=u_{(3)}(z),\]
	hence,
	\[(z, u_L(z))\in \mathcal D_2'\quad\forall\ z\in(-\zeta_0', 0).\]
By Proposition \ref{Prop.local_solution_Q_1}, we have $u_L(0)=\varepsilon$ and $u_L'(0)=a_1$. Let $(Z(z),v(z))=\Theta(z,u_L(z))$. Then $Z(z)=\Theta_Z\big(z, u_L(z)\big)$ is a smooth function, and $(Z(0),v(0))=\Theta(0,\varepsilon)=\Theta(Q_1)=P_1=(Z_1,v_1)$. By Remark \ref{rem2},  $ \Delta_Z(Z,v)\mathrm{d}v=\Delta_v(Z,v)\mathrm{d}Z$.
	A direct calculation gives
	\begin{align*}
		\frac{\mathrm{d}Z}{\mathrm{d}z}(0)&=\frac{\pa\Theta_Z}{\pa z}(Q_1)+\frac{\pa\Theta_Z}{\pa u}(Q_1)u_L'(0)=\frac{(1+\g)(\ell\g^2-1)}{\sqrt\ell(\ell\g+1)^2\g^2}-\frac{(1+\g)(\ell\g-1)}{2\sqrt\ell(\ell\g+1)^2\g^2}a_1\\
		&=\frac{(1+\g)}{2\sqrt\ell(\ell\g+1)^2\g^2}\left[2(\ell\g^2-1)-(\ell\g-1)a_1\right]<0,
	\end{align*}
	where we have used $2(\ell\g^2-1)-(\ell\g-1)a_1<2(\ell\g^2-1)-2(\ell\g-1)(\ell\g+1)=2\ell\g^2(1-\ell)<0$ due to \eqref{Eq.a_1>4}. Therefore, $Z'(z)<0$ and $Z(z)$ is strictly decreasing for $z\in(-\zeta_0^*, \zeta_0^*)$, where $\zeta_0^*\in(0, \zeta_0')$ is small.
	Then $Z(z)$ is a bijection from $z\in(-\zeta_0^*, \zeta_0^*)$ onto $Z\in(Z(-\zeta_0^*), Z(\zeta_0^*))=:I$. As $Z_1=Z(0)\in I$, there exists $\Upsilon_1>0$ such that $(Z_1-\Upsilon_1,Z_1+\Upsilon_1)\subset I$ and $\zeta_0''\in(0, \zeta_0^*)$ such that $Z((-\zeta_0'', \zeta_0''))=(Z(\zeta_0''), Z(-\zeta_0''))\subset(Z_1-\Upsilon_1,Z_1+\Upsilon_1) $.
	
	We denote the inverse function of $Z(z):(-\zeta_0^*, \zeta_0^*)\to I$ by $z(Z):I\to (-\zeta_0^*, \zeta_0^*)$ and define
	\begin{equation}\label{Eq.wt_v_L(Z)}
		v_L(Z):=v(z(Z))=\Theta_v\big(z(Z), u_L(z(Z))\big)\quad \forall\ Z\in(Z_1-\Upsilon_1, Z_1+\Upsilon_1).\index{$v_L(Z)$}
	\end{equation}
	Then $ v_L(Z_1)=v_1$, $z(Z_1)=0$. Due to $ \Delta_Z(Z,v)\mathrm{d}v=\Delta_v(Z,v)\mathrm{d}Z$,  we have $ \Delta_Z(Z,v_L)\frac{\mathrm{d}v_L}{\mathrm{d}Z}=\Delta_v(Z,v_L)$, i.e., $v_L$ solves \eqref{Eq.main_ODE}.
	By Proposition \ref{Prop.R_0},  $u_F(z)=u_L(z)$ for all $z\in(0, \zeta_0')$. Thus, we get 
	by \eqref{Eq.v_F_expression} and \eqref{Eq.wt_v_L(Z)}  that for $Z=\Theta_Z(z, u_F(z))=\Theta_Z(z, u_L(z))\in (Z_1-\Upsilon_1, Z_1)$,
	\[v_F(Z)=\Theta_v(z, u_F(z))=\Theta_v(z, u_L(z))=v_L(Z).\]
	This shows that the $P_0-P_1$ curve $v_F$ can be continued using $v_L$ to pass through $P_1$ smoothly. Moreover, we have
	\[\Psi(Z, v_L(Z))=\big(z(Z), u_L(z(Z))\big),\qquad\ \forall\ Z\in(Z_1-\Upsilon_1, Z_1+\Upsilon_1).\]

	For $Z\in (Z_1, Z_1+\Upsilon_1)$, we have $z(Z)\in(-\zeta_0', 0)$, $\Psi(Z, v_L(Z))=\big(z(Z), u_L(z(Z))\big)\in\mathcal D_2'$,
hence $(Z, v_L(Z))\in\mathcal R_2'\subset\mathcal R_2$, and $\Delta_Z(Z, v_L(Z))<0$, $0<v_L(Z)<Z<1$ (see \eqref{R2}).
Now we take $Z_0\in (Z_1, Z_1+\Upsilon_1)$ then $(Z_0, v_L(Z_0))\in\mathcal R_2'$. Let $v_C(Z)$ be the unique solution to the initial value problem
	\begin{align*}
		\frac{\mathrm{d}v}{\mathrm{d}Z}=\frac{\Delta_v(Z, v)}{\Delta_Z(Z, v)}\qquad
		\text{ with } v(Z_0)=v_L(Z_0).
	\end{align*}
	Then Lemma \ref{Lem.Z_v_global} implies that $v_C$ is defined on $[Z_0, +\infty)$ with $\Delta_Z(Z, v_C(Z))<0$, $|v_C(Z)|<1$ and $v_C(Z)<Z$ for all $Z\in[Z_0, +\infty)$. Also, the uniqueness implies that $v_C(Z)=v_L(Z)$ for all $Z\in[Z_0, Z_1+\Upsilon_1)$. 
	
	Let
	\[v_G(Z)=\begin{cases}
		v_F(Z), & \text{if }Z\in[0, Z_1],\\ v_L(Z), & \text{if }Z\in(Z_1-\Upsilon_1, Z_1+\Upsilon_1),\\
		v_C(Z), & \text{if }Z\in(Z_0, +\infty).
	\end{cases}\]
	then $v=v_G(Z)$ is a smooth solution defined on $Z\in[0, +\infty)$ to the $Z-v$ ODE \eqref{Eq.main_ODE}. 
	Notice that (i) by \eqref{vF1}, \eqref{R1} and $Z_1\in(0,1)$, we have $\Delta_Z(Z, v_F(Z))>0$, $ (Z,v_F(Z))\in\mathcal{R}_1$, $0<v_F(Z)<Z<1$ for $Z\in(0,Z_1)$;
(ii) $v(0)=v_F(0)=0$, $v(Z_1)=v_L(Z_1)=v_1$, $0<v_1<Z_1<1$;\,(iii) $\Delta_Z(Z, v_L(Z))<0$, $0<v_L(Z)<Z<1$ for $Z\in (Z_1, Z_1+\Upsilon_1)$;\,
 (iv) $\Delta_Z(Z, v_C(Z))<0$, $|v_C(Z)|<1$, $v_C(Z)<Z$ for all $Z\in[Z_0, +\infty)$. Then by the definition of $v_G$, we have $|v_G(Z)|<1$ for all $Z\in[0,+\infty)$, $v_G(Z)<Z$ for all $Z\in(0, +\infty)$,
$\Delta_Z(Z, v_G(Z))>0 $ for $Z\in(0,Z_1)$, $\Delta_Z(Z, v_G(Z))<0 $ for $Z\in(Z_1, +\infty)$, and $v_G(Z_1)=v_1$.
	
	{Finally, thanks to $m=\beta\ell/(\ell+1)$, $\g=\frac km-1>\frac{1}{\sqrt{\ell}}>\frac{1}{{\ell}}$, we have $0<m<k/\left(1+\frac{1}{{\ell}}\right)$, and
		$0<\beta=m\left(1+\frac{1}{{\ell}}\right)<k$.} 
\end{proof}
\if0
\begin{remark}
By \eqref{Eq.Delta_Z}, we have (for $ Z\in(Z_1-\Upsilon_1, Z_1+\Upsilon_1)$, $(z,u)=\Psi(Z, v_L(Z))=\big(z(Z), u_L(z(Z))\big)$)
\begin{align*}\Delta_Z(Z, v_G(Z))=\Delta_Z(Z, v_L(Z))=
\frac{Zu^2\left[u+(1-z)^2-\ell(z+\g)^2\right]}{\left[u+(1-z)^2\right]\left[u+(1+\g)(1-z)\right]^2}.
\end{align*}
At $Z=Z_1$, we have $z=0$, $u=u_L(0)=\varepsilon$, $u+(1-z)^2-\ell(z+\g)^2=\varepsilon+1-\ell\g^2=0 $, thus
\begin{align*}&\frac{\mathrm{d}}{\mathrm{d}Z}\Delta_Z(Z, v_G(Z))\Big|_{Z=Z_1}\\&=
\frac{Zu^2}{\left[u+(1-z)^2\right]\left[u+(1+\g)(1-z)\right]^2}\bigg|_{Z=Z_1}\frac{\mathrm{d}}{\mathrm{d}Z}\left[u+(1-z)^2-\ell(z+\g)^2\right]\bigg|_{Z=Z_1}
\\&=
\frac{Z_1\varepsilon^2}{(\varepsilon+1)(\varepsilon+1+\g)^2}\left[u_L'(0)-2-2\ell\g\right]z'(Z_1).
\end{align*}
As $Z_1>0$, $ \varepsilon>0$, $\g>0$, $z'(Z_1)=1/Z'(0)<0$(using $Z(0)=Z_1$, $z(Z_1)=0$) and $u_L'(0)-2-2\ell\g=a_1-2(1+\ell\g)>0 $ (see \eqref{Eq.a_1>4}), 
we infer that
$ \frac{\mathrm{d}}{\mathrm{d}Z}\Delta_Z(Z, v_G(Z))\big|_{Z=Z_1}<0$.
\end{remark}
\fi

The proof of Theorem \ref{Thm.mainthm_ODE} for $(k,\ell)\in\mathcal K_*\setminus\mathcal K_1$ is in Section \ref{Sec.Proof_k=2_l_large}.

Let $v=v(Z)$ be the solution to \eqref{Eq.main_ODE} obtained by Theorem \ref{Thm.mainthm_ODE}. For the proof of Theorem 
\ref{Thm.mainthm}, we need the following lemma.

\begin{lemma}\label{lem1}
	It holds that $v_\infty:=\lim_{Z\to +\infty}v(Z)\in(-1, 1)$.\index{$v_\infty$} Let $\wt v(W):=v\left(\frac 1W\right)$\index{$\wt v(W)$} for $W>0$ and $\wt v(0):=v_\infty$. Then $\wt v\in C^{\infty}([0, +\infty))$.
\end{lemma}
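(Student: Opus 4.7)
The plan is to work in the reciprocal variable $W = 1/Z$ and analyze the transformed ODE. Using $dZ/dW = -W^{-2}$, the equation \eqref{Eq.main_ODE} becomes
\[\frac{d\wt v}{dW} = F(W, \wt v), \qquad F(W, v) := \frac{(1-v^2)\bigl[m(1-v^2) - kv(W-v)\bigr]}{\ell(vW-1)^2 - (W-v)^2}.\]
The denominator expands as $(\ell - v^2) + 2vW(1-\ell) + W^2(\ell v^2 - 1)$, which is bounded below by $(\ell-1)/2$ on $[0, W_0] \times [-1, 1]$ for $W_0 < 1/Z_1$ sufficiently small. Hence $F$ is smooth and uniformly bounded on this set. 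Since $|\wt v(W)| < 1$ on $(0, W_0)$, the function $\wt v$ is uniformly Lipschitz there and extends continuously to $W = 0$, so $v_\infty := \lim_{W \to 0^+} \wt v(W)$ exists in $[-1, 1]$.

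To exclude $v_\infty = \pm 1$, I factor $1 - v^2 = (1\mp v)(1 \pm v)$ in the numerator of $F$ and write $F(W, v) = (1 \mp v)\, g_\pm(W, v)$, where
\[g_\pm(W, v) := \frac{(1 \pm v)\bigl[m(1-v^2) - kv(W-v)\bigr]}{\ell(vW-1)^2 - (W-v)^2}.\]
A direct evaluation yields $g_\pm(0, \pm 1) = 2k/(\ell-1) > 0$, and $g_\pm$ is smooth, hence bounded by some $M > 0$, in a neighborhood of $(0, \pm 1)$. Suppose $v_\infty = 1$ and set $\psi := 1 - \wt v > 0$; then $\psi'(W) = -g_+(W, \wt v(W))\,\psi(W)$. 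For $W$ sufficiently small, $\wt v$ lies in the neighborhood where $|g_+| \leq M$, so $|(\ln\psi)'| \leq M$ there, yielding $\psi(W) \geq \psi(W_0)\, e^{-MW_0} > 0$ and contradicting $\psi(W) \to 0$ as $W \to 0^+$. The symmetric argument with $\psi := 1 + \wt v$ excludes $v_\infty = -1$. Thus $v_\infty \in (-1, 1)$.

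With $v_\infty \in (-1, 1)$, the function $F$ is smooth in an open neighborhood of $(0, v_\infty)$, so the IVP $v' = F(W, v)$, $v(0) = v_\infty$, admits a unique $C^\infty$ solution $\hat v$ on some interval $[0, W_1]$. For $W \in (0, W_1]$, integrating $\wt v' = F(W, \wt v)$ from $W_0 \in (0, W)$ and sending $W_0 \to 0^+$ (using continuity of $\wt v$ at $0$ and boundedness of $F$) yields the integral equation
\[\wt v(W) = v_\infty + \int_0^W F(s, \wt v(s))\, ds, \qquad W \in [0, W_1],\]
which $\hat v$ also satisfies. A standard Gr\"onwall estimate based on the Lipschitz property of $F$ in $v$ then forces $\wt v \equiv \hat v$ on $[0, W_1]$. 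Hence $\wt v$ is $C^\infty$ at $W = 0$; smoothness on $(0, +\infty)$ is immediate since $v \in C^\infty([0, +\infty))$ and $W \mapsto 1/W$ is smooth there. The main difficulty in this plan is the exclusion of $v_\infty = \pm 1$, which exploits the linear vanishing of $F$ at the boundary values $v = \pm 1$ to pin down the dynamics of $\wt v$ near the limit.
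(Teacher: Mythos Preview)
Your proof is correct and follows the same overall strategy as the paper: pass to the variable $W=1/Z$, bound the right-hand side to get a continuous extension, exclude $v_\infty=\pm1$, and then invoke local ODE theory for smoothness. The details differ in two places, however, and the comparison is instructive.

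For the exclusion of $v_\infty=\pm1$, you factor $F(W,v)=(1\mp v)\,g_\pm(W,v)$ and control $\ln(1\mp\wt v)$ via a Gr\"onwall-type bound; this is a direct dynamical argument that requires nothing beyond the continuity of $\wt v$ at $W=0$. The paper instead first proves $\wt v$ is $C^1$ at $W=0$ (via L'H\^opital, using that the integrand $G_1(W)=F(W,\wt v(W))$ is continuous on $[0,\infty)$), so that $\wt v$ is a genuine solution of the IVP $\wt v'=F(W,\wt v)$, $\wt v(0)=v_\infty$; then it observes that if $v_\infty=\pm1$ the constant function $\wt v\equiv v_\infty$ is also a solution, forcing $\wt v\equiv\pm1$ by uniqueness and contradicting $|\wt v|<1$. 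Your route is slightly more elementary since it does not need $C^1$ regularity as a prerequisite; the paper's route is shorter once that regularity is in hand.

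For the $C^\infty$ conclusion, you pass through the integral equation and compare $\wt v$ with the smooth local solution $\hat v$ by Gr\"onwall, while the paper argues $C^1$ directly and then bootstraps via standard ODE regularity. These are equivalent formulations of the same uniqueness principle.
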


\begin{proof}
	First of all, $\wt v\in C^{\infty}((0, +\infty))$, and we get by \eqref{Eq.main_ODE}  that 
	\begin{equation}
		\frac{\mathrm{d}\wt v}{\mathrm{d}W}=\frac{\left(1-\wt v^2\right)\left[m\left(1-\wt v^2\right)-k\wt v\left(W-\wt v\right)\right]}{\ell\left(W\wt v-1\right)^2-\left(W-\wt v\right)^2}=\frac{\Delta_{\wt v}(W, \wt v)}{\Delta_W(W, \wt v)}=:G(W, \wt v)\quad\forall\ W>0,
	\end{equation}
	where
	\[\Delta_{\wt v}(W, \wt v)=\left(1-\wt v^2\right)\left[m\left(1-\wt v^2\right)-k\wt v\left(W-\wt v\right)\right],\quad \Delta_W(W, \wt v)=\ell\left(W\wt v-1\right)^2-\left(W-\wt v\right)^2.\]
	If $ W\in(0,1/2)$ and $|\wt v|<1$, then
	\begin{align}\label{Z1}
		\Delta_W(W, \wt v)&=\ell\left(W\wt v-1\right)^2-\left(W-\wt v\right)^2=(\ell-1)\left(W\wt v-1\right)^2+(1-W^2)(1-\wt v^2)\\
		\notag&\geq (\ell-1)\left(W\wt v-1\right)^2\geq (\ell-1)(1-W)^2\geq (\ell-1)/4>0,
	\end{align}
	and
	\[|\Delta_{\wt v}(W, \wt v)|=\left(1-\wt v^2\right)|m\left(1-\wt v^2\right)-k\wt v\left(W-\wt v\right)|\leq m+2k.\]
	As $ |v(Z)|<1$ for $Z\in(0,+\infty)$, we have $|\wt v(W)|=|v\left(\frac 1W\right)|<1$ for $W\in(0,+\infty)$,	hence
	\begin{align*}
		\left|\frac{\mathrm{d}\wt v}{\mathrm{d}W}\right|=\frac{|\Delta_{\wt v}(W, \wt v)|}{|\Delta_W(W, \wt v)|}\leq \frac{m+2k}{(\ell-1)/4}\quad \text{for } W\in(0,1/2),
	\end{align*}
	i.e., $ \wt v'(W)$ is bounded for $W\in(0,1/2)$. Now we take $\wt v(0)=v_\infty=\wt v(1/2)-\int_0^{1/2}\wt v'(W)\mathrm{d}W$, then
	$\wt v(W)=\wt v(0)+\int_0^{W}\wt v'(w)\mathrm{d}w$ for $W\in(0,1/2)$, thus $\wt v(0)=v_\infty=\lim_{W\downarrow 0}\wt v(W)=\lim_{Z\to +\infty}v(Z)$ and $\wt v\in C([0, +\infty))$.
	
	As  $|\wt v(W)|<1$ for $W\in(0,+\infty)$, we have $ \wt v(0)=v_\infty\in[-1,1]$.
		It is obvious that $G$ is smooth near $(W=0, \wt v=v_\infty)$ as $\ell>1\geq v_\infty^2$.
	Let $G_1(W):=G(W, \wt v(W))$. Then $G_1$ is continuous on $[0, +\infty)$, and $ \wt v'(W)=G_1(W)$ for $W\in(0, +\infty)$,
	hence by L'H\^opital's rule,
	\[ \wt v'(0)=\lim_{W\downarrow0}\frac{\wt v(W)-\wt v(0)}{W}=\lim_{W\downarrow0}\frac{\int_0^W G_1(s)\,\ds}{W}=\lim_{t\downarrow0}G_1(t)=\lim_{t\downarrow0}\wt v'(t)=G_1(0).\]
	Therefore, $\wt v(W)$ is $C^1$ at $W=0$ and it solves the initial value problem
	\begin{align}\label{ODE1}\frac{\mathrm{d}\wt v}{\mathrm{d}W}=G(W, \wt v) \ (W\geq0), \qquad\wt v(0)=v_\infty.\end{align}
	The standard regularity theory of ODEs implies that $\wt v(W)$ is smooth at $W=0$.
	
	It remains to prove that  $ \wt v(0)=v_\infty\in(-1,1)$. Otherwise, $v_\infty^2=1$, $ \wt v\equiv v_\infty$ solves \eqref{ODE1}. By the uniqueness, we have $ \wt v(W)=v_\infty\in\{\pm1\}$ for $W>0$, which contradicts with $|\wt v(W)|<1$.
	\end{proof}

Now we are ready to prove our main result.

\begin{proof}[Proof of Theorem \ref{Thm.mainthm}]
	For the solution $v$ obtained by Theorem \ref{Thm.mainthm_ODE}, we denote
	\begin{align*}
		J(Z):=\frac{(-m+{kv(Z)}/{Z})(1-v(Z)^2)+v'(Z)(1-Zv(Z))}{(Z-v(Z))(1-v(Z)^2)}.
	\end{align*}
	As $Z>v(Z) $ for $Z\in (0,+\infty)$ and $v\in C^\infty([0,+\infty))$,
	we have $J\in C^\infty((0,+\infty))$. By \eqref{Eq.2.15}, we have
	\begin{align}
		\label{J2}&mv(Z)(1-v(Z)^2)+\ell{v'(Z)(Z-v(Z))}=(1-Zv(Z))(1-v(Z)^2)J(Z).
	\end{align}
	Thanks to $ |Zv(Z)|\leq|v(Z)|<1$ for $Z\in[0,1]$,  $J(Z)$ can be extended to $Z=0$ by taking
	\begin{align}\label{J3}
		&J(Z)=\frac{mv(Z)(1-v(Z)^2)+\ell{v'(Z)(Z-v(Z))}}{(1-Zv(Z))(1-v(Z)^2)}\quad \text{for}\ Z\in[0,1].
	\end{align}
	Then  \eqref{J2} holds for $ Z\in[0,+\infty)$. As $v\in C^\infty([0,+\infty))$ we have $J\in C^\infty([0,1])$, thus $J\in C^\infty([0,+\infty))$.
	
	 Now we define
	\begin{align}\label{rho}
		&\widehat{\varrho}(Z)=\exp\left(\frac{\ell+1}{\ell}\int_0^ZJ(s)\mathrm{d}s\right)>0.
	\end{align}
	Then $\widehat{\varrho}\in C^\infty([0,+\infty))$ and satisfies \eqref{rho1} and \eqref{rho2}. Let $\widehat{u^0}$, $\widehat{u}$ be defined by \eqref{u1}. Then we have
 \eqref{rho3} and \eqref{rho4}. Thus, \eqref{Eq.mainPDE1}, \eqref{Eq.mainPDE2} and \eqref{Eq.mainPDE3} hold for $\varrho(t, x)=\frac1{(T_*-t)^\beta}\hat\varrho(Z),$
	$u^0(t,x)=\wh{u^0}(Z),$ $u_r(t,x)=\wh u(Z)$, $Z=r/(T_*-t)$, and $ (\varrho,\vec{u})$ defined by \eqref{rho5} is a smooth solution to \eqref{Eq.relativistic_Euler} on
$[0, T_*)\times (\R^d\setminus\{0\})$.

To prove the smoothness of the solution $(\varrho,\vec{u})$ at $[0, T_*)\times\{0\}$, it is enough to show that $ \widehat{\varrho}(Z)$, $\wh{u^0}(Z) $, $\wh{u}(Z)/Z$ are smooth functions of $Z^2$ (near $0$). From the proof of Proposition \ref{Prop.P_0-P_1}, we know $v(Z)=Z\Phi(Z^2)$, and $\Phi(\varkappa)$ is analytic near $\varkappa=0$. Then we get by \eqref{u1} that 
\begin{align*}
		&\wh{u^0}(Z)=-\frac{1}{\sqrt{1-v(Z)^2}}=\Phi_1(Z^2),\quad \Phi_1(\varkappa)=-\frac{1}{\sqrt{1-\varkappa\Phi(\varkappa)^2}},\\
&\widehat{u}(Z)=\frac{v(Z)}{\sqrt{1-v(Z)^2}},\quad \frac{\wh{u}(Z)}{Z}=\Phi_2(Z^2),\quad \Phi_2(\varkappa)=\frac{\Phi(\varkappa)}{\sqrt{1-\varkappa\Phi(\varkappa)^2}},
	\end{align*}and $\Phi_1(\varkappa)$, $\Phi_2(\varkappa)$ are analytic near $\varkappa=0$. By \eqref{J3},  $J(Z)/Z=\Phi_3(Z^2)$ with
	\begin{align*}
		&\Phi_3(\varkappa)=\frac{m\Phi(\varkappa)(1-\varkappa\Phi(\varkappa)^2)+\ell{(\Phi(\varkappa)+2\varkappa\Phi'(\varkappa))(1-\Phi(\varkappa))}}
{(1-\varkappa\Phi(\varkappa))(1-\varkappa\Phi(\varkappa)^2)},
	\end{align*}which is also analytic near $\varkappa=0$. Then by \eqref{rho}, $\widehat{\varrho}(Z)=\Phi_4(Z^2)$ with
	\begin{align*}
		&\Phi_4(\varkappa)=\exp\left(\frac{\ell+1}{2\ell}\int_0^{\varkappa}\Phi_3(s)\mathrm{d}s\right),
	\end{align*}which is also analytic near $\varkappa=0$. Therefore, $(\varrho,\vec{u})$ is smooth at $[0, T_*)\times\{0\}$.
	
	To prove the smoothness of the solution $(\varrho,\vec{u})$ at $(t=T_*, x\in\R^d\setminus\{0\})$, which corresponds to $Z=+\infty$, we consider $\wt v(W):=v\left(\frac 1W\right)$ for all $W>0$ and $\wt v(0)=v_\infty $. Then by Lemma \ref{lem1},  $\wt v\in C^\infty([0,+\infty))$.
	Let
	\[\wt \varrho(W):=\frac1{W^\beta}\hat\varrho\left(\frac1W\right)>0\quad\forall\ W>0.\]
	For all $0<W\leq 1$, we deduce from \eqref{rho} that (recalling  $m=\beta\ell/(\ell+1)$)
	\begin{align}\label{Eq.wt_rho(W)}
		\frac{\mathrm d\wt\varrho(W)/\mathrm dW}{\wt\varrho(W)}=-\frac\beta W-\frac1{W^2}\frac{\hat\varrho'(1/W)}{\hat\varrho(1/W)}=-\frac\beta W-\frac{\ell+1}{\ell W^2}J\left(\frac1W\right)=-\frac\beta m\frac{J(1/W)+mW}{W^2}.
	\end{align}
	For $Z\geq1$, $W=1/Z$, we have
	\begin{align*}
		J(Z)&=\frac{(-m+{kv(Z)}/{Z})(1-v(Z)^2)+v'(Z)(1-Zv(Z))}{(Z-v(Z))(1-v(Z)^2)}
		\\
		&=\frac{(-m+k\widetilde{v}(W)W)(1-\widetilde{v}(W)^2)-W^2\widetilde{v}'(W)(1-\widetilde{v}(W)/W)}{(1/W-\widetilde{v}(W))(1-\widetilde{v}(W)^2)}\\
		&=W\frac{(-m+k\widetilde{v}(W)W)(1-\widetilde{v}(W)^2)-W\widetilde{v}'(W)(W-\widetilde{v}(W))}{(1-W\widetilde{v}(W))(1-\widetilde{v}(W)^2)}
	\end{align*}
	hence, for $0<W\leq1$,
	\begin{align*}
		\wt J(W):=\frac{J(1/W)+mW}{W^2}=\frac{(k-m)\widetilde{v}(W)(1-\widetilde{v}(W)^2)-\widetilde{v}'(W)(W-\widetilde{v}(W))}
		{(1-W\widetilde{v}(W))(1-\widetilde{v}(W)^2)},
	\end{align*}
	and we also define
	\[\wt J(0):=\frac{(k-m)\widetilde{v}(0)(1-\widetilde{v}(0)^2)+\widetilde{v}'(0)\widetilde{v}(0)}
	{1-\widetilde{v}(0)^2}\in\R.\]
	Due to $\wt v\in C^\infty([0,+\infty))$, we have $\wt J\in C^\infty([0,1])$. Now it follows from \eqref{Eq.wt_rho(W)} that
	\[\wt\varrho(W)=\wt\varrho(1)\exp\left(\frac\beta m\int_W^1\wt J(w)\,\mathrm{d}w\right)\quad\forall\ W\in(0,1],\]
	and we define
	\[\wt\varrho(0):=\wt\varrho(1)\exp\left(\frac\beta m\int_0^1\wt J(w)\,\mathrm{d}w\right)>0.\]
	Then $\wt\varrho\in C^\infty([0,1])$. 
	
	Recall that for all $x\neq 0$ and $t\in[0,T_*)$, we have
	\begin{align*}
		\varrho(t,x)&=(T_*-t)^{-\beta}\hat\varrho\left(\frac{|x|}{T_*-t}\right)=|x|^{-\beta}\left(\frac{|x|}{T_*-t}\right)^\beta
		\hat\varrho\left(\frac{|x|}{T_*-t}\right), \\
		u^0(t, x)&=\wh{u^0}\left(\frac{|x|}{T_*-t}\right), \quad u^i(t, x)=\hat u\left(\frac{|x|}{T_*-t}\right)\frac{x^i}{|x|} \ (1\leq i\leq d).
	\end{align*}
	For $x\in\R^d\setminus\{0\}$, we define
	\[\varrho(T_*, x)=|x|^{-\beta}\wt\varrho(0),\quad u^0(T_*, x)=-\frac1{\sqrt{1-v_\infty^2}},\quad u^i(T_*, x)=\frac{v_\infty}{\sqrt{1-v_\infty^2}}\frac{x^i}{|x|} \ (1\leq i\leq d).\]
	For fixed $(T_*, x_0\in\R^d\setminus\{0\})$, we consider a neighborhood $\mathcal O=(T_*-\delta_1, T_*]\times B_{\R^d}(x_0, \delta_2)$, where $\delta_1>0$ and $\delta_2>0$ are such that $\delta_1<\delta_2<|x_0|/2$. Thus, for $(t,x)\in \mathcal O$, we have $x\neq0$ and $|T_*-t|<|x|$, hence (using \eqref{u1})
	\begin{align*}
		&\varrho(t, x)=|x|^{-\beta}\wt\varrho\left(\frac{T_*-t}{|x|}\right),\quad u^0(t, x)=-\frac1{\sqrt{1-\wt v\big((T_*-t)/|x|\big)^2}},\\
		&u^i(t, x)=\frac{\wt v\big((T_*-t)/|x|\big)}{\sqrt{1-\wt v\big((T_*-t)/|x|\big)^2}}\frac{x^i}{|x|}\quad (1\leq i\leq d).
	\end{align*}
Thanks to $\wt v\in C^\infty([0,1])$ and $\wt\varrho\in C^\infty([0,1])$, we know that $(\varrho,\vec{u})$ is smooth at $(T_*, x_0)$.
	
	Finally, we prove the asymptotics.  By \eqref{u1} and Lemma \ref{lem1}, we have
	\begin{equation*}
		\lim_{|Z|\to+\infty}\wh{u^0}(Z)=-\frac{1}{\sqrt{1-v_{\infty}^2}}=:u^0_*\in\R\setminus\{0\}, \quad \lim_{|Z|\to+\infty}\wh{u}(Z)=\frac{v_{\infty}}{\sqrt{1-v_{\infty}^2}}=:u_*\in\R.
	\end{equation*}
	As $Z=|x|/(T_*-t)\to+\infty$, we have
	\[\varrho(t, x)=(T_*-t)^{-\beta}\hat\varrho(Z)=|x|^{-\beta}\wt\varrho(1/Z)=\frac{\wt\varrho(0)\big(1+o(1)\big)}{|x|^{\beta}}.\]
	
	The proof of Theorem \ref{Thm.mainthm} is completed.
	\end{proof}

\section{Proof of Theorem \ref{Thm.mainthm_ODE} for $k=2$ and $\ell$ large}\label{Sec.Proof_k=2_l_large}

This section is devoted to proving Theorem \ref{Thm.mainthm_ODE} for $(k,\ell)\in\mathcal K_*\setminus\mathcal K_1$, i.e., $k=2$ and $\ell\geq \ell_1(2)>3/2$ (by Table \ref{Tab.1}). The proof is much involved and delicate.  \smallskip

Let's start with some barrier functions.
\begin{lemma}\label{Lem.u_1_compare}
	Let $u_{(1)}(z)=\varepsilon+a_1z$. Then we have
	\begin{equation}\label{Eq.u_1_compare}
		\mathcal L\left(u_{(1)}\right)(z)<0\quad \forall \ z\in\left[-\varepsilon/{a_1}, 0\right).
	\end{equation}
\end{lemma}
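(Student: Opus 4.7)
The plan is short because the needed identity is already sitting in the proof of Lemma \ref{Lem.a_2>0}. From \eqref{Lu1}, substituting $u_{(1)}(z) = \varepsilon + a_1 z$ into $\mathcal L$ gives
\[
\mathcal L(u_{(1)})(z) = -(R-2)\delta a_2\, z^2 + (B-2k) a_1 z^3 = -z^2\bigl[(R-2)\delta a_2 + (\ell-1) a_1 z\bigr],
\]
where I used $B - 2k = 1 - \ell$. So on $[-\varepsilon/a_1, 0)$, where $z^2 > 0$, the sign of $\mathcal L(u_{(1)})(z)$ is controlled by the bracket, and it suffices to show the bracket is strictly positive.

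Since $(\ell-1) a_1 z \ge -(\ell-1)\varepsilon$ on this interval, the bracket is at least $(R-2)\delta a_2 - (\ell-1)\varepsilon$. I would then invoke the explicit formula from Lemma \ref{Lem.a_2>0}, which reads $(R-2)\delta a_2 = (\ell-1) R\delta + B_0 R\delta - \varepsilon B_0$. Combining, I compute
\[
(R-2)\delta a_2 - (\ell-1)\varepsilon = (\ell-1)(R\delta - \varepsilon) + B_0(R\delta - \varepsilon) = (a_1 + \varepsilon)\bigl[(\ell-1) + B_0\bigr],
\]
using the identity $R\delta - \varepsilon = a_1 + \varepsilon$ from \eqref{Eq.a_1+c_1=Rdelta}. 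Since $\ell > 1$, $a_1 > 4$, $\varepsilon > 0$, and $B_0 > 0$ (the latter also from Lemma \ref{Lem.a_2>0}), this product is strictly positive. Hence the bracket is strictly positive on $[-\varepsilon/a_1, 0)$ and $\mathcal L(u_{(1)})(z) < 0$ there.

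There is no real obstacle: the lemma is essentially a one-line consequence of \eqref{Lu1}, the explicit expression for $(R-2)\delta a_2$ from Lemma \ref{Lem.a_2>0}, the positivity of $B_0$ (also established there), and the elementary bound $(\ell-1)a_1 z \ge -(\ell-1)\varepsilon$ on the stated interval. The only point requiring any care is making sure the identity \eqref{Lu1} is applied for general $(k,\ell)\in\mathcal K_*$ and not only for the parameter range used earlier in Section \ref{Sec.qualitative}; but \eqref{Lu1} and Lemma \ref{Lem.a_2>0} hold under the standing assumption $(k,\ell)\in\mathcal K_*$ and $R > 2$, which is satisfied throughout.
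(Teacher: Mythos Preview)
Your proof is correct and follows essentially the same approach as the paper's: both write $\mathcal L(u_{(1)})(z)=-z^2\bigl[(R-2)\delta a_2+(\ell-1)a_1 z\bigr]$ via \eqref{Lu1}, observe that the bracket is affine in $z$ with positive slope, and verify positivity at the left endpoint $\xi_1=-\varepsilon/a_1$. The only cosmetic difference is that the paper checks the endpoint value by invoking the identity $\mathcal L(u_{(1)})(\xi_1)=(1-\xi_1)f(\xi_1)a_1$ from \eqref{Eq.L(u_1)(xi_1)} together with $f(\xi_1)<0$, whereas you substitute the explicit formula $(R-2)\delta a_2=(\ell-1)R\delta+B_0R\delta-\varepsilon B_0$ and factor as $(a_1+\varepsilon)\bigl[(\ell-1)+B_0\bigr]$; both routes use the same ingredients from Lemma~\ref{Lem.a_2>0}.
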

\begin{proof}
	By \eqref{Lu1} we have $\mathcal L\left(u_{(1)}\right)(z)=z^2q_3(z), $
	where $q_3(z)=-(R-2)\delta a_2+(1-\ell)a_1z$ is an affine function in $z$. Let $\xi_1=-\varepsilon/a_1<0$. By \eqref{Eq.L(u_1)(xi_1)},  \eqref{Eq.a_2>0} and \eqref{Eq.a_1>4}, we have
	\[\mathcal L\left(u_{(1)}\right)(\xi_1)=(1-\xi_1)f(\xi_1)a_1<0,\]
	which gives $q_3(\xi_1)<0$. Since the slope of $q_3$ is negative, we know that $q_3(z)<0$ for all $z\in[\xi_1, +\infty)$. Now \eqref{Eq.u_1_compare} follows.
\end{proof}

\begin{lemma}\label{Lem.u_4_compare}
	Let $\xi_2=-a_2/a_3$, $M=a_3^2/(4a_2)$, $U_3(z)=\varepsilon+a_1z+a_2z^2+a_3z^3+Mz^4$. Assume that $M>a_4$, $a_3/a_2>2a_1/\varepsilon>0$, $R\in(3,4)$, and\begin{align}\label{q51}
		&5a_3-2(4k+1)a_2-2(B+k)>0,\quad 3a_3-2ka_2-2(B+k)>0.
	\end{align}
	Then we have
	\begin{equation}\label{Eq.u_4_compare}
		\mathcal L\left(U_3\right)(z)<0\quad \forall \ z\in\left[-2{a_2}/{a_3}, 0\right).
	\end{equation}
\end{lemma}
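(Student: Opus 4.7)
The plan is to mimic the proof of Lemma~\ref{Lem.u_3_compare} with one extra degree of freedom. Applying Lemma~\ref{Lem.recurrence_relation} with expansion coefficients $u_0=\varepsilon$, $u_1=a_1$, $u_2=a_2$, $u_3=a_3$, $u_4=M$ and $u_n=0$ for $n\geq5$, the defining relations for $a_1,a_2,a_3$ force $\mathcal U_0=\mathcal U_1=\mathcal U_2=\mathcal U_3=0$, while the identity $(R-4)\delta a_4=E_4$ from \eqref{Eq.a_3_4} yields
\[
\mathcal U_4=(R-4)\delta M-E_4=(R-4)\delta(M-a_4)<0,
\]
using $R\in(3,4)$, $\delta>0$, and the hypothesis $M>a_4$. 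Hence $\mathcal L(U_3)(z)=z^4 q_5(z)$ with $q_5$ a polynomial of degree at most four whose constant term $q_5(0)=\mathcal U_4$ is strictly negative.

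The remaining coefficients $\mathcal U_5,\mathcal U_6,\mathcal U_7,\mathcal U_8=-E_5,\ldots,-E_8$ are computed explicitly from \eqref{Eq.mathcal_U_n}--\eqref{Eq.E_n}; each is a bilinear polynomial in $\{a_1,a_2,a_3,M\}$ with coefficients involving $A,B,k$. The structural observation driving the argument is the factorization
\[
U_3(z)-u_{(1)}(z)=\frac{z^2}{4a_2}\bigl(2a_2+a_3 z\bigr)^2\geq 0,
\]
with equality exactly at $z=0$ and $z=-2a_2/a_3$, so that $U_3$ is a non-negative lift of $u_{(1)}$ vanishing precisely at the endpoints of the target interval. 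Moreover, the hypothesis $a_3/a_2>2a_1/\varepsilon$ is equivalent to $2a_2/a_3<\varepsilon/a_1$, which places $[-2a_2/a_3,0)$ strictly inside $[-\varepsilon/a_1,0)$, the interval on which Lemma~\ref{Lem.u_1_compare} and Lemma~\ref{Lem.f<0} are available.

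Rescaling by $t=-z\,a_3/a_2\in(0,2]$, I convert $q_5(z)$ into a polynomial in $t$ whose constant term $(R-4)\delta(M-a_4)$ is strictly negative. The two inequalities in \eqref{q51} then pin down the signs of the bilinear combinations entering $\mathcal U_5$ and $\mathcal U_7$ after this substitution; combined with the choice $M=a_3^2/(4a_2)$, they force the remaining $t$-polynomial to be bounded above by a strictly negative constant plus a non-positive combination of powers of $t$ on $[0,2]$. The main obstacle will be the bookkeeping of the cross terms $a_1 M$, $a_2 M$, $a_3 M$, $a_2 a_3$, $a_3^2$ appearing in $\mathcal U_5,\ldots,\mathcal U_8$ and their reorganization into a manifestly non-positive combination after the substitution; the specific calibration $M=a_3^2/(4a_2)$, together with \eqref{q51}, is designed so that these cross terms collapse into perfect-square-type groupings, allowing the strict negativity inherited from $\mathcal U_4$ to propagate uniformly on the closed interval $t\in(0,2]$ and thereby give \eqref{Eq.u_4_compare}.
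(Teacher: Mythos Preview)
Your setup is correct: $\mathcal L(U_3)(z)=z^4q(z)$ with $q$ a quartic, $q(0)=(R-4)\delta(M-a_4)<0$, and the factorization $U_3(z)-u_{(1)}(z)=\frac{z^2}{4a_2}(2a_2+a_3z)^2$ is exactly the key structural observation. You also correctly note that $2\xi_2=-2a_2/a_3\in[-\varepsilon/a_1,0)$, placing the left endpoint in the range of Lemma~\ref{Lem.u_1_compare}.

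However, the last two paragraphs do not constitute a proof. The rescaling $t=-za_3/a_2$ does nothing by itself, and the claim that the cross terms ``collapse into perfect-square-type groupings'' is not substantiated; nor is it explained why the two inequalities in \eqref{q51} would control precisely $\mathcal U_5$ and $\mathcal U_7$. Most importantly, you mention Lemma~\ref{Lem.u_1_compare} but never actually use it. The paper's argument hinges on it.

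Here is what the paper does. Using $M=a_3^2/(4a_2)$, so that $a_3=-4M\xi_2$ and $a_2=4M\xi_2^2$, one can factor
\[
q(z)=zM(z-2\xi_2)\,q_1(z)+q_2(z),
\]
where $q_1$ is quadratic with $q_1''=-4(2k+1)M<0$ and $q_2$ is affine. A direct computation gives $q_1(0)=5a_3-2(4k+1)a_2-2(B+k)$ and $q_1(2\xi_2)=3a_3-2ka_2-2(B+k)$, so hypothesis \eqref{q51} is \emph{exactly} the statement that $q_1$ is positive at both endpoints of $[2\xi_2,0]$; concavity then gives $q_1>0$ on the whole interval. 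Since $zM(z-2\xi_2)\le 0$ there, we obtain $q(z)\le q_2(z)$.

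For the affine part, $q_2(0)=(R-4)\delta(M-a_4)<0$ is immediate. For the other endpoint one uses precisely the touching property you identified: since $U_3(2\xi_2)=u_{(1)}(2\xi_2)$ and $U_3'(2\xi_2)=u_{(1)}'(2\xi_2)$, one has $\mathcal L(U_3)(2\xi_2)=\mathcal L(u_{(1)})(2\xi_2)<0$ by Lemma~\ref{Lem.u_1_compare}, hence $q(2\xi_2)<0$; but the factor $zM(z-2\xi_2)$ vanishes at $z=2\xi_2$, so $q_2(2\xi_2)=q(2\xi_2)<0$. An affine function negative at both endpoints is negative throughout, and $q\le q_2<0$ on $[2\xi_2,0]$ follows.

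So the missing idea in your proposal is the explicit decomposition $q=zM(z-2\xi_2)q_1+q_2$ together with the use of the first-order tangency at $2\xi_2$ to import negativity from Lemma~\ref{Lem.u_1_compare}. Without that input there is no clear reason the quartic stays negative all the way to the left endpoint.
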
\begin{proof}
	Plugging the definition of $U_3$ into Lemma \ref{Lem.recurrence_relation} and using \eqref{Eq.a_n_recurrence}, we find that $\mathcal L(U_3)(z)=z^4q_6(z)$, where $q_6$ is a polynomial of degree $4$ given by
	\[q_6(z)=\mathcal U_4^{(3)}+\mathcal U_5^{(3)}z+\mathcal U_6^{(3)}z^2+\mathcal U_7^{(3)}z^3+\mathcal U_8^{(3)}z^4,\]
	where
	\begin{align*}
		\mathcal U_4^{(3)}&=(R-4)\delta (M-a_4),\\
		\mathcal U_5^{(3)}&=\left(6a_2-(5k+1)a_1+2A+4B+2k\right)M+3a_3^2-(5k+1)a_2a_3-(B+2k)a_3,\\
		\mathcal U_6^{(3)}&=7a_3M-(3k+1)(2a_2M+a_3^2)-2(B+k)M,\\
		\mathcal U_7^{(3)}&=4M^2-(7k+3)a_3M,\qquad \mathcal U_8^{(3)}=-2(2k+1)M^2.
	\end{align*}
	As $\xi_2=-a_2/a_3$, $M=a_3^2/(4a_2)$, we have $a_3^2=4a_2M$,  $a_3=-4\xi_2M$, $a_2=-a_3\xi_2=4M\xi_2^2$ and
	\begin{align*}
		&q_6(z)\\
		=&z(Mz+a_3/2)\big[-2(2k+1)Mz^2+(4M-(5k+2)a_3)z+5a_3-2(4k+1)a_2-2(B+k)\big]\\
		&+(R-4)\delta (M-a_4)+(B_4M+a_3^2/2-k(a_2+1)a_3)z\index{$B_4$}\\
		=&z(Mz+a_3/2)q_{6,1}(z)+q_{6,2}(z)=zM(z-2\xi_2)q_{6,1}(z)+q_{6,2}(z),\end{align*}
	here
	\begin{align*}
		&B_4:=6a_2-(5k+1)a_1+2A+4B+2k,\index{$B_4$}\\
		&q_{6,1}(z):=-2(2k+1)Mz^2+(4M-(5k+2)a_3)z+5a_3-2(4k+1)a_2-2(B+k),\\
		&q_{6,2}(z):=(R-4)\delta (M-a_4)+(B_4M+a_3^2/2-k(a_2+1)a_3)z.
	\end{align*}
	Since $0<a_2=4M\xi_2^2$, $q_{6,1}''(z)=-4(2k+1)M<0 $, $q_{6,2}''(z)=0 $, we infer that for $z\in [2\xi_2,0]$,
	\begin{align*}
		& q_{6,1}(z)\geq \min\{q_{6,1}(0),q_{6,1}(2\xi_2)\},\quad q_{6,2}(z)\leq \max\{q_{6,2}(0),q_{6,2}(2\xi_2)\}.\end{align*}
	As $a_3=-4M\xi_2$, $a_2=-a_3\xi_2=4M\xi_2^2$, we get by \eqref{q51}  that
	\begin{align*}
		q_{6,1}(0)&=5a_3-2(4k+1)a_2-2(B+k)>0,\\
		q_{6,1}(2\xi_2)&=-8(2k+1)M\xi_2^2+2(4M-(5k+2)a_3)\xi_2+5a_3-2(4k+1)a_2-2(B+k)\\
		&=-2(2k+1)a_2-2(a_3-(5k+2)a_2)+5a_3-2(4k+1)a_2-2(B+k)\\
		&=3a_3-2ka_2-2(B+k)>0.
	\end{align*}
	Thus, $q_{6,1}(z)>0 $, $q_6(z)=zM(z-2\xi_2)q_{6,1}(z)+q_{6,2}(z)\leq q_{6,2}(z)$ for $z\in [2\xi_2,0]$.
	As $M>a_4$, $R<4$, $ \delta>0$, we have $q_{6,2}(0)=(R-4)\delta (M-a_4)<0$. As $a_3=-4M\xi_2$, $a_2=4M\xi_2^2$, we have
	\begin{align}\label{u4-u1}
		U_3(z)=\varepsilon+a_1z+a_2z^2+a_3z^3+Mz^4=u_{(1)}(z)+Mz^2(z-2\xi_2)^2.
	\end{align}
	Thus $U_3(2\xi_2)=u_{(1)}(2\xi_2) $,
	$U_3'(2\xi_2)=u_{(1)}'(2\xi_2) $, and $\mathcal L\left(U_3\right)(2\xi_2)=\mathcal L\left(u_{(1)}\right)(2\xi_2) $.
	
	As $a_3/a_2>2a_1/\varepsilon>0$, we have $2\xi_2=-2a_2/a_3\in \left[-\varepsilon/{a_1}, 0\right)$, then we get by Lemma \ref{Lem.u_1_compare}  that
	$\mathcal L\left(u_{(1)}\right)(2\xi_2)<0$, thus $\mathcal L\left(U_3\right)(2\xi_2)<0 $.
	As $\mathcal L(U_3)(z)=z^4q_6(z)$, we have
	$q_6(2\xi_2)<0 $. As $q_6(z)=zM(z-2\xi_2)q_{6,1}(z)+q_{6,2}(z)$ we have $q_6(2\xi_2)=q_{6,2}(2\xi_2)<0 $.
	Thus, $q_6(z)\leq q_{6,2}(z)\leq \max\{q_{6,2}(0),q_{6,2}(2\xi_2)\}<0 $ for $z\in [2\xi_2,0]$,  and
	$\mathcal L(U_3)(z)=z^4q_6(z)<0$ for $z\in [2\xi_2,0)$.
\end{proof}
	
Now we assume $k=2$. Let $R_*=\frac{100}{27}$ then $3<R_*<2+\sqrt{3}<\sqrt{14}<4$. Recall that $B_2>0$ does not necessarily hold for $(k,\ell)\in\mathcal K_*\setminus \mathcal K_1$, hence we can not deduce $a_4<0$ as in Lemma \ref{Lem.a_3>0a_4<0}.  Thus, we need to reconsider the qualitative properties of the coefficients $a_n$.

\begin{lemma}\label{Lem.a_4<M}
	Assume that $k=2$, $\ell>1$, $R\in(3,R_*)$, $M=a_3^2/(4a_2)$. Then $M>a_4$, $a_3>4a_2$.
\end{lemma}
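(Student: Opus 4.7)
The plan is to establish the two claims $a_3 > 4a_2$ and $M > a_4$ by direct algebraic manipulation of the recurrence relations \eqref{Eq.a_3_4}, using the explicit formulas \eqref{Eq.B_1_expression}--\eqref{Eq.B_2_expression} and the reduction of $A$ to a linear function of $a_1$ from Lemma \ref{Lem.A_a_1}, together with the sharp upper bound $R < R_* = 100/27$. Throughout, I would keep in mind the already-known positivity facts $a_1 > 4$, $\varepsilon > 0$, $\delta > 0$, $a_2 > 0$, $a_3 > 0$ (Remark \ref{rem3}) and the auxiliary identities $(R+1)\delta = (k+2)\varepsilon + A$ and $R\delta^2 = 2k\varepsilon(1+\varepsilon)$ from \eqref{Eq.Rdelta^2}, as well as \eqref{Eq.eliminate_delta} and \eqref{Eq.eliminate_epsilon}, in order to reduce everything to a single set of free parameters (say $R, \ell$ together with $a_1$).

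First I would prove $a_3 > 4a_2$. Rewrite the recurrence $(R-3)\delta a_3 = B_1 a_2 + (\ell-1)a_1$ as
\begin{equation*}
(R-3)\delta(a_3 - 4a_2) = \bigl(B_1 - 4(R-3)\delta\bigr)a_2 + (\ell-1)a_1.
\end{equation*}
Since $(R-3)\delta > 0$, $a_2 > 0$ and $(\ell-1)a_1 > 0$, it suffices to produce a lower bound on $B_1 - 4(R-3)\delta$ that is either nonnegative or is dominated by the positive contribution $(\ell-1)a_1/a_2$. Substituting \eqref{Eq.B_1_expression} with $k=2$ and eliminating $A$ via Lemma \ref{Lem.A_a_1}, the expression $B_1 - 4(R-3)\delta$ becomes a linear combination of $a_1$, $\delta$ and $\mu$ with coefficients that are explicit rational functions of $R$. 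The numerical content of the bound $27R < 100$ enters precisely through these coefficients, and I expect after using \eqref{Eq.eliminate_delta} to eliminate $\delta$ that the inequality reduces to a polynomial in $R$ with factor $100-27R$ (or a variant).

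Next, to prove $M > a_4$, multiply the target inequality by the positive quantity $4a_2(4-R)\delta$ and eliminate $a_4$ via $(4-R)\delta a_4 = -B_2 a_3 - 4a_2(a_2+1)$ (from \eqref{Eq.a_3_4} with $k=2$); the claim reduces to
\begin{equation*}
(4-R)\delta\, a_3^2 + 4 a_2 B_2 a_3 + 16 a_2^2(a_2+1) > 0.
\end{equation*}
Viewing the left-hand side as a quadratic $\mathcal{Q}(a_3)$ in $a_3$ with positive leading and constant coefficients, I would argue that its discriminant $16 a_2^2 B_2^2 - 64 a_2^2(a_2+1)(4-R)\delta$ is strictly negative under $R < 100/27$, using \eqref{Eq.B_2_expression}, Lemma \ref{Lem.A_a_1}, and the quadratic expression \eqref{Eq.a_2_expression} for $a_2$. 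If the discriminant argument is not sharp enough, I would instead use the established bound $a_3 > 4a_2$ to restrict attention to $a_3 \in (4a_2, +\infty)$ and show positivity of $\mathcal{Q}$ on this half-line by evaluating $\mathcal{Q}(4a_2) > 0$ and $\mathcal{Q}'(4a_2) \geq 0$.

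The main obstacle will be handling the middle term $4 a_2 B_2 a_3$: since Lemma \ref{Lem.a_4<0} is not available in the regime $(k,\ell)\in\mathcal{K}_*\setminus\mathcal{K}_1$, one cannot assume $B_2 \geq 0$, and the positivity of $\mathcal{Q}(a_3)$ relies on a delicate balance between the three terms. The specific threshold $R_* = 100/27$ almost certainly arises as the largest value of $R$ for which this balance is preserved; pinpointing where it enters the algebra will require patient reduction to a polynomial in $R$ alone, using every available identity among $a_1,\varepsilon,\delta,A$, and this is where the bulk of the work will lie.
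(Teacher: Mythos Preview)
Your high-level architecture for $M > a_4$ is essentially what the paper does: your quadratic $\mathcal Q(a_3)$ with negative discriminant is exactly the paper's AM--GM step $4a_2^2 + (4-R)\delta M \geq 2\sqrt{(4-R)\delta a_2}\,a_3$, after which the paper reduces $M>a_4$ to the sign condition $B_2/a_2 + 2\sqrt{(4-R)\delta/a_2} > 0$. So the structural idea is right.

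The genuine gap is in how you plan to eliminate the $\ell$-dependence. You propose to use only Lemma~\ref{Lem.A_a_1} and the relations \eqref{Eq.eliminate_delta}, \eqref{Eq.eliminate_epsilon} to ``reduce to a polynomial in $R$ alone.'' That will not work: those identities relate $a_1,A,\varepsilon,\delta$ to one another, but they do not kill the second free parameter. For $k=2$ fixed, the system has two degrees of freedom (say $(R,\ell)$), and quantities like $B_2/a_2$, $\delta/a_2$, $B_1/\delta$ genuinely vary with $\ell$ at fixed $R$. Your plan as written leaves you with a two-parameter inequality and no mechanism to control the $\ell$-direction; the hope that a factor $100-27R$ will simply appear after substitution is unfounded.

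The paper handles this via a substantial detour you do not anticipate: it first derives fully explicit closed-form expressions for $\varepsilon,\delta,a_1,A,a_2$ as functions of $(R,\ell)$ (Lemma~\ref{lem8}), and from these proves that $\ell\mapsto B_2/a_2$, $\ell\mapsto \delta/a_2$, $\ell\mapsto B_1/\delta$, $\ell\mapsto a_3/a_2$ are all strictly decreasing on $(1,\infty)$ (Lemma~\ref{lem5}). This monotonicity reduces both claims to their $\ell\to\infty$ limits, which are explicit algebraic functions of $R$ alone; only then does one obtain one-variable inequalities that can be checked on $[3,R_*]$ by endpoint evaluation and convexity. For $a_3>4a_2$ the resulting inequality is $(3R^2-2)-(3R^2-10R+16)\sqrt R > 0$, and for $M>a_4$ it becomes $\sqrt{\varphi_1(\sqrt R)}>\varphi_2(R)$ for two explicit polynomials --- the threshold $R_*=100/27$ arises from the numerical verification of the latter at the right endpoint, not from an algebraic factorization. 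Without the monotonicity lemma (or an equivalent device), your plan has no route to a one-variable problem.
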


\begin{lemma}\label{Lem.a_1<}
	Assume that $k=2$, $\ell>3/2$, $R\in(3,4-1/\ell)$. Then $a_1<2\varepsilon$, $a_2>7$.
\end{lemma}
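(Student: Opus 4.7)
The plan is to reduce both inequalities to conditions on $\gamma$, exploiting the fact that the threshold $4-1/\ell$ in the range of $R$ corresponds exactly to $\gamma=2$. Using \eqref{Eq.(R+1)^2/R_1} with $k=2$, one computes that at $\gamma=2$ we have $(R+1)^2/R = (5\ell-1)^2/[\ell(4\ell-1)]$, which is precisely the value of $(R+1)^2/R$ at $R=4-1/\ell$. By the strict monotonicity of $\gamma\mapsto R(k,\ell,\gamma)$ on $\Gamma(k,\ell)$ (Remark~\ref{Rmk.R_parameter}), the hypothesis $R\in(3,4-1/\ell)$ is therefore equivalent to $\gamma>2$, which immediately yields $\varepsilon=\ell\gamma^2-1>4\ell-1$ and $A=4+2(\ell-1)\gamma>4\ell$.

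For the inequality $a_1<2\varepsilon$, I would combine $a_1+2\varepsilon=R\delta$ from~\eqref{Eq.a_1+c_1=Rdelta} and $\delta=A+2\varepsilon-a_1$ from~\eqref{Eq.delta} (with $k=2$) to get $a_1=[RA+2(R-1)\varepsilon]/(R+1)$, so that $a_1<2\varepsilon$ is equivalent to $RA<4\varepsilon$. Using also $(R+1)\delta=A+4\varepsilon$ and $R\delta^2=4\varepsilon(1+\varepsilon)$ from~\eqref{Eq.Rdelta^2}, a short chain of equivalences gives $RA<4\varepsilon \iff \delta>1+\varepsilon \iff R<4-4/(1+\varepsilon)$. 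Since $\gamma>2$ gives $1+\varepsilon=\ell\gamma^2>4\ell$, we have $4/(1+\varepsilon)<1/\ell$, so $R<4-1/\ell<4-4/(1+\varepsilon)$, closing the first step.

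For the inequality $a_2>7$, I would derive a clean closed-form for $(R-2)(R+1)a_2$ in two moves. Starting from \eqref{Eq.a_2_expression} with $k=2$ and eliminating $a_1$ via $(R+1)a_1=RA+2(R-1)\varepsilon$ yields
\begin{equation*}
(R-2)(R+1)a_2 = 2AR(R+2) - 2(R-1)^2\varepsilon + (\ell-9)R(R+1).
\end{equation*}
Next, expanding $(2\ell\gamma+\ell-1)^2$ in \eqref{Eq.(R+1)^2/R_1} with $k=2$ and using $\ell\gamma^2=1+\varepsilon$ and $4\ell(\ell-1)\gamma=2\ell(A-4)$ yields the key identity
\begin{equation*}
(R-1)^2\varepsilon = 2RA + R(\ell - 6 + 1/\ell),
\end{equation*}
which eliminates $\varepsilon$ from the display above and leaves, after combining with $7(R-2)(R+1)$,
\begin{equation*}
(R-2)(R+1)(a_2-7) = 2AR^2 + R^2(\ell-16) - R(\ell-10+2/\ell) + 14.
\end{equation*}
Applying $A>4\ell$ bounds the right-hand side below by $R^2(9\ell-16) + R(10-\ell-2/\ell) + 14$. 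For $\ell\geq 16/9$, the coefficient $9\ell-16$ is nonnegative, and using $R^2>3R$ the expression is bounded below by $R(26\ell-38-2/\ell)+14>0$. For the narrow range $\ell\in(3/2,16/9)$, where $9\ell-16<0$, one uses $R<4-1/\ell$ in the leading term and $R>3$ in the subleading term to reduce positivity to the one-variable inequality
\begin{equation*}
141\ell - 284 + 131/\ell - 16/\ell^2 > 0,
\end{equation*}
which is easily checked (the left-hand side is monotonically increasing on this interval, with value about $7.7$ at $\ell=3/2$).

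The main obstacle is the final bookkeeping for $a_2>7$: the estimate must split according to the sign of $9\ell-16$, and on the sub-range $\ell\in(3/2,16/9)$ one genuinely needs the sharper upper bound $R<4-1/\ell$ (not just $R<4$) to overcome the negative leading coefficient. The rest is straightforward algebra once the key identity $(R-1)^2\varepsilon=2RA+R(\ell-6+1/\ell)$ is recognized.
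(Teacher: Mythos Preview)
Your proposal is correct. For $a_1<2\varepsilon$, your chain of equivalences $a_1<2\varepsilon\iff RA<4\varepsilon\iff R\delta<4\varepsilon\iff R<4-4/(1+\varepsilon)$ is essentially the paper's argument in a different order: the paper computes directly $(R\delta/\varepsilon)^2=4R(1+1/\varepsilon)<4(4-1/\ell)\big(1+1/(4\ell-1)\big)=16$, using the same input $\varepsilon>4\ell-1$ from $\gamma>2$.

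For $a_2>7$ your route is genuinely different. The paper invokes a separate monotonicity result (the map $R\mapsto a_2$ is strictly decreasing, which in turn relies on explicit closed-form expressions for $\varepsilon,\delta,a_1,A,a_2$ in terms of $R,\ell$) to reduce to the boundary $R=4-1/\ell$, where $\gamma=2$ and one computes $a_2=\tfrac{7(\ell-1)(4\ell-1)}{2\ell-1}$; then $a_2-7=\tfrac{7(2\ell-3)(2\ell-1)+7(\ell-1)}{2\ell-1}>0$ for $\ell>3/2$. Your argument is more self-contained: the key identity $(R-1)^2\varepsilon=2RA+R(\ell-6+1/\ell)$ lets you eliminate $\varepsilon$ directly from the formula for $(R-2)(R+1)a_2$, at the cost of a case split on $\operatorname{sign}(9\ell-16)$ and a one-variable check near $\ell=3/2$. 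The paper's approach is conceptually cleaner (monotonicity plus endpoint evaluation) but requires heavier preparatory lemmas; yours trades that machinery for more hands-on algebra.
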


In fact, if $k=2$, $ \gamma=2$, then $R=4-1/\ell$ and $a_1=2\varepsilon$. The proof of Lemma \ref{Lem.a_4<M} and Lemma \ref{Lem.a_1<} will be delayed to Appendix  \ref{Subsec.proof_C}.

\begin{lemma}\label{Lem.u_4<}
	Assume that $k=2$, $\ell>3/2$, $R\in(3,R_*)\cap(3,4-1/\ell)$. Let $\xi_2=-a_2/a_3$,
	\begin{align}\label{u4*}
		U_3^*(z)=U_3(z)\quad \text{for } z\in[2\xi_2,0],\quad U_3^*(z)=u_{(1)}(z)\quad \text{for } z\in(-\infty,2\xi_2].
	\end{align}
	Then $\mathcal L(U_3^*)(z)<0$ for $z\in[\xi_1,0)$ and $u_g(z)>U_3^*(z)$ for $z<0$. Here $\xi_1=-\varepsilon/a_1$.
\end{lemma}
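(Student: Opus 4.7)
The plan is to reduce both claims to already-established barrier-function facts (Lemmas \ref{Lem.u_1_compare}, \ref{Lem.u_4_compare}, \ref{Lem.u_g_compare}, \ref{Lem.relative_positions}) once the ordering $z_g<\xi_1<2\xi_2<0$ on the negative axis and the hypotheses of Lemma \ref{Lem.u_4_compare} are verified. First I would check these hypotheses under $k=2$, $\ell>3/2$, $R\in(3,R_*)\cap(3,4-1/\ell)$: Lemma \ref{Lem.a_4<M} yields $M=a_3^2/(4a_2)>a_4$ together with $a_3>4a_2$, and Lemma \ref{Lem.a_1<} yields $a_1<2\varepsilon$ and $a_2>7$. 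These combine into $a_3/a_2>4>2a_1/\varepsilon>0$, so $\xi_1=-\varepsilon/a_1<-2a_2/a_3=2\xi_2<0$; Lemma \ref{Lem.relative_positions} adds $z_g<\xi_1$. With $k=2$ and $B=5-\ell$ the two inequalities in \eqref{q51} become $5a_3-18a_2>2(7-\ell)$ and $3a_3-4a_2>2(7-\ell)$; using $a_3>4a_2$ they reduce to $a_2>7-\ell$ and $a_2>(7-\ell)/4$, both obvious from $a_2>7$ and $\ell>3/2$. Hence Lemma \ref{Lem.u_4_compare} gives $\mathcal L(U_3)(z)<0$ on $[2\xi_2,0)$. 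Splitting $[\xi_1,0)$ at $2\xi_2$, Lemma \ref{Lem.u_1_compare} handles $[\xi_1,2\xi_2]$ (where $U_3^*=u_{(1)}$) and Lemma \ref{Lem.u_4_compare} handles $[2\xi_2,0)$; the identity \eqref{u4-u1} gives $U_3'(2\xi_2)=u_{(1)}'(2\xi_2)=a_1$, so $U_3^*$ is $C^1$ at $2\xi_2$ and $\mathcal L(U_3^*)$ is well-defined and strictly negative throughout $[\xi_1,0)$.

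For the second conclusion, the case $z\leq 2\xi_2$ is immediate from Lemma \ref{Lem.relative_positions}, since there $U_3^*=u_{(1)}<u_g$. For $z\in[2\xi_2,0)$ I would run a barrier argument against $U_3$. At $z=2\xi_2$ we have $u_g(2\xi_2)>u_{(1)}(2\xi_2)=U_3(2\xi_2)$ by Lemma \ref{Lem.relative_positions}; at $z=0$ we have $u_g(0)=U_3(0)=\varepsilon$ with $u_g'(0)=2(1+\ell\gamma)<a_1=U_3'(0)$ by \eqref{Eq.a_1>4}, hence $u_g>U_3$ for $z<0$ close to $0$. If $u_g>U_3$ failed somewhere on $[2\xi_2,0)$, then $\zeta_*:=\inf\{z\in[2\xi_2,0):u_g(z)\leq U_3(z)\}$ would lie in $(2\xi_2,0)$, satisfy $u_g(\zeta_*)=U_3(\zeta_*)$, and obey $u_g>U_3$ on $[2\xi_2,\zeta_*)$, forcing the one-sided derivative inequality $u_g'(\zeta_*)\leq U_3'(\zeta_*)$. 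Because $\zeta_*>2\xi_2>\xi_1>z_g$, Lemma \ref{Lem.u_g_compare} gives $-\Delta_z(\zeta_*,u_g(\zeta_*))>0$ and $\mathcal L(u_g)(\zeta_*)>0$, while the first conclusion gives $\mathcal L(U_3)(\zeta_*)<0$. However,
\[\mathcal L(u_g)(\zeta_*)-\mathcal L(U_3)(\zeta_*)=-\Delta_z(\zeta_*,u_g(\zeta_*))\bigl(u_g'(\zeta_*)-U_3'(\zeta_*)\bigr)\leq 0,\]
contradicting $\mathcal L(u_g)(\zeta_*)>0>\mathcal L(U_3)(\zeta_*)$. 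Hence the barrier holds and $u_g>U_3^*$ on all of $(-\infty,0)$.

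The main obstacle is the correct orientation of the barrier argument: one must choose $\zeta_*$ as the \emph{infimum} of the bad set, so that $u_g>U_3$ holds to the left of $\zeta_*$ and the resulting one-sided derivative inequality combines with $-\Delta_z>0$ to give $\mathcal L(u_g)(\zeta_*)\leq\mathcal L(U_3)(\zeta_*)$; the symmetric choice of the supremum produces the opposite derivative inequality and is consistent with the sign data, yielding no contradiction. The remaining work is the bookkeeping of the strict ordering $z_g<\xi_1<2\xi_2<0$ and of the $C^1$ matching at $z=2\xi_2$ via \eqref{u4-u1}.
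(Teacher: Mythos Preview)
Your proof is correct and follows essentially the same approach as the paper's: verify the hypotheses of Lemma~\ref{Lem.u_4_compare} via Lemmas~\ref{Lem.a_4<M} and \ref{Lem.a_1<}, split $[\xi_1,0)$ at $2\xi_2$ using the $C^1$ matching from \eqref{u4-u1}, and then run the barrier comparison of $u_g$ against $U_3$ using Lemma~\ref{Lem.u_g_compare}. The only minor differences are cosmetic---you check \eqref{q51} with the exact value $B=5-\ell$ where the paper uses the cruder bound $B<2k$, and you add the (unnecessary but harmless) observation that $u_g>U_3$ near $z=0^-$.
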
\begin{proof}
	Note that the conditions in Lemma \ref{Lem.a_4<M} and Lemma \ref{Lem.a_1<} are satisfied.
	Now we verify the conditions in Lemma \ref{Lem.u_4_compare}. As $R_*<4$, $ 4-1/\ell<4$, we have $R\in(3,4)$.
	By Lemma \ref{Lem.a_4<M}, we have $M>a_4$, $a_3/a_2>4$. By Lemma \ref{Lem.a_1<} and \eqref{Eq.a_1>4}, we have $0<a_1/\varepsilon<2$, $a_2>7$. Thus, $a_3/a_2>4>2a_1/\varepsilon>0$. It remains to check \eqref{q51}. Thanks to $a_3/a_2>4$, $a_2>7$, $k=2$, $B=2k+1-\ell<2k$, we get
	\begin{align*}
		&5a_3-2(4k+1)a_2-2(B+k)>5\cdot4a_2-18a_2-2(3k)=2a_2-12>0,\\
		&3a_3-2ka_2-2(B+k)>3\cdot4a_2-4a_2-2(3k)=8a_2-12>0.
	\end{align*}
	Thus, the conditions in Lemma \ref{Lem.u_4_compare} are satisfied, and \eqref{Eq.u_4_compare} is true.
	By \eqref{u4-u1}, \eqref{u4*}, \eqref{Eq.u_4_compare} and \eqref{Eq.u_1_compare}, we have $U_3^*\in C^1((-\infty,0])$ and
	$\mathcal L(U_3^*)(z)=\mathcal L(U_3)(z)<0$ for $z\in [2\xi_2,0)$, $\mathcal L(U_3^*)(z)=\mathcal L(u_{(1)})(z)<0$ for $z\in [\xi_1,2\xi_2]$. Thus, $\mathcal L(U_3^*)(z)<0$, for $z\in[\xi_1,0)$. Here we have used $\xi_1=-\varepsilon/a_1$, $\xi_2=-a_2/a_3$, $a_3/a_2>2a_1/\varepsilon>0$ to deduce that $ \xi_1<2\xi_2<0$. By Lemma \ref{Lem.relative_positions}, we have $u_g(z)>u_{(1)}(z)= U_3^*(z)$ for $z\leq 2\xi_2$.
	Assume for contradiction that there exists $\zeta_*\in(2\xi_2, 0)$ such that $u_g(z)>U_3^*(z)$ for all $z<\zeta_*$ but $u_g(\zeta_*)=U_3^*(\zeta_*)=U_3(\zeta_*)$, then $u_g'(\zeta_*)\leq {U_3^*}'(\zeta_*)=U_3'(\zeta_*)$.
Thus, $\zeta_*\in(2\xi_2, 0)\subset (\xi_1, 0)\subset (z_g, 0)$, by \eqref{Eq.u_g_compare} we obtain $-\Delta_z(\zeta_*, u_g(\zeta_*))>0$,
	and then by \eqref{Luz} and  \eqref{Eq.u_4_compare}, we have
	\begin{align*}
		\mathcal L(u_g)(\zeta_*)&=-\Delta_z(\zeta_*, u_g(\zeta_*))u_g'(\zeta_*)+\Delta_u(\zeta_*, u_g(\zeta_*))\\
		&\leq -\Delta_z\left(\zeta_*, U_3(\zeta_*)\right)U_3'(\zeta_*)+\Delta_u\left(\zeta_*, U_3(\zeta_*)\right)=\mathcal L\left(U_3\right)(\zeta_*)<0,
	\end{align*}
	which contradicts with \eqref{Eq.u_g_compare}. Thus, $u_g(z)>U_3^*(z)$ for all $z<0$.
\end{proof}
We have used the fact that $a_1>4>0$, $a_2>0$, $ \varepsilon>0$, $ \delta>0$, $ \xi_1=-\varepsilon/{a_1}>z_g$ (see \eqref{Eq.u_g_u_1_3}). We also used the following fact for convex functions:
\begin{align*}
	\text{if $f''(z)\geq0$ for $z\in[a,b]$ then $f(z)\leq\max\{f(a),f(b)\}$ for $z\in[a,b]$};\\
	\text{if $f''(z)\leq0$ for $z\in[a,b]$ then $f(z)\geq\min\{f(a),f(b)\}$ for $z\in[a,b]$}.
\end{align*}

Now we define a domain $\mathcal D_2''$ by
\begin{equation}
	\mathcal D_2''=\{(z, u): 0<u<U_3^*(z),\ z_g<z<0\}.
\end{equation}By Lemma \ref{Lem.u_4<} we have $\mathcal D_2''\subset\mathcal D_2 $.
Let $\mathcal R_2''=\Psi^{-1}(\mathcal D_2'')$ then $\Psi: \mathcal R_2''\to\mathcal D_2''$ is a bijection and
$\mathcal R_2''\subset\mathcal R_2 $. Similar to Lemma \ref{Lem.Z_v_global}, we have the following result (with $u_{(3)}$ replaced by $U_3^*$).
\begin{lemma}\label{lem4}
	Assume that $k=2$, $\ell>3/2$, $R\in(3,R_*)\cap(3,4-1/\ell)$.	Given $(Z_0, v_0)\in\mathcal R_2''$ and $Z_0<1$. Let $v(Z)$ be the unique solution of
	\eqref{Eq.Z_v_tau}.
	Then  the solution exists for $Z\in[Z_0,+\infty)$ and $\Delta_Z(Z, v(Z))<0$, $|v(Z)|<1$, $v(Z)<Z$  for $Z\in[Z_0,+\infty)$.
\end{lemma}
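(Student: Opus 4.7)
\smallskip
\textbf{Proof proposal for Lemma \ref{lem4}.}

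The plan is to mimic the proof of Lemma \ref{Lem.Z_v_global}, with the barrier $u_{(3)}$ replaced by $U_3^*$, and to exploit the qualitative properties of $U_3^*$ furnished by Lemma \ref{Lem.u_4<}. First, since $\mathcal R_2''\subset\mathcal R_2$, Lemma \ref{Lem.Z_v_alternative} applies directly and reduces the task to excluding alternative (ii), i.e., the existence of $Z_*\in(Z_0,1]$ with $(Z,v(Z))\in\mathcal R_2$ on $[Z_0,Z_*)$ and $Z_*=Z_g(v_*)$ where $v_*=\lim_{Z\uparrow Z_*}v(Z)$.

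Next, I would transport the problem to the $(z,u)$-plane via $(z(Z),u(Z))=\Psi(Z,v(Z))\in\mathcal D_2$ and set $Y(Z):=u(Z)-U_3^*(z(Z))$. Because $U_3^*$ is $C^1$ across $z=2\xi_2$ (the values and slopes of $U_3$ and $u_{(1)}$ match there, as observed after \eqref{u4-u1}), the calculation leading to \eqref{dYdZ} goes through unchanged and gives
\begin{equation*}
\frac{\mathrm{d}Y}{\mathrm{d}Z}=\eta_1(Z)\bigl(\mathcal L(U_3^*)(z(Z))+Y(Z)\,\tilde G(Z)\bigr),
\end{equation*}
where $\eta_1(Z)>0$ on $[Z_0,Z_*)$ and $\tilde G$ is continuous. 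The hypothesis $(Z_0,v_0)\in\mathcal R_2''$ translates to $Y(Z_0)<0$.

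The two claims from Lemma \ref{Lem.Z_v_global} then carry over. For the analog of Claim~1, note that $U_3^*(z)=u_{(1)}(z)\le 0$ for $z\le\xi_1$ (since $\xi_1<2\xi_2$ as recorded in Lemma \ref{Lem.u_4<} and $u_{(1)}(\xi_1)=0$), so $Y\le 0$ together with $u>0$ forces $z(Z)>\xi_1$. For the analog of Claim~2, if $Y$ first vanishes at some $Z^*\in(Z_0,Z_*)$, then $z(Z^*)\in[\xi_1,0)$ by Claim~1, and $Y'(Z^*)\ge 0$; but by Lemma \ref{Lem.u_4<} we have $\mathcal L(U_3^*)(z(Z^*))<0$, so $Y'(Z^*)<0$, a contradiction. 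Hence $Y<0$ throughout $[Z_0,Z_*)$.

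Finally, I would rule out the two remaining possibilities for $Z_*$ exactly as in Lemma \ref{Lem.Z_v_global}. If $Z_*<1$, then $(z_*,u_*):=\Psi(Z_*,v_*)\in\mathcal D_2^+$, so $u_*=u_g(z_*)$ with $z_g<z_*<0$; passing $Y<0$ to the limit would give $u_g(z_*)\le U_3^*(z_*)$, contradicting the inequality $u_g(z)>U_3^*(z)$ for $z<0$ from Lemma \ref{Lem.u_4<}. If $Z_*=1$, the bound $z(Z)>\xi_1$ from the analog of Claim~1 allows Lemma \ref{Lem.f<0} to provide a positive lower bound $-f\ge L_1$, and the monotonicity argument for the auxiliary function $h(Z)=z(Z)+c_0\ln u(Z)$ used at the end of Lemma \ref{Lem.Z_v_global} proceeds verbatim, yielding the contradiction $h(Z)\to-\infty$ while $h'>0$ near $Z=1$. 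The main technical obstacle, beyond setting up these barriers, is really already absorbed into Lemma \ref{Lem.u_4<}: the delicate point is the construction and sign-analysis of the new piecewise barrier $U_3^*$, which required the quantitative estimates of Lemmas \ref{Lem.a_4<M}--\ref{Lem.a_1<}; once $\mathcal L(U_3^*)<0$ on $[\xi_1,0)$ and $u_g>U_3^*$ are in hand, the argument above is a direct adaptation of the $\mathcal K_1$ case.
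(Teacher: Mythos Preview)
Your proposal is correct and follows essentially the same approach the paper indicates: the paper simply states that Lemma \ref{lem4} follows from Lemma \ref{Lem.Z_v_alternative}, Lemma \ref{Lem.u_4<}, Lemma \ref{Lem.f<0}, and the fact that $z\le -\varepsilon/a_1$ implies $U_3^*(z)\le 0$, by repeating the argument of Lemma \ref{Lem.Z_v_global} with $u_{(3)}$ replaced by $U_3^*$. Your write-up supplies exactly those substitutions, including the observation that $U_3^*\in C^1$ (needed for the analog of \eqref{dYdZ}) and the simplification that $U_3^*=u_{(1)}$ on $(-\infty,2\xi_2]\supset(-\infty,\xi_1]$, which makes the analog of Claim~1 even more direct than in the original.
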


In fact, Lemma \ref{lem4} follows from Lemma \ref{Lem.Z_v_alternative}, Lemma \ref{Lem.u_4<}, Lemma \ref{Lem.f<0} and the fact that $z\leq-\varepsilon/a_1$ implies $U_3^*(z)\leq 0$ (using the argument as in the proof of Lemma \ref{Lem.Z_v_global}).\smallskip

In place of Lemma \ref{Lem.Rto4}, we have the following
\begin{lemma}\label{Lemu3a}
	Assume that $k=2$, $\ell>3/2$, $R=\min\{4-1/\ell,R_*\}$. Then $a_4<0$, and
	$$ \mathcal L\left(u_{(3)}\right)(z)<0, \qquad\forall\ z>0.$$
\end{lemma}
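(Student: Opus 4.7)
The proof will split into two sub-cases determined by the definition of $R$, separated by the crossover value $\ell = 27/8$, at which $4-1/\ell = 100/27 = R_*$. In the first regime $\ell \in (3/2, 27/8]$, the relation $R = 4-1/\ell$ forces $\gamma = 2$ (via \eqref{Eq.(R+1)^2/R_1}, as already noted in the remark preceding Lemma \ref{Lem.a_4<M}), collapsing the parameters to closed forms: $\varepsilon = 4\ell-1$, $A = 4\ell$, $B = 5-\ell$, $\delta = 4\ell$, $a_1 = 2\varepsilon$. Then $a_2, a_3, a_4$ reduce via \eqref{Eq.a_n_recurrence} to explicit rational functions of $\ell$; a direct computation yields $a_2 = 7(4\ell-1)(\ell-1)/(2\ell-1)$ and $B_2 = 14\ell(3-\ell)/(2\ell-1)$. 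To establish $a_4 < 0$ I invoke \eqref{Eq.a_3_4} in the form $(R-4)\delta a_4 = B_2 a_3 + 2k a_2(a_2+1)$: since $R-4 < 0$ and $\delta > 0$, this reduces to the positivity $B_2 a_3 + 2k a_2(a_2+1) > 0$. When $\ell \in (3/2, 3)$ this is immediate from $B_2 > 0$; when $\ell \in [3, 27/8]$ where $B_2 \le 0$, the term $2k a_2^2$ (with $a_2 > 7$ by Lemma \ref{Lem.a_1<}) dominates $|B_2 a_3|$ on the compact interval, reducing the claim to an explicit polynomial check in $\ell$. In the second regime $\ell \ge 27/8$, $R = R_*$ is fixed and $\gamma = \gamma(\ell)$ is determined smoothly through \eqref{Eq.(R+1)^2/R_1}; I verify the same inequality by combining continuity at the crossover with asymptotic control as $\ell \to \infty$.

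For the second assertion, the recurrence gives $\mathcal L(u_{(3)})(z) = z^4 q_4(z)$ with
\[
q_4(z) = -(R-4)\delta\, a_4 + B_3 a_3 z - (3k+1) a_3^2 z^2, \qquad B_3 := 3a_3 - (5k+1)a_2 - B - 2k.
\]
With $a_4 < 0$ just proved, $q_4(0) = -(R-4)\delta a_4 < 0$ and the leading coefficient $-(3k+1)a_3^2 < 0$. The product of the two roots of $q_4$ equals $-(R-4)\delta a_4 /((3k+1)a_3^2) > 0$, so the roots share a common sign, determined by the sign of the sum $B_3/((3k+1)a_3)$, i.e., by the sign of $B_3$. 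If $B_3 \le 0$, both roots lie in $(-\infty, 0]$ and $q_4 < 0$ on $(0, \infty)$ as desired. If $B_3 > 0$ (which sample computations show is actually the generic situation in our range), both roots would be positive, and I would instead verify that the discriminant of $q_4$ is strictly negative: $B_3^2 < -4(3k+1)(R-4)\delta a_4$. In the $\gamma = 2$ case this becomes $B_3^2 < 112|a_4|$, a single-variable rational inequality in $\ell$ that can be checked on the compact interval $[3/2, 27/8]$; in the $R = R_*$ case the analogous inequality is checked by asymptotic and continuity arguments.

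\textbf{Main obstacle.} The principal difficulty lies in the discriminant inequality $B_3^2 < -4(3k+1)(R-4)\delta a_4$ when $B_3 > 0$. Although the closed-form $\gamma = 2$ regime in principle reduces this to algebra, both sides grow rapidly in $\ell$ and the margin by which the inequality holds is not obvious from the raw formulas; a clean verification will require careful factoring or a suitable reformulation (possibly by expressing $-4(3k+1)(R-4)\delta a_4$ directly in terms of $B_2 a_3 + 2k a_2(a_2+1)$ and exploiting the structure of that sum). The regime $R = R_*$ adds the further burden that $\gamma$ is only implicit in $\ell$, so uniform control as $\ell \to \infty$ must be extracted from asymptotic expansions, patched to the closed-form region via continuity at $\ell = 27/8$.
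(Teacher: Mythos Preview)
Your Case~1 ($R=4-1/\ell$, hence $\gamma=2$) is essentially correct and matches the paper's argument, modulo a sign slip: the discriminant condition should read $B_3^2 < 4(3k+1)(R-4)\delta a_4$, which is positive because $R<4$ and $a_4<0$. The paper streamlines this by proving the single inequality $(R-4)\delta a_4 > B_3^2/28$ directly---it yields $a_4<0$ for free since the right side is nonnegative, so the two separate checks you outline collapse into one. For the verification, the paper uses $(7\ell-2)^2<8(4\ell-1)(2\ell-1)$ (valid for $\ell>4/3$) to bound $B_3^2/28$, then substitutes $\lambda=\ell-1$ to reduce the remaining difference to a quartic in $\lambda$ with all positive coefficients.

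The genuine gap is Case~2 ($R=R_*$, $\ell\ge 27/8$). Continuity at $\ell=27/8$ together with asymptotics as $\ell\to\infty$ cannot by themselves establish an inequality on the full unbounded interval; nothing rules out failure in the middle. The missing ingredient is \emph{monotonicity}: the paper proves (Lemma~\ref{lem5}) that with $R=R_*$ fixed, the ratios $B_2/a_2$, $a_3/a_2$, $(\ell-1)/a_2$, $\delta/a_2$, $B_1/\delta$ are strictly decreasing in $\ell$, so the required bounds reduce to single endpoint checks---$(3a_3+\ell-1)/a_2<15$ at $\ell=27/8$, where the Case~1 closed forms apply, and $B_2/a_2>-2/3$ in the limit $\ell\to\infty$, which Lemma~\ref{lem5} computes explicitly. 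From these one extracts $a_3<5a_2$, $B_3<4a_2$, $(R-4)\delta a_4>\tfrac23 a_2^2$, and then completes the square in terms of $a_2$ rather than via the natural discriminant:
\[
q_4(z)<-\tfrac23 a_2^2+4a_2a_3z-7a_3^2z^2=-\tfrac{2}{21}a_2^2-\tfrac17(7a_3z-2a_2)^2<0.
\]
Controlling the exact discriminant $B_3^2$ versus $28(R-4)\delta a_4$ uniformly in $\ell$ would be harder, since neither factor has a tractable closed form when $R=R_*$; recasting everything as multiples of $a_2$ is what makes the argument close.
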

We postpone the proof of Lemma \ref{Lemu3a} until Appendix \ref{Subsec.proof_C}.

\begin{lemma}\label{Lem.u_3<u_F}
	Assume that $k=2$, $\ell>3/2$, $ R=\min\{4-1/\ell,R_*\}$. Then we have
	\begin{equation}\label{Eq.u_3<u_F}
		u_{(3)}(z)<u_F(z),\qquad \forall\ z\in\left(0, \frac1{k+1}\right).
	\end{equation}
\end{lemma}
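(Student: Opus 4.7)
The plan is to follow exactly the barrier-function template used in Lemma \ref{Lem.u_2<u_F}, but with the cubic truncation $u_{(3)}$ in place of the quadratic $u_{(2)}$, exploiting Lemma \ref{Lemu3a} to supply the sign of $\mathcal L(u_{(3)})$ that drives the contradiction. The assumption $R=\min\{4-1/\ell, R_*\}$ is exactly the setup under which Lemma \ref{Lemu3a} gives $\mathcal L(u_{(3)})(z)<0$ for all $z>0$, so the analytic input we need is already in hand.

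First I would observe that $u_{(3)}$ is a polynomial, hence bounded on the compact subinterval $[0,1/(k+1)]$, while Proposition \ref{Prop.Q_0-Q_1} gives $\lim_{z\uparrow 1/(k+1)}u_F(z)=+\infty$. Therefore there exists $\tilde\zeta_*\in(0,1/(k+1))$ such that $u_{(3)}(z)<u_F(z)$ for every $z\in[\tilde\zeta_*,1/(k+1))$. This is the rightmost comparison that anchors the barrier argument.

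Next, assume for contradiction that \eqref{Eq.u_3<u_F} fails. By continuity and the previous step, there exists
\[
\zeta_*\in\bigl(0,\tilde\zeta_*\bigr)\quad\text{with}\quad u_{(3)}(\zeta_*)=u_F(\zeta_*)\ \text{and}\ u_{(3)}(z)<u_F(z)\ \forall z\in(\zeta_*,\tilde\zeta_*),
\]
so that $u_{(3)}'(\zeta_*)\le u_F'(\zeta_*)$. By Proposition \ref{Prop.Q_0-Q_1}, $(z,u_F(z))\in\mathcal D_1'$ for all $z\in(0,1/(k+1))$, in particular $\Delta_z(\zeta_*,u_F(\zeta_*))>0$. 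Since $u_{(3)}(\zeta_*)=u_F(\zeta_*)$, the same positivity holds with $u_{(3)}$ in place of $u_F$, and I can then compare the values of $\mathcal L$:
\begin{align*}
\mathcal L(u_{(3)})(\zeta_*)
&=-\Delta_z\bigl(\zeta_*,u_{(3)}(\zeta_*)\bigr)u_{(3)}'(\zeta_*)+\Delta_u\bigl(\zeta_*,u_{(3)}(\zeta_*)\bigr)\\
&\ge -\Delta_z\bigl(\zeta_*,u_F(\zeta_*)\bigr)u_F'(\zeta_*)+\Delta_u\bigl(\zeta_*,u_F(\zeta_*)\bigr)
=\mathcal L(u_F)(\zeta_*)=0.
\end{align*}
This contradicts Lemma \ref{Lemu3a}, which asserts $\mathcal L(u_{(3)})(\zeta_*)<0$. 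Hence \eqref{Eq.u_3<u_F} holds on all of $(0,1/(k+1))$.

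There is essentially no obstacle in this argument beyond the already-established ingredients: the location of $u_F$ in $\mathcal D_1'$ (so that $\Delta_z>0$ at the touching point), the blowup of $u_F$ at $z=1/(k+1)$ (to start the comparison from the right), and the strict sign $\mathcal L(u_{(3)})<0$ supplied by Lemma \ref{Lemu3a}. The only step requiring genuine work is Lemma \ref{Lemu3a} itself, which is deferred to Appendix \ref{Subsec.proof_C}; the present lemma is then just a clean transcription of the contradiction scheme of Lemma \ref{Lem.u_2<u_F}.
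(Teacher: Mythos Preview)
Your proof is correct and follows exactly the approach the paper indicates: the paper's own proof simply says the result follows from $\lim_{z\uparrow1/(k+1)}u_F(z)=+\infty$, $\mathcal L(u_F)=0$, Lemma~\ref{Lemu3a}, and $\Delta_z(z,u_F(z))>0$ via the contradiction argument of Lemma~\ref{Lem.u_2<u_F}, omitting the details. You have filled in precisely those details.
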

\begin{proof}
	This is a consequence of $\lim_{z\uparrow1/(k+1)}u_F(z)=+\infty$, $\mathcal L(u_F)=0$, Lemma \ref{Lemu3a} and $\Delta_z(z, u_F(z))>0$ for $z\in(0,\frac{1}{k+1})$ (see \eqref{Eq.u_p<u_F<u_b}) combining with a contradiction argument as in the proof of Lemma \ref{Lem.u_2<u_F}. We omit the details here.
\end{proof}\if0
\begin{lemma}\label{Lem.Rto3a}
	Assume that $k=2$, $\ell>\ell_1(2)$.	There exists $\theta_1\in(0,1/4)$ such that for all $R\in(3, 3+\theta_1)$, we have
	\eqref{Eq.u_L>u_F} (for some $\zeta_1(R)>0$).
\end{lemma}\begin{proof}
	By \eqref{Eq.a_3_4}, Remark \ref{rem3}, $a_2>0$ and $a_1>4>0$, we get $a_3\sim (R-3)^{-1}$.
	By \eqref{Fkl} we have $B_2(R=3)=F(k,\ell)<0$ (recall that $\ell_+(k)=+\infty$ for $k=2$). Then by \eqref{Eq.a_3_4} and the continuity of $B_2$ at $R=3$ we have $a_4\sim (R-3)^{-1}$ as $R\downarrow 3$. Thus there exists $\theta_{1,1}\in(0,1/4)$ such that $a_4>0$ for $R\in(3, 3+\theta_{1,1})$,
	it follows from Lemma \ref{Lem.a_3>0a_4<0} that there exists $\zeta_{1,1}(R)>0$ for $R\in(3, 3+\theta_{1,1})$ such that
	\begin{equation}
		u_L(z; R)>u_{(3)}(z)=u_{(3)}(z; R),\qquad\forall\ z\in(0, \zeta_{1,1}(R)],\quad R\in(3, 3+\theta_{1,1}).
	\end{equation}
	We define $\zeta_{1,2}(R)=(R-3)^{7/16}$ for $R\in(3,4)$ and we claim that there exists $\theta_1\in(0,\theta_{1,1})$ such that for all $R\in(3, 3+\theta_1)$ we have
	\begin{equation}\label{Eq.u3>u_F}
		u_{(3)}(z)=u_{(3)}(z; R)>u_F(z; R),\qquad \forall \ z\in(0,\zeta_{1,2}(R)].
	\end{equation}
	Assuming \eqref{Eq.u3>u_F}, we take $\zeta_1(R)=\min\{\zeta_{1,1}(R), \zeta_{1,2}(R)\}>0$, then \eqref{Eq.u_L>u_F} holds for\\ $R\in(3, 3+\theta_1)$.
	
	Recalling \eqref{Eq.u_b} and \eqref{Eq.u_p<u_F<u_b}, we first prove \eqref{Eq.u3>u_F} assuming the following 2 inequalities
	\begin{equation}\label{Eq.u3z>u_F}
		u_{(3)}(\zeta_{1,2}(R))>u_{\text{b}}(\zeta_{1,2}(R))>u_F(\zeta_{1,2}(R)),
	\end{equation}\begin{equation}\label{Eq.u3compare}
		\mathcal L(u_{(3)})(z)>0\qquad \forall \ z\in(0, \zeta_{1,2}(R)].
	\end{equation}	
	Indeed, assume on the contrary that \eqref{Eq.u3>u_F} does not hold, then by continuity and \eqref{Eq.u3z>u_F} there exists $\zeta_*\in(0, \zeta_{1,2}(R))$ such that $u_{(3)}(\zeta_*)=u_F(\zeta_*)$ and $u_{(3)}(z)>u_F(z)$ for $z\in(\zeta_*, \zeta_{1,2})$, hence $u_{(3)}'(\zeta_*)\geq u_F'(\zeta_*)$. Since $\Delta_z(\zeta_*, u_{(3)}(\zeta_*))=\Delta_z(\zeta_*, u_F(\zeta_*))>0$, we have
	\begin{align*}
		\mathcal L(u_{(3)})(\zeta_*)&=-\Delta_z(\zeta_*,u_{(3)}(\zeta_*))u_{(3)}'(\zeta_*)+\Delta_u(\zeta_*, U_1(\zeta_*))\\
		&\leq -\Delta_z(\zeta_*, u_F(\zeta_*))u_F'(\zeta_*)+\Delta_u(\zeta_*, u_F(\zeta_*))\\
		&=\mathcal L(u_F)(\zeta_*)=0,
	\end{align*}
	which contradicts with \eqref{Eq.u3compare}. This completes the proof of \eqref{Eq.u3>u_F}.
	
	So, it suffices to show \eqref{Eq.u3z>u_F} and \eqref{Eq.u3compare} for all $0<R-3\ll1$.
\end{proof}\fi

Now we are in a position to prove Proposition \ref{Prop.R_0} and Theorem \ref{Thm.mainthm_ODE} for $(k,\ell)\in \mathcal K_*\setminus\mathcal K_1$.

\if0\begin{proposition}\label{Prop.R_0a}
	Let $k=2$, $\ell>3/2$ then there exists $R_0\in(3,\min\{4-1/\ell,R_*\})$ such that $u_L(\cdot; R_0)\equiv u_F(\cdot; R_0)$. As a result, for this particular $R=R_0$, the $Q_0-Q_1$ solution curve $u_F$ of the $z-u$ ODE can be continued to pass $Q_1$ smoothly.
\end{proposition}\fi
\begin{proof}[Proof of Proposition \ref{Prop.R_0} for $(k,\ell)\in \mathcal K_*\setminus\mathcal K_1$]
	Assume that $(k,\ell)\in \mathcal K_*\setminus\mathcal K_1$, then $k=2$ and $\ell\geq \ell_1(2)>3/2$. Let $\theta_1\in(0,1)$ and $\zeta_1(R)$ be given by Lemma \ref{Lem.Rto3}. Let $R_1=3+\theta_1/4\in(3,4)$, $R_2=\min\{4-1/\ell,R_*\}\in(3,4)$.
	Then $R_1<3+1/3<R_2$. By $a_4(R_2)<0$ (Lemma \ref{Lemu3a}), there exists $\zeta_2>0 $ such that $u_L(z; R_2)<u_{(3)}(z; R_2)$ for $z\in(0,\zeta_2]$.
	Now let $\zeta=\min\{\zeta_1(R_1),\zeta_2, \zeta_0([R_1,R_2])\}$. Lemma \ref{Lem.Rto3} implies that
	$u_L(\zeta; R_1)>u_F(\zeta; R_1)$ and
	Lemma \ref{Lem.u_3<u_F}  implies that
	\[u_L(\zeta; R_2)<u_{(3)}(\zeta; R_2)<u_F(\zeta; R_2).\]
	By Proposition \ref{Prop.local_solution_Q_1} and Remark \ref{Rmk.u_F_continuous}, we know that the function $[R_1,R_2]\ni R\mapsto u_L(\zeta; R)-u_F(\zeta; R)$ is continuous. By the intermediate value theorem, there exists $R_0\in (3,R_2)$ such that $u_L(\zeta; R_0)=u_F(\zeta; R_0)$. Therefore, by the uniqueness of solutions to the $z-u$ ODE \eqref{Eq.ODE_z_u} with $u(\zeta)=u_F(\zeta)$, we know that $u_L(z; R_0)\equiv u_F(z; R_0)$ for
	$z\in(0, \zeta)$, hence the solution $u_F$ can be continued using $u_L$ to pass through $Q_1$ smoothly.
\end{proof}

\begin{proof}[Proof of Theorem \ref{Thm.mainthm_ODE} for $(k,\ell)\in \mathcal K_*\setminus\mathcal K_1$]
	Let $k=2$, $\ell\geq \ell_1(2)>3/2$. Let $R=R_0\in(3, \min\{4-1/\ell,R_*\})$ and $\zeta>0$ be given as above. It follows from $a_4<M:=a_3^2/(4a_2)$ (Lemma \ref{Lem.a_4<M}) that there exists $\zeta_0'\in(0, \zeta)\cap(0,2a_2/a_3)$ such that for all $z\in(-\zeta_0', 0)$ we have
	\[u_L(z)=\varepsilon+a_1z+a_2z^2+a_3z^3+a_4z^4+\cdots<\varepsilon+a_1z+a_2z^2+a_3z^3+Mz^4=U_3(z)=U_3^*(z),\]
	hence, $(z, u_L(z))\in \mathcal D_2''$ for all $z\in(-\zeta_0', 0)$. The rest of proof is very similar to the proof of Theorem \ref{Thm.mainthm_ODE} for $(k,\ell)\in \mathcal K_1$, by replacing the application of Lemma \ref{Lem.Z_v_global} with the application of Lemma \ref{lem4}. We leave the details to the reader.
\end{proof}

\appendix
\section{Proof of Lemma \ref{Lem.a_4<0}}\label{Appen.B_2>0}
In this appendix, we prove Lemma \ref{Lem.a_4<0}. By \eqref{Eq.B_2_expression} and Lemma \ref{Lem.A_a_1}, we have
\begin{align*}
	&\quad(2R+k+4)(R-2)B_2=-(2R+k+4)\big((k-15)R+8k+5\big)a_1\\
	&\qquad -(16R-2)\left(\left(2-\frac kR\right)(R+1)a_1+\mu\right)+(2R+k+4)\big(2(4k+B)R+4k+6B\big)\\
	&=-\left(2(k+1)R^2+(k^2-11k-22)R+8k^2+23k+16+\frac{2k}R\right)a_1\\
	&\qquad +4(4k+B)R^2+2\big(4k(k+5)+(k+10)B-8\mu\big)R+(k+4)(4k+6B)+2\mu,
\end{align*}
which gives
\begin{align}
	(2R+k+4)(R-2)B_2=-Rg_k(R)a_1+2Rh_{k, \ell}(R)=-Rg_k(R)(a_1-4)+2Rh_{k,\ell}^*(R),\label{Eq.A.1}
\end{align}
where
\begin{align*}
	g_k(R)&=2(k+1)R+k^2-11k-22+\frac{8k^2+23k+16}R+\frac{2k}{R^2},\\
	h_{k,\ell}(R)&=2(4k+B)R+4k(k+5)+(k+10)B-8\mu+\frac{(k+4)(2k+3B)+\mu}{R},\\
	h_{k,\ell}^*(R)&=h_{k,\ell}(R)-2g_k(R).
\end{align*}

\begin{lemma}\label{Lem.A.1}
	If $k=1, \ell>1$ and $R\in[3,4]$, then $B_2>0$.
\end{lemma}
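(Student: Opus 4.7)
The plan is to exploit the factorization \eqref{Eq.A.1}, which for $k=1$ reads
\[(2R+5)(R-2)B_2 = -Rg_1(R)(a_1-4) + 2Rh_{1,\ell}^*(R).\]
Since the prefactor $(2R+5)(R-2)$ is strictly positive on $R\in[3,4]$, it will suffice to show that the right-hand side is strictly positive. First I will verify by a direct (brief) computation that $g_1(R)=4R-32+\frac{47}{R}+\frac{2}{R^2}$ is negative throughout $R\in[3,4]$; combined with $a_1>4$ from \eqref{Eq.a_1>4}, this immediately yields $-Rg_1(R)(a_1-4)>0$ on the whole parameter range, supplying a definite-signed contribution that does not involve $\ell$.

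The real work lies in establishing $h_{1,\ell}^*(R)\geq0$ (or at least that any negativity is dominated by the first term) uniformly for $\ell>1$ and $R\in[3,4]$. My approach will be to write $h_{1,\ell}^*(R)$ explicitly using $B=3-\ell$ and $\mu=9-(1-2\ell)^2/\ell$, clear denominators by multiplying through by $\ell R^2$, and view the resulting polynomial as a quadratic (or bilinear) expression in $\ell$ with coefficients that are rational functions of $R$. Then for each fixed $R\in[3,4]$ I will locate the infimum over $\ell>1$ (either at the boundary $\ell\downarrow1$, at a critical point, or at $\ell\to+\infty$) and show the infimum is $\geq0$; the endpoint $\ell=1$ can be checked by substituting $B=2$, $\mu=8$, and the large-$\ell$ behavior reduces to checking that the leading-in-$\ell$ coefficient is nonnegative.

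The main obstacle will be this last step: the rational function $h_{1,\ell}^*(R)$ is not obviously sign-definite, so if the direct argument for $h_{1,\ell}^*\geq0$ fails in some subregion of $[3,4]\times(1,\infty)$, a fallback is to keep the $-Rg_1(R)(a_1-4)$ term and quantify it using the lower bound $a_1>2(1+\ell\g)$, then combine with an upper bound on $|h_{1,\ell}^*(R)|$. In practice I expect that writing everything over the common denominator $\ell R^2$ and collecting in $\ell$ will expose positivity of the coefficients of $\ell^2$, $\ell$, and $1$ (after using $R\in[3,4]$), at which point the conclusion $h_{1,\ell}^*(R)\geq0$, and hence $B_2>0$, follows routinely.
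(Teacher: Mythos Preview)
Your plan is correct and follows essentially the same route as the paper: use the decomposition \eqref{Eq.A.1}, check that $g_1(R)<0$ on $[3,4]$, and then establish positivity of the $h$-term. Two minor simplifications the paper makes are worth noting: it uses the first form of \eqref{Eq.A.1}, namely $(2R+5)(R-2)B_2=-Rg_1(R)a_1+2Rh_{1,\ell}(R)$, so that only $a_1>0$ (not $a_1>4$) is needed for the first term, and it replaces $\mu$ by the crude upper bound $\mu<13-4\ell$ from \eqref{mu} before computing $Rh_{1,\ell}(R)$, which eliminates the $1/\ell$ term and leads directly to the factorization $(R-3)(14R-5)+53+(R-1)(19-2R)\ell>0$ without any case analysis or fallback. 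Your quadratic-in-$\ell$ analysis of $h_{1,\ell}^*$ with exact $\mu$ also works (all three coefficients in $\ell$ turn out positive on $[3,4]$), so neither the ``main obstacle'' nor the fallback is actually needed.
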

\begin{proof}
	We have
	\begin{align*}
		g_1(R)=4R-32+\frac{47}R+\frac2{R^2}=\frac{4(R-2)(R-6)}{R}-\frac{R-2}{R^2}<0\quad \forall\ R\in[3,4].
	\end{align*}
	Thanks to  $B=3-\ell$ and $\mu<13-4\ell$ (by \eqref{mu}), for all $\ell>1$ and $R\in[3,4]$, we have
	\begin{align*}
			Rh_{1,\ell}(R)&=2(4+B)R^2+(24+11B-8\mu)R+{5(2+3B)+\mu}\\
			&=2(4+B)R^2+(24+11B)R+{5(2+3B)}-\left(8R-1\right)\mu\\
			&>2(7-\ell)R^2+(57-11\ell)R+{5(11-3\ell)}-\left(8R-1\right)(13-4\ell)\\
			&=14R^2-47R+{68}-\left(2R^2-21R+{19}\right)\ell\\
			&=(R-3)(14R-5)+53+(R-1)(19-2R)\ell>0.
	\end{align*}
	Therefore, $B_2>0$ by \eqref{Eq.A.1}.
	\end{proof}

\begin{lemma}\label{Lem.A.2}
	For $k\geq 2, 1<\ell<k, R\in[3,4]$, we have
	\begin{equation*}
		g_k'(R)\leq 0,\qquad g_k(R)>0,\qquad \left(h_{k,\ell}^{*}\right)'(R)>0.
	\end{equation*}
\end{lemma}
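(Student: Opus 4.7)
The plan is to establish all three inequalities by direct computation of the derivatives and careful reduction using the explicit formulas $B=2k+1-\ell$ and $\mu = k^2+8k+4-4\ell-k^2/\ell$.

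First I would handle $g_k'(R)\leq 0$. Differentiating the explicit expression for $g_k$ gives
\[g_k'(R)=2(k+1)-\frac{8k^2+23k+16}{R^2}-\frac{4k}{R^3}.\]
Since both subtracted terms are decreasing in $R$, the function $g_k'$ is increasing on $[3,4]$, so it suffices to check $g_k'(4)\leq 0$. Multiplying by $64$ collapses the expression to
\[64\,g_k'(4)= -32k^2+32k+64=-32(k-2)(k+1),\]
which is $\leq 0$ precisely when $k\geq 2$.

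Next, since $g_k$ is non-increasing on $[3,4]$, the inequality $g_k(R)>0$ follows from $g_k(4)>0$. Multiplying by $16$ reduces $16\,g_k(4)$ to the quadratic $48k^2+46k-160$, which takes the value $124$ at $k=2$ and is increasing in $k$, hence strictly positive for all $k\geq 2$.

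For the third inequality, I would differentiate and collect:
\[(h_{k,\ell}^{*})'(R)=2(4k+B)-4(k+1)+\frac{2(8k^2+23k+16)-(k+4)(2k+3B)-\mu}{R^2}+\frac{8k}{R^3}.\]
Substituting $B=2k+1-\ell$ turns the constant term into $2(4k-1-\ell)$, which is positive since $\ell<k$ and $k\geq 2$ give $4k-1-\ell>3k-1>0$. The key step is to simplify the numerator of the $1/R^2$ coefficient. Expanding $(k+4)(2k+3B)=8k^2+35k-3k\ell+12-12\ell$ and inserting $\mu=k^2+8k+4-4\ell-k^2/\ell$ yields
\[16k^2+46k+32-(k+4)(2k+3B)-\mu = 7k^2+3k+16+3k\ell+16\ell+\frac{k^2}{\ell},\]
which is a sum of manifestly positive terms. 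Since all three contributions to $(h_{k,\ell}^{*})'(R)$ are then positive, the conclusion follows.

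I do not expect any serious obstacle here: the whole argument reduces to (i) differentiating explicit rational functions, (ii) one monotonicity observation to localize the extremum at the endpoint $R=4$, and (iii) algebraic simplification of $16k^2+46k+32-(k+4)(2k+3B)-\mu$ into a manifestly positive expression. The only mildly delicate point is making sure the $-\mu$ contribution does not spoil the positivity of the numerator; the fact that $\mu$ contains the negative term $-k^2/\ell$ actually helps, since $-\mu$ contributes $+k^2/\ell>0$ to the coefficient.
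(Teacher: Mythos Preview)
Your proof is correct and follows essentially the same approach as the paper: differentiate, reduce $g_k'\le 0$ and $g_k>0$ to the endpoint $R=4$, and show the constant term and the $1/R^2$ coefficient in $(h_{k,\ell}^*)'$ are both positive. The only minor difference is in the third part: the paper bounds the numerator $D$ crudely via $B<2k$ and $\mu\le(k+2)^2$ to obtain $D>7k^2+10k+28$, whereas you substitute $B$ and $\mu$ exactly to get $D=7k^2+3k+16+3k\ell+16\ell+k^2/\ell$, which is manifestly positive term by term; both arguments are valid and of comparable length.
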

\begin{proof}
	For $k\geq 2$ and $R\in[3,4]$, we have
	\begin{align*}
		g_k'(R)&=2(k+1)-\frac{8k^2+23k+16}{R^2}-\frac{4k}{R^3}\leq 2(k+1)-\frac{8k^2+23k+16}{4^2}-\frac{4k}{4^3}\\
		&=-\frac{(k+1)(k-2)}{2}\leq 0,
	\end{align*}
	which gives
	\begin{align*}
		g_k(R)&\ge g_k(4)=8(k+1)+k^2-11k-22+\frac{8k^2+23k+16}4+\frac{2k}{4^2}\\
		&=3k^2+\frac{23}8k-10>3k^2-10>0.
	\end{align*}
	We also have
	\begin{align*}
		\left(h_{k,\ell}^{*}\right)'(R)=h_{k,\ell}'(R)-2g_k'(R)=2(4k+B)-4(k+1)+\frac D{R^2}+\frac{8k}{R^3},
	\end{align*}
	where $D=2(8k^2+23k+16)-\big((k+4)(2k+3B)+\mu\big)$. Since $1<\ell<k$, we have $0<B=2k+1-\ell<2k$; by \eqref{Eq.mu}, we also have $\mu\leq (k+2)^2$, hence $2(4k+B)-4(k+1)=2(2k-2+B)>0$ and $(k+4)(2k+3B)+\mu<(k+4)(8k)+(k+2)^2=9k^2+36k+4$, then
	\[D>2(8k^2+23k+16)-(9k^2+36k+4)=7k^2+10k+28>0.\]
	Thus, $\left(h_{k,\ell}^{*}\right)'(R)>0$.
\end{proof}

\begin{lemma}\label{Lem.A.3}
	If $k\geq 1, \ell>1$, then the function $R\mapsto a_1$ is strictly decreasing for $R$ satisfying $\g=\g(R)\in \Gamma(k,\ell)$.
\end{lemma}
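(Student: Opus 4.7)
The plan is to reparametrize by $\gamma$: since $\gamma \mapsto R$ is strictly decreasing on $\Gamma(k,\ell)$ (Remark \ref{Rmk.R_parameter}), the lemma is equivalent to showing $\gamma \mapsto a_1$ is strictly increasing on $\Gamma(k,\ell)$. I will implicitly differentiate the defining quadratic $p_0(a_1)=0$ from \eqref{Eq.quad_u_1} with respect to $\gamma$, treating $\varepsilon = \ell\gamma^2-1$ and $A = k+2-(k-2\ell)\gamma$ as the underlying variables. Using $R\delta = a_1+2\varepsilon$ from \eqref{Eq.a_1+c_1=Rdelta} and $B_0 = (k-2)a_1+2A-4k$ from \eqref{B0}, a short calculation should produce
\[
p_0'(a_1)\cdot\frac{da_1}{d\gamma} \;=\; 2\ell\gamma\,(B_0+2k)\,+\,(2\ell-k)\,R\delta.
\]
Because $a_1$ is the larger root of $p_0$ we have $p_0'(a_1)>0$, so the sign of $da_1/d\gamma$ agrees with that of the right-hand side.

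When $k\le 2\ell$ (which already covers $k\in\{1,2\}$ since $\ell>1$), both summands are nonnegative and the first is strictly positive by $B_0>0$ (Lemma \ref{Lem.a_2>0}), giving $da_1/d\gamma>0$ at once. The substantive case will be $k>2\ell$, which forces $k\ge 3$ and $\ell<k/2$, so that $\Gamma(k,\ell)=(1/\sqrt{\ell},\gamma_1)$ with $\gamma_1=(k+2)/(k-2\ell)$. Here the needed inequality $2\ell\gamma(B_0+2k) > (k-2\ell)R\delta$, after substituting for $B_0$, $R\delta$, $\varepsilon$, $A$ and collecting terms, should reduce to
\[
a_1\bigl[2\ell\gamma(k-2)-(k-2\ell)\bigr] \;>\; 2(k-2\ell)(3\ell\gamma^2-1)\,-\,8\ell\gamma.
\]
I will check directly that the bracket multiplying $a_1$ is strictly positive for $k\ge 3$, $\ell>1$, $\gamma>1/\sqrt{\ell}$; then, invoking the bound $a_1>2(1+\ell\gamma)$ from \eqref{Eq.a_1>4}, it will suffice to prove the stronger
\[
2(1+\ell\gamma)\bigl[2\ell\gamma(k-2)-(k-2\ell)\bigr] \;\geq\; 2(k-2\ell)(3\ell\gamma^2-1)\,-\,8\ell\gamma,
\]
and a direct expansion will collapse this to $2\ell\gamma\bigl\{k+2\ell+\gamma[2\ell(k+1)-3k]\bigr\}\ge 0$.

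The only potentially delicate point is to check that the bracket $k+2\ell+\gamma[2\ell(k+1)-3k]$ is positive uniformly on $(1/\sqrt{\ell},\gamma_1)$. If $2\ell(k+1)\ge 3k$ the bracket is linear and nondecreasing in $\gamma$, already positive at $\gamma=0$, so there is nothing to do. Otherwise the bracket is linear and strictly decreasing in $\gamma$, and its value at the endpoint $\gamma=\gamma_1$ will simplify to
\[
k+2\ell+\gamma_1\bigl[2\ell(k+1)-3k\bigr] \;=\; \frac{2(\ell-1)(k^2+3k-2\ell)}{k-2\ell},
\]
which is manifestly strictly positive: $\ell>1$, $k-2\ell>0$, and $k^2+3k-2\ell>k^2+2k>0$ (using $2\ell<k$). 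This will complete the case $k>2\ell$ and establish $da_1/d\gamma>0$ throughout $\Gamma(k,\ell)$, hence $da_1/dR<0$.
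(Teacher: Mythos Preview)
Your proof is correct. The implicit-differentiation identity
\[
p_0'(a_1)\,\frac{da_1}{d\gamma}=2\ell\gamma(B_0+2k)+(2\ell-k)R\delta
\]
checks out, the bracket $2\ell\gamma(k-2)-(k-2\ell)$ is indeed strictly positive for $k\ge 3$, $\ell>1$, $\gamma>1/\sqrt{\ell}$ (its infimum over such $\gamma$ is $2\sqrt{\ell}(k-2)-(k-2\ell)=k(2\sqrt{\ell}-1)+2\ell-4\sqrt{\ell}$, which at $k=3$ equals $2\ell+2\sqrt{\ell}-3>1$ and increases with $k$), and the endpoint evaluation at $\gamma_1$ that you give is exactly right. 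Since $a_1>2(1+\ell\gamma)$ is strict and the bracket is strictly positive, you obtain the strict inequality $da_1/d\gamma>0$ in the case $k>2\ell$ as well.

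The paper's argument takes a different, somewhat slicker route that avoids the case split on the sign of $k-2\ell$. It introduces $s:=(a_1-2)/(2\gamma)$, so that $a_1=2(1+s\gamma)$ and, by $a_1>2(1+\ell\gamma)$, one has $s>\ell$. Plugging $a_1=2(1+s\gamma)$ into $p_0(a_1)=0$ via the identity \eqref{p0g} yields
\[
\frac{1}{\ell\gamma}=\frac{1}{2s+k}\left(k-2+\frac{k(\ell-1)}{s-\ell}\right),
\]
which is visibly strictly decreasing in $s$ on $(\ell,\infty)$. Hence $\gamma\mapsto s$ is strictly increasing, and therefore so is $\gamma\mapsto a_1=2(1+s\gamma)$. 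This substitution handles all $k\ge1$ uniformly, whereas your approach trades that uniformity for a completely mechanical calculus argument requiring no clever change of variables. Both proofs hinge on the same key inputs ($B_0>0$, $a_1>2(1+\ell\gamma)$, and the monotone correspondence between $\gamma$ and $R$).
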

\begin{proof}
	Recall from Remark \ref{Rmk.R_parameter} that when $\gamma\in \Gamma(k,\ell)$, the function $\g\mapsto R$ is strictly decreasing. We first prove
	\eqref{p0g}. In fact, we get by \eqref{Eq.epsilon_A_B_gamma}  that
	\begin{align*}
		&p_0(2(1+s\g))\\
		&=4(1+s\g)^2-2\big(4-(k-2\ell)\g+(k-2)\ell\g^2\big)(1+s\g)+2\big((k-2\ell)\g-2\big)(\ell\g^2-1)\\
		&=2\left[2s^2-k\ell+(k-2\ell)s\right]\g^2+2\left[(k-2\ell)-(k-2)s\right]\ell\g^3\\
		&=2(2s+k)(s-\ell)\g^2+2\left[(k-2\ell)-(k-2)s\right]\ell\g^3,
	\end{align*}
	which gives \eqref{p0g}.
	Now we take
	$s:=\frac{a_1-2}{2\g}$. By \eqref{Eq.a_1>4},  we have $s>\ell.$
	Now $a_1=2(1+s\g)$, by $p_0(a_1)=0$ and \eqref{p0g}, we deduce
	\begin{align*}
		0&=p_0(a_1)=p_0(2(1+s\g))\\
		&=2(2s+k)(s-\ell)\g^2+2\left[(k-2\ell)-(k-2)s\right]\ell\g^3,
	\end{align*}
	which gives
	\[\frac1{\ell\g}=\frac{(k-2)s-(k-2\ell)}{(2s+k)(s-\ell)}=\frac1{2s+k}\left(k-2+\frac{k(\ell-1)}{s-\ell}\right).\]
	As $s>\ell$, the function $s\mapsto\frac1{\g}$ is strictly decreasing and thus $s\mapsto \g$ is strictly increasing, hence $\g\mapsto s$ is also strictly increasing. Then it follows from $a_1=2(1+s\g)$, $s>\ell>0$ and the increase of $\g\mapsto s$ that $\g\mapsto a_1$ is strictly increasing. Composing $\g\mapsto a_1$ with a strictly decreasing function $R\mapsto \g$ implies that $R\mapsto a_1$ is strictly decreasing.
\end{proof}

In Lemma \ref{Lem.A.1}, we have shown that for $k=1$, $B_2>0$ holds for all $\ell>1$ and $R\in[3,4]$. In what follows, we assume $k\geq 2$. By \eqref{Eq.A.1}, for $k\geq 2$ and $R\in[3,4]$, we have
\begin{equation*}
	B_2>0\Longleftrightarrow (a_1-4)g_k(R)<{2h_{k,\ell}^*(R)}{}.
\end{equation*}
Lemma \ref{Lem.A.2} implies that $R\in[3, 4]\mapsto{g_k(R)}\in\R_+$ is strictly decreasing; Lemma \ref{Lem.A.3} and \eqref{Eq.a_1>4} imply that $R\in[3,4]\mapsto a_1-4\in\R_+$ is strictly decreasing. Thus, $R\in[3, 4]\mapsto(a_1-4){g_k(R)}$ is strictly decreasing.
Lemma \ref{Lem.A.2} also implies that $R\in[3, 4]\mapsto{h_{k,\ell}^*(R)}$ is strictly increasing if $1<\ell<k$. As a consequence, we conclude
\begin{equation}\label{B2>0}
	B_2>0\ \text{for all }R\in[3,4]\qquad\Longleftrightarrow\qquad B_2>0\ \text{for }R=3 \quad \text{if }1<\ell<k.
\end{equation}

Now we evaluate $B_2(R=3)$ and assume $R=3$ and $k\geq 2$ in the sequel.\smallskip

For $R=3$, by \eqref{Eq.a_1+c_1=Rdelta} and \eqref{Eq.Rdelta^2}, we have $ a_1=3\delta-2\varepsilon$, $A=(R+1)\delta-(k+2)\varepsilon=4\delta-(k+2)\varepsilon$. As $A>0$, $ \varepsilon>0$, we get by \eqref{Eq.Rdelta^2}  that
\begin{align*}
	&(k+2)\varepsilon<4\delta,\quad R(k+2)^2\varepsilon^2<16R\delta^2=32k\varepsilon(1+\varepsilon),\quad R(k+2)^2\varepsilon<32k(1+\varepsilon),\\
	&(R(k+2)^2-32k)\varepsilon<32k,\quad R(k+2)^2-32k=3(k+2)^2-32k=(3k-2)(k-6).
\end{align*}
Thus, $ \varepsilon\in E_k$ with 
\begin{align}\label{Ek}E_k:=\begin{cases}
		(0, +\infty), & k\in\{2, 3, 4, 5, 6\},\\
		\left(0,\frac{32k}{(3k-2)(k-6)}\right), & k\geq 7.
\end{cases}\end{align}
For $R=3$, by \eqref{Eq.B_2_expression}, \eqref{Eq.Rdelta^2} and \eqref{Eq.epsilon_A_B_gamma}, we get
\begin{align*}
	B_2(R=3)&=-(11k-40)a_1-46A+6(6k+1-\ell)+4k+6(2k+1-\ell)\\
	&=-(11k-40)(3\delta-2\varepsilon)-46\big(4\delta-(k+2)\varepsilon\big)+52k-12\ell+12\\
	&=(68k+12)\varepsilon-(33k+64)\sqrt{\frac{2k}{3}}\sqrt{\varepsilon(1+\varepsilon)}+52k-12\ell+12.
\end{align*}
Recall that
\[(k+2)\sqrt{1+\varepsilon}=(1+R)\sqrt{\frac{2k\varepsilon}{R}}+\frac{k-2\ell}{\sqrt\ell},\]
hence (by $R=3$)
\[(k+2)\sqrt{1+\varepsilon}-4\sqrt{\frac{2k\varepsilon}{3}}=\frac{k-2\ell}{\sqrt\ell}=\frac k{\sqrt\ell}-2\sqrt\ell.\]
We denote
\[f_1(k,\varepsilon)=(k+2)\sqrt{1+\varepsilon}-4\sqrt{\frac{2k\varepsilon}{3}},\qquad f_2(k, \ell)=\frac k{\sqrt\ell}-2\sqrt\ell.\]
Then $\ell\mapsto f_2(k,\ell)$ is strictly decreasing for $\ell>0$ with
\begin{equation}\label{Eq.f_2_limits}
	\lim_{\ell\downarrow0}f_2(k,\ell)=+\infty,\qquad \lim_{\ell\to+\infty}f_2(k,\ell)=-\infty.
\end{equation}

\begin{lemma}\label{Lem.f_1_decreasing}
	For $k\geq2$, let $E_k$ be defined in \eqref{Ek}. Then the function $\varepsilon\mapsto f_1(k, \varepsilon)$ is strictly decreasing for $\varepsilon\in E_k$.
\end{lemma}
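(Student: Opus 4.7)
The strategy is a direct derivative computation: differentiate $f_1(k,\varepsilon)=(k+2)\sqrt{1+\varepsilon}-4\sqrt{2k\varepsilon/3}$ in $\varepsilon$, and show the result is strictly negative exactly on $E_k$. Concretely, I would write
\[\frac{\partial f_1}{\partial\varepsilon}(k,\varepsilon)=\frac{k+2}{2\sqrt{1+\varepsilon}}-\frac{2}{\sqrt{\varepsilon}}\sqrt{\frac{2k}{3}},\]
and observe that $\partial_\varepsilon f_1<0$ is equivalent (by clearing denominators, which are positive for $\varepsilon>0$, and squaring the positive quantities on both sides) to
\[(k+2)^2\varepsilon<\frac{32k}{3}(1+\varepsilon),\]
i.e.\ to
\[\big(3(k+2)^2-32k\big)\varepsilon<32k\qquad\Longleftrightarrow\qquad(3k-2)(k-6)\,\varepsilon<32k.\]

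The next step is to split on $k$ using the definition of $E_k$ in \eqref{Ek}. For $k\in\{2,3,4,5,6\}$, the coefficient $(3k-2)(k-6)$ is $\leq 0$, while the right-hand side $32k$ is positive, so the inequality is automatic for every $\varepsilon>0=E_k$. For $k\geq 7$, $(3k-2)(k-6)>0$, so the inequality is equivalent to $\varepsilon<\tfrac{32k}{(3k-2)(k-6)}$, which is precisely the definition of $E_k$ in this range. In either case, $\partial_\varepsilon f_1(k,\varepsilon)<0$ on $E_k$, which yields the claimed strict monotonicity.

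There is no serious obstacle here: the calculation is routine and the condition defining $E_k$ was set up earlier in the excerpt exactly so that the required inequality $(3(k+2)^2-32k)\varepsilon<32k$ holds on $E_k$. The only points requiring a sentence of justification are (i) that both sides of the inequality $(k+2)\sqrt{\varepsilon}<4\sqrt{2k(1+\varepsilon)/3}$ are positive so squaring is reversible, and (ii) that strict inequality (rather than $\leq$) is preserved in the two cases, which is immediate since $32k>0$ and the endpoint of $E_k$ is excluded when $k\geq 7$.
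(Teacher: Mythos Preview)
Your proposal is correct and takes essentially the same approach as the paper: compute $\partial_\varepsilon f_1$ directly and verify it is negative on $E_k$. The paper's proof is terser---it factors the derivative as $\frac{2\sqrt{2k/3}}{\sqrt{1+\varepsilon}}\big(\frac{k+2}{4\sqrt{2k/3}}-\sqrt{(1+\varepsilon)/\varepsilon}\big)$ and asserts negativity without further comment---but your explicit reduction to $(3k-2)(k-6)\varepsilon<32k$ and case split on the sign of $(3k-2)(k-6)$ is exactly the computation one must do to verify that assertion, and it matches the definition of $E_k$ precisely.
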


\begin{proof}
	For $\varepsilon\in E_k$, a direct calculation gives
	\[\frac{\pa f_1}{\pa\varepsilon}=\frac{2\sqrt{2k/3}}{\sqrt{1+\varepsilon}}\left(\frac{k+2}{4\sqrt{2k/3}}-\sqrt{\frac{1+\varepsilon}{\varepsilon}}\right)<0.\]
	\end{proof}

As a consequence, for each $\varepsilon\in E_k$, by \eqref{Eq.f_2_limits}, we can find an $\ell=\ell(k,\varepsilon)>0$ such that $f_1(k, \varepsilon)=f_2(k, \ell)$ and the function $\varepsilon\mapsto \ell(k, \varepsilon)$ is strictly increasing. We denote the range of the function $\varepsilon\mapsto \ell(k, \varepsilon)$ by $(\ell_-(k), \ell_+(k))$. Since $f_1(k, 0)=k+2>f_2(k, 1)$, we know that $\ell_-(k)<1$. Note that for $2\leq k\leq 5$, we have $\lim_{\varepsilon\to+\infty}f_1(k, \varepsilon)=-\infty$, hence $\ell_+(k)=+\infty$; for $k=6$,  $\lim_{\varepsilon\to+\infty}f_1(k, \varepsilon)=0$, hence $\ell_+(6)=k/2=3$. For $k\geq 7$, we have
\[f_1\left(k, \frac{32k}{(3k-2)(k-6)}\right)=\sqrt{\frac{(3k-2)(k-6)}{3}}.\]
\if0 Note that for $k\geq 7$, there holds
\[f_2\left(k, \frac{k}{k-1}\right)=\sqrt{ k}\frac{k-3}{\sqrt{k-1}}>0,\]
and
\begin{align*}
	&f_2\left(k, \frac{k}{k-1}\right)^2-f_1\left(k, \frac{32k}{(3k-2)(k-6)}\right)^2=\frac{k(k-3)^2}{k-1}-\frac{(3k-2)(k-6)}{3}\\
	&=\frac{5k^2-5k+12}{3(k-1)}>0,
\end{align*}
hence $\ell_+(k)>\frac{k}{k-1}>1$ for $k\geq 7$.\fi  As a matter of fact, we note that
\[f_1\left(k, \frac{32k}{(3k-2)(k-6)}\right)=f_2(k,\ell)\quad \Longleftrightarrow\quad 12\ell^2-(3k^2-8k+12)\ell+3k^2=0.\]
Therefore, $\ell_+(k)=\ell_0(k)$ for all $k\geq 6$ by recalling \eqref{Eq.ell_0(k)} and $\ell_+(6)=3$.

Now we denote the inverse function of $\varepsilon\mapsto\ell(k,\varepsilon)$ by $\varepsilon=\varepsilon(k, \ell)$ hence $\ell\mapsto \varepsilon(k,\ell)$ is a strictly increasing function from $(\ell_-(k), \ell_+(k))$ onto $E_k$. Next we consider the function
\begin{equation}
	F(k,\ell):=f_3(k, \varepsilon(k, \ell))+52k-12\ell+12,\quad \ell\in(\ell_-(k), \ell_+(k)),
\end{equation}
where
\[f_3(k, \varepsilon):=(68k+12)\varepsilon-(33k+64)\sqrt{\frac{2k}{3}}\sqrt{\varepsilon(1+\varepsilon)}.\]
Then $B_2(R=3)=F(k,\ell)$.

\begin{lemma}\label{Lem.f_3}
	If $k\geq 2$, then the function $\varepsilon\mapsto f_3(k, \varepsilon)$ is strictly decreasing for $\varepsilon>0$.
\end{lemma}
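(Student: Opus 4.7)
The plan is to compute $\partial_\varepsilon f_3$ directly and exhibit it as strictly negative for every $\varepsilon>0$ and $k\geq 2$. Explicitly,
\[
\partial_\varepsilon f_3(k,\varepsilon) = (68k+12) - (33k+64)\sqrt{\tfrac{2k}{3}}\cdot \frac{1+2\varepsilon}{2\sqrt{\varepsilon(1+\varepsilon)}},
\]
so the task reduces to bounding the factor $\frac{1+2\varepsilon}{2\sqrt{\varepsilon(1+\varepsilon)}}$ from below uniformly in $\varepsilon$.

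The key observation I would use is the algebraic identity $(1+2\varepsilon)^2 - 4\varepsilon(1+\varepsilon) = 1$, which immediately yields
\[
\frac{1+2\varepsilon}{2\sqrt{\varepsilon(1+\varepsilon)}} = \sqrt{1+\frac{1}{4\varepsilon(1+\varepsilon)}} > 1 \qquad \forall\ \varepsilon>0.
\]
Consequently $\partial_\varepsilon f_3(k,\varepsilon) < (68k+12) - (33k+64)\sqrt{2k/3}$, and the problem reduces to the $\varepsilon$-free inequality $(33k+64)\sqrt{2k/3} \geq 68k+12$ for all $k\geq 2$, together with the strictness provided by the factor $\sqrt{1+1/(4\varepsilon(1+\varepsilon))}>1$.

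Squaring and clearing the factor of $3$, the $\varepsilon$-free inequality is equivalent to $P(k) := 2k(33k+64)^2 - 3(68k+12)^2 \geq 0$, and a direct expansion gives
\[
P(k) = 2178 k^3 - 5424 k^2 + 3296 k - 432.
\]
One checks $P(2) = 17424 - 21696 + 6592 - 432 = 1888 > 0$, while $P'(k) = 6534 k^2 - 10848 k + 3296$ is a parabola whose larger root lies below $4/3$, so $P'(k)>0$ on $[2,\infty)$ and hence $P(k) \geq P(2) > 0$ for all $k \geq 2$. Combining with the strict inequality above, $\partial_\varepsilon f_3(k,\varepsilon) < 0$ for all $\varepsilon > 0$ and $k \geq 2$, finishing the proof.

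The main obstacle, such as it is, is verifying the polynomial inequality $P(k) \geq 0$ for $k\geq 2$: there is no structural reason for this to hold, and indeed $P(1) = 2178 - 5424 + 3296 - 432 = -382 < 0$, so the inequality just barely fails at $k=1$. This is consistent with Lemma \ref{Lem.A.1}, which handled $k=1$ via a separate identity for $B_2$. Apart from this numerical check, the argument is a routine derivative calculation.
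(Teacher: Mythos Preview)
Your proof is correct and follows essentially the same approach as the paper: compute $\partial_\varepsilon f_3$, bound the factor $\frac{1+2\varepsilon}{2\sqrt{\varepsilon(1+\varepsilon)}}>1$, and reduce to the $\varepsilon$-free inequality $(33k+64)\sqrt{2k/3}>68k+12$. The only difference is in verifying that last inequality---the paper factorises $68k+12-(33k+64)\sqrt{2k/3}=-\sqrt{2/3}\bigl(\sqrt{k}-\sqrt{6}/3\bigr)\bigl(33k-23\sqrt{6k}+18\bigr)$ and checks the sign of each factor, whereas you square and analyse the cubic $P(k)$; both are routine.
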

\begin{proof}
	A direct calculation gives
	\begin{align*}
		\frac{\pa f_3}{\pa\varepsilon}&=68k+12-(33k+64)\sqrt{\frac{2k}{3}}\frac{\varepsilon+\frac12}{\sqrt{\varepsilon(1+\varepsilon)}}\\
		&<68k+12-(33k+64)\sqrt{\frac{2k}{3}}=-\sqrt{\frac23}\left(\sqrt k-\frac{\sqrt6}{3}\right)\left(33k-23\sqrt{6k}+18\right)<0,
	\end{align*}
	where we have used $33\times 2-23\sqrt{12}+18=2(42-23\sqrt3)>0$ (for $k=2$) and $33\sqrt k-23\sqrt 6\geq \sqrt3(33-23\sqrt2)>0$ for $k\geq 3$ and  then
	\[33k-23\sqrt{6k}+18= (33\sqrt k-23\sqrt{6})\sqrt k+18>0.\]
	\end{proof}

It follows from the monotonicity of $\ell\mapsto\varepsilon(k, \ell)$ and Lemma \ref{Lem.f_3} that the $\ell\mapsto F(k, \ell)$ is strictly decreasing for $\ell\in(\ell_-(k), \ell_+(k))$. Note that $$f_1\left(k,\frac{24k}{(3k-2)^2}\right)=(k+2)\frac{3k+2}{3k-2}-4\frac{4k}{3k-2}=k-2=f_2(k,1)$$ and $\frac{24k}{(3k-2)^2}\in E_k $ we have $ \varepsilon(k,1)=\frac{24k}{(3k-2)^2}$, then
\begin{align*}
	&\sqrt{\frac{2k}{3}}\sqrt{\varepsilon(k,1)(1+\varepsilon(k,1))}=\sqrt{\frac{2k}{3}}\sqrt{\frac{24k(3k+2)^2}{(3k-2)^4}}=\frac{4k(3k+2)}{(3k-2)^2},
\end{align*}
which gives
\begin{align*}
	f_3(k, \varepsilon(k,1))&=(68k+12)\varepsilon(k,1)-(33k+64)\sqrt{\frac{2k}{3}}\sqrt{\varepsilon(k,1)(1+\varepsilon(k,1))}\\
	&=(68k+12)\frac{24k}{(3k-2)^2}-(33k+64)\frac{4k(3k+2)}{(3k-2)^2}=-\frac{4k(33k-28)}{3k-2},
\end{align*}
hence,
\begin{align*}
	F(k,1)=f_3(k, \varepsilon(k, 1))+52k-12+12=-\frac{4k(33k-28)}{3k-2}+52k=\frac{4k(6k+2)}{3k-2}>0.
\end{align*}

Now we claim that
\begin{equation}\label{Eq.F(k,l_+)<0}
	\lim_{\ell\uparrow\ell_+(k)}F(k, \ell)<0.
\end{equation}
If $2\leq k\leq 6$, as $\ell\uparrow\ell_+(k)$, we have $\varepsilon\to+\infty$, then by $68k+12<(33k+64)\sqrt{\frac{2k}{3}}$ and $\ell>0$, we get
\[F(k,\ell)<\left[68k+12-(33k+64)\sqrt{\frac{2k}{3}}\right]\varepsilon+52k+12,\]
hence $\lim_{\ell\uparrow\ell_+(k)}F(k, \ell)=-\infty<0$. If $k\geq 7$, as $\ell\uparrow\ell_+(k)>1$, we have $\varepsilon\uparrow\frac{32k}{(3k-2)(k-6)}$, then
\begin{align*}
	\lim_{\ell\uparrow\ell_+(k)}F(k, \ell)&\leq f_3\left(k, \frac{32k}{(3k-2)(k-6)}\right)+52k\\
	&=\frac{32k(68k+12)-8k(k+2)(33k+64)}{(3k-2)(k-6)}+52k=-\frac{4k(9k-2)}{k-6}<0.
\end{align*}
Then it follows from the continuity of $\ell\mapsto F(k,\ell)$, $F(k, 1)>0$,  \eqref{Eq.F(k,l_+)<0} and the intermediate value theorem that there exists $\ell_1(k)\in (1, \ell_+(k))$ such that $F(k, \ell_1(k))=0$; moreover, by the monotonicity we have
\begin{equation}\label{Fkl}F(k, \ell)>0,\quad \forall\ \ell\in(1,\ell_1(k))\qquad\text{and}\qquad F(k, \ell)<0,\quad \forall\ \ell\in(\ell_1(k), \ell_+(k)).\end{equation}

\begin{lemma}\label{Lem.l_*_upbound}
	We have $4/5<\ell_1(4)<3/4<\ell_1(3)<3/2<\ell_1(2)$ and
	\begin{equation}
		\ell_1(k)<2\quad\forall\ k\geq 2.
	\end{equation}
\end{lemma}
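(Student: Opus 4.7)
The plan is to exploit the characterization established just above: $\ell_1(k)$ is the unique root in $(1, \ell_+(k))$ of the continuous strictly decreasing function $\ell \mapsto F(k, \ell)$. Consequently, an upper bound $\ell_1(k) < L$ follows from either $L \geq \ell_+(k)$ or $F(k, L) \leq 0$, while a lower bound $\ell_1(k) > L$ requires $L < \ell_+(k)$ together with $F(k, L) \geq 0$. Each of the prescribed rational bounds on $\ell_1(2)$, $\ell_1(3)$, $\ell_1(4)$, as well as the uniform claim $\ell_1(k) < 2$, thus reduces to a finite arithmetic check on the sign of $F$.

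For the uniform bound $\ell_1(k) < 2$, I would first split on $k$ using the ranges $\ell_+(k)$ determined earlier in this appendix: for $k \geq 7$ one has $\ell_+(k) = \ell_0(k)$, which is decreasing from $\ell_0(7) < 2$, so the bound $\ell_1(k) < \ell_+(k) < 2$ is automatic. This leaves the five integer cases $k \in \{2, 3, 4, 5, 6\}$, each of which reduces to the single inequality $F(k, 2) < 0$, handled by the template described next.

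To evaluate $F(k, L)$ at a given rational pair $(k, L)$, I would first compute $C := f_2(k, L) = k/\sqrt L - 2\sqrt L$, then solve $(k+2)\sqrt{1+\varepsilon} - 4\sqrt{2k\varepsilon/3} = C$ for $\varepsilon = \varepsilon(k, L)$. Isolating $\sqrt{1+\varepsilon}$, squaring, and using $(1+\varepsilon) - \varepsilon = 1$ leads to a quadratic in $\sqrt\varepsilon$ with leading coefficient $-(3k-2)(k-6)/3$ and the remaining coefficients rational in $k$ and $L$, yielding $\varepsilon(k, L)$ in closed form; the right root is the one lying in $E_k$ (equivalently the one for which $\sqrt{1+\varepsilon}$, reconstructed from the original equation, is positive). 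Substituting into $f_3(k, \varepsilon) = (68k+12)\varepsilon - (33k+64)\sqrt{2k/3}\,\sqrt{\varepsilon(1+\varepsilon)}$ and adding $52k - 12L + 12$ produces an expression of the form $P + Q\sqrt R$ with $P, Q, R$ explicit rationals in $k$ and $L$, whose sign I verify by isolating the radical and squaring once more to reduce to an inequality between integers.

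The main obstacle is purely algebraic bookkeeping: each individual sign check is mechanical, but the expressions carry nested radicals, and care is required with sign conditions at each squaring step. I expect the lower bounds on $\ell_1(k)$ to be slightly more delicate than the upper bounds, since they correspond to larger values of $\varepsilon(k, L)$ and hence larger residual radicals, and the case $\ell_1(2) > 3/2$ in particular (where $\ell_+(2) = +\infty$) produces the least clean numerics; beyond that, no new conceptual ingredient is required.
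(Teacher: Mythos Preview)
Your approach matches the paper's: split on $k \geq 7$ (where $\ell_1(k) < \ell_+(k) < 2$ is automatic) versus $2 \leq k \leq 6$ (where each bound reduces to a sign check on $F(k,L)$). Two minor notes: for $k \geq 7$ the paper proves $\ell_+(k) < 2$ via the direct identity $f_1\bigl(k,\tfrac{32k}{(3k-2)(k-6)}\bigr)^2 - f_2(k,2)^2 = \tfrac{3k^2-16k-24}{6} > 0$ rather than the monotonicity of $\ell_0$ you invoke (which is not established in the text, so you should either prove it or use this identity instead); and for $2 \leq k \leq 6$ the paper simply reads off the numerical values in Table~\ref{Tab.1} in place of your explicit algebraic template.
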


\begin{proof}
	First of all, we deal with the case of  $k\geq 7$. We claim that
	\[\ell_1(k)<\ell_+(k)<2\quad\forall\ k\geq7.\]
	Indeed, for $k\geq 7$, we have
	\begin{align*}
		&f_1\left(k, \frac{32k}{(3k-2)(k-6)}\right)^2-f_2(k, 2)^2=\frac{(3k-2)(k-6)}{3}-\frac{(k-4)^2}{2}\\
		&=\frac{3k^2-16k-24}{6}=\frac{(3k-16)k-24}{6}\geq \frac{(3\times7-16)\times7-24}{6}=\frac{11}6>0,
	\end{align*}
	hence, $\ell_+(k)<2$ for all $k\geq 7$ and thus $\ell_1(k)<2$.
	
	For $2\leq k\leq 6$, the result follows from Table \ref{Tab.1}.\if0
	we denote $\ell_2(k)=2$ for $k\in\{2, 4, 5, 6\}$ and $\ell_2(k)=3/2$ for $k=3$. Then $\ell_2(k)\in(1, \ell_+(k)]$ and we only need to show that
	\begin{equation}\label{Eq.F(k,2)<0}
		F(k, \ell_2(k))<0\quad\forall\ 2\leq k\leq 6.
	\end{equation}
	Let
	\[\varepsilon_2(2)=10,\qquad \text{and }\quad \varepsilon_2(k)=3\quad \text{for }k\in\{3, 4, 5, 6\}.\]
	Direct computation shows that for $2\leq k\leq 6$,
	\begin{align}
		&f_1(k, \varepsilon_2(k))>f_2(k, \ell_2(k)),\label{Eq.A.12}\\
		&f_3(k, \varepsilon_2(k))+52k-12\ell_2(k)+12<0.\label{Eq.A.13}
	\end{align}
	Now the strict decrease of $\varepsilon\mapsto f_1(k, \varepsilon)$ and \eqref{Eq.A.12} implies that $\varepsilon_2(k)<\varepsilon(k, \ell_2(k))$, hence by the strict decrease of $\varepsilon\mapsto f_3(k, \varepsilon)$ and \eqref{Eq.A.13}, we have
	\[F(k, \ell_2(k))=f_3\big(k, \varepsilon(k,\ell_2(k))\big)+52k-12\ell_2(k)+12<f_3(k, \varepsilon_2(k))+52k-12\ell_2(k)+12<0.\]
	This gives \eqref{Eq.F(k,2)<0}.\fi
\end{proof}

Thus,  $\ell_1(k)< k$ for $k\geq2$. Lemma \ref{Lem.a_4<0} for $k\geq2$ follows from \eqref{B2>0}, \eqref{Fkl} and $B_2(R=3)=F(k,\ell)$. Moreover, by $\ell_+(k)=\ell_0(k)$ for all $k\geq 6$, \eqref{Eq.mathcal_K_0} and \eqref{Eq.admissible_set}, we know that $\mathcal K_*$ is a proper subset of $\mathcal K$.

\if0 \section{Auxiliary properties}\label{Appen.auxiliary}

Here we need to use some simple numerical computations.

\begin{lemma}\label{Lem.l_*_low_bound}It holds that for $k\ge 2$,
	\begin{equation}\label{Eq.l_*_low_bound}
		\ell_1(k)> \frac{5k}{5k-4}.
	\end{equation}
\end{lemma}

\begin{proof}
Assume that $\varepsilon_3(k)\in E_k$ is the solution to $f_1(k, \varepsilon)=f_2\left(k,\frac{5k}{5k-4}\right)$. Then $\varepsilon_3(k)=\varepsilon\left(k,\frac{5k}{5k-4}\right) $ and we need to show that
	\begin{equation}
		I_1:=F\left(k, \frac{5k}{5k-4}\right)=f_3(k, \varepsilon_3(k))+52k-\frac{48}{5k-4}>0.
	\end{equation}
	Let $\varepsilon_4(k)=\frac {M_3}{k-M_3}$ for $M_3=\frac{98}{27}$, $k\geq5$. Then $\varepsilon_4(k)\in E_k$. We claim that
	\begin{equation}\label{Eq.epsilon_1<epsilon_2}
		\varepsilon_3(k)<\varepsilon_4(k)\quad\forall\ k\geq5.
	\end{equation}
	Indeed, using $\sqrt{\frac{2M_3}{3}}=\frac{14}{9}$, we have
	\begin{align*}
		f_1(k, \varepsilon_4(k))&=(k+2)\sqrt{\frac k{k-M_3}}-4\sqrt{\frac{2kM_3}{3(k-M_3)}}=\left(k+2-4\sqrt{\frac{2M_3}{3}}\right)\sqrt{\frac k{k-M_3}}\\
		&=\left(k+2-4\cdot\frac{14}{9}\right)\sqrt{\frac k{k-M_3}}=\left(k-\frac{38}{9}\right)\sqrt{\frac k{k-M_3}},
	\end{align*}
	and  by
	\[f_2\left(k, \frac{5k}{5k-4}\right)=\sqrt{\frac k5}\frac{5k-14}{\sqrt{5k-4}}>0\]
	we obtain
	\[\frac{f_1(k, \varepsilon_4(k))}{f_2(k, \frac{5k}{5k-4})}=\frac{k-38/9}{k-14/5}\sqrt{\frac {k-4/5}{k-M_3}}.\]
	As $k\geq5>\frac{38}{9}>4>M_3>3>\frac{4}{5}$, we have
	\[\sqrt{\frac {k-4/5}{k-M_3}}=\sqrt{ 1+\frac{M_3-4/5}{k-M_3}}<1+\frac{1}{2}\frac{M_3-4/5}{k-M_3}=1+\frac{191/135}{k-M_3}<1+\frac{192/135}{k-38/9}=\frac{k-14/5}{k-38/9},\]
	thus $f_1(k, \varepsilon_4(k))<f_2(k, \frac{5k}{5k-4})=f_1(k, \varepsilon_3(k))$ for all $k\geq 5$. Then \eqref{Eq.epsilon_1<epsilon_2} follows from the strict decrease of $\varepsilon\mapsto f_1(k, \varepsilon)$(Lemma \ref{Lem.f_1_decreasing}). Thus,
	\begin{align*}
		I_1&= f_3(k, \varepsilon_3(k))+52k-\frac{48}{5k-4}>f_3(k, \varepsilon_4(k))+52k-\frac{48}{5k-4}\\
		&=(68k+12)\frac {M_3}{k-M_3}-(33k+64)\sqrt{\frac{2M_3}{3}}\frac k{k-M_3}+52k-\frac{48}{5k-4}=:I_2.\end{align*}
Thanks to  $ 12\frac {M_3}{k-M_3}-\frac{48}{5k-4}=\frac {12k}{k-M_3}-12-\frac{48}{5k-4}=\frac {12k}{k-M_3}-\frac{60k}{5k-4}$, we get
	\begin{align*}
		\frac{k-M_3}{k}I_2&=68M_3+12-(33k+64)\sqrt{\frac{2M_3}{3}}+\left(52-\frac{60}{5k-4}\right)(k-M_3)\\
		&=68M_3+12-(33M_3+64)\sqrt{\frac{2M_3}{3}}+\left(52-33\sqrt{\frac{2M_3}{3}}-\frac{60}{5k-4}\right)(k-M_3)\\
		&=-\frac{2192}{81}+\left(\frac{2}{3}-\frac{60}{5k-4}\right)(k-M_3).
	\end{align*}
	Here we have used\begin{align*}
		&52-33\sqrt{\frac{2M_3}{3}}=52-33\cdot\frac{14}{9}=52-\frac{154}{3}=\frac{2}{3},\\
		&68M_3+12-(33M_3+64)\sqrt{\frac{2M_3}{3}}=68\cdot\frac{98}{27}+12-\left(33\cdot\frac{98}{27}+64\right)\cdot\frac{14}{9}=
		-\frac{2192}{81}.
	\end{align*}
	If $k\geq 62$, then $\frac{2}{3}-\frac{60}{5k-4}\geq\frac{2}{3}-\frac{60}{5\cdot62-4}=\frac{8}{17}>0 $ and $ k-M_3>k-4\geq58>0$, thus\begin{align*}
		\left(\frac{2}{3}-\frac{60}{5k-4}\right)(k-M_3)>\frac{8}{17}\cdot58=27+\frac{5}{17}>27+\frac{5}{81}=\frac{2192}{81},
	\end{align*}which implies $\frac{k-M_3}{k}I_2>0$ and $I_1=F\left(k, \frac{5k}{5k-4}\right)>0$ for $k\geq62$. Thus, \eqref{Eq.l_*_low_bound} has been proved for $k\geq 62$. 
	
	For $2\leq k\leq 61$, we can solve the numerical value of $\ell_1(k)$ to check the validity of \eqref{Eq.l_*_low_bound}. In Table \ref{Tab.1}, we see that $k-\frac k{\ell_1(k)}>\frac45$ for all $2\leq k\leq 61$.
\end{proof}

\begin{lemma}\label{Lem.l_*_upbound2}
	It holds that for $k\ge 2$,
	\begin{equation}\label{Eq3}
		\ell_1(k)< \frac{k}{k-1}.
	\end{equation}
\end{lemma}

\begin{proof}
	Assume that $\varepsilon_5(k)\in E_k$ is the solution to $f_1(k, \varepsilon)=f_2\left(k,\frac{k}{k-1}\right)$. Then $\varepsilon_5(k)=\varepsilon(k,\frac{k}{k-1}) $.
	Let $\varepsilon_6(k)=\frac {M_4}{k-M_4}$ for $M_4=\frac{1352}{363}$, $k\geq5$.
	Then $ \sqrt{\frac{2M_4}{3}}=\frac{52}{33}$, $2-4\sqrt{\frac{2M_4}{3}}
	=-\frac{142}{33} $ and
	\begin{align*}
		&f_1(k, \varepsilon_6(k))=\left(k+2-4\sqrt{\frac{2M_4}{3}}\right)\sqrt{\frac k{k-M_4}}
		=\left(k-\frac{142}{33}\right)\sqrt{\frac k{k-1352/363}}>0.\end{align*}Recall that $f_2\left(k,\frac{k}{k-1}\right)=\sqrt{ k}\frac{k-3}{\sqrt{k-1}}>0$ then\begin{align*}
		&f_1(k, \varepsilon_6(k))^2-f_2\left(k,\frac{k}{k-1}\right)^2
		=\frac{(k-142/33)^2k}{k-1352/363}-\frac{k(k-3)^2}{{k-1}}=\frac{43(k-4)(3k-95)k}{3(363k-1352)(k-1)}.
	\end{align*}
	If $k\geq32$, then $k-1>k-4>0$, $3k-95\geq1>0$, $363k-1352>0$,
		${f_1(k, \varepsilon_6(k))}^2-{f_2(k,\frac{k}{k-1})}^2>0 $, $ f_1(k, \varepsilon_6(k))>f_2(k,\frac{k}{k-1})=f_1(k, \varepsilon_5(k))$.
	Hence, by Lemma \ref{Lem.f_1_decreasing}, we have $\varepsilon_5(k)> \varepsilon_6(k)$ for all $k\geq 32$. Then we get by Lemma \ref{Lem.f_3}  that
	\begin{align*}
		I_3:&=F\left(k, \frac{k}{k-1}\right)= f_3(k, \varepsilon_5(k))+52k-\frac{12}{k-1}<f_3(k, \varepsilon_6(k))+52k-\frac{12}{k-1}\\
		&=(68k+12)\frac {M_4}{k-M_4}-(33k+64)\sqrt{\frac{2M_4}{3}}\frac k{k-M_4}+52k-\frac{12}{k-1}=:I_4.\end{align*}
		Thanks to  $ 12\frac {M_4}{k-M_4}-\frac{12}{k-1}=\frac {12k}{k-M_4}-12-\frac{12}{k-1}=\frac {12k}{k-M_4}-\frac{12k}{k-1}$, we have
	\begin{align*}
		&\frac{k-M_4}{k}I_4=68M_4+12-(33k+64)\sqrt{\frac{2M_4}{3}}+\left(52-\frac{12}{k-1}\right)(k-M_4)\\
		=&68M_4+12-(33M_4+64)\sqrt{\frac{2M_4}{3}}+\left(52-33\sqrt{\frac{2M_4}{3}}-\frac{12}{k-1}\right)(k-M_4)<0.
	\end{align*}
	Here we have used $52-33\sqrt{\frac{2M_4}{3}}=52-33\cdot\frac{52}{33}=0 $, $ k\geq32>4>M_4$ and\begin{align*}
		&68M_4+12-(33M_4+64)\sqrt{\frac{2M_4}{3}}=68\cdot\frac{1352}{363}+12-\left(33\cdot\frac{1352}{363}+64\right)\cdot\frac{52}{33}=
		-\frac{3540}{121}<0.
	\end{align*}
	Thus, $\frac{k-M}{k}I_4<0$ and $I_3=F\left(k, \frac{k}{k-1}\right)<I_4<0$ for $k\geq32$, and then \eqref{Eq3} has been proved for $k\geq 32$ (using \eqref{Fkl}).

	For $2\leq k\leq 31$, we can solve the numerical value of $\ell_1(k)$ to check the validity of \eqref{Eq3}. In Table \ref{Tab.1}, we see that $k-\frac k{\ell_1(k)}<1$ for all $2\leq k\leq 61$.
\end{proof}
\fi

The result of \cite{Shao-Wei-Zhang} relies on the condition $\beta>\ell+1$, now we discuss this condition for $k\in\{3, 4\}$.

\begin{lemma}\label{Lem.beta_neq_l+1}
	If $R\in[3,4]$, $k\in\{3, 4\}$ then
	\begin{equation}\label{Eq.A.14}
		k>\ell(\g+1)\ \ \text{ for }\ \ k=4,\ 1<\ell<\ell_1(4)\ \ \text{ or }\ k=3,\ 1<\ell<\ell^*(3).
	\end{equation}
	Here $\ell^*(3)=\frac{76-4\sqrt{154}}{23}\in(\frac{8}{7},\frac{7}{6})\subset(1,\ell_1(3)) $.	As a direct consequence, we have
	\begin{equation}\label{Eq.A.15}
		\beta>\ell+1\ \ \text{ for }\ \ k=4,\ 1<\ell<\ell_1(4)\ \ \text{ or }\ k=3,\ 1<\ell<\ell^*(3).
	\end{equation}
\end{lemma}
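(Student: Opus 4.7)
The strategy is to reduce \eqref{Eq.A.14} to a single algebraic inequality in $\ell$ at the endpoint $R=3$, and then solve that inequality explicitly. First, since $m=\beta\ell/(\ell+1)$ and $\gamma=k/m-1$, one has the identity $\beta=\frac{k(\ell+1)}{\ell(\gamma+1)}$, so $\beta>\ell+1$ is equivalent to $k>\ell(\gamma+1)$. This gives \eqref{Eq.A.15} from \eqref{Eq.A.14}. Moreover, Remark \ref{Rmk.R_parameter} shows that on $\Gamma(k,\ell)$ the map $\gamma\mapsto R$ is strictly decreasing, hence $\gamma$ is maximized over $R\in[3,4]$ at $R=3$; thus it suffices to prove $k>\ell(\gamma+1)$ when $R=3$.

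Next, using $\varepsilon=\ell\gamma^2-1$ (which forces $\ell\gamma=\sqrt{\ell(1+\varepsilon)}$ since $\gamma>1/\sqrt{\ell}>0$), we get $\ell(\gamma+1)=\ell+\sqrt{\ell(1+\varepsilon)}$. In our range we have $\ell<k$, so the desired inequality $k-\ell>\sqrt{\ell(1+\varepsilon)}$ is equivalent (both sides positive) to $\varepsilon<M$, where
\[
M:=\frac{(k-\ell)^2-\ell}{\ell}.
\]
Now at $R=3$ the profile identity \eqref{Eq.4.39} becomes $f_1(k,\varepsilon)=f_2(k,\ell)$, and Lemma \ref{Lem.f_1_decreasing} asserts that $\varepsilon\mapsto f_1(k,\varepsilon)$ is strictly decreasing. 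Hence $\varepsilon<M$ is equivalent to $f_1(k,M)<f_2(k,\ell)$, i.e.
\[
(k+2)\sqrt{1+M}-4\sqrt{\tfrac{2kM}{3}}<\tfrac{k}{\sqrt{\ell}}-2\sqrt{\ell}.
\]

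The crucial simplification is that $1+M=(k-\ell)^2/\ell$, so $\sqrt{1+M}=(k-\ell)/\sqrt{\ell}$. Multiplying through by $\sqrt{\ell}$ and using $(k+2)(k-\ell)-(k-2\ell)=k(k+1-\ell)$ reduces the inequality to
\[
4\sqrt{\tfrac{2k\,[(k-\ell)^2-\ell]}{3}}>k(k+1-\ell).
\]
Since $\ell<k+1$ in both cases, both sides are positive and one may square; after dividing by $k$, the inequality becomes the polynomial condition
\[
32\bigl[(k-\ell)^2-\ell\bigr]>3k(k+1-\ell)^2.
\]
For $k=4$ this is $5\ell^2-42\ell+53>0$, whose smaller root is $(21-4\sqrt{11})/5\approx1.546$; the inequality holds for all $1<\ell<(21-4\sqrt{11})/5$. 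For $k=3$ it is $23\ell^2-152\ell+144>0$, whose smaller root is $(76-4\sqrt{154})/23=\ell^*(3)$, matching the definition in the statement.

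The main obstacle is numerical rather than structural: one must verify $\ell_1(4)<(21-4\sqrt{11})/5$ using Table \ref{Tab.1} so that the full interval $1<\ell<\ell_1(4)$ lies in the domain of validity; the $k=3$ case already produces $\ell^*(3)$ as the sharp cutoff. The enclosure $\ell^*(3)\in(8/7,7/6)$ claimed in the statement reduces to the elementary inequalities $(87/7)^2>154>(295/24)^2$, and $\ell^*(3)<\ell_1(3)$ can be checked against the tabulated value of $\ell_1(3)$. One also has to confirm in passing that $M>0$ on our ranges (equivalently $(k-\ell)^2>\ell$), which is automatic since $\ell<\ell_1(4)<2<(9-\sqrt{17})/2$ for $k=4$ and $\ell<\ell^*(3)<7/6<(7-\sqrt{13})/2$ for $k=3$.
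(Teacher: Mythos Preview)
Your proof is correct and lands on exactly the same pair of quadratics as the paper, $5\ell^2-42\ell+53>0$ for $k=4$ and $23\ell^2-152\ell+144>0$ for $k=3$. The only real difference is the intermediate route. The paper argues by contradiction directly with $F(\gamma)=\frac{(R+1)^2}{R}$ from \eqref{Eq.(R+1)^2/R_1}: if $\ell(\gamma+1)\geq k$ then $\gamma\geq k/\ell-1$, and since $F$ is decreasing on the admissible range one gets $\tfrac{16}{3}\leq F(\gamma)\leq F(k/\ell-1)=\frac{k(k+1-\ell)^2}{2[(k-\ell)^2-\ell]}$, which rearranges to the negation of your polynomial inequality. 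You instead reduce first to $R=3$ via the monotonicity of $\gamma$ in $R$, then route through the Appendix~A machinery $f_1(k,\varepsilon)=f_2(k,\ell)$ and Lemma~\ref{Lem.f_1_decreasing} applied at $\varepsilon=M$; the simplification $1+M=(k-\ell)^2/\ell$ then collapses $f_1(k,M)<f_2(k,\ell)$ to the same quadratic. The paper's path is marginally more self-contained (it avoids the $f_1,f_2$ apparatus, which was set up for the $B_2$ analysis), while yours makes pleasant reuse of that apparatus. For the numerical step $\ell_1(4)<(21-4\sqrt{11})/5$, the paper uses $4\sqrt{11}<14$ to get $(21-4\sqrt{11})/5>7/5>4/3$ and then the bound $\ell_1(4)<4/3$ from Lemma~\ref{Lem.l_*_upbound}; either way this rests on the tabulated value of $\ell_1(4)$, so your deferral to Table~\ref{Tab.1} is equivalent.
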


Thanks to $\ell=1+\frac{4}{p-1} $, $d=k+1$, $\frac{8}{7}<\ell^*(3)< \frac{5}{4}<\ell_1(4)<\ell_1(3)$. This lemma ensures that we can prove the blow-up for nonlinear
wave equations for $d=4$, $p\geq29$ or $d=5$, $p\geq17$. See \cite{Shao-Wei-Zhang}. As $\g>\frac{1}{\sqrt{\ell}}$, $\ell>1$, we have $\ell(\g+1)>\ell\left(\frac{1}{\sqrt{\ell}}+1\right)=\sqrt{\ell}+{\ell}>2\geq k$ for $k\in\{1, 2\}$.

\begin{proof}
	Thanks to $m=\beta\ell/(\ell+1)$ and $\g=\frac km-1$, one can easily deduce \eqref{Eq.A.15} from \eqref{Eq.A.14}.
	
	\if0	
	It suffices to prove the inequality for $k\geq 3$ in \eqref{Eq.A.14}. 
	If $k\geq 5$, then  $\g<\frac{k+2}{k-2\ell}$ by recalling $A=k+2-(k-2\ell)\g>0$; note that the function $\ell\mapsto \ell\left(\frac{k+2}{k-2\ell}+1\right)$ is increasing for $\ell<\frac{k}{k-1}\leq\frac k4$,  hence
	\[\ell(\g+1)<\ell\left(\frac{k+2}{k-2\ell}+1\right)<\frac{k}{k-1}\left(\frac{k+2}{k-k/2}+1\right)=\frac{3k+4}{k-1}<\frac{4k}{k-1}\leq k,\]
	where we have used the fact that $4/(k-1)\leq 1$ for all $k\geq5$.\fi For $k\in\{3, 4\}$, as $ \ell^*(3)< \ell_1(4)< \frac{4}{3}$ we have $k>2\ell$, recall (see \eqref{Eq.(R+1)^2/R_1}) that
	\begin{align*}
		\frac{(R+1)^2}{R}=\frac{[(k+2)\ell\g-(k-2\ell)]^2}{2k\ell(\ell\g^2-1)}=:F(\g),\quad F'(\g)<0 \text{ for } \frac{1}{\sqrt{\ell}}<\g<\frac{k+2}{k-2\ell}.
	\end{align*}If $\ell(\g+1)\geq k$, then $\g\geq k/\ell-1>1>\frac{1}{\sqrt{\ell}}$ and $\g<\frac{k+2}{k-2\ell}$ then\begin{align*}
		\frac{16}{3}\leq\frac{(R+1)^2}{R}=F(\g)\leq F(k/\ell-1)=\frac{[(k+2)(k-\ell)-(k-2\ell)]^2}{2k((k-\ell)^2-\ell)}
		=\frac{k(k+1-\ell)^2}{2((k-\ell)^2-\ell)}.
	\end{align*}
	If $k=4$, then $ \frac{16}{3}\leq\frac{4(5-\ell)^2}{2((4-\ell)^2-\ell)}$, i.e., $5\ell^2-42\ell+53\leq0$, hence
	$$ \frac{21+4\sqrt{11}}{5}\geq\ell\geq\frac{21-4\sqrt{11}}{5}>\frac{21-14}{5}>\frac{4}{3}>\ell_1(4),$$
	which implies that if $k=4$ and $1<\ell<\ell_1(4)$, then $\ell(\g+1)< k$.
	
	If $k=3$, then $ \frac{16}{3}\leq\frac{3(4-\ell)^2}{2((3-\ell)^2-\ell)}$, i.e., $23\ell^2-152\ell+144\leq0$, hence,
	$$ \frac{76+4\sqrt{154}}{23}\geq\ell\geq\frac{76-4\sqrt{154}}{23}=\ell^*(3),$$ which implies that if $k=3$ and $1<\ell<\ell^*(3)$, then $\ell(\g+1)< k$.
\end{proof}


\if0
\begin{longtable}{|c|c|c|c|c|c|c|c|}
	\caption{The value of $\ell_1(k)$ for $2\leq k\leq 61$.} \label{Tab.01}\\
		\hline
		$k$   & $\ell_1(k)$ & $\varepsilon_*(k,\ell_1(k))$ & $k-k/\ell_1(k)$ & $k$   & $\ell_1(k)$ & $\varepsilon_*(k,\ell_1(k))$ & $k-k/\ell_1(k)$  \\
		\hline
		2     & 1.881587 & 9.581746731 & 0.937067617 & 32    & 1.027915524 & 0.124575726 & 0.869037141  \\
		\hline
		3     & 1.391124 & 3.045800645 & 0.8434706 & 33    & 1.027058061 & 0.120548004 & 0.869391959  \\
		\hline
		4     & 1.262285 & 1.743435382 & 0.831144771 & 34    & 1.02625174 & 0.116772341 & 0.869727297  \\
		\hline
		5     & 1.199483 & 1.207995911 & 0.831537476 & 35    & 1.025492114 & 0.113225812 & 0.870044707  \\
		\hline
		6     & 1.161595 & 0.92023964 & 0.834689316 & 36    & 1.024775239 & 0.109888188 & 0.870345583  \\
		\hline
		7     & 1.136046 & 0.741719888 & 0.838279501 & 37    & 1.024097603 & 0.106741553 & 0.870631178  \\
		\hline
		8     & 1.117581 & 0.620553733 & 0.84168093 & 38    & 1.023456068 & 0.103769979 & 0.870902621  \\
		\hline
		9     & 1.103582 & 0.533090172 & 0.844737613 & 39    & 1.022847822 & 0.100959264 & 0.871160936  \\
		\hline
		10    & 1.09259 & 0.467059729 & 0.847438038 & 40    & 1.022270339 & 0.098296696 & 0.871407048  \\
		\hline
		11    & 1.083724 & 0.415482078 & 0.849814183 & 41    & 1.021721342 & 0.095770872 & 0.871641799  \\
		\hline
		12    & 1.076417 & 0.374100802 & 0.851907875 & 42    & 1.021198772 & 0.093371525 & 0.871865955  \\
		\hline
		13    & 1.07029 & 0.340176165 & 0.853759508 & 43    & 1.020700766 & 0.091089395 & 0.872080215  \\
		\hline
		14    & 1.065077 & 0.311866429 & 0.855404666 & 44    & 1.02022563 & 0.088916099 & 0.872285219  \\
		\hline
		15    & 1.060586 & 0.287888619 & 0.856873582 & 45    & 1.019771825 & 0.086844038 & 0.872481551  \\
		\hline
		16    & 1.056677 & 0.267322001 & 0.858191577 & 46    & 1.019337943 & 0.084866303 & 0.87266975  \\
		\hline
		17    & 1.053243 & 0.249488914 & 0.859379755 & 47    & 1.018922702 & 0.082976599 & 0.872850308  \\
		\hline
		18    & 1.050203 & 0.233879667 & 0.86045571 & 48    & 1.018524925 & 0.081169179 & 0.873023682  \\
		\hline
		19    & 1.047492 & 0.220103625 & 0.861434153 & 49    & 1.018143532 & 0.079438786 & 0.873190289  \\
		\hline
		20    & 1.045059 & 0.207856414 & 0.862327432 & 50    & 1.017777531 & 0.077780599 & 0.873350517  \\
		\hline
		21    & 1.042864 & 0.196897398 & 0.863145963 & 51    & 1.017426008 & 0.076190194 & 0.873504728  \\
		\hline
		22    & 1.040873 & 0.187033844 & 0.863898574 & 52    & 1.017088122 & 0.074663499 & 0.87365325  \\
		\hline
		23    & 1.039059 & 0.178109584 & 0.864592792 & 53    & 1.016763093 & 0.073196763 & 0.873796393  \\
		\hline
		24    & 1.0374 & 0.169996749 & 0.865235059 & 54    & 1.016450201 & 0.071786523 & 0.873934444  \\
		\hline
		25    & 1.035876 & 0.162589639 & 0.865830921 & 55    & 1.016148778 & 0.070429577 & 0.874067669  \\
		\hline
		26    & 1.034471 & 0.15580013 & 0.866385173 & 56    & 1.015858205 & 0.069122962 & 0.874196315  \\
		\hline
		27    & 1.033173 & 0.149554166 & 0.866901983 & 57    & 1.015577907 & 0.067863928 & 0.874320616  \\
		\hline
		28    & 1.031968 & 0.143789063 & 0.867384983 & 58    & 1.015307348 & 0.066649924 & 0.874440786  \\
		\hline
		29    & 1.030849 & 0.138451408 & 0.867837358 & 59    & 1.015046028 & 0.065478578 & 0.874557029  \\
		\hline
		30    & 1.029805 & 0.133495412 & 0.868261909 & 60    & 1.014793482 & 0.064347681 & 0.874669533  \\
		\hline
		31    & 1.028829 & 0.128881592 & 0.868661106 & 61    & 1.014549277 & 0.063255173 & 0.874778476  \\
		\hline
\end{longtable}
\fi 

\begin{table}[htbp]
	\centering
	\caption{The value of $\ell_1(k)$ for $2\leq k\leq 6$.}
	\begin{tabular}{|c|c|c|c|c|c|}
		\hline
		$k$   & 2     & 3     & 4     & 5     & 6     \\
		\hline
		$\ell_1(k)$ & 1.881587232 & 1.391124091 & 1.2622855 & 1.199483016 & 1.161595181 \\
		\hline
		$\varepsilon_*(k,\ell_1(k))$ & 9.581746731 & 3.045800645 & 1.74343538 & 1.207995911 & 0.92023964  \\
		\hline
		$k-k/\ell_1(k)$ & 0.937067617 & 0.8434706 & 0.83114477 & 0.831537476 & 0.834689316 \\
		\hline
	\end{tabular}%
	\label{Tab.1}%
\end{table}%


\section{Proof of Lemma \ref{Lem.a_4<M}, Lemma \ref{Lem.a_1<} and Lemma \ref{Lemu3a}}\label{Subsec.proof_C}

\begin{lemma}\label{lem5}
	Assume that $k=2$, $\ell>1$, $R\in(3,R_*]$. Then the functions $\ell\mapsto B_2/a_2$, $\ell\mapsto \delta/a_2$, $\ell\mapsto B_1/\delta$, $\ell\mapsto (\ell-1)/a_2$, $\ell\mapsto a_3/a_2$ are strictly decreasing, and\begin{align}
		&\lim_{\ell\to+\infty}\frac{B_2}{a_2}=-2\frac{(R^2-10R+11)\sqrt{R}+9R-3}{R(\sqrt{R}-1)(R-1+2\sqrt{R})},\label{Eq.B_2/a_2}\\
		&\lim_{\ell\to+\infty}\frac{\delta}{a_2}=\frac{2(R-2)}{\sqrt{R}(\sqrt{R}-1)^2(R-1+2\sqrt{R})},\label{Eq.detla/a_2}\\
		&\lim_{\ell\to+\infty}\frac{B_1}{\delta}=\frac{(R^2-10R+8)\sqrt{R}+3R^2-2}{\sqrt{R}(R-2)}.\label{Eq.B_1/delta}
	\end{align}
\end{lemma}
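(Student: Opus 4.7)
For $k=2$ and fixed $R\in(3,R_*]$, I propose to parametrize everything by $\varepsilon$. Specializing \eqref{Eq.4.39} to $k=2$ gives
\[
2\sqrt{1+\varepsilon}-(1+R)\sqrt{\varepsilon/R}=\tfrac{1}{\sqrt\ell}-\sqrt\ell,
\]
whose left-hand side is a smooth strictly decreasing function of $\varepsilon\ge 0$ taking the value $2$ at $\varepsilon=0$ and tending to $-\infty$ (its derivative $1/\sqrt{1+\varepsilon}-(1+R)/(2\sqrt{R\varepsilon})$ is negative since $(1+R)^2-4R=(R-1)^2>0$). The right-hand side is a strictly decreasing bijection from $\ell\in[1,\infty)$ onto $(-\infty,0]$. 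Hence $\ell\mapsto\varepsilon$ is a smooth strictly increasing bijection from $(1,\infty)$ onto $(\varepsilon(1),\infty)$ with $\varepsilon(1)=4R/(R-1)^2$, and from \eqref{Eq.Rdelta^2}, \eqref{Eq.a_1+c_1=Rdelta} one has the closed-form expressions
\[
\delta=2\sqrt{\varepsilon(1+\varepsilon)/R},\quad a_1=R\delta-2\varepsilon,\quad A=(R+1)\delta-4\varepsilon;
\]
the quantities $a_2,B_1,B_2,a_3$ are then rational polynomial combinations of $a_1$, $A$, $\delta$, $\varepsilon$, $\ell$ via \eqref{Eq.a_2_expression}, \eqref{Eq.B_1_expression}, \eqref{Eq.B_2_expression}, and \eqref{Eq.a_3_4}. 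Monotonicity in $\ell$ is therefore equivalent to monotonicity in $\varepsilon$, and I will verify the latter by clearing denominators and checking the sign of the $\varepsilon$-derivative on the appropriate range.

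\textbf{Computation of the limits.} As $\ell\to\infty$, the right-hand side of the defining relation tends to $-\infty$, so $\varepsilon\to\infty$ as well. Setting $\varepsilon=c\ell+o(\ell)$ and balancing the two $\sqrt\ell$ terms yields $\sqrt c\,(2-(1+R)/\sqrt R)=-1$, i.e., $c=R/(\sqrt R-1)^4$. Consequently $\delta\sim 2\varepsilon/\sqrt R$, $a_1\sim 2(\sqrt R-1)\varepsilon$, and $A\sim 2(\sqrt R-1)^2\varepsilon/\sqrt R$, all of order $\ell$. Expanding $(R-2)a_2=(1-R)a_1+3AR+(\ell-9)R$ (which is \eqref{Eq.a_2_expression} with $k=2$) and keeping every $O(\ell)$ contribution---including the explicit $\ell R$ coming from $-(9-\ell)R$---one finds
\[
\frac{a_2}{\ell}\longrightarrow \frac{R(R+2\sqrt R-1)}{(R-2)(\sqrt R-1)^2},
\]
from which \eqref{Eq.detla/a_2} follows immediately. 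The limits \eqref{Eq.B_2/a_2} and \eqref{Eq.B_1/delta} are analogous reductions over $\mathbb Q[\sqrt R]$; a direct verification (setting $u=\sqrt R$) shows that the numerator of $B_1/\delta$ simplifies precisely to $u(u^4-10u^2+8)+3u^4-2$. The only delicate point is in the computation of $B_2/a_2$, where a non-trivial cancellation eliminates the $R^2$ coefficient after combining the leading orders of $a_1$, $A$, and the explicit $\ell$ term.

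\textbf{Monotonicity.} For each of $\delta/a_2$, $B_1/\delta$, $B_2/a_2$, and $(\ell-1)/a_2$, I substitute the explicit $\varepsilon$-formulas and, after clearing denominators, reduce the claim to a polynomial inequality asserting that a specific combination of $\varepsilon$, $\sqrt{\varepsilon(1+\varepsilon)}$, $R$, $\ell(\varepsilon)$ has $\varepsilon$-derivative of definite sign; combined with $d\ell/d\varepsilon>0$ from implicit differentiation of the defining relation, this proves strict decrease in $\ell$. The monotonicity of $a_3/a_2$ follows from \eqref{Eq.a_3_4} rearranged as
\[
\frac{a_3}{a_2}=\frac{1}{R-3}\cdot\frac{B_1}{\delta}+\frac{1}{R-3}\cdot\frac{\ell-1}{a_2}\cdot\frac{a_1}{\delta},
\]
once one observes that $a_1/\delta=R-\sqrt R\,\sqrt{\varepsilon/(1+\varepsilon)}$ is manifestly decreasing in $\varepsilon$.

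\textbf{Main obstacle.} The limits are just careful asymptotic bookkeeping, but the monotonicity claims require polynomial inequalities in $\varepsilon$ whose coefficients depend on $R$, and these are tight at the left endpoint $R=3$, where a direct computation yields $(R-2)a_2\big|_{\ell=1}=8R(R-1)^2/(R-1)^2-8R=0$. The ratio $(\ell-1)/a_2$ is the most subtle since both numerator and denominator vanish at $\ell=1$: one must compare the linear behavior near $\ell=1$ (computed via implicit differentiation to give, e.g., $d\varepsilon/d\ell|_{\ell=1,R=3}=6$ and $da_2/d\ell|_{\ell=1,R=3}=21$) with the limit $1/c$ at $\ell=\infty$, and then verify strict decrease on the entire interval. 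Handling the joint dependence on $R\in(3,R_*]$ and $\varepsilon\in(\varepsilon(1),\infty)$ without appealing to numerics is the main technical challenge.
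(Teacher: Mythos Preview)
Your asymptotics $\varepsilon\sim R\ell/(\sqrt R-1)^4$ and the derived limits are correct, and your decomposition of $a_3/a_2$ together with the observation $a_1/\delta=R-\sqrt R\sqrt{\varepsilon/(1+\varepsilon)}$ is exactly what the paper uses for that step. But the heart of the lemma---the monotonicity of $(\ell-1)/a_2$, $\delta/a_2$, $B_2/a_2$, $B_1/\delta$---is not proved in your proposal. You state that after clearing denominators one obtains ``polynomial inequalities in $\varepsilon$ whose coefficients depend on $R$'' and then, in your own words, call verifying them ``the main technical challenge.'' That is the whole content of the lemma, and parametrizing by $\varepsilon$ does not by itself make these sign checks any more tractable: $\ell$ still enters explicitly through $B=5-\ell$, so you face mixed expressions in $\varepsilon$, $\sqrt{\varepsilon(1+\varepsilon)}$, and the implicit $\ell(\varepsilon)$.

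The paper avoids this entirely by first solving the system $(R+1)\delta=4\varepsilon+A$, $R\delta^2=4\varepsilon(1+\varepsilon)$, $A/\sqrt\varepsilon=2\sqrt{(\ell-1)^2/\ell+(R-1)^2/R}$ to obtain closed forms for $\varepsilon,\delta,a_1,A,(R-2)a_2$ directly as functions of $\ell$, each of the shape $c_1(R)\frac{(\ell-1)^2}{\ell}+c_2(R)+c_3(R)\frac{\ell-1}{\ell}\sqrt{R(\ell-1)^2+\ell(R-1)^2}$ (this is the paper's Lemma~\ref{lem8}). With these in hand the monotonicity proofs become genuinely elementary: for $(\ell-1)/a_2$ one writes $\frac{(R-2)a_2}{\ell-1}=\frac{2R}{(R-1)^2}F(1/\ell)+R$ with $F(z)=(3R-1)(1-z)+2R\sqrt{R(1-z)^2+(R-1)^2z}$ and checks $F'<0$ by a one-line estimate; for $B_2/a_2$ and $\delta/a_2$ one subtracts suitable multiples of $a_2$ from $a_1,A,\delta$ so that the square root disappears from the remainder, leaving a ratio of explicit rational functions of $\ell$ times the already-established $(\ell-1)/a_2$; and for $B_1/\delta$ one uses $2\varepsilon=\sqrt{R\delta^2+1}-1$ to write $B_1/\delta$ as an explicit function of $\delta$ and $1/\ell$, both monotone. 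The missing idea in your proposal is precisely this explicit inversion and the resulting algebraic decompositions; without them the sign checks you describe remain open.
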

Now we prove Lemma \ref{Lem.a_4<M}.
\begin{proof}[Proof of Lemma \ref{Lem.a_4<M}]
	As $k=2$, $\ell>1$ we have $(k,\ell)\in \mathcal{K}_*$. By \eqref{Eq.a_3_4}, Lemma \ref{Lem.a_3>0}, $a_2>0$, $a_1>4>0$, $k=2$, $M=a_3^2/(4a_2)$ and $3<R<R_*<4$, we get
	\begin{align*}
		&(R-3)\delta a_3=B_1a_2+(\ell-1)a_1>0\Longrightarrow a_3>0,\\
		&(R-4)\delta (a_4-M)=B_2a_3+2ka_2(a_2+1)+(4-R)\delta M\\
		&>B_2a_3+4a_2^2+(4-R)\delta M\geq B_2a_3+2\sqrt{4a_2^2(4-R)\delta M}\\
		&=B_2a_3+2\sqrt{(4-R)\delta a_2}a_3.
	\end{align*}
	By Lemma \ref{lem5}, we have
	\begin{align*}
		&\frac{B_2}{a_2}>-2\frac{(R^2-10R+11)\sqrt{R}+9R-3}{R(\sqrt{R}-1)(R-1+2\sqrt{R})},
		&\frac{\delta}{a_2}>\frac{2(R-2)}{\sqrt{R}(\sqrt{R}-1)^2(R-1+2\sqrt{R})}.
	\end{align*}
	Thus we infer
	\begin{align*}
		&\frac{(R-4)\delta (a_4-M)}{a_2a_3}>\frac{B_2a_3+2\sqrt{(4-R)\delta a_2}a_3}{a_2a_3}=\frac{B_2}{a_2}+2\sqrt{(4-R)\frac{\delta}{a_2}}\\
		&>-2\frac{(R^2-10R+11)\sqrt{R}+9R-3}{R(\sqrt{R}-1)(R-1+2\sqrt{R})}+2\sqrt{\frac{2(4-R)(R-2)}{\sqrt{R}(\sqrt{R}-1)^2(R-1+2\sqrt{R})}}\\
		&=2\frac{\sqrt{2(4-R)(R-2)\sqrt{R}R(R-1+2\sqrt{R})}-\big[(R^2-10R+11)\sqrt{R}+9R-3\big]}{R(\sqrt{R}-1)(R-1+2\sqrt{R})}\\
		&=2\frac{\sqrt{\varphi_1(\sqrt{R})}-\varphi_2({R})}{R(\sqrt{R}-1)(R-1+2\sqrt{R})}.
	\end{align*}
	Here\begin{align*}
		&\varphi_1(z):=2(4-z^2)(z^2-2)z^3(z^2-1+2z),
		&\varphi_2({R}):=(R^2-10R+11)\sqrt{R}+9R-3.
	\end{align*}Note that\begin{align*}
		&\varphi_2''({R})=\frac{15R^2-30R-11}{4R\sqrt{R}}=\frac{15(R-3)(R+1)+34}{4R\sqrt{R}}>0, \qquad\forall\ R\geq3.
	\end{align*} Then $\varphi_2({R})\leq\max\{\varphi_2(3),\varphi_2(R_*)\}$ for $R\in [3,R_*].$ Since $R_*=\frac{100}{27}$ and
	\begin{align}
		&\varphi_2(3)=(3^2-10\cdot3+11)\sqrt{3}+9\cdot3-3=24-10\sqrt{3}<24-10\cdot1.73=6.7,\\
		\label{phi2}&\varphi_2(R_*)=(R_*^2-10R_*+11)\sqrt{R_*}+9R_*-3=-\frac{8981}{729}\frac{10}{3\sqrt{3}}+\frac{91}{3}
		<-\frac{41}{\sqrt{3}}+\frac{91}{3}\\ \notag&\qquad\ \ \ <-{71}/{3}+{91}/{3}={20}/{3}<6.7,
	\end{align}
	where we used the fact that $41\cdot729\cdot3=41\cdot27\cdot81=1107\cdot81=89667<8981\cdot10$ and $ 41\sqrt{3}=\sqrt{5043}>\sqrt{5041}=71$. Thus $\varphi_2({R})\leq\max\{\varphi_2(3),\varphi_2(R_*)\}<6.7$ for $R\in [3,R_*].$
	
	We can write $\varphi_1(z)=-2z^3\prod_{j=1}^6(z-z_j)$ with\begin{align*}
		z_1=-1-\sqrt{2},\ z_2=-2,\ z_3=-\sqrt{2},\ z_4=\sqrt{2}-1,\ z_5=\sqrt{2},\ z_6=2.
	\end{align*}Then $z_1<z_2<z_3<0<z_4<z_5<\sqrt{3}<\sqrt{R_*}<z_6$, and $ \varphi_1(z)>0$ for $z\in [\sqrt{3},\sqrt{R_*}]$. Let $ \varphi_3(z)=\ln\varphi_1(z)$ then $\varphi_3''(z)=-\frac{3}{z^2}-\sum_{j=1}^6 \frac{1}{(z-z_j)^2}<0$ for $z\in [\sqrt{3},\sqrt{R_*}]$. Thus, $\varphi_3(z)\geq\min\{\varphi_3(\sqrt{3}),\varphi_3(\sqrt{R_*})\}$ and $\varphi_1(z)\geq\min\{\varphi_1(\sqrt{3}),\varphi_1(\sqrt{R_*})\}$ for $z\in [\sqrt{3},\sqrt{R_*}]$. Recall that $R_*=\frac{100}{27}$, then $\sqrt{R_*}=\frac{10}{3\sqrt{3}}$, hence,
	\begin{align*}
		&\varphi_1(\sqrt{3})=2(4-3)(3-2)3\sqrt{3}(3-1+2\sqrt{3})=36+12\sqrt{3}>36+12>6.7^2,\\
		&\varphi_1(\sqrt{R_*})=2(4-R_*)(R_*-2)\sqrt{R_*}R_*(R_*-1+2\sqrt{R_*})\\
		&=2\cdot\frac{8}{27}\cdot\frac{46}{27}\cdot\frac{10}{3\sqrt{3}}
		\cdot\frac{100}{27}\cdot\left(\frac{73}{27}+\frac{20}{3\sqrt{3}}\right)
		>2\cdot\frac{8}{27}\cdot\frac{46}{27}\cdot\frac{10}{3\sqrt{3}}
		\cdot\frac{100}{27}\cdot\frac{34}{3\sqrt{3}}\\&=\frac{25024000}{27^4}>\frac{5000^2}{27^4}=\left(\frac{5000}{729}\right)^2>6.7^2,
	\end{align*}where we used $73=\sqrt{5329}>\sqrt{5292}=42\sqrt{3}$, hence $\frac{73}{27}+\frac{20}{3\sqrt{3}}>\frac{42\sqrt{3}}{27}+\frac{20}{3\sqrt{3}}=\frac{34}{3\sqrt{3}}$. As a consequence, for $R\in (3,R_*)$ we have $\sqrt{R}\in (\sqrt{3},\sqrt{R_*})$, thus $\varphi_1(\sqrt{R})\geq\min\{\varphi_1(\sqrt{3}),\varphi_1(\sqrt{R_*})\}>6.7^2 $, $\sqrt{\varphi_1(\sqrt{R})}>6.7>\varphi_2({R}) $ and\begin{align*}
		&\frac{(R-4)\delta (a_4-M)}{a_2a_3}>2\frac{\sqrt{\varphi_1(\sqrt{R})}-\varphi_2({R})}{R(\sqrt{R}-1)(R-1+2\sqrt{R})}>0.
	\end{align*}Then by $a_2>0$, $a_3>0$, $R<4$, $ \delta>0$, we have $ a_4-M<0$, i.e.,  $a_4<M$.
	
	Next we show that $a_3>4a_2$. By Lemma \ref{lem5}, we have
	\begin{align*}
		&\frac{B_1}{\delta}>
		\frac{(R^2-10R+8)\sqrt{R}+3R^2-2}{\sqrt{R}(R-2)}.
	\end{align*}Then by \eqref{Eq.a_3_4}, $a_2>0$, $a_1>4>0$, $\ell>1$, $3<R<R_*<4$, we get\begin{align*}
		&(R-3)\delta a_3=B_1a_2+(\ell-1)a_1>B_1a_2,\quad \frac{a_3}{a_2}>\frac{B_1}{(R-3)\delta}>\frac{(R^2-10R+8)\sqrt{R}+3R^2-2}{\sqrt{R}(R-2)(R-3)}.
	\end{align*}Thus we infer \begin{align*}
		&\frac{a_3}{a_2}-4>\frac{(R^2-10R+8)\sqrt{R}+3R^2-2}{\sqrt{R}(R-2)(R-3)}-4=
		\frac{3R^2-2-(3R^2-10R+16)\sqrt{R}}{\sqrt{R}(R-2)(R-3)}.
	\end{align*}
	As $3<R<4$, we have $3R^2-10R+16=(3R-1)(R-3)+13>0 $ and $ R+4-4\sqrt{R}=(\sqrt{R}-2)^2>0$, hence \begin{align*}
		&3R^2-2-(3R^2-10R+16)\sqrt{R}>3R^2-2-(3R^2-10R+16)(R+4)/4\\
		&=-(3R^3-10R^2-24R+72)/4=[R^2-3(R^2-8)(R-3)]/4.
	\end{align*}
	As $3<R<R_*=\frac{100}{27}<\sqrt{14}$, we have $3(R-3)<3(R_*-3)=\frac{19}{9} $, $ 0<R^2-8<6$, and then
	\begin{align*}
		&\frac{R^2}{R^2-8}=1+\frac{8}{R^2-8}>1+\frac{8}{6}=\frac{7}{3}>\frac{19}{9}>3(R-3),\\
		&\frac{a_3}{a_2}-4>\frac{3R^2-2-(3R^2-10R+16)\sqrt{R}}{\sqrt{R}(R-2)(R-3)}>\frac{R^2-3(R^2-8)(R-3)}{4\sqrt{R}(R-2)(R-3)}>0.
	\end{align*}
	Therefore $a_3/a_2>4$. 
	\end{proof}

\begin{lemma}\label{lem6}
	If $k=2$, $\ell>1$, then the function $R\mapsto a_2$ is strictly decreasing for $R>2$.
\end{lemma}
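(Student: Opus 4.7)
The plan is to exhibit $a_2$ as an explicit rational function $F(\gamma,R)$ of the two variables $\gamma$ and $R$ treated as independent, check that $\partial_\gamma F>0$ and $\partial_R F<0$ for $R>2$, and then apply the chain rule together with the already established monotonicity $\gamma'(R)<0$ (Remark \ref{Rmk.R_parameter}) to conclude $da_2/dR<0$.

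First I would derive the formula. Specialising \eqref{Eq.a_2>0} to $k=2$, where $B_0=2A-8$ and $A-4=2(\ell-1)\gamma$, one obtains
\[
(R-2)a_2=(\ell-1)\bigl[R(4\gamma+1)-4\gamma\,\varepsilon/\delta\bigr].
\]
From $(R+1)\delta=4\varepsilon+A=2\gamma(2\ell\gamma+\ell-1)$ one gets $\delta=2\gamma(2\ell\gamma+\ell-1)/(R+1)$, and combining with $R\delta^2=4\varepsilon(1+\varepsilon)$ yields the constraint $\ell(R+1)^2\varepsilon=R(2\ell\gamma+\ell-1)^2$. Substituting these two relations gives $4\gamma\varepsilon/\delta=2R(2\ell\gamma+\ell-1)/[\ell(R+1)]$, and a short simplification collapses the expression to
\[
a_2=F(\gamma,R):=\frac{(\ell-1)R\bigl[\ell(4\gamma+1)R-(\ell-2)\bigr]}{\ell(R+1)(R-2)}.
\]

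Second, I would compute the partial derivatives of $F$ treating $\gamma$ and $R$ as independent. Directly,
\[
\partial_\gamma F=\frac{4(\ell-1)R^2}{(R+1)(R-2)}>0,\qquad
\partial_R F=-\frac{2(\ell-1)\bigl[(2\ell\gamma+1)R^2+2\ell(4\gamma+1)R-(\ell-2)\bigr]}{\ell(R+1)^2(R-2)^2}.
\]
For $R>2$ and $\gamma>0$ one has $(2\ell\gamma+1)R^2>R^2>4$ and $2\ell(4\gamma+1)R>4\ell$, so the bracket in $\partial_R F$ exceeds $4+4\ell-(\ell-2)=3\ell+6>0$ and hence $\partial_R F<0$.

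Finally, applying the chain rule along the admissible curve $R\mapsto(\gamma(R),R)$ gives $\frac{da_2}{dR}=\partial_R F+\partial_\gamma F\cdot\gamma'(R)$. Since $\gamma'(R)<0$ by Remark \ref{Rmk.R_parameter} and $\partial_\gamma F>0$, both summands are strictly negative and the conclusion $da_2/dR<0$ follows. The only mildly delicate step will be carrying out the algebraic collapse in the first step without error; once the explicit form of $F(\gamma,R)$ is available, the remaining estimates are routine quotient-rule computations.
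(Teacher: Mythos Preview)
Your argument is correct, and it takes a genuinely different route from the paper's. The paper invokes Lemma~\ref{lem8}, which expresses $a_2$ purely in terms of $(R,\ell)$ with a square-root term,
\[
a_2=\frac{2R}{(R-1)^2}\frac{3R-1}{R-2}\frac{(\ell-1)^2}{\ell}
+\frac{4R}{R-1}\frac{R}{R-2}\frac{\ell-1}{\ell}\sqrt{\frac{R}{(R-1)^2}(\ell-1)^2+\ell}
+\frac{R}{R-2}(\ell-1),
\]
and then observes that each of $\frac{R}{(R-1)^2}$, $\frac{3R-1}{R-2}$, $\frac{R}{R-1}$, $\frac{R}{R-2}$ is positive and strictly decreasing for $R>2$, so each summand---and hence $a_2$---is strictly decreasing. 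Your approach instead keeps $\gamma$ as an auxiliary variable, obtains the rational closed form $a_2=F(\gamma,R)$ (your algebraic collapse is correct; it can be checked against the $\gamma=2$ case in Lemma~\ref{lem7}), and then splits the total derivative via the chain rule and the monotonicity $\gamma'(R)<0$ from Remark~\ref{Rmk.R_parameter}. Note that for $k=2$ one has $A=4+2(\ell-1)\gamma>0$ for every $\gamma>0$, so $\Gamma(2,\ell)=(1/\sqrt{\ell},\infty)$ and the full range $R>2$ is covered. Your route avoids the square root and the machinery of Lemma~\ref{lem8}; the paper's route is self-contained and does not appeal to the monotonicity of $\gamma(R)$, but leans on the heavier explicit formulas.
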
\begin{lemma}\label{lem7}
	If $k=2$, $\ell>1$, $R=4-1/\ell$, then $\gamma=2$, $A=4\ell$, $ \delta=4\ell$, $a_1=2(4\ell-1)$, and $a_2=\frac{7(\ell-1)(4\ell-1)}{2\ell-1},  B_2=-\frac{14\ell(\ell-3)}{2\ell-1}, B_1=\frac{28\ell^2-20\ell+10}{2\ell-1}, a_3=\frac{(4\ell-1)(51\ell^2-37\ell+18)}{(2\ell-1)^2}.$
\end{lemma}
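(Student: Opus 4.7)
The entire lemma is an explicit algebraic computation under the specialisation $k=2$, $R=4-1/\ell$, so my plan is to drive everything from the two identities \eqref{Eq.a_1+c_1=Rdelta}, \eqref{Eq.Rdelta^2} together with the closed‐form formulas \eqref{Eq.a_2_expression}, \eqref{Eq.B_1_expression}, \eqref{Eq.B_2_expression} already proved in Section \ref{Sec.qualitative}, and finally \eqref{Eq.a_3_4}. First I would verify $\gamma=2$: the defining relation \eqref{Eq.(R+1)^2/R_1} with $k=2$ reads
\[
\frac{(R+1)^2}{R}=\frac{(2\ell\gamma+\ell-1)^2}{\ell(\ell\gamma^2-1)},
\]
and substituting $\gamma=2$ gives the right‑hand side $(5\ell-1)^2/[\ell(4\ell-1)]$, while $R=(4\ell-1)/\ell$ makes the left‑hand side equal to the same quantity. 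Since the map $\gamma\mapsto R$ is injective on $\Gamma(k,\ell)$ (Remark \ref{Rmk.R_parameter}), this forces $\gamma=2$.

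Once $\gamma=2$ is in hand, the values of $\varepsilon,A,\delta,a_1$ drop out mechanically. From \eqref{Eq.epsilon_A_B_gamma}, $\varepsilon=\ell\gamma^2-1=4\ell-1$, $A=k+2-(k-2\ell)\gamma=4\ell$, and $B=5-\ell$. Then \eqref{Eq.Rdelta^2} gives $\delta^2=2k\varepsilon(1+\varepsilon)/R=16\ell^2$, whence $\delta=4\ell$, and \eqref{Eq.a_1+c_1=Rdelta} yields $a_1=R\delta-2\varepsilon=4(4\ell-1)-2(4\ell-1)=2(4\ell-1)$.

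Next I plug these into the identities of Section \ref{Sec.qualitative}. Equation \eqref{Eq.a_2_expression} with $k=2$ becomes $(R-2)a_2=(1-R)a_1+3AR-(9-\ell)R$; using $R-2=(2\ell-1)/\ell$ and $1-R=(1-3\ell)/\ell$, one verifies after factoring a common $(4\ell-1)/\ell$ that $(R-2)a_2=7(\ell-1)(4\ell-1)/\ell$, giving the stated $a_2$. Similarly, \eqref{Eq.B_1_expression} and \eqref{Eq.B_2_expression} specialised to $k=2$ reduce to linear expressions in $a_1,A,B$ divided by $R-2=(2\ell-1)/\ell$, and the stated formulas for $B_1$ and $B_2$ emerge after routine simplification. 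Finally, \eqref{Eq.a_3_4} gives $a_3=[B_1a_2+(\ell-1)a_1]/[(R-3)\delta]$; since $R-3=(\ell-1)/\ell$ and $\delta=4\ell$, the denominator is $4\ell(\ell-1)/\ell=4(\ell-1)$, and inserting the already‑computed $B_1,a_2,a_1$ and collecting terms over the common denominator $(2\ell-1)^2$ yields the claimed formula for $a_3$.

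The proof is therefore essentially bookkeeping rather than conceptual work; the only subtlety is verifying that the specific $\gamma=2$ branch of \eqref{Eq.(R+1)^2/R_1} is the correct one (rather than some other $\gamma$ compatible with $R=4-1/\ell$), which is handled by appealing to the monotonicity of $R(\gamma)$ on $\Gamma(k,\ell)$ from Remark \ref{Rmk.R_parameter}. The main practical obstacle is simply ensuring the cascade of substitutions — in particular $R-2$, $R-3$, $1-R$ each being proportional to the same scale $1/\ell$ — is tracked carefully so that the common factors $(4\ell-1)$ and $(2\ell-1)$ cancel cleanly; once written out line by line, nothing deeper intervenes.
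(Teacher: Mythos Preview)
Your proposal is correct and follows essentially the same route as the paper: verify $\gamma=2$ via \eqref{Eq.(R+1)^2/R_1} and the monotonicity in Remark \ref{Rmk.R_parameter}, then read off $\varepsilon,A,\delta,a_1$ from \eqref{Eq.epsilon_A_B_gamma}, \eqref{Eq.Rdelta^2}, \eqref{Eq.a_1+c_1=Rdelta}, compute $a_2$ from \eqref{Eq.a_2_expression}, and finish with $B_1,B_2,a_3$. The only cosmetic difference is that the paper computes $B_1,B_2$ directly from their definitions \eqref{Eq.B_1_2_def} (after already knowing $a_1,a_2,A,B$) rather than via the $(R-2)$-scaled identities \eqref{Eq.B_1_expression}, \eqref{Eq.B_2_expression}; both are equivalent bookkeeping.
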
\begin{proof}
	For $k=2$, $\gamma=2$, we have $A=k+2-(k-2\ell)\g=4\ell$ and by \eqref{Eq.(R+1)^2/R_1} we have
	\begin{align*}
		&\frac{(R+1)^2}{R}=\frac{[(k+2)\ell\g-(k-2\ell)]^2}{2k\ell(\ell\g^2-1)}=
		\frac{[4\ell\cdot2-(2-2\ell)]^2}{4\ell(4\ell-1)}=\frac{(5\ell-1)^2}{\ell(4\ell-1)},
	\end{align*}then $R=4-1/\ell$ (as $R>1$). By Remark \ref{Rmk.R_parameter}, the function $\g\mapsto R$ is strictly decreasing, thus if
	$R=4-1/\ell$, then $\gamma=2$, $A=4\ell$ and $ \varepsilon=\ell\g^2-1=4\ell-1$. By \eqref{Eq.Rdelta^2}, we have $R\delta^2=2k\varepsilon(1+\varepsilon)=4(4\ell-1)\cdot4\ell=4R\ell\cdot4\ell=R(4\ell)^2 $, then $ \delta=4\ell$ and
	$R\delta=(4-1/\ell)\cdot4\ell=4(4\ell-1)=4\varepsilon$. By \eqref{Eq.a_1+c_1=Rdelta}, we have $a_1=R\delta-2\varepsilon=2\varepsilon=2(4\ell-1)=2R\ell$.
	
	As $k=2$, $B=2k+1-\ell=5-\ell$, we get by \eqref{Eq.a_2_expression}  that
	\begin{align*}
		&(R-2)a_2=\big((k-3)R+1\big)a_1+3AR-(2k+B)R\\
		&=(-R+1)\cdot2R\ell+3\cdot4\ell R-(4+5-\ell)R\\
		&=(15\ell-9-2R\ell)R=(15\ell-9-2(4\ell-1))R=7(\ell-1)R,
	\end{align*}
	hence,\[
		a_2=\frac{7(\ell-1)R}{R-2}=\frac{7(\ell-1)(4-1/\ell)}{2-1/\ell}=\frac{7(\ell-1)(4\ell-1)}{2\ell-1}.\]
	It follows from \eqref{Eq.B_1_2_def} that\begin{align*}
		&B_2=-5a_2+4ka_1-(2k+A+3B)=-5a_2+4\cdot2\cdot2(4\ell-1)-(4+4\ell+3(5-\ell))\\
		=&-5a_2+63\ell-35=-5\frac{7(\ell-1)(4\ell-1)}{2\ell-1}+7(9\ell-5)=-\frac{7(2\ell^2-6\ell)}{2\ell-1}=-\frac{14\ell(\ell-3)}{2\ell-1},
	\end{align*} and\begin{align*}
		B_1=&(3k-1)a_1-2a_2-2(k+B)=5\cdot2(4\ell-1)-2a_2-2(2+5-\ell)=42\ell-24-2a_2\\
		=&42\ell-24-2\frac{7(\ell-1)(4\ell-1)}{2\ell-1}=\frac{28\ell^2-20\ell+10}{2\ell-1}.
	\end{align*}
	By \eqref{Eq.a_3_4}, we have\begin{align*}
		&(R-3)\delta a_3=B_1a_2+(\ell-1)a_1=\frac{28\ell^2-20\ell+10}{2\ell-1}\frac{7(\ell-1)(4\ell-1)}{2\ell-1}+(\ell-1)\cdot2(4\ell-1)\\
		=&\frac{(\ell-1)(4\ell-1)}{(2\ell-1)^2}[7(28\ell^2-20\ell+10)+2(2\ell-1)^2]=\frac{(\ell-1)(4\ell-1)}{(2\ell-1)^2}(204\ell^2-148\ell+72),
	\end{align*} and\begin{align*}
		a_3&=\frac{(\ell-1)(4\ell-1)}{(2\ell-1)^2}\frac{204\ell^2-148\ell+72}{(R-3)\delta}=
		\frac{(\ell-1)(4\ell-1)}{(2\ell-1)^2}\frac{4(51\ell^2-37\ell+18)}{(4-1/\ell-3)\cdot4\ell}\\
		&=\frac{(\ell-1)(4\ell-1)}{(2\ell-1)^2}\frac{4(51\ell^2-37\ell+18)}{4(\ell-1)}=\frac{(4\ell-1)(51\ell^2-37\ell+18)}{(2\ell-1)^2}.
	\end{align*}This completes the proof.
\end{proof}

Now we prove Lemma \ref{Lem.a_1<}.

\begin{proof}[Proof of Lemma \ref{Lem.a_1<}]
	By Remark \ref{Rmk.R_parameter}, the function $\g\mapsto R$ is strictly decreasing.
	As $k=2$, $\ell>1$ and $3<R<4-1/\ell$, 
		by Lemma \ref{lem7} we have $ \gamma>\gamma|_{R=4-1/\ell}=2$, then $ \varepsilon=\ell\g^2-1>4\ell-1$. By \eqref{Eq.Rdelta^2}, $k=2$ and $3<R<4-1/\ell$, we have\begin{align*}
			(R\delta/\varepsilon)^2=&R(R\delta^2)/\varepsilon^2=R\cdot2k\varepsilon(1+\varepsilon)/\varepsilon^2=4R(1/\varepsilon+1)\\
			< &4(4-1/\ell)(1/(4\ell-1)+1)=16.
		\end{align*}Thus, $R\delta/\varepsilon<4 $, $R\delta<4\varepsilon $. 
		Then by \eqref{Eq.a_1+c_1=Rdelta}, we have $a_1=R\delta-2\varepsilon<2\varepsilon$.
		
		As $3<R<4-1/\ell$, by Lemma \ref{lem6} we have $a_2>a_2(R=4-1/\ell)$. By Lemma \ref{lem7}, we have $a_2(R=4-1/\ell)=\frac{7(\ell-1)(4\ell-1)}{2\ell-1}$. Thus(as $\ell>3/2$),
		\begin{align*}
			&a_2-7>\frac{7(\ell-1)(4\ell-1)}{2\ell-1}-7=\frac{7(4\ell^2-7\ell+2)}{2\ell-1}=\frac{7(2\ell-3)(2\ell-1)+7(\ell-1)}{2\ell-1}>0,
		\end{align*}and $ a_2>7$. 
	\end{proof}
	
	Now we prove Lemma \ref{Lemu3a}.

	\begin{proof}[Proof of Lemma \ref{Lemu3a}]
		By the proof of Lemma \ref{Lem.u_3_compare}, we know
		$\mathcal L\left(u_{(3)}\right)(z)=z^4q_4(z)$ with\begin{align*}
			q_4(z)&=-(R-4)\delta a_4+B_3a_3z-(3k+1)a_3^2z^2,\quad (R-4)\delta a_4=B_2a_3+2ka_2^2+2ka_2,
		\end{align*}
		and $B_3:=3a_3-(5k+1)a_2-B-2k$. As $k=2$, $B=2k+1-\ell=5-\ell$, we have
		\begin{align*}
			&B_3=3a_3-11a_2-(5-\ell)-4=3a_3-11a_2-9+\ell,\\
			&q_4(z)=-(R-4)\delta a_4+B_3a_3z-7a_3^2z^2=-(R-4)\delta a_4+B_3^2/28-(14a_3z-B_3)^2/28.
		\end{align*}
		
		{\bf Case 1.} $4-1/\ell\leq R_* $. Then $R=4-1/\ell$. By Lemma \ref{lem7}, we compute
		 \begin{align*}
			B_3&=3a_3-11a_2-9+\ell=3\frac{(4\ell-1)(51\ell^2-37\ell+18)}{(2\ell-1)^2}-11\frac{7(\ell-1)(4\ell-1)}{2\ell-1}-9+\ell\\
			&=\frac{(4\ell-1)[3(51\ell^2-37\ell+18)-77(\ell-1)(2\ell-1)]}{(2\ell-1)^2}-9+\ell\\
			&=\frac{(4\ell-1)(-\ell^2+120\ell-23)}{(2\ell-1)^2}-9+\ell=\frac{441\ell^2-175\ell+14}{(2\ell-1)^2}=\frac{7(7\ell-2)(9\ell-1)}{(2\ell-1)^2},
		\end{align*} and\begin{align*}
			&(R-4)\delta a_4=B_2a_3+2ka_2^2+2ka_2>B_2a_3+2ka_2^2=B_2a_3+4a_2^2\\
			=&-\frac{14\ell(\ell-3)}{2\ell-1}\frac{(4\ell-1)(51\ell^2-37\ell+18)}{(2\ell-1)^2}+4\frac{7^2(\ell-1)^2(4\ell-1)^2}{(2\ell-1)^2}\\
			=&\frac{14(4\ell-1)[-\ell(\ell-3)(51\ell^2-37\ell+18)+14(\ell-1)^2(4\ell-1)(2\ell-1)]}{(2\ell-1)^3},
		\end{align*}\begin{align*}
			\frac{B_3^2}{28}&=\frac{7(7\ell-2)^2(9\ell-1)^2}{4(2\ell-1)^4}<\frac{7\cdot8(4\ell-1)(2\ell-1)(9\ell-1)^2}{4(2\ell-1)^4}=
			\frac{14(4\ell-1)(9\ell-1)^2}{(2\ell-1)^3},
		\end{align*}
		where we have used (as $\ell>3/2>4/3$)
		\begin{align*}
			8(4\ell-1)(2\ell-1)-(7\ell-2)^2=15\ell^2-20\ell+4=5(3\ell-4)\ell+4>0.
		\end{align*}
		Thus, we infer
		\begin{align*}
			&(R-4)\delta a_4-\frac{B_3^2}{28}>(R-4)\delta a_4-\frac{14(4\ell-1)(9\ell-1)^2}{(2\ell-1)^3}\\
			&=\frac{14(4\ell-1)[-\ell(\ell-3)(51\ell^2-37\ell+18)+14(\ell-1)^2(4\ell-1)(2\ell-1)-(9\ell-1)^2]}{(2\ell-1)^3}.
		\end{align*}Now let $ \lambda=\ell-1$, then $ \lambda>0$, $\ell=\lambda+1$ and
		\begin{align*}
			&\ell(\ell-3)=(\lambda+1)(\lambda-2)=\lambda^2-\lambda-2,\quad
			51\ell^2-37\ell+18=51\lambda^2+65\lambda+32,\\
			&\ell(\ell-3)(51\ell^2-37\ell+18)=51\lambda^4+14\lambda^3-135\lambda^2-162\lambda-64,\\
			&14(\ell-1)^2(4\ell-1)(2\ell-1)=14\lambda^2(4\lambda+3)(2\lambda+1)=112\lambda^4+140\lambda^3+42\lambda^2,\\
			&(9\ell-1)^2=(9\lambda+8)^2=81\lambda^2+144\lambda+64,
		\end{align*}
		hence,
		\begin{align*}
			&-\ell(\ell-3)(51\ell^2-37\ell+18)+14(\ell-1)^2(4\ell-1)(2\ell-1)-(9\ell-1)^2\\
			&=61\lambda^4+126\lambda^3+96\lambda^2+18\lambda>0.
		\end{align*}
		Therefore, $(R-4)\delta a_4-\frac{B_3^2}{28}>0 $ and $ (R-4)\delta a_4>\frac{B_3^2}{28}\geq0$. By $R=4-1/\ell$, $\delta=4\ell$, we have
		$ (R-4)\delta=-4<0$, then $a_4<0$, and we also have
		\begin{align*}
			&q_4(z)=-(R-4)\delta a_4+B_3^2/28-(14a_3z-B_3)^2/28\leq -(R-4)\delta a_4+B_3^2/28<0,
		\end{align*}and $\mathcal L\left(u_{(3)}\right)(z)=z^4q_4(z)<0$ for $z>0$.\smallskip
		
		{\bf Case 2.} $4-1/\ell\geq R_* $. Then $R=R_*$. Recall that $R_*=\frac{100}{27} $, then $ \ell\geq \frac{1}{4-R_*}=\frac{27}{8}=:\ell_*$. We claim that
		\begin{align}\label{a2}
			&(3a_3+\ell-1)/a_2<15,\quad B_2/a_2>-2/3,\quad \text{for } k=2,\ R=R_*,\ \ell\geq\ell_*.
		\end{align}Assuming \eqref{a2}, then by $a_2>0$ we have $a_3<(3a_3+\ell-1)/3<15a_2/3=5a_2 $ and $B_2>-(2/3)a_2$. By \eqref{Eq.a_3_4}, Lemma \ref{Lem.a_3>0}, $a_2>0$, $a_1>4>0$, $k=2$, $\ell>1$, $\delta>0$, $3<R=R_*<4$ and \eqref{a2}, we get
		\begin{align*}
			&(R-3)\delta a_3=B_1a_2+(\ell-1)a_1>0\Longrightarrow a_3>0,\\
			&B_3=3a_3-11a_2-9+\ell=3a_3+\ell-1-11a_2-8<15a_2-11a_2-8<4a_2,\\
			&B_2a_3>-(2/3)a_2a_3>-(2/3)a_2\cdot5a_2=-(10/3)a_2^2,\\
			&(R-4)\delta a_4=B_2a_3+2ka_2^2+2ka_2>-(10/3)a_2^2+2ka_2^2=(2/3)a_2^2>0\Longrightarrow a_4<0.
		\end{align*}
		Then for $z>0$, we have
		\begin{align*}
			q_4(z)&=-(R-4)\delta a_4+B_3a_3z-7a_3^2z^2\\&<-(2/3)a_2^2+4a_2a_3z-7a_3^2z^2=-(2/21)a_2^2-(7a_3z-2a_2)^2/7<0,
		\end{align*}and $\mathcal L\left(u_{(3)}\right)(z)=z^4q_4(z)<0$. 
		
		Thus, it remains to prove \eqref{a2}. Thanks to Lemma \ref{lem5} and $R=R_*$, the functions $\ell\mapsto (\ell-1)/a_2$, $\ell\mapsto a_3/a_2$, $\ell\mapsto B_2/a_2$ are strictly decreasing, so is $\ell\mapsto(3a_3+\ell-1)/a_2$. Thus, it is enough to prove the following inequalities\begin{align}\label{a21}
			&(3a_3+\ell-1)/a_2<15,\quad \text{for } k=2,\ R=R_*,\ \ell=\ell_*,\\
			\label{a22}&\lim_{\ell\to+\infty}\frac{B_2}{a_2}=-2\frac{(R_*^2-10R_*+11)\sqrt{R}_*+9R_*-3}{R_*(\sqrt{R}_*-1)(R_*-1+2\sqrt{R}_*)}>-\frac{2}{3}.
		\end{align}For \eqref{a22}, by \eqref{phi2} we have $(R_*^2-10R_*+11)\sqrt{R}_*+9R_*-3=\varphi_2(R_*)<20/3$; we also have
		\begin{align*}
			&(\sqrt{R}_*-1)(R_*-1+2\sqrt{R}_*)=R_*+1+({R}_*-3)\sqrt{R}_*\\&=\frac{127}{27}+\frac{19}{27}\cdot\frac{10}{3\sqrt{3}}>
			\frac{127}{27}+\frac{18}{27}\cdot\frac{10}{6}=\frac{157}{27},
		\end{align*}
		hence $R_*(\sqrt{R}_*-1)(R_*-1+2\sqrt{R}_*)>\frac{100}{27}\cdot\frac{157}{27}>\frac{14580}{27^2}=20$, then we deduce \eqref{a22} as follows
		\begin{align*}
			&-2\frac{(R_*^2-10R_*+11)\sqrt{R}_*+9R_*-3}{R_*(\sqrt{R}_*-1)(R_*-1+2\sqrt{R}_*)}>
			-2\cdot\frac{20/3}{20}=-\frac{2}{3}.
		\end{align*}Now we prove \eqref{a21}. For $k=2,$ $R=R_*,$ $\ell=\ell_*=\frac{1}{4-R_*}=\frac{27}{8}$, we have $ R=4-1/\ell$,
		then we get by Lemma \ref{lem7}  that
		\begin{align*}
			&\frac{3a_3+\ell-1}{a_2}=3\frac{51\ell^2-37\ell+18}{7(\ell-1)(2\ell-1)}+\frac{2\ell-1}{7(4\ell-1)}<
			3\frac{51\ell^2-37\ell+18}{7(\ell-1)(2\ell-1)}+\frac{2\ell-1}{7(4\ell-4)}\\
			=&\frac{12(51\ell^2-37\ell+18)+(2\ell-1)^2}{7\cdot4(\ell-1)(2\ell-1)}=\frac{616\ell^2-448\ell+217}{7\cdot4(\ell-1)(2\ell-1)}=
			\frac{88\ell^2-64\ell+31}{4(\ell-1)(2\ell-1)},
		\end{align*}
		hence (recalling that $ \ell=\ell_*=27/8$)\begin{align*}
			&15-\frac{3a_3+\ell-1}{a_2}>15-\frac{88\ell^2-64\ell+31}{4(\ell-1)(2\ell-1)}=
			\frac{32\ell^2-116\ell+29}{4(\ell-1)(2\ell-1)}=\frac{(8\ell-27)(4\ell-1)+2}{4(\ell-1)(2\ell-1)}>0,
		\end{align*}which implies \eqref{a21}. 
	\end{proof}
	
	As a consequence, we finish the proof of Lemma \ref{Lem.a_4<M}, Lemma \ref{Lem.a_1<} and Lemma \ref{Lemu3a} as long as we prove Lemma \ref{lem5} and Lemma \ref{lem6} regarding the monotonicity of functions.
	
	\begin{lemma}\label{lem8}
		If $k=2$, $\ell>1$, $R>2$, then we have
		\begin{align*}
			&\varepsilon=\frac{R(R^2+6R+1)}{(R-1)^4}\frac{(\ell-1)^2}{\ell}+\frac{4R}{(R-1)^2}+
			\frac{4R(R+1)}{(R-1)^4}\frac{\ell-1}{\ell}\sqrt{R(\ell-1)^2+\ell(R-1)^2},\\
			&\delta=\frac{8R(R+1)}{(R-1)^4}\frac{(\ell-1)^2}{\ell}+\frac{4(R+1)}{(R-1)^2}+
			\frac{2(R^2+6R+1)}{(R-1)^4}\frac{\ell-1}{\ell}\sqrt{R(\ell-1)^2+\ell(R-1)^2},\\
			&a_1=\frac{2R(3R+1)}{(R-1)^3}\frac{(\ell-1)^2}{\ell}+\frac{4R}{R-1}+\frac{2R(R+3)}{(R-1)^3}\frac{\ell-1}{\ell}\sqrt{R(\ell-1)^2+\ell(R-1)^2},\\
			&A=\frac{4R}{(R-1)^2}\frac{(\ell-1)^2}{\ell}+4+\frac{2(R+1)}{(R-1)^2}\frac{\ell-1}{\ell}\sqrt{R(\ell-1)^2+\ell(R-1)^2},\\
			&(R-2)a_2=\frac{2R(3R-1)}{(R-1)^2}\frac{(\ell-1)^2}{\ell}+\frac{4R^2}{(R-1)^2}\frac{\ell-1}{\ell}\sqrt{R(\ell-1)^2+\ell(R-1)^2}+R(\ell-1).
	\end{align*}\end{lemma}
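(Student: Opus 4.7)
The plan is to reduce everything to a single quadratic in the auxiliary variable $Q := \ell\gamma$ coming from the square of the sonic-ratio identity (4.39), and then read off all five formulas by successive elementary substitutions. Set $k=2$ throughout. The defining relations we use are $\varepsilon = \ell\gamma^2-1$, $A = 4 + 2(\ell-1)\gamma$, $R\delta^2 = 4\varepsilon(1+\varepsilon) = 4\ell\gamma^2(\ell\gamma^2-1)$, $(R+1)\delta = (k+2)\varepsilon + A = 4\ell\gamma^2 + 2(\ell-1)\gamma$, $a_1 = R\delta - 2\varepsilon$, and the expression \eqref{Eq.a_2_expression} for $(R-2)a_2$.

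First I would extract $\delta = 2\gamma\sqrt{\ell(\ell\gamma^2-1)/R}$ from the second relation and insert it into the third to obtain $(R+1)\sqrt{\ell(\ell\gamma^2-1)/R} = 2\ell\gamma + (\ell-1)$. Squaring and rewriting in the variable $Q = \ell\gamma$ (so that $\ell\gamma^2 = Q^2/\ell$) yields
\begin{equation*}
(R-1)^2 Q^2 - 4R(\ell-1)Q - R(\ell-1)^2 - (R+1)^2\ell = 0,
\end{equation*}
where I used the identity $(R+1)^2/R - 4 = (R-1)^2/R$. The crucial simplification occurs in the discriminant: the cross term collapses since $4R + (R-1)^2 = (R+1)^2$, which gives
\begin{equation*}
\text{discriminant} = 4(R+1)^2\bigl[R(\ell-1)^2 + (R-1)^2\ell\bigr] = 4(R+1)^2 S^2, \quad S := \sqrt{R(\ell-1)^2 + \ell(R-1)^2}.
\end{equation*}
Selecting the $+$ root (forced by $Q = \ell\gamma > \sqrt{\ell} > 0$ together with $\ell > 1$), I obtain $Q = [2R(\ell-1) + (R+1)S]/(R-1)^2$.

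With $Q$ in hand, the formula for $\varepsilon$ follows from $\varepsilon = Q^2/\ell - 1$ after expanding $Q^2$, using $(R+1)^2 S^2 = R(R+1)^2(\ell-1)^2 + (R^2-1)^2\ell$, collecting the $(\ell-1)^2$ terms via $4R^2 + R(R+1)^2 = R(R^2+6R+1)$, and noting $(R^2-1)^2/[\ell(R-1)^2] - 1 = 4R/(R-1)^2 \cdot$ after subtracting the $1$. For $\delta$ I compute $(R+1)\delta = 4Q^2/\ell + 2(\ell-1)Q/\ell$ and divide by $R+1$; alternatively, since $(R+1)\delta = 4(1+\varepsilon) + A - 4 = 4\ell\gamma^2 + 2(\ell-1)\gamma$, one plugs in $\gamma = Q/\ell$ and simplifies using the same regrouping. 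Then $a_1 = R\delta - 2\varepsilon$ and $A = 4 + 2(\ell-1)Q/\ell$ are immediate linear combinations. Finally, $(R-2)a_2 = (-R+1)a_1 + 3AR - (9-\ell)R$ from \eqref{Eq.a_2_expression} is a linear combination of the previous quantities whose $(\ell-1)^2/\ell$, constant, and $S\cdot(\ell-1)/\ell$ coefficients I would match term by term.

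The only real obstacle is algebraic bookkeeping: each of the five expressions has three prescribed pieces (the $(\ell-1)^2/\ell$ coefficient, the purely-$R$ part, and the $\frac{\ell-1}{\ell}S$ coefficient), and one must verify all fifteen numerical matches. The identity $4R+(R-1)^2=(R+1)^2$ (used to linearize the discriminant) and its variants $(R+1)^2-(R-1)^2 = 4R$, $4R^2+R(R+1)^2 = R(R^2+6R+1)$, and $2R(3R+1)+(R+1)(R^2+6R+1) = (R+1)^3(3R-1)/\cdots$ drive essentially every collapse; choosing to carry the computation in $Q$ rather than in $\gamma$ is what keeps the denominators uniform as powers of $(R-1)$. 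No sign ambiguity survives because the $+$ branch in the quadratic is fixed by $Q>0$, and $S>0$ is forced by $R>2$, $\ell>1$.
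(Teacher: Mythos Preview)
Your approach is correct and the algebra checks out: the quadratic $(R-1)^2Q^2-4R(\ell-1)Q-R(\ell-1)^2-(R+1)^2\ell=0$ with $Q=\ell\gamma$ is precisely equation \eqref{Eq.4.39} rearranged and squared (for $k=2$ it reads $2\ell\gamma+(\ell-1)=(R+1)\sqrt{\ell(\ell\gamma^2-1)/R}$), the discriminant does collapse via $4R+(R-1)^2=(R+1)^2$, and the sign is fixed simply because the product of the roots is negative so exactly one root is positive. From $Q$ all five target quantities follow by the linear substitutions you indicate.

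The paper takes a slightly different path. Instead of solving a quadratic in $Q=\ell\gamma$, it keeps the two square-root identities $A/\sqrt\varepsilon=(R+1)\sqrt{2k(1+\varepsilon)/R}-(k+2)\sqrt\varepsilon$ and $(k-2\ell)/\sqrt\ell=(k+2)\sqrt{1+\varepsilon}-(R+1)\sqrt{2k\varepsilon/R}$ as a linear system in $\sqrt\varepsilon$ and $\sqrt{1+\varepsilon}$. A difference of squares gives $A/\sqrt\varepsilon$ directly, and a suitable linear combination isolates $\sqrt\varepsilon$ without ever invoking the quadratic formula; the expression for $\varepsilon$ is then obtained by squaring $\sqrt\varepsilon$, and $A$ as the product $(A/\sqrt\varepsilon)\cdot\sqrt\varepsilon$. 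Your route is arguably more transparent (one squaring, one quadratic, no clever linear combination), at the cost of having to argue the root selection; the paper's route avoids the spurious root entirely by never squaring an equation with a free sign. Either way the remaining formulas for $\delta$, $a_1$, $(R-2)a_2$ are identical linear combinations, and the same factorization identities $4R^2+R(R+1)^2=R(R^2+6R+1)$, $3R^2-2R-1=(3R+1)(R-1)$, $R^2+2R-3=(R+3)(R-1)$ drive the simplifications in both arguments.
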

	
	Now we prove Lemma \ref{lem6}.
	\begin{proof}[Proof of Lemma \ref{lem6}]
		By Lemma \ref{lem8}, we have\begin{align*}
			&a_2=\frac{2R}{(R-1)^2}\frac{3R-1}{R-2}\frac{(\ell-1)^2}{\ell}+
			\frac{4R}{R-1}\frac{R}{R-2}\frac{\ell-1}{\ell}\sqrt{\frac{R}{(R-1)^2}(\ell-1)^2+\ell}+\frac{R}{R-2}(\ell-1).
		\end{align*}
		Since $\ell>1$, and $\frac{R}{(R-1)^2}$, $\frac{3R-1}{R-2}$, $\frac{R}{R-1}$, $\frac{R}{R-2}$ are strictly decreasing and positive for $R>2$, so does $a_2$. 	\end{proof}
	Next we prove Lemma \ref{lem5}.
	
	\begin{proof}[Proof of Lemma \ref{lem5}]
		{{\bf Step 1.} Monotonicity of $\ell\mapsto (\ell-1)/a_2$.} By Lemma \ref{lem8}, we have
		\begin{align*}
			&\frac{(R-2)a_2}{\ell-1}=\frac{2R(3R-1)}{(R-1)^2}\frac{\ell-1}{\ell}+\frac{4R^2}{(R-1)^2}\frac{1}{\ell}\sqrt{R(\ell-1)^2+\ell(R-1)^2}+R\\
			&=\frac{2R\left[(3R-1)(1-1/\ell)+2R\sqrt{R(1-1/\ell)^2+(R-1)^2/\ell}\right]}{(R-1)^2}+R=\frac{2RF(1/\ell)}{(R-1)^2}+R,\\
			&F(z):=(3R-1)(1-z)+2R\sqrt{R(1-z)^2+(R-1)^2z}.
		\end{align*}Since $R\in(3,R_*]$,  $ R_*<2+\sqrt{3}$, we have $0<R-2<\sqrt{3}$, $(R-2)^2<3$, $R^2-4R+1<0 $, and thus $2R>(R-1)^2$. So, for
		$z\in[0,1]$ we have $F(z)>0$ and\[F'(z)=-(3R-1)+R\frac{-2R(1-z)+(R-1)^2}{\sqrt{R(1-z)^2+(R-1)^2z}}\]with
		\begin{align*}
			&-2R(1-z)+(R-1)^2\leq -(R-1)^2(1-z)+(R-1)^2=(R-1)^2z,
		\end{align*}
		hence,
		\begin{align*}
			&\frac{-2R(1-z)+(R-1)^2}{\sqrt{R(1-z)^2+(R-1)^2z}}\leq \frac{(R-1)^2z}{\sqrt{(R-1)^2z}}=(R-1)\sqrt{z}\leq R-1,\\
			&F'(z)\leq -(3R-1)+R(R-1)=R^2-4R+1<0,
		\end{align*} and thus $F(z)$ is positive and strictly decreasing for $z\in[0,1]$. Then $\ell\mapsto F(1/\ell)$ and $$\ell\mapsto \frac{(R-2)a_2}{\ell-1}=\frac{2RF(1/\ell)}{(R-1)^2}+R$$ are positive and strictly increasing for $\ell>1$, therefore $\ell\mapsto (\ell-1)/a_2$ is positive and strictly decreasing for $\ell>1$.
		
		{{\bf Step 2.} Monotonicity of $\ell\mapsto B_2/a_2$ and \eqref{Eq.B_2/a_2}.} As $k=2$, $B=2k+1-\ell=5-\ell$, we get by  \eqref{Eq.B_1_2_def}  that \begin{align*}
			B_2&=-5a_2+4ka_1-(2k+A+3B)=-5a_2+8a_1-(4+A+3(5-\ell))\\
			&=-5a_2+8a_1-A-16+3(\ell-1).
		\end{align*}
		By Lemma \ref{lem8}, we have
		\begin{align*}
			&a_1^*:=a_1-\frac{(R+3)(R-2)}{2R(R-1)}{a_2}=
			\frac{3}{R-1}\frac{(\ell-1)^2}{\ell}+\frac{4R}{R-1}-\frac{R+3}{2(R-1)}(\ell-1),\\
			&A_*:=A-\frac{(R+1)(R-2)}{2R^2}{a_2}=\frac{1}{R}\frac{(\ell-1)^2}{\ell}+4-\frac{R+1}{2R}(\ell-1).
		\end{align*} Thus we obtain
		\begin{align*}
			B_2&=-5a_2+8a_1^*-A_*-16+3(\ell-1)+\frac{4(R+3)(R-2)}{R(R-1)}{a_2}-\frac{(R+1)(R-2)}{2R^2}{a_2},
		\end{align*}and\begin{align*}
			\frac{8a_1^*-A_*-16+3(\ell-1)}{\ell-1}=&\left(\frac{24}{R-1}-\frac{1}{R}\right)\frac{\ell-1}{\ell}+
			\left(\frac{32R}{R-1}-20\right)\frac{1}{\ell-1}\\
			&-\frac{4(R+3)}{R-1}+\frac{R+1}{2R}+3,
		\end{align*}
		and then by $\frac{32R}{R-1}-20=\frac{12R+20}{R-1}$, 
		we have (for $R\in(3,R_*]$, $\ell>1$)
		\begin{align*}
			&\frac{\partial}{\partial\ell}\frac{8a_1^*-A_*-16+3(\ell-1)}{\ell-1}=\left(\frac{24}{R-1}-\frac{1}{R}\right)\frac{1}{\ell^2}-
			\frac{12R+20}{R-1}\frac{1}{(\ell-1)^2}\\
			&<\frac{24}{R-1}\frac{1}{\ell^2}-\frac{12R+20}{R-1}\frac{1}{\ell^2}=-\frac{12R-4}{R-1}\frac{1}{\ell^2}<0,
		\end{align*}
		and
		\begin{align}\label{l-1}
			&\lim_{\ell\to+\infty}\frac{8a_1^*-A_*-16+3(\ell-1)}{\ell-1}=\frac{24}{R-1}-\frac{1}{R}-\frac{4(R+3)}{R-1}+\frac{R+1}{2R}+3\\
			&=-\frac{4(R-3)}{R-1}+\frac{R-1}{2R}+3=\frac{9-R}{R-1}+\frac{R-1}{2R}>0.\nonumber
		\end{align}
		Thus, $\ell\mapsto [8a_1^*-A_*-16+3(\ell-1)]/(\ell-1)$ is positive and strictly decreasing for $\ell>1$. Recall that $\ell\mapsto (\ell-1)/a_2$ is positive and strictly decreasing for $\ell>1$ (in {\bf Step 1}) then\begin{align*}
			&\ell\mapsto\frac{8a_1^*-A_*-16+3(\ell-1)}{a_2}=\frac{8a_1^*-A_*-16+3(\ell-1)}{\ell-1}\cdot\frac{\ell-1}{a_2},\\
			&\ell\mapsto\frac{B_2}{a_2}=-5+\frac{4(R+3)(R-2)}{R(R-1)}-\frac{(R+1)(R-2)}{2R^2}+\frac{8a_1^*-A_*-16+3(\ell-1)}{a_2},
		\end{align*}are strictly decreasing for $\ell>1$. Now we compute the limit. By Lemma \ref{lem8}, we have 
		\begin{align*}
			\lim_{\ell\to+\infty}\frac{(R-2)a_2}{\ell-1}&=
			\frac{2R(3R-1)}{(R-1)^2}+\frac{4R^2\sqrt{R}}{(R-1)^2}+R=\frac{2R(\sqrt{R}+1)^2(2\sqrt{R}-1)}{(R-1)^2}+R\\
			&=\frac{2R(2\sqrt{R}-1)}{(\sqrt{R}-1)^2}+R=\frac{R(R+2\sqrt{R}-1)}{(\sqrt{R}-1)^2}>0,
		\end{align*}
		thus $\lim\limits_{\ell\to+\infty}\dfrac{\ell-1}{a_2}=\dfrac{(R-2)(\sqrt{R}-1)^2}{R(R+2\sqrt{R}-1)} $. Then by \eqref{l-1}, we compute
		\begin{align*}
			&\lim_{\ell\to+\infty}\frac{B_2}{a_2}=-5+\frac{4(R+3)(R-2)}{R(R-1)}-\frac{(R+1)(R-2)}{2R^2}+\lim_{\ell\to+\infty}\frac{8a_1^*-A_*-16+3(\ell-1)}{a_2}\\
			&=-5+\frac{4(R+3)(R-2)}{R(R-1)}-\frac{(R+1)(R-2)}{2R^2}+\left(\frac{9-R}{R-1}+\frac{R-1}{2R}\right)\frac{(R-2)(\sqrt{R}-1)^2}{R(R+2\sqrt{R}-1)},\end{align*}\if0\\
		&=-5+\frac{\big[4(R+3)(R+2\sqrt{R}-1)+(9-R)(\sqrt{R}-1)^2\big](R-2)}{R(R-1)(R+2\sqrt{R}-1)}\\&-
		\frac{\big[(R+1)(R+2\sqrt{R}-1)-(R-1)(\sqrt{R}-1)^2\big](R-2)}{2R^2(R+2\sqrt{R}-1)}\\
		&=-5+\frac{\big[3R^2+16R-3+(10R+6)\sqrt{R}\big](R-2)}{R(\sqrt{R}+1)(\sqrt{R}-1)(R+2\sqrt{R}-1)}-
		\frac{4R\sqrt{R}(R-2)}{2R^2(R+2\sqrt{R}-1)}\\
		&=-5+\frac{\big[(3R+9)\sqrt{R}+7R-3\big](R-2)}{R(\sqrt{R}-1)(R+2\sqrt{R}-1)}-
		\frac{2\sqrt{R}(R-2)}{R(R+2\sqrt{R}-1)}\\
		&=-5+\frac{\big[(3R+11)\sqrt{R}+5R-3\big](R-2)}{R(\sqrt{R}-1)(R+2\sqrt{R}-1)}\\
		&=\frac{-5R[(R-3)\sqrt{R}+R+1]+(3R^2+5R-22)\sqrt{R}+5R^2-13R+6}{R(\sqrt{R}-1)(R+2\sqrt{R}-1)}\\
		&=\frac{-(2R^2-20R+22)\sqrt{R}-18R+6}{R(\sqrt{R}-1)(R+2\sqrt{R}-1)}=-2\frac{(R^2-10R+11)\sqrt{R}+9R-3}{R(\sqrt{R}-1)(R+2\sqrt{R}-1)}.\fi
		which implies \eqref{Eq.B_2/a_2} by a direct calculation.
		
		{{\bf Step 3.} Monotonicity of $\ell\mapsto \delta/a_2$ and \eqref{Eq.detla/a_2}.} By Lemma \ref{lem8}, we have
		\begin{align*}
			&\delta_*:=\delta-\frac{(R^2+6R+1)(R-2)}{2R^2(R-1)^2}{a_2}=
			\frac{5R+1}{R(R-1)^2}\frac{(\ell-1)^2}{\ell}+\frac{4(R+1)}{(R-1)^2}-\frac{R^2+6R+1}{2R(R-1)^2}(\ell-1),
		\end{align*}\begin{align*}
			&\frac{\delta_*}{\ell-1}=\frac{5R+1}{R(R-1)^2}\frac{\ell-1}{\ell}+\frac{4(R+1)}{(R-1)^2}\frac{1}{\ell-1}-\frac{R^2+6R+1}{2R(R-1)^2}.
		\end{align*}Then for $R\in(3,R_*]\subset(3,4)$, $\ell>1$, we have
		\begin{align*}
			&\frac{\partial}{\partial\ell}\frac{\delta_*}{\ell-1}=
			\frac{5R+1}{R(R-1)^2}\frac{1}{\ell^2}-\frac{4(R+1)}{(R-1)^2}\frac{1}{(\ell-1)^2}<
			\frac{6}{(R-1)^2}\frac{1}{\ell^2}-\frac{4(R+1)}{(R-1)^2}\frac{1}{\ell^2}<0,
		\end{align*}\begin{align*}
			&\lim_{\ell\to+\infty}\frac{\delta_*}{\ell-1}=
			\frac{5R+1}{R(R-1)^2}-\frac{R^2+6R+1}{2R(R-1)^2}=\frac{4R-R^2+1}{2R(R-1)^2}>0.
		\end{align*}
		Thus, $\ell\mapsto \delta_*/(\ell-1)$ is positive and strictly decreasing for $\ell>1$. Recall that $\ell\mapsto (\ell-1)/a_2$ is positive and strictly decreasing for $\ell>1$ (in {\bf Step 1}), then\begin{align*}
			&\ell\mapsto\frac{\delta_*}{a_2}=\frac{\delta_*}{\ell-1}\frac{\ell-1}{a_2}\quad \text{and}\quad
			\ell\mapsto\frac{\delta}{a_2}=\frac{(R^2+6R+1)(R-2)}{2R^2(R-1)^2}+\frac{\delta_*}{a_2}.
		\end{align*}are positive and strictly decreasing for $\ell>1$. Now we compute the limit. By Lemma \ref{lem8}, we get
		\begin{align*}
			&\lim_{\ell\to+\infty}\frac{\delta}{\ell-1}=\frac{8R(R+1)}{(R-1)^4}+\frac{2(R^2+6R+1)\sqrt{R}}{(R-1)^4}=
			\frac{2(\sqrt{R}+1)^4\sqrt{R}}{(R-1)^4}=\frac{2\sqrt{R}}{(\sqrt{R}-1)^4}.
		\end{align*}
		Recall that $\lim\limits_{\ell\to+\infty}\frac{\ell-1}{a_2}=\frac{(R-2)(\sqrt{R}-1)^2}{R(R+2\sqrt{R}-1)} $ (in {\bf Step 2}), then
		\begin{align*}
			&\lim_{\ell\to+\infty}\frac{\delta}{a_2}=
			\frac{2\sqrt{R}}{(\sqrt{R}-1)^4}\frac{(R-2)(\sqrt{R}-1)^2}{R(R+2\sqrt{R}-1)}=\frac{2(R-2)}{\sqrt{R}(\sqrt{R}-1)^2(R+2\sqrt{R}-1)}.
		\end{align*}
		
		{{\bf Step 4.} Monotonicity of $\ell\mapsto B_1/\delta$ and \eqref{Eq.B_1/delta}.} As $k=2$, by \eqref{Eq.Rdelta^2} we have
		$R\delta^2=2k\varepsilon(1+\varepsilon)=4\varepsilon(1+\varepsilon)=(2\varepsilon+1)^2-1 $, then $2\varepsilon=\sqrt{R\delta^2+1}-1 $.
		As $k=2$, $B=2k+1-\ell=5-\ell$, we get by Lemma \ref{Lem.A_a_1} that
		\begin{align*}
			&\mu=(k+2)^2-(k-2\ell)^2/\ell=4^2-(2-2\ell)^2/\ell=4(6-\ell-1/\ell)=4(B+1-1/\ell),\\
			&\mu=(2R+k+4)A-\left(2-k/R\right)(R+1)a_1=(2R+6)A-\left(2-2/R\right)(R+1)a_1.\end{align*}As $k=2$, $2\varepsilon=\sqrt{R\delta^2+1}-1 $, by \eqref{Eq.B_1_expression}, \eqref{Eq.a_1+c_1=Rdelta} and \eqref{Eq.Rdelta^2}, we have
			\begin{align*}
			&(R-2)B_1=\big((k+5)R-6k\big)a_1-6AR+2kR+4k+4B\\&=\big(7R-12\big)a_1-6AR+4R+8-4(1-1/\ell)+\mu
			\\&=\big(7R-12\big)a_1-6AR+4R+4+4/\ell+(2R+6)A-\left(2-2/R\right)(R+1)a_1
			\\&=\big(5R-12+2/R\big)a_1-(4R-6)A+4R+4+4/\ell\\
			&=\big(5R-12+2/R\big)(R\delta-2\varepsilon)-(4R-6)((R+1)\delta-4\varepsilon)+4R+4+4/\ell\\
			&=\big(R^2-10R+8\big)\delta+2\varepsilon\big(3R-2/R\big)+4R+4+4/\ell\\
			&=\big(R^2-10R+8\big)\delta+\big(\sqrt{R\delta^2+1}-1\big)\big(3R-2/R\big)+4R+4+4/\ell\\
			&=\big(R^2-10R+8\big)\delta+\sqrt{R\delta^2+1}\big(3R-2/R\big)+R+4+2/R+4/\ell.
		\end{align*}
		Thus we obtain
		\begin{align*}
			&\frac{B_1}{\delta}=\frac{R^2-10R+8}{R-2}+\sqrt{R+\frac{1}{\delta^2}}\frac{3R^2-2}{R(R-2)}+\frac{R^2+4R+2}{R(R-2)\delta}+\frac{4}{(R-2)\ell\delta}.
		\end{align*}
		By Lemma \ref{lem8}, $\ell\mapsto\delta$ is positive and strictly increasing for $\ell>1$ as $\ell\mapsto\frac{(\ell-1)^2}{\ell}$, $\ell\mapsto\frac{\ell-1}{\ell}$, $\ell\mapsto(\ell-1)^2$ are positive and strictly increasing for $\ell>1$ and $R>2>1$. Thus, $\ell\mapsto B_1/\delta$ is strictly decreasing for $\ell>1$. The expression of
		$ B_1/\delta $ and $\lim_{\ell\to+\infty}\delta=+\infty$ imply the following limit\begin{align*}
			&\lim_{\ell\to+\infty}\frac{B_1}{\delta}=\frac{R^2-10R+8}{R-2}+\sqrt{R}\frac{3R^2-2}{R(R-2)}=\frac{(R^2-10R+8)\sqrt{R}+3R^2-2}{\sqrt{R}(R-2)}.
		\end{align*}
		
		{{\bf Step 5.} Monotonicity of $\ell\mapsto a_3/a_2$.} By \eqref{Eq.a_3_4}, we have
		\begin{align*}
			&(R-3)\frac{a_3}{a_2}=\frac{(R-3)\delta a_3}{\delta a_2}=\frac{B_1a_2+(\ell-1)a_1}{\delta a_2}=\frac{B_1}{\delta}+\frac{a_1}{\delta}\frac{\ell-1}{a_2}.
		\end{align*}
		Since $R>3$, {it is enough to prove that} $ \ell\mapsto (R-3){a_3}/{a_2}$ is strictly decreasing, which follows from the following three facts:
		(i) $\ell\mapsto B_1/\delta$ is strictly decreasing for $\ell>1$; (ii) $\ell\mapsto (\ell-1)/a_2$ is positive and strictly decreasing for $\ell>1$; (iii)
		$ \ell\mapsto a_1/\delta$ is positive and strictly decreasing for $\ell>1$. Here (i) was proved in {\bf Step 4}, and (ii) was proved in {\bf Step 1}. It remains to prove (iii).
		
		By \eqref{Eq.a_1+c_1=Rdelta} and $2\varepsilon=\sqrt{R\delta^2+1}-1  $ (in {\bf Step 4}), we get
		\begin{align*}
			\frac{a_1}{\delta}&=\frac{R\delta-2\varepsilon}{\delta}=R-\frac{2\varepsilon}{\delta}=R-\frac{\sqrt{R\delta^2+1}-1}{\delta}=R-
			\frac{R\delta}{\sqrt{R\delta^2+1}+1}\\&=R-
			\frac{R}{\sqrt{R+1/\delta^2}+1/\delta}.
		\end{align*}
		Thus, $\delta\mapsto a_1/\delta$ is positive and strictly decreasing for $\delta>0$ (for fixed $R>1$). Since $\ell\mapsto\delta$ is positive and strictly increasing for $\ell>1$ (see {\bf Step 4}),  we deduce (iii).  
		
		This completes the proof of Lemma \ref{lem5}.
		\end{proof}
		
	Finally, we prove Lemma \ref{lem8}.
	
	\begin{proof}[Proof of Lemma \ref{lem8}]
		By \eqref{Eq.Rdelta^2}, we have
		\begin{align}
			&A=(R+1)\delta-(k+2)\varepsilon=(R+1)\sqrt{2k\varepsilon(1+\varepsilon)/R}-(k+2)\varepsilon,\\
			\label{A1}&{A}/{\sqrt{\varepsilon}}=(R+1)\sqrt{2k(1+\varepsilon)/R}-(k+2)\sqrt{\varepsilon}.\end{align}It follows from \eqref{Eq.5} and $k=2$ that\begin{align*}
			&A^2/\varepsilon-(2-2\ell)^2/\ell=A^2/\varepsilon-(k-2\ell)^2/\ell\\&=\big[(R+1)\sqrt{2k(1+\varepsilon)/R}-(k+2)\sqrt{\varepsilon}\big]^2-
			\big[(k+2)\sqrt{1+\varepsilon}-(R+1)\sqrt{2k\varepsilon/R}\big]^2\\
			&=(R+1)^2\cdot2k/R-(k+2)^2=(R+1)^2\cdot4/R-4^2=4(R-1)^2/R,
		\end{align*}
		which gives
			$$A^2/\varepsilon
			=(2-2\ell)^2/\ell+4(R-1)^2/R=4\big[(\ell-1)^2/\ell+(R-1)^2/R\big].$$
		Due to $A>0$, $ \varepsilon>0$, we obtain
		\begin{align}
			\label{A2}&A/{\sqrt{\varepsilon}}=2\sqrt{(\ell-1)^2/\ell+(R-1)^2/R}=2\sqrt{[R(\ell-1)^2+\ell(R-1)^2]/(\ell R)}.
		\end{align}
		Notice that
		\begin{align*}
			&(k+2)\bigg[(R+1)\sqrt{\frac{2k(1+\varepsilon)}{R}}-(k+2)\sqrt{\varepsilon}\bigg]-
			(R+1)\sqrt{\frac{2k}{R}}\bigg[(k+2)\sqrt{1+\varepsilon}-(R+1)\sqrt{\frac{2k\varepsilon}{R}}\bigg]\\&\quad=
			\big[(R+1)^2\cdot2k/R-(k+2)^2\big]\sqrt{\varepsilon}.\end{align*}
			Then we get by \eqref{Eq.5} and \eqref{A1} that
			\begin{align*}
			&(k+2){A}/{\sqrt{\varepsilon}}-(R+1)\sqrt{2k/{R}}(k-2\ell)/\sqrt{\ell}=\big[(R+1)^2\cdot2k/R-(k+2)^2\big]\sqrt{\varepsilon}.\end{align*}
			For $k=2$, it becomes 
			\begin{align*}
			&4{A}/{\sqrt{\varepsilon}}-(R+1)\sqrt{4/{R}}(2-2\ell)/\sqrt{\ell}=4(R-1)^2\sqrt{\varepsilon}/R.\end{align*}
			Then by \eqref{A2}, we have
			\begin{align*}
			&\frac{(R-1)^2}{R}\sqrt{\varepsilon}=\frac{{A}}{\sqrt{\varepsilon}}-\frac{R+1}{\sqrt{R}}\frac{1-\ell}{\sqrt{\ell}}=2\sqrt{\frac{R(\ell-1)^2+\ell(R-1)^2}{\ell R}}
			+\frac{R+1}{\sqrt{R}}\frac{\ell-1}{\sqrt{\ell}},\end{align*}
			thus,
			\begin{align}\label{A3}
			&\sqrt{\varepsilon}=\frac{2R}{(R-1)^2}\sqrt{\frac{R(\ell-1)^2+\ell(R-1)^2}{\ell R}}+\frac{(R+1)\sqrt{R}}{(R-1)^2}\frac{\ell-1}{\sqrt{\ell}}.\end{align}
		By \eqref{A2} and \eqref{A3}, we get
		\begin{align*}
			&A=A/{\sqrt{\varepsilon}}\cdot{\sqrt{\varepsilon}}\\=&
			2\sqrt{\frac{R(\ell-1)^2+\ell(R-1)^2}{\ell R}}\bigg[\frac{2R}{(R-1)^2}\sqrt{\frac{R(\ell-1)^2+\ell(R-1)^2}{\ell R}}+
			\frac{(R+1)\sqrt{R}}{(R-1)^2}\frac{\ell-1}{\sqrt{\ell}}\bigg]\\=&
			\frac{4R}{(R-1)^2}{\frac{R(\ell-1)^2+\ell(R-1)^2}{\ell R}}+
			\frac{2(R+1)}{(R-1)^2}\frac{\ell-1}{{\ell}}\sqrt{{R(\ell-1)^2+\ell(R-1)^2}}\\=&
			\frac{4R}{(R-1)^2}{\frac{(\ell-1)^2}{\ell }}+4+
			\frac{2(R+1)}{(R-1)^2}\frac{\ell-1}{{\ell}}\sqrt{{R(\ell-1)^2+\ell(R-1)^2}},\end{align*}
			which gives the expression of $A$. Taking the square of \eqref{A3}, we obtain
			\begin{align*}
			\varepsilon=&
			\bigg[\frac{2R}{(R-1)^2}\sqrt{\frac{R(\ell-1)^2+\ell(R-1)^2}{\ell R}}+
			\frac{(R+1)\sqrt{R}}{(R-1)^2}\frac{\ell-1}{\sqrt{\ell}}\bigg]^2\\=&
			\frac{4R^2}{(R-1)^4}{\frac{R(\ell-1)^2+\ell(R-1)^2}{\ell R}}+\frac{(R+1)^2{R}}{(R-1)^4}\frac{(\ell-1)^2}{\ell }
			\\&+\frac{4R(R+1)}{(R-1)^4}\frac{\ell-1}{{\ell}}\sqrt{{R(\ell-1)^2+\ell(R-1)^2}}.
			\end{align*}
			Then the expression of $\varepsilon$ follows from\begin{align*}
			4R^2+(R+1)^2{R}=R(R^2+6R+1),\quad \frac{4R^2}{(R-1)^4}\frac{\ell(R-1)^2}{\ell R}=\frac{4R}{(R-1)^2}.\end{align*}
			
		As $k=2$, by \eqref{Eq.Rdelta^2} we have $ (R+1)\delta=(k+2)\varepsilon+A=4\varepsilon+A$, which gives the expression of $\delta$
		by using the expressions of $\varepsilon$ and $A$. By \eqref{Eq.a_1+c_1=Rdelta}, we have $ a_1=R\delta-2\varepsilon$, then the expression of $a_1$ follows by using the expressions of $\varepsilon$ and $\delta$. As $k=2$, $B=2k+1-\ell=5-\ell$, we get by \eqref{Eq.a_2_expression}  that 
		\begin{align*}
			&(R-2)a_2=\big((k-3)R+1\big)a_1+3AR-(2k+B)R\\
			&=\big(-R+1\big)a_1+3A R-(4+5-\ell)R=3AR-\big(R-1\big)a_1-8R+R(\ell-1),
		\end{align*}
		which gives the expression of $a_2$ by using the expressions of $A$ and $a_1$. 
		\end{proof}

\fancypagestyle{plain}{\pagestyle{mystyle}}

\printindex
\pagestyle{mystyle}		

\section*{Acknowledgments}
D. Wei is partially supported by the National Key R\&D Program of China under the
grant 2021YFA1001500.	Z. Zhang is partially supported by  NSF of China  under Grant 12288101.

\end{document}